\newcommand{\ls}{\leqslant}
\newcommand{\gs}{\geqslant}
\newcommand{\bX}{\mathbb{X}}
\newcommand{\bT}{\mathbb{T}}
\newcommand{\bN}{\mathbb{N}}
\newcommand{\bP}{\mathbb{P}}
\newcommand{\bC}{\mathbb{C}}
\newcommand{\cO}{\mathcal{O}}
\newcommand{\cF}{\mathcal{F}}
\newcommand{\cE}{\mathcal{E}}
\newcommand{\ep}{\varepsilon}
\newcommand{\cG}{\mathcal{G}}
\newcommand{\bI}{\mathbb{I}}
\newcommand{\Id}{Id}
\newcommand{\Om}{\Omega}
\newcommand{\tOm}{\widetilde{\Omega}}
\newcommand{\Si}{\Sigma}
\newcommand{\tSi}{\widetilde{\Sigma}}
\newcommand{\la}{\lambda}
\newcommand{\tva}{\tilde{\varphi}}
\newcommand{\tla}{\widetilde{\lambda}}
\newcommand{\cS}{\mathcal{S}}
\newcommand{\Ga}{\Gamma}
\newcommand{\al}{\alpha}
\newcommand{\om}{\omega}
\newcommand{\fU}{\mathfrak{U}}
\newcommand{\tphi}{\tilde{\varphi}}
\newcommand{\vphi}{\varphi}
\newcommand{\La}{\Lambda}
\newcommand{\si}{\sigma}
\newcommand{\tom}{\widetilde{\omega}}
\newcommand{\be}{\beta}
\newcommand{\bA}{\mathbb{A}}
\newcommand{\ccX}{\mathscr{X}}
\newcommand{\De}{\Delta}
\newcommand{\bL}{\mathbb{L}}
\newcommand{\de}{\delta}
\newcommand{\Eul}{\mathscr{E}}
\DeclareMathOperator{\Gl}{Gl}
\DeclareMathOperator{\Mat}{Mat}
\DeclareMathOperator{\rk}{rk}
\DeclareMathOperator{\codim}{codim}
\DeclareMathOperator{\bB}{Bs}
\DeclareMathOperator{\n}{n}
\DeclareMathOperator{\s}{s}
\DeclareMathOperator{\w}{w}
\DeclareMathOperator{\im}{im}
\DeclareMathOperator{\Grass}{Grass}
\DeclareMathOperator{\pr}{pr}
\DeclareMathOperator{\Hom}{Hom}
\DeclareMathOperator{\nz}{nz}
\theoremstyle{plain}
\newtheorem{theorem}{Theorem}[section]
\newtheorem{lemma}[theorem]{Lemma}
\newtheorem{proposition}[theorem]{Proposition}
\newtheorem{corollary}[theorem]{Corollary}
\newtheorem{theointro}{Theorem}
\theoremstyle{definition}
\newtheorem{definition}[theorem]{Definition\rm}
\theoremstyle{remark}
\newtheorem{remark}[theorem]{\it Remark\/}
\newtheorem{claim}{\it Claim\/}
\begin{document}

\title{Explicit symmetric differential forms on complete intersection varieties and applications
}


\author{Damian Brotbek     
}
\maketitle

\begin{abstract}
In this paper we study the cohomology of tensor products of symmetric powers of the cotangent bundle of complete intersection varieties in projective space. We provide an explicit description of some of those cohomology groups in terms of the equations defining the complete intersection. We give several applications. First we prove a non-vanishing result, then we give a new example illustrating the fact that the dimension of the space of holomorphic symmetric differential forms is not deformation invariant. Our main application is the construction of varieties with ample cotangent bundle, providing new  results towards a conjecture of Debarre. 
\end{abstract}
\tableofcontents

\section{Introduction}
Varieties with ample cotangent bundle have many interesting properties, however, relatively few examples of such varieties are know (see \cite{Sch85}, \cite{Som84}, \cite{Deb05}, \cite{Bro14}). Debarre conjectured in \cite{Deb05} that: \emph{ if $X$ is a general complete intersection variety in $\bP^N$ of multidegree high enough and such that $\dim X\ls \codim_{\bP^N}X$ then the cotangent bundle of $X$ should be ample.} The study of this conjecture was the starting point of the present work. In \cite{Bro14} we were able to prove this conjecture when $\dim X=2,$ using Voisin's variational method and inspired by the work of Siu \cite{Siu04} and the work of Diverio Merker and Rousseau  \cite{DMR10}. However we were not able to make this strategy work completely in higher dimensions. \\

The construction of varieties with positive cotangent bundle is closely related to the construction of symmetric differential forms on it. In fact, if one wants to prove that a given variety has ample cotangent bundle, it is natural to start by producing many symmetric differential forms, to  be more precise, this means proving that the cotangent bundle is big. In general, this is already a highly non-trivial question and this leads to very interesting considerations. In that direction we would like to mention  the recent work of Brunebarbe, Klingler and Totaro \cite{BKT13} as well as the work of Roulleau and Rousseau \cite{R-R14}.\\

However, ampleness is a much more restrictive condition than bigness, in some sense, bigness only requires a quantitative information on the number of symmetric differential forms whereas ampleness requires a more qualitative information on the geometry of the symmetric differential forms. The most natural way to produce symmetric differential forms is to use Riemann-Roch theorem or a variation of it. For instance  under the hypothesis of Debarre's conjecture, one can use Demailly's holomorphic Morse inequality, in the spirit of Diverio's work \cite{Div08} and \cite{Div09}, to prove that the cotangent bundle is big (see \cite{Bro14}). Nevertheless, this approach doesn't give much information on the constructed symmetric differential forms besides its existence. One can wonder if it is possible, given a complete intersection variety $X$ in $\bP^N$ to write down explicitly the equation of a symmetric differential form on $X$ (if such an object exists).\\

If $X$ is a curve in $\bP^2$ of genus greater than $1$, this is a very classical exercise. In higher dimensions, very few results towards that problem are known. Br\"uckmann \cite{Bru85} constructed an example of a symmetric differential form on a complete intersection in $\bP^4$ given by Fermat type equations, and more recently Merker \cite{Mer13} was able to study examples of symmetric differential forms on complete intersection variety in $\bP^4$ in the spirit of work of Siu and Yeung \cite{S-Y96} (see also \cite{Mer14} for related results for higher order jet differential equations). \\

The aim of this paper is to develop a cohomological framework which will enable us to describe the space of holomorphic symmetric differential forms on a complete intersection variety in $\bP^N$ in terms of its defining equations, and to give several applications. The outline of the paper is as follows.\\

In Section \ref{MainSection} we prove the main technical result of this work. In view of the possible generalizations to higher order jet differential as well as for its own sake, we will not only study the space of symmetric differential forms, but also the more general spaces $H^i(X,S^{\ell_1}\Om_X\otimes\cdots\otimes S^{\ell_k}\Om_X)$ for a complete intersection variety $X$ in $\bP^N.$ Recall the following vanishing result of Br\"uckman and Rackwitz.

\begin{theorem}[Br\"uckmann-Rackwitz \cite{B-R90}]\label{B-RVanishing}
Let $X\subseteq \bP^N$ be a complete intersection of dimension $n$ and codimension $c$. Take integers $j,\ell_1,\dots,\ell_k\geqslant 0$. If $j<n-\sum_{i=1}^k\min\{c,\ell_i\}$ then
$$H^j(X,S^{\ell_1}\Om_X\otimes\cdots \otimes S^{\ell_k}\Om_X)=0.$$
\end{theorem}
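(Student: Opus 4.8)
The plan is to prove the vanishing $H^j(X,S^{\ell_1}\Om_X\otimes\cdots\otimes S^{\ell_k}\Om_X)=0$ for $j<n-\sum_{i=1}^k\min\{c,\ell_i\}$ by induction, peeling off one hypersurface at a time and, inside each step, inducting on the $\ell_i$ by means of the Euler-type exact sequences attached to symmetric powers. First I would reduce to the case $k=1$ at the level of bookkeeping by noting that a tensor product $S^{\ell_1}\Om_X\otimes\cdots\otimes S^{\ell_k}\Om_X$ is a direct summand (or at least admits a filtration with controlled graded pieces built from) sheaves of the form $S^{\bullet}\Om_X$ twisted by line bundles; more honestly, I would carry the whole tensor product along and only manipulate one factor at a time, since the arguments below are all linear in a single $S^{\ell_i}$-slot.

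The two fundamental exact sequences are: (i) the conormal sequence for $X=X_1\cap\cdots\cap X_c\subseteq\bP^N$, or more usefully the stepwise version for $X\subseteq X'$ where $X'$ is the complete intersection of all but the last hypersurface, giving
$$0\to \cO_X(-d_c)\to \Om_{X'}|_X\to \Om_X\to 0;$$
and (ii) the restriction/Koszul sequence relating $\Om_{X'}|_X$ to $\Om_{X'}$ via $0\to\cO_{X'}(-d_c)\to\cO_{X'}\to\cO_X\to 0$. Taking symmetric powers of (i) produces, for each $\ell$, a filtration of $S^\ell(\Om_{X'}|_X)$ with graded pieces $S^{\ell-p}\Om_X\otimes \cO_X(-pd_c)$ for $0\le p\le\ell$; dually there is the sequence expressing $S^\ell\Om_X$ as a quotient of $S^\ell(\Om_{X'}|_X)$ with kernel filtered by $S^{\ell-p}\Om_X(-pd_c)$, $p\ge 1$. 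The key numerical point is that passing from $X'$ (dimension $n+1$, codimension $c-1$) to $X$ the bound $n-\sum\min\{c,\ell_i\}$ shifts in a way compatible with these sequences: replacing one factor $S^{\ell_i}\Om_X$ by $S^{\ell_i}\Om_{X'}|_X$ changes $\min\{c,\ell_i\}$ to $\min\{c-1,\ell_i\}$, i.e. the allowed range of $j$ grows by at most one, exactly matching the shift in the long exact sequence. One also needs the base case: when $X=\bP^N$ (so $c=0$, $n=N$), the statement is that $H^j(\bP^N,S^{\ell_1}\Om_{\bP^N}\otimes\cdots)=0$ for $j<N$, twisted appropriately — here all the $\cO_X(-pd_c)$ twists that accumulate are by \emph{negative} line bundles, and one invokes Bott's formula on $\bP^N$ (or Le Potier-type vanishing) for symmetric powers of $\Om_{\bP^N}$ twisted by $\cO(-m)$ with $m>0$, which gives vanishing in all degrees $<N$; this is where the twists actually help rather than hurt.

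Concretely the induction runs as follows. Fix $c$ and induct downward from $\bP^N$: suppose the bound holds for the codimension-$(c-1)$ complete intersection $X'\subseteq\bP^N$ (with any twist by $\cO(-m)$, $m\ge0$, which I would build into the statement from the start, since the exact sequences force negative twists to appear). To prove it for $X\subseteq X'$ with a given multi-index $(\ell_1,\dots,\ell_k)$ and target degree $j<n-\sum\min\{c,\ell_i\}$, do a secondary induction on $\sum_i\ell_i$. Using sequence (i) raised to the relevant symmetric power in the first slot, $H^j(X,S^{\ell_1}\Om_X\otimes\cdots)$ is squeezed between $H^j$ of the filtered pieces of $S^{\ell_1}(\Om_{X'}|_X)\otimes\cdots$ and $H^{j+1}$ of the filtered pieces $S^{\ell_1-p}\Om_X(-pd_c)\otimes\cdots$ with $p\ge1$; the latter have strictly smaller $\sum\ell_i$ so the secondary induction kills them (the twist only relaxes the constraint), while the former are handled by pushing through sequence (ii) to land on $\bP^N$-cohomology, where the accumulated negative twists make Bott vanishing apply below degree $n$ — and one checks $j+$(number of codimension steps)$<N$ using precisely the identity $\sum\min\{c,\ell_i\}\ge\sum\min\{c-1,\ell_i\}\ge\cdots$. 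The main obstacle, and the only genuinely delicate part, is the numerology: one must verify that at every stage the degree index stays below the Bott threshold on $\bP^N$, i.e. that the inequalities $j+(\text{steps})+(\text{contributions from }p\ge1\text{ terms})<N$ are implied by $j<n-\sum_i\min\{c,\ell_i\}$ together with $n=N-c$; the saturation of $\min\{c,\ell_i\}$ at $c$ (versus the naive $\ell_i$) is exactly what makes this work and is the reason the statement is phrased with $\min$. Everything else is a routine chase through the symmetric-power filtrations of the Euler sequences.
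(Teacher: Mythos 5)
Your skeleton (conormal sequence, restriction sequence, induction towards $\bP^N$ with accumulating negative twists) is the same one the paper uses for its generalization, Lemma \ref{VanishingLemma}, but the step where you dispose of the kernel term is genuinely broken. In your secondary induction you replace the kernel of $S^{\ell_1}(\Om_{X'}|_X)\otimes T\to S^{\ell_1}\Om_X\otimes T$ by its filtration with graded pieces $S^{\ell_1-p}\Om_X(-pd_c)\otimes T$, $p\geqslant 1$, and invoke the inductive vanishing for these at degree $j+1$. That needs $j+1<n-\min\{c,\ell_1-p\}-\sum_{i\geqslant 2}\min\{c,\ell_i\}$, i.e. $\min\{c,\ell_1-p\}\leqslant\min\{c,\ell_1\}-1$, which fails precisely when $\ell_1-p\geqslant c$ — that is, in the saturated range $\ell_1>c$ that you yourself identify as the crux. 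Moreover this is not a repairable bookkeeping slip: the graded pieces really do carry cohomology in the critical degree. The paper's optimality result (Proposition \ref{example}) exhibits complete intersections with $H^{\,n-\min\{c,\ell\}}\bigl(X,S^{\ell}\Om_X(a)\bigr)\neq 0$ for every $a<\ell-1$, and the twists $a=-pd_c$ appearing in your kernel pieces satisfy this; so the vanishing you need for the graded pieces is false in general, and the required cancellation only happens at the level of the unfiltered kernel $S^{\ell_1-1}(\Om_{X'}|_X)(-d_c)$.

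The repair is exactly what the paper does: never collapse back to $\Om_X$. Keep the kernel as the mixed bundle $S^{\ell_1-1}(\Om_{X'}|_X)(-d_c)$ and set up the induction for the whole class of bundles $\bigotimes_i S^{a_i}(\Om_{X_{j_i}}|_X)$ built from the cotangent bundles of all intermediate complete intersections (the $\lambda$-settings of Section \ref{SectionExactSequence}), with the admissible degree governed by $\sum_i\min\{\codim X_{j_i},a_i\}$ (the function $\n$); since $\min\{c-1,\ell-1\}=\min\{c,\ell\}-1$, the counter drops by exactly one at each application of the exact sequence of Proposition \ref{Suite-Exacte} (this is Proposition \ref{Prop-elementaire}), and the induction closes with no case distinction on saturation. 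Two further points in your outline also need this formalism: you cannot ``push through sequence (ii)'' while other factors are still $\Om_X$-bundles — descent from $X$ to $X'$ is only possible after every factor has been converted to a bundle restricted from $X'$, which is again why the mixed objects are unavoidable; and the negative twists do not ``relax the constraint'' — they must be tracked against a bound of the shape $a<\sum_i\ell_i-k$ (the paper's $t(\Si)$), since the base case on $\bP^N$ is false without a strictly negative twist (e.g. $H^{k}(\bP^N,\Om_{\bP^N}^{\otimes k})\neq 0$), so the twist bookkeeping is a genuine hypothesis, not a convenience.
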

It is natural to look at what happens in the case $j= n-\sum_{i=1}^k\min\{c,\ell_i\}$ in the above theorem. Our result in that direction is the following.
\begin{theointro}\label{MainTheorem}
Let $c,N,e_1,\dots,e_c\in\bN^*$, set $n=N-c$. Let $X\in \bP^N$ be a smooth complete intersection variety of codimension $c$, dimension $n$, defined by the ideal $(F_1,\dots, F_c)$, where $F_i\in H^0(\bP^N,\cO_{\bP^N}(e_i))$. Take integers $\ell_1,\dots,\ell_k\geqslant c$ take an integer $a<\ell_1+\cdots +\ell_k-k$, let $q:=n-kc$ and $b:=(k+1)\sum_{i=1}^ce_i$.  Then one has a commutative diagram
$$
\xymatrix{
H^q\left(X,S^{\ell_1}\Om_X\otimes\cdots \otimes S^{\ell_k}\Om_X(a)\right)\ar[r] \ar[d]_{\Eul} \ar[dr]^{\varphi} & H^N\left(\bP^N,S^{\ell_1-c}\Om_{\bP^N}\otimes\cdots \otimes S^{\ell_k-c}\Om_{\bP^N}(a-b)\right)\ar[d]^{\Eul}\\
H^q\left(X,S^{\ell_1}\tOm_X\otimes\cdots \otimes S^{\ell_k}\tOm_X(a)\right)\ar[r]^{\tva\ \ \ \ \ } & H^N\left(\bP^N,S^{\ell_1-c}\tOm_{\bP^N}\otimes\cdots \otimes S^{\ell_k-c}\tOm_{\bP^N}(a-b)\right)\\
}
$$
Such that all the arrows are injective and such that:
\begin{enumerate}
\item $\im \tva =\bigcap_{i=1}^c \left(\ker(\cdot F_i)\bigcap_{j=1}^k\ker(\cdot dF_i^{\{j\}})\right)$. 
\item$ \im \varphi=\im \tva \cap \im\Eul.$
\end{enumerate}
\end{theointro}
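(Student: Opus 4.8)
The plan is to build the entire diagram out of a single geometric source: the Koszul-type resolution of the structure sheaf $\cO_X$ (and of its twists by symmetric powers of $\Om_X$) coming from the conormal exact sequence. First I would recall the two short exact sequences on $X$: the conormal sequence $0\to N^*_{X/\bP^N}\to \Om_{\bP^N}|_X\to \Om_X\to 0$, where $N^*_{X/\bP^N}=\bigoplus_{i=1}^c\cO_X(-e_i)$, and the Euler sequence $0\to\Om_{\bP^N}\to\cO_{\bP^N}(-1)^{N+1}\to\cO_{\bP^N}\to 0$. The twisted sheaves $\tOm_X$, $\tOm_{\bP^N}$ appearing in the bottom row should be the "logarithmic/twisted" versions built from the $\cO(-1)^{N+1}$ terms, and the vertical map $\Eul$ is the connecting-type map induced by the Euler sequence; its injectivity on the relevant cohomology is where one uses the numerical hypothesis $a<\ell_1+\cdots+\ell_k-k$ together with the Bott vanishing theorem on $\bP^N$ (the kernel/cokernel terms have cohomology that vanishes in the relevant degrees). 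The horizontal maps $\varphi,\tva$ should be the edge maps in the hypercohomology spectral sequence of the Koszul complex resolving $S^{\ell_1}\Om_X\otimes\cdots\otimes S^{\ell_k}\Om_X(a)$ in terms of $\Om_{\bP^N}$-bundles on $\bP^N$; here is where the degree shift $b=(k+1)\sum e_i$ and the drop $\ell_i\mapsto\ell_i-c$ come from, since taking $c$-fold symmetric "Koszul differentials" against the $c$ equations $F_i$ costs $c$ in each symmetric factor and $\sum e_i$ in each of the $k$ Koszul steps plus one more for the structure-sheaf resolution. The Br\"uckmann–Rackwitz vanishing (Theorem \ref{B-RVanishing}) is what guarantees that in this spectral sequence all the intermediate cohomology groups vanish in the range $q=n-kc$, so that the spectral sequence degenerates to give exactly the edge map and computes $H^q$ as a subquotient — in fact a sub — of the top-right group, proving injectivity of $\varphi$ and $\tva$.

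Next I would identify the image of $\tva$ explicitly. The point is that after resolving by the Euler sequences, $H^N(\bP^N, S^{\ell_1-c}\tOm_{\bP^N}\otimes\cdots\otimes S^{\ell_k-c}\tOm_{\bP^N}(a-b))$ becomes, by Serre duality and Bott's explicit description, a concrete space of polynomial vectors (symmetric-tensor-valued forms with polynomial coefficients of prescribed degree). On this space the $c$ equations $F_i$ act by multiplication $(\cdot F_i)$ and the differentials $dF_i$ act in each of the $k$ tensor slots by a contraction-type operation $(\cdot dF_i^{\{j\}})$. The claim (1), $\im\tva=\bigcap_{i=1}^c\big(\ker(\cdot F_i)\cap\bigcap_{j=1}^k\ker(\cdot dF_i^{\{j\}})\big)$, is precisely the statement that a class on $\bP^N$ descends to $X$ (i.e. lifts through $\varphi$'s target to an actual class on $X$) if and only if it is annihilated by all the Koszul differentials, which is the $E_2=E_\infty$ statement of the spectral sequence. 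Concretely I would compare the Koszul complex for $\cO_X$ (multiplication by the $F_i$) with the Koszul complex for the conormal sequence (contraction by the $dF_i$), show the total complex resolving $S^\bullet\Om_X(a)$ has its $H^q$ given by the kernel of the outgoing differential modulo nothing (by vanishing of everything below), and then transport this through Serre duality: the outgoing Koszul differential becomes, dually, exactly the collection of maps $\cdot F_i$ and $\cdot dF_i^{\{j\}}$.

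For part (2), $\im\varphi=\im\tva\cap\im\Eul$, one inclusion ($\im\varphi\subseteq\im\tva\cap\im\Eul$) is immediate from commutativity of the diagram, since $\varphi$ factors both as $\tva\circ\Eul$ (along the bottom) and through the right-hand $\Eul$ (along the top), so its image lies in both images. The reverse inclusion is the substantive point: given a class in $H^N(\bP^N,S^{\ell-c}\tOm\otimes\cdots(a-b))$ that is simultaneously in $\im\tva$ and in $\im\Eul$, I would lift it along $\tva$ to a class $\xi\in H^q(X,S^{\ell_1}\tOm_X\otimes\cdots(a))$ and separately lift along the right $\Eul$ to $\eta\in H^N(\bP^N,S^{\ell-c}\Om_{\bP^N}\otimes\cdots(a-b))$, and then show $\xi$ actually comes from $H^q(X,S^{\ell_1}\Om_X\otimes\cdots(a))$, i.e. lies in $\im(\Eul\colon H^q(X,S\Om_X)\to H^q(X,S\tOm_X))$. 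This is a diagram chase using the long exact sequences obtained by tensoring the Euler sequence (pulled back to $X$) with $S^{\ell_2}\tOm_X\otimes\cdots$, controlling the error terms again by Bott/Br\"uckmann–Rackwitz vanishing so that the relevant connecting maps vanish; the compatibility of the two Euler-sequence long exact sequences on $X$ and on $\bP^N$ with the Koszul maps $\varphi,\tva$ is what forces $\xi$ into the sub $H^q(X,S\Om_X)$.

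The main obstacle I anticipate is bookkeeping: setting up the double complex (Euler sequence in one direction, conormal/Koszul sequence in the other) so that the two spectral sequences genuinely degenerate in the single degree $q=n-kc$, and checking that every intermediate cohomology group that one needs to kill is covered by Theorem \ref{B-RVanishing} on $X$ and by Bott vanishing on $\bP^N$ — the inequality $a<\ell_1+\cdots+\ell_k-k$ and the identities $q=n-kc$, $b=(k+1)\sum e_i$ have to be exactly the thresholds that make this work, and verifying that is the delicate part. The second genuinely non-formal step is the dualization in part (1): translating the abstract Koszul differential into the concrete operators $\cdot F_i$ and $\cdot dF_i^{\{j\}}$ requires an explicit Serre-duality computation on $\bP^N$ for tensor products of symmetric powers of $\Om_{\bP^N}$, and getting the pairing right (so that "kernel of the differential" becomes "intersection of kernels of $\cdot F_i$ and $\cdot dF_i^{\{j\}}$" rather than images) is where care is needed.
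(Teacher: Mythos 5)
Your overall mechanism is the right one, but packaged differently from the paper: you propose to tensor together Koszul-type resolutions (of $\cO_X$ and of the symmetric powers coming from the conormal sequence), take the hypercohomology spectral sequence, and read off $H^q$ as the kernel of the leftmost $d_1$, identified with the operators $\cdot F_i$ and $\cdot dF_i^{\{j\}}$; the paper instead never forms a global resolution or a spectral sequence, but runs an induction through iterated long exact sequences (restriction, tilde conormal, Euler), organized by a bookkeeping formalism of ``$\lambda$-settings'' with successor operations, a vanishing lemma proved from scratch, and an elementary linear-algebra lemma ($\im\varphi = h_1(B)\cap\ker f_2$ for a commutative square) applied repeatedly to identify the image as an intersection of kernels. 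For the tilde row your plan works cleanly, since all resolution terms are sums of line bundles $\cO(-t)$ with $H^{<N}=0$ under the hypothesis $a<\ell_1+\cdots+\ell_k-k$; what your route buys is conceptual compactness, what the paper's buys is that no exactness of a tensored-up total complex has to be checked (your approach does need a Tor-vanishing argument to know the product of the individual resolutions is still a resolution) and that the identification of the $d_1$-components with $\cdot F_i$, $\cdot dF_i^{\{j\}}$ is replaced by explicit commuting squares (Proposition~\ref{key}).

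Three points in your sketch are genuinely under-justified. First, Bott vanishing does not directly give $H^{<N}=0$ for terms of the form $S^{m_1}\Om_{\bP^N}\otimes\cdots\otimes S^{m_k}\Om_{\bP^N}(t)$: these are not the bundles Bott's formula covers, and the paper's way around this is exactly to resolve $\Om$ by $\tOm$ through the Euler sequence (the content of Proposition~\ref{PropositionEuler} and Theorem~\ref{LimitePair}); you mention this resolution but then attribute the vanishing to Bott, which skips the actual argument. Second, the vanishing you need on $X$ is not the quoted Br\"uckmann--Rackwitz theorem: the intermediate groups involve twists $\cO(a)$ and \emph{mixed} tensor products of $\Om$'s, $\tOm$'s and restrictions from the partial intersections $X_i$, so you must prove a generalization (the paper's Lemma~\ref{VanishingLemma}, proved by the same induction machinery). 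Third, for part (2) the inclusion $\im\varphi\subseteq\im\tva\cap\im\Eul$ is indeed formal, but the reverse inclusion does not follow from ``the relevant connecting maps vanish'': one must propagate, step by step through the Euler-sequence squares, the identity $\im\varphi_1=\im\varphi_2\cap\im\Eul_N$ (the paper's double induction on $\nz(\Si)$ using Lemma~\ref{linalg} and the diagram \eqref{GrosDiagram}); your chase is pointed in the right direction but this stepwise image identification is the actual content and would have to be written out.
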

\begin{remark} The bundle $\tOm$ is described in Section \ref{TidleOmega} and the  different maps arising in the statement are described in Section \ref{SectionStatements}
\end{remark}
The important thing to note in that result, is that it gives a way of describing the vector space $H^q\left(X,S^{\ell_1}\Om_X\otimes\cdots \otimes S^{\ell_k}\Om_X(a)\right)$ as a sub-vector space of $H^N\left(\bP^N,S^{\ell_1-c}\tOm_{\bP^N}\otimes\cdots \otimes S^{\ell_k-c}\tOm_{\bP^N}(a-b)\right)$, that this last space is easily described, and that one can precisely determine, in terms of the defining equations of $X$ what is the relevant sub-vector space. Therefore this result (and the more general statements in Theorem \ref{LimiteSetting} and Theorem \ref{LimitePair}) should be understood as our main theoretical tool to construct symmetric differential forms on complete intersection varieties. \\

In Section \ref{SectionApplications} we provide the first applications of Theorem \ref{MainTheorem}. First we describe how Theorem \ref{MainTheorem} can be used very explicitly   in \v{C}ech cohomology. Then we illustrate this by treating the case of curves in $\bP^2$. After that (Proposition \ref{example}) we prove that the result of Br\"uckmann and Rackwitz is optimal by providing the following non-vanishing result.  
\begin{theointro}\label{theoB}
Let $N\geqslant 2$, let $0\leqslant c<N$. Take integers $\ell_1,\dots,\ell_k\gs 1$ and $a<\ell_1+\cdots+\ell_k-k$. Suppose  $0\leqslant q:= N-c-\sum_{i=1}^k\min\{c,\ell_i\}$. Then, there exists a smooth complete intersection variety in $\bP^N$ of codimension $c$, such that 
$$H^q(X,S^{\ell_1}\Om_X\otimes\cdots\otimes S^{\ell_k}\Om_X(a))\neq 0.$$
\end{theointro}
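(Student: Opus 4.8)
The plan is to use Theorem~\ref{MainTheorem} as the main tool: the desired non-vanishing $H^q(X,S^{\ell_1}\Om_X\otimes\cdots\otimes S^{\ell_k}\Om_X(a))\neq 0$ will follow as soon as we exhibit one smooth complete intersection $X$ of codimension $c$ for which the subspace described in parts (1) and (2) of that theorem is nonzero. Recall $q=N-c-\sum_i\min\{c,\ell_i\}$; first I would reduce to the case where every $\ell_i\gs c$, so that $\min\{c,\ell_i\}=c$ and $q=N-kc=n-kc$ is exactly the degree appearing in Theorem~\ref{MainTheorem}. If some $\ell_i<c$ one can presumably split off those factors (they contribute $S^{\ell_i}\Om_{\bP^N}$-type terms on projective space, whose top cohomology is understood) or, more simply, first treat the case $\ell_i\gs c$ for all $i$ and then deduce the general case by an induction on $k$ peeling off one symmetric power at a time, using the same explicit \v{C}ech description promised in Section~\ref{SectionApplications}.

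Granting the reduction, by Theorem~\ref{MainTheorem} it suffices to find homogeneous $F_1,\dots,F_c$ of some degrees $e_1,\dots,e_c$, cutting out a smooth $X$, such that $\im\varphi=\im\tva\cap\im\Eul\neq 0$, i.e. such that $\bigcap_{i=1}^c(\ker(\cdot F_i)\cap\bigcap_{j=1}^k\ker(\cdot dF_i^{\{j\}}))$ meets $\im\Eul$ nontrivially inside $H^N(\bP^N,S^{\ell_1-c}\tOm_{\bP^N}\otimes\cdots\otimes S^{\ell_k-c}\tOm_{\bP^N}(a-b))$. The natural candidate, following Br\"uckmann and Merker, is a Fermat-type choice $F_i=\sum_j x_j^{e_i}$ (or more flexibly $F_i=\sum_j a_{ij}x_j^{e_i}$ with generic coefficients and $e_i$ large), for which the differentials $dF_i$ and the multiplication maps have a very transparent monomial structure; one then writes down an explicit \v{C}ech cocycle on the standard affine cover of $\bP^N$, living in the target $H^N$, and checks by direct monomial bookkeeping that it lies in all the kernels $\ker(\cdot F_i)$, $\ker(\cdot dF_i^{\{j\}})$ and simultaneously in the image of the Euler-type map $\Eul$. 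The hypothesis $a<\ell_1+\cdots+\ell_k-k$ is exactly what is needed for such a cocycle to exist (it guarantees enough negativity in the twist so that $H^N\neq 0$ on $\bP^N$), and the constraint $q\gs 0$ is what makes the whole diagram live in a meaningful degree.

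The main obstacle, as usual with these explicit constructions, is the combinatorial core: producing an actual nonzero class in the intersection of all those kernels inside $\im\Eul$ and verifying it is well-defined in \v{C}ech cohomology (i.e. not a coboundary). Concretely one must choose the degrees $e_i$ large enough, and the exponents in the monomial cocycle carefully, so that (a) the denominators $x_0\cdots x_N$ appearing in the top \v{C}ech cohomology class are ``balanced'' in the way Bott-type vanishing/non-vanishing on $\bP^N$ requires, (b) multiplying by $F_i$ and by the $dF_i^{\{j\}}$ kills the class — this is where the Fermat structure, sending $x_j^{e_i}\mapsto$ a monomial that pushes the exponent out of the admissible range, is essential — and (c) the class is genuinely in the image of $\Eul$, which amounts to a compatibility between the conormal directions and the chosen monomials. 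I expect this to work for all sufficiently large $e_i$ (depending on $N$, $c$, the $\ell_i$ and $a$), and smoothness of the Fermat-type $X$ is automatic for these degrees; the write-up will mostly consist of setting up the \v{C}ech notation from Section~\ref{SectionApplications} and then a careful but routine degree count.
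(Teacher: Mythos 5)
Your overall strategy for the \emph{simple} case ($\ell_i\gs c$ for all $i$) is exactly the paper's: take $F_p=\sum_j a_{pj}Z_j^e$ with a generic matrix $(a_{pj})$ of coefficients and $e$ large, apply Theorems~\ref{LimiteSetting} and~\ref{LimitePair}, and exhibit an explicit nonzero class in the intersection of kernels inside $\im\Eul$ by monomial bookkeeping. The paper (Proposition~\ref{SimpleCase}) does this by taking
$$\omega=\frac{P}{Z_0^{e-1}\cdots Z_N^{e-1}}\bigotimes_{j,k}(Z_0dZ_1-Z_1dZ_0)^{\la^j_k-j},$$
whose $dZ$-part $(Z_0dZ_1-Z_1dZ_0)$ is chosen precisely to land in $\im\Eul$, and whose denominator $Z_0^{e-1}\cdots Z_N^{e-1}$ guarantees that multiplication by $Z_j^e$ (resp.\ $Z_j^{e-1}dZ_j$) pushes the exponent of $Z_j$ below~$1$, killing the class in $H^N$. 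You promise the same kind of computation, so on this part you and the paper agree.

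The genuine gap is your reduction to the case $\ell_i\gs c$. Neither of your two suggested reductions works as stated. ``Splitting off'' a factor $S^{\ell_i}\Om$ with $\ell_i<c$ does not leave it as an $S^{\ell_i}\Om_{\bP^N}$-type term: after running $\ell_i$ conormal sequences it stabilizes at a nontrivial symmetric power of $\Om_{X_{c-\ell_i}}|_X$, not on projective space, and there is no evident way to factor out its cohomology. Likewise, peeling off one $S^{\ell_i}\Om_X$ factor with $\ell_i<c$ changes $q$ by $+\ell_i$, so an induction on $k$ does not stay in the relevant cohomological degree. The paper handles this via Proposition~\ref{NonSimple}: it introduces a third type of successor operation $c_1,c_2$ on settings (distinct from $\s_1,\s_2$), which ``demotes'' a small $\la^{j_0}_{i_0}<j_0$ to a lower level $j_0-1$ while preserving $q$, and uses the vanishing Lemma~\ref{VanishingLemma} on $c_2(\Si)$ (whose $q$ goes up by one) to get an \emph{injection} $H^q(\Om^{c_1(\Si)}(a))\hookrightarrow H^q(\Om^{\Si}(a))$. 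Iterating produces a simple setting $\Si'$ with $q(\Si')=q(\Si)$ and an injection $H^q(\Om^{\Si'}(a))\hookrightarrow H^q(\Om^{\Si}(a))$; the simple case then finishes. The key point you are missing is that the target of the reduction is not the same tensor product with smaller $k$, but a different $\lambda$-setting living at intermediate stages $X_j$ of the complete intersection, and that the non-vanishing is propagated by a cohomological injection rather than a splitting. Without a lemma of this type, the case where some $\ell_i<c$ is not covered by your argument.
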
 
Then, in Corollary \ref{CoroFamily}, we provide a new example of a family illustrating the fact that the dimension of the space of holomorphic symmetric  differential forms is not deformation invariant.
\begin{theointro}\label{theoC}
For any $n\geqslant 2$, for any $m\geqslant 2$, there is a family of varieties $\mathscr{Y}\to B$ over a curve, of relative dimension $n$, and a point $0\in B$ such that for generic $t\in B,$ $$h^0(Y_0,S^m\Om_{Y_0})> h^0(Y_t,S^m\Om_{Y_t}).$$
\end{theointro}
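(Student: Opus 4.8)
The strategy is to exhibit an explicit one-parameter family. The natural candidate, in view of Theorem \ref{theoB}, is to take a family $\mathscr{Y}\to B$ whose special fiber $Y_0$ is a complete intersection chosen so that Theorem \ref{theoB} (or, more precisely, its proof via Proposition \ref{example}) produces a nonzero class in $H^0(Y_0,S^m\Om_{Y_0})$, and whose generic fiber $Y_t$ is a \emph{general} complete intersection of the same multidegree in $\bP^N$, for which the Br\"uckmann--Rackwitz vanishing (Theorem \ref{B-RVanishing}) together with a Bogomolov-type / genericity argument forces $h^0(Y_t,S^m\Om_{Y_t})$ to be strictly smaller. Concretely, I would first fix $n\geqslant 2$ and $m\geqslant 2$, choose $N$ and a codimension $c$ (the case $c=n$, so $N=2n$, is the most convenient since then $q=0$ and we are genuinely looking at global symmetric forms), and pick the multidegree $(e_1,\dots,e_c)$ large enough that the special fiber constructed in the proof of Theorem \ref{theoB} has $H^0(Y_0,S^m\Om_{Y_0})\neq 0$; set $Y_0$ to be that variety.

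Next I would build the family: let $B$ be a smooth curve (an affine line or a small disc will do) parametrizing a pencil of complete intersections of multidegree $(e_1,\dots,e_c)$ in $\bP^N$, with fiber over $0$ equal to the chosen $Y_0$ and with generic member smooth. Smoothness of the total space and of the generic fiber is a standard Bertini argument once one checks the base locus is controlled; shrinking $B$ if necessary we may assume $\mathscr{Y}\to B$ is smooth of relative dimension $n$. The point $0\in B$ is the one singled out in the statement.

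The heart of the argument is then the inequality $h^0(Y_0,S^m\Om_{Y_0})>h^0(Y_t,S^m\Om_{Y_t})$ for generic $t$. For the left-hand side, nonvanishing is exactly what the construction in Proposition \ref{example}/Theorem \ref{theoB} delivers, and Theorem \ref{MainTheorem} (with $k=1$, $\ell_1=m\geqslant c$, $a=0<m-1$, $q=n-c$) gives an explicit identification of $H^0(Y_0,S^m\Om_{Y_0})$ as the subspace $\bigcap_{i=1}^c(\ker(\cdot F_i)\cap\ker(\cdot dF_i^{\{1\}}))\cap\im\Eul$ inside an explicit cohomology group on $\bP^N$; one reads off a strictly positive lower bound for its dimension from the specific (e.g. Fermat-type) equations of $Y_0$. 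For the right-hand side, the key input is that for a \emph{general} complete intersection $Y_t$ the analogous intersection of kernels is as small as possible — generically the defining equations $F_i^{(t)}$ and their differentials $dF_i^{(t)}$ impose independent conditions — so that $h^0(Y_t,S^m\Om_{Y_t})$ drops; combined with semicontinuity of $h^0$ in families this forces a strict drop at the special point. The main obstacle, and where I would spend most of the effort, is precisely this genericity estimate: one must show that for the generic member of the pencil the explicit subspace of Theorem \ref{MainTheorem} is strictly smaller than for $Y_0$, i.e. that the special equations are genuinely non-generic with respect to this cohomological condition. This is a computation with the \v{C}ech description of the maps $\cdot F_i$ and $\cdot dF_i^{\{j\}}$ from Section \ref{SectionApplications}, analogous to but in the opposite direction from the non-vanishing computation, and it is the only place where real work beyond bookkeeping is needed. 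Finally, since $h^0$ of any coherent sheaf is upper semicontinuous, it suffices to have the strict inequality for one generic $t$, and then it holds on a dense open subset of $B$, completing the proof.
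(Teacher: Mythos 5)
There is a genuine gap, and in fact two. First, your strategy of taking both the special and the generic fibre to be complete intersections cannot work in the range $2\ls m<n$: by the Br\"uckmann--Rackwitz vanishing (Theorem \ref{B-RVanishing}, with $k=1$, $\ell_1=m$) one has $H^0(X,S^m\Om_X)=0$ as soon as $\min\{c,m\}<\dim X$, hence for \emph{every} smooth complete intersection of dimension $n$ whenever $m<n$, whatever $N$, $c$ and the multidegree are. So no family of complete intersections can exhibit a jump of $h^0(S^m\Om)$ when $m<n$, and your reduction ``choose $c=n$, $N=2n$'' only even makes sense when $m\gs n$. The paper circumvents exactly this obstruction: it does the hard work only once, for surfaces and $m=2$ (Proposition \ref{PropFamille}), and then obtains all pairs $(n,m)$ with $n,m\gs 2$ by taking the product of that family of surfaces with a fixed $(n-2)$-dimensional abelian variety $A$ (Corollary \ref{CoroFamily}); the jump in the $S^2$-summand of $S^m\Om_{X\times A}=\bigoplus_{i+j=m}S^i\Om_X\boxtimes S^j\Om_A$ then produces the jump for $S^m$. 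This product step is entirely absent from your plan and is not a cosmetic detail: without it the statement for $m<n$ is out of reach of your construction.

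Second, even in the range $m\gs n$ where complete intersections are not excluded, the step you yourself identify as ``the main obstacle'' is precisely the content of the theorem, and the way you propose to handle it is not viable as stated. You cannot expect that for a general complete intersection of the large multidegree needed for the nonvanishing on $Y_0$ the group $H^0(Y_t,S^m\Om_{Y_t})$ is ``as small as possible'' by a soft independence argument: in the regime $\dim X\ls\codim X$ and high degree the general complete intersection has big (conjecturally ample) cotangent bundle, so for most $m$ it carries many symmetric forms, and a strict drop would require a genuine quantitative comparison of the two kernel intersections, which you do not carry out. The paper avoids this by proving an actual generic \emph{vanishing}, but only in the single tractable case $N=4$, $c=2$, $m=2$, and not for a general pencil: it uses the very specific two-parameter deformation $F_\al,G_\be$ of Fermat equations (perturbed by the two monomials $Z_0^{e_1}Z_1^{e_2}$ and $Z_2^{e_1}Z_3^{e_2}$), reduces $\im\tva$ to $\bigcap_i\bigl(\ker(\cdot\partial F_\al/\partial Z_i)\cap\ker(\cdot\partial G_\be/\partial Z_i)\bigr)$ via Euler's relation, and kills it by an explicit Gauss elimination plus the degree count $4e\ls 2(e_2+e-2)+e-1\ls 4e-4$. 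Your proposal contains neither such a computation nor a substitute for it, so the inequality $h^0(Y_0,S^m\Om_{Y_0})>h^0(Y_t,S^m\Om_{Y_t})$ is not established.
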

This phenomenon has already been studied (see for instance \cite{Bog78}, \cite{BDO08} and \cite{R-R14}) and is well known. However, this example shows that invariance fails for any $m\geqslant 2$, whereas  the other known examples  (based on intersection computations) provide the result for $m$ large enough.\\

In Section \ref{SectionAmplitude} we provide our main application, which is a special case of Debarre's conjecture.
\begin{theointro}\label{theoD}
Let $N,e\in\bN^*$ such that $e\geqslant 2N+3$. If $X\subseteq \bP^N$ be a general complete intersection variety of  multidegree $(e,\dots, e)$, such that $\codim_{\bP^N}X\gs 3\dim X-2$, then $\Om_X$ is ample.
\end{theointro}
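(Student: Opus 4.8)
The plan is to deduce Theorem~\ref{theoD} from Theorem~\ref{MainTheorem} by adapting the now-standard strategy for producing varieties with ample cotangent bundle: one realizes $X$ as a member of a suitable family and uses the explicit symmetric differential forms furnished by Theorem~\ref{MainTheorem} to show that the cotangent bundle of a general member is ample. Concretely, I would first reduce ampleness of $\Om_X$ to a statement about the tautological line bundle $\cO_{\bP(\Om_X)}(1)$ on the projectivized cotangent bundle $\bP(\Om_X)$: it is ample if and only if it is nef and its restriction to every subvariety has positive top self-intersection, or, more usefully here, if and only if there is no curve on $\bP(\Om_X)$ on which it has non-positive degree. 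So the goal becomes: for $X$ general of multidegree $(e,\dots,e)$ with $e\ge 2N+3$ and $c=\codim X\ge 3n-2$ (where $n=\dim X$), every irreducible curve $C\subset\bP(\Om_X)$ satisfies $\cO_{\bP(\Om_X)}(1)\cdot C>0$.

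The core step is to produce, for the general $X$, enough global sections of $\cO_{\bP(\Om_X)}(m)\otimes\pi^*\cO_X(-a)$ for suitable $m,a>0$ — equivalently, twisted symmetric differential forms in $H^0(X,S^m\Om_X(-a))$ — whose common base locus is empty, or at least small enough to be handled separately. Here is where Theorem~\ref{MainTheorem} enters: taking $k=1$, $\ell_1=m\ge c$, and a negative twist $a$ (so in the notation of the theorem we are looking at the analogous statement with the twist $-a$, which requires the variant allowing $a<\ell_1-1$; the condition $c\ge 3n-2$ is exactly what forces $q=n-c<0$-type vanishing for competing cohomology so that $H^0$ is controlled, and more precisely it is what makes the relevant index $q$ work out so that global sections are described as an explicit subspace of an $H^N$ on $\bP^N$ of $\tOm$-symmetric powers). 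The explicit description $\im\tva=\bigcap_i(\ker(\cdot F_i)\cap\ker(\cdot dF_i^{\{j\}}))$ lets me write down candidate forms built out of the $F_i$ and $dF_i$, exactly as in the Fermat-type constructions of Br\"uckmann and Merker but now in a cohomologically systematic way; the hypothesis $e\ge 2N+3$ is the numerical room needed to have $a>0$ while keeping $m$ moderate and keeping the forms from degenerating. I would then argue that for a \emph{general} choice of $(F_1,\dots,F_c)$ these explicit forms have empty common base locus on $\bP(\Om_X)$ — this is a semicontinuity/genericity argument: it suffices to exhibit one complete intersection (e.g. a Fermat-type one, or a generic perturbation of one) for which the base locus is empty, and then the same holds on a Zariski-open set of the parameter space.

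Granting base-point-freeness of $\cO_{\bP(\Om_X)}(m)\otimes\pi^*\cO_X(-a)$, ampleness of $\cO_{\bP(\Om_X)}(1)$ follows formally: $\cO_{\bP(\Om_X)}(1)=\frac1m\big(\cO_{\bP(\Om_X)}(m)\otimes\pi^*\cO_X(-a)\big)+\frac am\pi^*\cO_X(1)$ is a positive combination of a nef class and an ample class (pulled back from $X$, which is ample-by-restriction once $\cO_{\bP(\Om_X)}(1)$ is at least nef and $X$ itself carries an ample line bundle), hence ample; one makes this rigorous by checking that $\cO_{\bP(\Om_X)}(1)\cdot C>0$ for every curve $C$, splitting into the case where $\pi(C)$ is a point (then $C$ lies in a fibre $\bP^{n-1}$ and $\cO(1)$ restricts to $\cO_{\bP^{n-1}}(1)$, done) and the case where $\pi(C)$ is a curve (then $\pi^*\cO_X(1)\cdot C>0$ and the base-point-free part is $\ge 0$). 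I expect the main obstacle to be precisely the base-locus analysis: verifying that the explicit forms coming from Theorem~\ref{MainTheorem} — which a priori only span the right cohomology group — actually separate enough directions in $\bP(\Om_X)$, i.e. that their common zero locus is empty. This requires a careful local computation near an arbitrary point of $\bP(\Om_X)$, using the structure $\ker(\cdot F_i)\cap\bigcap_j\ker(\cdot dF_i^{\{j\}})$ to see that one can always find a form not vanishing at a prescribed $(x,[\xi])$; the numerical hypotheses $e\ge 2N+3$ and $c\ge 3n-2$ are there to guarantee both that such forms exist (enough cohomology, via the Br\"uckmann--Rackwitz range together with Theorem~\ref{MainTheorem}) and that the genericity argument closing the base locus has room to run.
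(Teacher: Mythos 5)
Your proposal has the right outer shell --- reduce to positivity of the tautological class on $\bP(\Om_X)$, build explicit twisted symmetric differential forms on a (deformed) Fermat-type complete intersection via Theorem \ref{MainTheorem}, and pass to the general member by openness of ampleness --- but the step your whole argument hinges on is a genuine gap. You propose to show that the constructed sections of $\bL_X^m\otimes\pi_X^*\cO_X(-a)$ have \emph{empty} common base locus for one well-chosen member and then spread this out by semicontinuity. For the forms this construction actually produces this is impossible: the sections $\om^{I,P}$ of Proposition \ref{DiffForms} are determinants built from $a_i(s_i^j)=Z_is_i^j$ and $\al_i(s_i^j)=Z_ids_i^j+es_i^jdZ_i$, and every one of them vanishes identically on each $\bP(\Om_{W_i})$, where $W_i=X\cap(Z_i=0)$ (in a chart, the locus $z_i=\xi_i=0$). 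So the base locus of the system they span always contains $\bigcup_{i=0}^N\bP(\Om_{W_i})$, a positive-dimensional set; there is no example with empty base locus to feed into the semicontinuity argument, and your ``handled separately'' clause is exactly the part that needs a new idea. (Your placement of the numerical hypotheses is also off: $c\gs 3\dim X-2$, i.e.\ $c\gs\frac{3N-2}{4}$, is not about the cohomological index $q$ --- the construction of the forms only needs roughly $c\gs N/2$ --- but is precisely the inequality $-4c+3N-2\ls 0$ needed in the dimension count of the incidence variety $\Ga_c^0$ in the proof of Lemma \ref{RefinedElimitationLemma}; and $\ep\gs 1$, i.e.\ non-constant coefficients $s_i^j$, is needed so that there are enough parameters for that genericity argument to run.)

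The missing mechanism, which is the heart of the paper's proof, is an induction on dimension through the coordinate hyperplane sections. One first proves (Lemma \ref{EliminationLemma}, via the rank-stratification counts of Lemmas \ref{determinental} and \ref{FormesIndep}) that for a general deformation of Fermat type the common zero locus of the $\om^{I,P}$ is no larger than $\bigcup_i\bP(\Om_{W_i})\cup E$ with $E$ finite. One then observes that each $W_i$ is again a complete intersection of the same deformed-Fermat shape in $\bP^{N-1}$, still satisfying $c\gs\frac{3(N-1)-2}{4}$, and that $\bL_X(-a)|_{\bP(\Om_{W_i})}=\bL_{W_i}(-a)$; the statement proved by induction (Theorem \ref{AmplitudeExemple}) is \emph{nefness} of $\bL_X^n(-a)$, not base-point-freeness, and the twist $-a$ is calibrated exactly so that the induction step closes: a curve $C$ in the base locus lies in some $\bP(\Om_{W_i})$, and nefness of $\bL_{W_i}^{n-1}(-a)$ gives $n\bL_X\cdot C\gs\frac{n}{n-1}a\,\pi_X^*H\cdot C\gs a\,\pi_X^*H\cdot C$, hence $\bL_X^n(-a)\cdot C\gs 0$. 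Only after this does one argue as you do at the end: nefness of $\bL_X^n(-1)$ for one member forces $\bL_X$ ample there, and openness of ampleness in families gives the statement for the general complete intersection. Your final formal deduction is fine; without the induction through the $W_i$ (or some substitute for handling the unavoidable base locus $\bigcup_i\bP(\Om_{W_i})$), the proof does not close.
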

To our knowledge, this is the first higher dimensional result towards Debarre's conjecture. The proof of this statement does not rely on the variational method neither does it need the Riemann-Roch theorem nor Demailly's holomorphic Morse inequalities. The idea is to use the results of Section \ref{MainSection} to construct one very particular example of a smooth complete intersection variety in $\bP^N$ (with prescribed dimension and multidegree) whose cotangent bundle is ample. Then by the openness property of ampleness, we will deduce that the result holds generically. Such an example is produced by considering intersections of deformations of Fermat type hypersurfaces.\\

\emph{Notation and conventions:}  In this paper, we will be working over he field of complex numbers $\bC$. Given a smooth projective variety $X$ and a vector bundle $E$ on $X$, we will denote by $S^mE$ the $m$-th symmetric power of $E$,  we will denote by $\bP(E)$ the projectivization of rank one quotients of $E$, we will denote the tangent bundle of $X$ by $TX$ and the cotangent bundle of $X$ by $\Om_X.$ Moreover we will denote by $\pi_X:\bP(\Om_X)\to X$ the canonical projection. Given a line bundle $L$ on $X$ and an element $\si\in H^0(X,L)$ we will denote the zero locus of $\si$ by $(\si=0),$ and the base locus of $L$ by $\bB(L)=\bigcap_{\si\in H^0(X,L)}(\si=0).$ \\ Given any $m\in \bN$, we will denote by $\bC[Z_0,\dots,Z_N]_m$ the set of homogenous polynomials of degree $m$ in $N+1$ variables and by $\bC[z_1,\dots,z_N]_{\ls m}$ the set of polynomials of degree less or equal to $m$ in $N$ variables.  Given any set $E\subseteq \bN$ and any $k\in \bN$ we will write $E^k_{\neq}:=\{(i_1,\dots,i_k)\ \text{such that}\  i_j\neq i_\ell \ \text{if} \ j\neq \ell\}$ and $E^k_{<}:=\{(i_1,\dots,i_k)\ \text{such that}\  i_j<i_\ell \ \text{if} \ j< \ell\}.$\\ Also, we will say that a property holds for a \og \emph{general}\fg\   or a \og\emph{generic}\fg\ member of a family $\ccX\stackrel{\rho}{\to} T$ if there exists a Zariski open subset $U\subseteq T$ such that the property holds for $\rho^{-1}(t)$ for any $t\in U$. \\


\emph{Acknowledgments.} This work originated during the author's phd thesis under the supervision of Christophe Mourougane. We thank him very warmly for his guidance, his time and all the discussions we had. We also thank Junjiro Noguchi and Yusaku Tiba for listening through many technical details. We  thank Jo\"el Merker for his many encouragements and for all the interest he showed in this work. We also thank Lionel Darondeau for motivating discussions and for his  suggestions about the presentation of this paper. 

\section{Cohomology of  symmetric powers of the cotangent bundle}\label{MainSection}
\subsection{The tilde cotangent bundle}\label{TidleOmega}

It will be convenient for to use the $\tOm$ bundle, studied in particular by Bogomolov and DeOliveira in \cite{BDO08}, but also by Debarre in \cite{Deb05}. In some way, the bundle $\tOm$ will allow us to work naturally in homogenous coordinates. Let us recall some basic facts about this bundle. Consider $\bP^N=P(\bC^{N+1})$ with its homogenous coordinates $[Z_0,\dots, Z_N].$ Let $X\subseteq \bP^N$ be a smooth subvariety. We denote by $\gamma_X$ the Gauss map
\begin{eqnarray*}
\gamma_X:X&\to &\Grass(n,\bP^N)\\
x&\mapsto& \bT_xX
\end{eqnarray*} 
where $\bT_xX\subset \bP^N$ is the embedded tangent space of $X$ at $x$, and where $\Grass(n,\bP^N)$ denotes the grassmannian of $n$-dimensional linear projective subspace of $\bP^N.$ Let $\cS_{n+1}$ denote the tautological rank $n+1$ vector bundle on $\Grass(n,\bP^N).$ Then define $$\tOm_X:=\gamma_X^*\cS_{n+1}^\vee\otimes \cO_X(-1).$$  
We will refer to this bundle as the \emph{tilde cotangent bundle} of $X$, and a holomorphic section of $S^m\tOm_X$ will be called a \emph{tilde symmetric differential form.} Observe that one has a natural identification
$$\tOm_{\bP^N}\cong \bC^{N+1}\otimes \cO_{\bP^N}(-1)\cong \bigoplus_{i=0}^N\cO_{\bP^N}(-1)dZ_i.$$
 Therefore given any homogenous degree $e$ polynomial $F\in \bC[Z_0,\dots,Z_N]$ one can define a map 
 \begin{eqnarray}
 \cdot dF: \cO_{\bP^N}(-e)&\to & \tOm_{\bP^N}\\ \label{foisdF}
 g&\mapsto & g\cdot \sum_{i=0}^N\frac{\partial F}{\partial Z_i}dZ_i.\nonumber
 \end{eqnarray} 
 One easily verifies  that if $X\subseteq \bP^N$ is a smooth subvariety and if $F$ defines a hypersurface $H$ such that $Y:=X\cap H$ is a smooth hypersurface in $X$ then the above map fits into the following exact sequence,
 \begin{eqnarray}0\to \cO_{Y}(-e)\stackrel{\cdot dF}{\to}\tOm_X|_Y\to \tOm_Y\to 0.\label{TildeConormalExactSequence}\end{eqnarray}
 We will refer to it as the  \emph{tilde conormal exact sequence}. On the other hand, let $\widehat{X}\subseteq \bC^{N+1}\setminus \{0\}$ be the cone above $X$, and let $\rho_X=\widehat{X}\to X$ be the natural projection. Observe that $\rho_X^*\gamma_X^*\cS_{n+1}=T\widehat{X}.$ The differential $d\rho_X:T\widehat{X}\to \rho_X^*TX$ is not invariant under the natural $\bC^*$ action on $\widehat{X}$ because 
 for any $x\in \widehat{X}$, any $\xi\in T_x\widehat{X}$ and any $\lambda\in \bC^*$, $d\rho_{X,\lambda x}\xi=\frac{1}{\lambda}d\rho_{X,x}\xi$.  We can easily compensate this by a simple twist by $\cO_X(-1)$ as in the following 
 \begin{eqnarray*}
 \gamma_X^*\cS_{n+1,x}&\to & T_xX\otimes \cO_{X,x}(-1)\\
 (x,\xi)&\mapsto& (x,d\rho_{X,x}\xi\otimes x).
 \end{eqnarray*}
 This yields an exact sequence $0\to \cO_X(-1)\to \gamma_X^*\cS_{n+1}\to TX(-1)\to 0$ which we twist and dualize to get 
 \begin{eqnarray}
 0\to \Om_X\stackrel{\Eul}{\to} \tOm_X \to \cO_X\to 0.\label{EulerExactSequence}
 \end{eqnarray}
Will refer to it as the \emph{Euler exact sequence}. Note that the map $\Eul$ can be understood very explicitly. Indeed, if we consider the chart $\bC^N=(Z_0\neq 0)\subset \bP^N$, with $z_i=\frac{Z_i}{Z_0}$ for any $i\in \{1,\dots, N\}$, then $\Eul(dz_i)=\frac{Z_0dZ_i-Z_idZ_0}{Z_0^2}.$  Let us mention that in our computations we will often write $dz_i$ instead of $\Eul(dz_i)$ for simplicity. Those two exact sequences fit together in the following commutative diagram 
\begin{eqnarray}\label{Diagram}
\begin{CD}
  @.            0                @.                 0          @.                                 @.          \\
@.            @VVV                                @VVV                          @.                          @.\\
  @.         \cO_Y(-e)            @=          \cO_Y(-e)        @.                                 @.          \\   
@.            @VVV                                @VVV                          @.                          @.\\ 
0 @>>> \Om_X|_Y @>>>    \tOm_X|_Y     @>>>       \cO_Y  @>>>    0\\  
@.            @VVV                                @VVV                          @|                          @.\\ 
0 @>>>\Om_{Y}  @>>>    \tOm_{Y}      @>>>       \cO_Y   @>>>     0 \\
@.            @VVV                                @VVV                          @.                          @.\\
  @.           0                @.                 0            @.                                 @.         \\ \\
\end{CD}\end{eqnarray}
 
\begin{remark} Observe that $\tOm_X$ can never be ample because it has a trivial quotient. However, Debarre proved in \cite{Deb05} that under the hypothesis of his conjecture, the bundle $\tOm_X(1)$ is ample.
\end{remark}

\subsection{A preliminary example}
The combinatorics needed in the proof of the main results of Section \ref{MainSection} may seem elaborate, but the idea behind it is absolutely elementary. In fact the proofs of the statements in Section \ref{MainSection} are only a repeated us of long exact sequences in cohomology associated to short some exact sequences which are deduced from the restriction exact sequence, the conormal exact sequence, the tilde conormal exact sequence and the Euler exact sequence. But because our purpose is to study tensor produces of symmetric powers of some vector bundle, many indices have to be taken into account, the only purpose of all the notation we will introduce is to synthesis this as smoothly as possible.\\

Let us illustrate the  idea behind this by considering a basic example. Suppose that $H$ is a smooth degree $e$ hypersurface in $\bP^N$ defined by some homogenous polynomial $F\in \bC[Z_0,\dots,Z_N].$ Suppose that we want to understand the groups $H^i(X,S^m\tOm_X(-a))$ for some $a\in \bN,$ and $m\gs 1.$ To do so we look at the tilde conormal exact sequence 
$$0\to \cO_X(-e)\stackrel{\cdot dF}{\to} \tOm_{\bP^N}|_X\to \tOm_X\to 0$$
and take the $m$-th symmetric power and twist it by $\cO_X(-a)$ of it to get the exact sequence
$$0\to S^{m-1}\tOm_{\bP^N}|_X(-e-a)\stackrel{\cdot dF}{\to} S^m\tOm_{\bP^N}|_X(-a)\to S^m\tOm_X(-a)\to 0.$$
By considering the long exact sequence in cohomology associated to it, we see that the groups $H^i(X,S^m\tOm_X(-a))$ can be understood from the groups $H^i(X,S^k\tOm_{\bP^N}|_X(-b))$ for $k,b\in \bN,$ and from the applications appearing in the long exact sequence in cohomology. But to understand those groups, we consider the restriction exact sequence 
$$0\to \cO_{\bP^N}(-e)\stackrel{\cdot F}{\to} \cO_{\bP^N}\to \cO_X\to 0,$$
and twist it by $S^k\tOm_{\bP^N}(-b)$ to get 
$$0\to S^k\tOm_{\bP^N}(-b-e)\stackrel{\cdot F}{\to} S^k\tOm_{\bP^N}(-b)\to S^k\tOm_{\bP^N}|_X(-b)\to 0.$$
Once again, we look at what happens in cohomology, and we see that the groups $H^i(X,S^k\tOm_{\bP^N}|_X(-b))$ can be understood from the groups $H^{i}(\bP^N,S^\ell\tOm_{\bP^N}(-c))$ for $\ell,c\in \bN$ and the maps appearing in the long exact sequence. But observe that $H^{i}(\bP^N,S^\ell\tOm_{\bP^N}(-c))\cong S^\ell\bC^{N+1}\otimes H^i(\bP^N,\cO_{\bP^N}(-c-\ell))=0$ for all $i<N,$ from this we get that   $H^i(X,S^k\tOm_{\bP^N}|_X(-b))=0$ for all $i<N-1$ and from this we deduce that  $H^i(X,S^m\tOm_X(-a))=0$ for all $i<N-2$. Moreover, a more careful study shows that we obtain the following chain of inclusions:
$$H^{N-2}(X,S^m\tOm_X(-a))\stackrel{\tva_1}{\hookrightarrow} H^{N-1}(X,S^{m-1}\tOm_{\bP^N}|_X(-a-e))\stackrel{\tva_2}{\hookrightarrow} H^N(\bP^N,S^{m-1}\tOm_{\bP^N}(-a-2e)).$$
The inclusions appearing in Theorem \ref{MainTheorem} are of this type. Now if one wants to describe what is the image of this composed inclusion, one needs to look more carefully at what are exactly the maps between the cohomology groups in the different long exact sequences. For instance the second injection comes the following exact sequence 
$$0\to H^{N-1}(X,S^{m-1}\tOm_{\bP^N}|_X(-a-e))\to H^N(\bP^N,S^{m-1}\tOm_{\bP^N}(-a-2e))\stackrel{\cdot F}{\to} H^N(\bP^N,S^{m-1}\tOm_{\bP^N}(-a-e)).$$
Hence, $\im(\tva_2)= \ker\left(H^N(\bP^N,S^{m-1}\tOm_{\bP^N}(-a-2e))\stackrel{\cdot F}{\to} H^N(\bP^N,S^{m-1}\tOm_{\bP^N}(-a-e))\right).$
To understand similarly $\im(\tva_2\circ \tva_1)$ is less straightforward, combining the different long exact sequences one obtains the following commutative diagram:
$$
\xymatrix{
H^{N-2}(X,S^m\tOm_X(-a)) \ar[r]^{\!\!\!\!\!\! \tva_1} \ar[dr]^{\varphi} &H^{N-1}(X,S^{m-1}\tOm_{\bP^N}|_X(-a-e)) \ar[r]^{\cdot dF}\ar[d]^{\tva_2} &H^{N-1}(X,S^{m}\tOm_{\bP^N}|_X(-a))\ar[d]\\
& H^N(\bP^N,S^{m-1}\tOm_{\bP^N}(-a-2e)) \ar[r]^{\cdot dF}& H^N(\bP^N,S^{m}\tOm_{\bP^N}(-a-e))
}
$$
where the vertical arrows are injective. Then, by linear algebra, we obtain that $\im (\tva)=\im(\tva_2)\cap \ker(\cdot dF)=\ker(\cdot F)\cap \ker(\cdot dF)$ for suitable maps $\cdot F$ and $\cdot dF.$ This example already contains the main idea of the proof of the first part of Theorem \ref{MainTheorem}. To study more generally tensor products of symmetric powers of the tilde cotangent bundle is done similarly by considering each factor independently, and to deduce the  results concerning the cotangent bundle instead of the tilde cotangent bundle is done in a similar fashion using the Euler exact sequence.

\subsection{An exact sequence}\label{SectionExactSequence}
In the rest of Section \ref{MainSection}, the setting will be the following.
Take an integer $N\geqslant 2$, let $ c \in \{0,\dots,N-1\}$ and take   $e_1,\dots,e_c\in \bN^*$. Take non-zero elements $F_1\in H^0(\bP^N,\cO_{\bP^N}(e_1)),\dots,F_c\in H^0(\bP^N,\cO_{\bP^N}(e_c))$, and for any $i\in \{1,\dots,c\}$ we set $H_i=(F_i=0)$.  For any $i\in \{1,\dots,c\}$ let $X_i:=H_1\cap \cdots \cap H_i.$ Set $X:=X_c$ and $X_0:=\bP^N.$ 
We suppose that $X$ is smooth. For simplicity, we will also suppose that $X_i$ is smooth for each $i\in \{0,\dots,c\}.$
\begin{remark} We make this additional smoothness hypothesis here so that we can work without worrying with all the conormal exact sequences between $X_i$ and $X_{i+1}$ (and this hypothesis will be satisfied in all our applications). However, a more careful analysis of the proof of the main results shows that the only thing we need to have is the smoothness of each of the $X_i'$s in a neighborhood of $X$, and this follows from the smoothness of $X$.\end{remark}


To simplify our exposition, we introduce more notation. If $E$ is a vector bundle on a variety $Y$, and if $\lambda=(\lambda_1,\dots,\lambda_k)$ is a $k$-uple of non-negative integers, then we set 
$$E^{\lambda}:=S^{\lambda_1}E\otimes\cdots\otimes S^{\lambda_k}E.$$
If $\mu=(\mu_1,\dots,\mu_{j})$ is a $j$-uple of non-negative integers, we set 
$$\lambda\cup \mu:=(\lambda_1,\dots,\lambda_k,\mu_1,\dots,\mu_j).$$
The following definition gives a convenient framework for our problem.
\begin{definition}
With the above notation.
\begin{enumerate}
\item A $\lambda$-setting is a $(p+2)$-uple $\Sigma:=(X_p,\lambda^0,\dots,\lambda^p),$ where $0\leqslant p\leqslant c$,
and for any $0\leqslant j\leqslant p$, $ \lambda^j=(\lambda^j_1,\dots,\lambda^j_{m_j})\in\mathbb{N}^{m_j}$ for some $m_j\in \bN$.
\item If $\Sigma=(X_p,\lambda^0,\dots,\lambda^p)$ is as above, we set:
\begin{itemize}
\item $\codim \Sigma :=p$ and  $\dim \Sigma :=N-p.$
\item If $\lambda^1=\cdots =\lambda^p=0$ set $\deg \Sigma:= e_p$. Otherwise, let $j_0:=\min\{1\leqslant j\leqslant p \ {\rm such \ that }\ \lambda^j\neq 0\}$ and set $\deg \Sigma:=e_{j_0}.$

\end{itemize}
\item Take $\Sigma$ as above. We set:
\begin{eqnarray*}
\Om^{\Sigma}:=\Om_{\bP^N|_{X_p}}^{\lambda^0}\otimes\Om_{{X_1}|_{X_p}}^{\lambda^1}\otimes \cdots\otimes \Om_{X_p}^{\lambda^p}\ \ \text{and} \ \  
\tOm^{\Sigma}:=\tOm_{\bP^N|_{X_p}}^{\lambda^0}\otimes\tOm_{{X_1}|_{X_p}}^{\lambda^1}\otimes \cdots\otimes \tOm_{X_p}^{\lambda^p}.
\end{eqnarray*}
\item For any $a\in \mathbb{Z}$ and any $j\in \mathbb{N}$, we set:
\begin{eqnarray*}
H^j\left(\Om^{\Sigma}(a)\right):=H^j\left(X_p,\Om^{\Sigma}\otimes\cO_{X_p}(a)\right) \ \ \text{and}\ \
H^j\left(\tOm^{\Sigma}(a)\right):=H^j\left(X_p,\tOm^{\Sigma}\otimes\cO_{X_p}(a)\right).
\end{eqnarray*}
 \end{enumerate}
\end{definition} 
We will also need a more general definition which will allow us to work simultaneously with $\Om$ and $\tOm$.

\begin{definition}
\begin{enumerate}
\item A $\lambda$-pair $(\Si,\tSi)$ is a couple of $\lambda$-settings $\Si=(X_p,\lambda^0,\dots,\lambda^p)$ and $\tSi=(X_{\tilde{p}},\tla^0,\dots,\tla^{\tilde{p}})$ such that $p=\tilde{p}.$
\item Given a $\lambda$-pair $(\Si,\tSi)$ we set:
\begin{itemize}
\item $\dim(\Si,\tSi):=\dim \Si=\dim \tSi$ and $\codim (\Si,\tSi):=\codim \Si=\codim \tSi.$
\item If $\la^j=\tla^j=0$ for all $i\in \{1,\dots,p\}$ we set $\deg(\Si,\tSi):=e_p.$ Otherwise, let $j_0:=\min\{j\in \{1,\dots, p\} \ \text{such that} \ \ \la^j\neq0 \ \text{or} \ \tla^j\neq 0\}$ and set $\deg(\Si,\tSi)=e_{j_0}.$

\end{itemize}
\item With the above notation, we set $\Om^{(\Si,\tSi)}:=\Om^{\Si}\otimes\tOm^{\tSi}.$
\item For any $a\in\mathbb{Z}$ and any $j\in \mathbb{N}$, we set $H^j\left(\Om^{(\Si,\tSi)}(a)\right):=H^j\left(X_p,\Om^{(\Si,\tSi)}\otimes\cO_{X_p}(a)\right).$ 
\end{enumerate}
\end{definition}
To describe our fundamental exact sequence, we introduce some notion of successors.
\begin{definition}
Take a $\lambda$-setting $\Si=(X_p,\la^0,\dots,\la^p)$ with $p\geqslant 1$ and where for any $0\leqslant j \leqslant p$ one denotes $\la^j=(\la^j_1,\dots,\la^j_{m_j})$. We define $\lambda$-settings $\s_1(\Si)$ and $\s_2(\Si)$ as follows.
\begin{itemize}
\item If $\la^j=0$   for all  $1\leqslant j \leqslant p$ then set $\s_1(\Si):=\s_2(\Si):=(X_{p-1},\la^0,\dots, \la^{p-1}).$
\item If there exists $1\leqslant j\leqslant p$ such that $\la^j\neq 0$, let $j_0:=\min \{j \ /\ 1\leqslant j \leqslant p \ {\rm and}\ \la^j\neq 0\}$ and let $i_0:=\min \{i \ /\ 1\leqslant i \leqslant m_{j_0}\ {\rm and}\ \la^{j_0}_i\neq 0\}$. Then we define 
\begin{eqnarray*}
\s_1(\Si)&:=&(X_p,\la^0,0,\dots,0,(\la^{j_0}_{i_0}),(\la^{j_0}_{i_0+1},\dots,\la^{j_0}_{m_{j_0}}),\la^{j_0+1},\dots,\la^p)\\
\s_2(\Si)&:=&(X_p,\la^0,0,\dots,0,(\la^{j_0}_{i_0}-1),(\la^{j_0}_{i_0+1},\dots,\la^{j_0}_{m_{j_0}}),\la^{j_0+1},\dots,\la^p).
\end{eqnarray*}
 \end{itemize}
\end{definition}
We will need the following generalization  to $\lambda$-pairs.
\begin{definition}
Take a $\lambda$-pair $(\Si,\tSi)$ where $\Si=(X_p,\la^0,\dots,\la^p)$ and $\tSi=(X_p,\tla^0,\dots,\tla^p)$. Define $\s_1(\Si,\tSi)$ and $\s_2(\Si,\tSi)$ as follows.
\begin{itemize}
\item If $\la^j=\tla^j=0$ for all $1\leqslant i \leqslant p$ set $\s_1(\Si,\tSi):=\s_2(\Si,\tSi):=(\s_2(\Si),\s_2(\tSi))=(\s_1(\Si),\s_1(\tSi)).$
\item If there exists $1\leqslant j\leqslant p$ such that $\la^j\neq 0$ or $\tla^j\neq 0$ set $j_0:=\min \{j \geqslant 1\ /\ \la^j\neq 0 \ {\rm or}\ \tla^j\neq 0 \}$.
\begin{itemize}
\item If $\tla^{j_0}\neq 0$ set $\s_1(\Si,\tSi)=(\Si,\s_1(\tSi))\ {\rm and}\ \s_2(\Si,\tSi)=(\Si,\s_2(\tSi)).$
\item If $\tla^{j_0}= 0$ set $\s_1(\Si,\tSi)=(\s_1(\Si),\tSi)\ {\rm and}\ \s_2(\Si,\tSi)=(\s_2(\Si),\tSi).$
\end{itemize}
\end{itemize}
\end{definition}
Now we come to an elementary, but important, observation.
\begin{proposition}\label{Suite-Exacte}
For any $\la$-pair $(\Si,\tSi)$, we have a short exact sequence
\begin{eqnarray}
0\to \Om^{\s_2(\Si,\tSi)}(-\deg(\Si,\tSi))\to \Om^{\s_1(\Si,\tSi)}\to \Om^{(\Si,\tSi)}\to 0 \label{ES1}
\end{eqnarray}
\end{proposition}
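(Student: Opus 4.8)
The plan is to reduce the claimed short exact sequence \eqref{ES1} to one of a small number of ``atomic'' exact sequences by a case analysis that exactly mirrors the definition of $\s_1$ and $\s_2$, and then to tensor that atomic sequence with an appropriate bundle which is locally free (hence the tensored sequence stays exact). All four building blocks are already available in the excerpt: the restriction exact sequence $0\to\cO(-e)\to\cO\to\cO_X\to 0$, the conormal exact sequence, the tilde conormal exact sequence \eqref{TildeConormalExactSequence}, and the Euler exact sequence \eqref{EulerExactSequence}. The point of the bookkeeping in the definitions of $\lambda$-settings and successors is precisely that $\s_1$ and $\s_2$ isolate \emph{one} symmetric factor in \emph{one} slot and lower it, so each instance of \eqref{ES1} is obtained from one atomic sequence by applying $S^{\bullet}$ to a single factor and tensoring by everything else.

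First I would dispose of the degenerate case: if $\la^j=\tla^j=0$ for all $1\le j\le p$, then by definition $\s_1(\Si,\tSi)=\s_2(\Si,\tSi)$ drops the ambient index from $X_p$ to $X_{p-1}$ and keeps $\la^0$ (and $\tla^0$); here $\Om^{(\Si,\tSi)}$ is a tensor product of symmetric powers of $\Om_{\bP^N}|_{X_p}$ and $\tOm_{\bP^N}|_{X_p}$, and \eqref{ES1} is just the restriction sequence $0\to\cO_{X_{p-1}}(-e_p)\xrightarrow{\cdot F_p}\cO_{X_{p-1}}\to\cO_{X_p}\to 0$ tensored with that (locally free) bundle pulled back from $X_{p-1}$; note $\deg(\Si,\tSi)=e_p$ here, matching. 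Next, in the main case, set $j_0=\min\{j\ge 1:\la^j\neq 0\text{ or }\tla^j\neq 0\}$. If $\tla^{j_0}\neq 0$, then $\s_1,\s_2$ only touch the $\tSi$-part: they peel off the first nonzero entry $\tla^{j_0}_{i_0}$ of $\tla^{j_0}$ into its own slot and, for $\s_2$, decrease it by one. Writing $S^{t}\tOm_{X_{j_0}}|_{X_p}$ for that factor, the relevant atomic sequence is the tilde conormal sequence \eqref{TildeConormalExactSequence} for $X_{j_0}=X_{j_0-1}\cap H_{j_0}$, namely $0\to\cO(-e_{j_0})\xrightarrow{\cdot dF_{j_0}}\tOm_{X_{j_0-1}}|_{X_{j_0}}\to\tOm_{X_{j_0}}\to 0$, restricted to $X_p$; taking its $t$-th symmetric power gives $0\to S^{t-1}\tOm_{X_{j_0-1}}|_{X_p}(-e_{j_0})\to S^{t}\tOm_{X_{j_0-1}}|_{X_p}\to S^{t}\tOm_{X_{j_0}}|_{X_p}\to 0$, and tensoring by all the other (locally free) factors yields exactly \eqref{ES1} with $\deg=e_{j_0}$. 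Here I use the standard fact that for a short exact sequence of vector bundles $0\to A\to B\to C\to 0$ with $C$ a line bundle one has $0\to S^{m-1}B\otimes A\to S^m B\to S^m C\to 0$; since $\cO(-e_{j_0})$ is a line bundle this applies verbatim.

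If instead $\tla^{j_0}=0$ (so $\la^{j_0}\neq 0$), the successors act on $\Si$ in the same manner, isolating the first nonzero entry $\la^{j_0}_{i_0}$ of $\la^{j_0}$; the relevant atomic sequence is now the \emph{(untilde) conormal sequence} $0\to\cO_{X_{j_0}}(-e_{j_0})\to\Om_{X_{j_0-1}}|_{X_{j_0}}\to\Om_{X_{j_0}}\to 0$, and the argument is identical after restricting to $X_p$, taking symmetric powers, and tensoring with the rest. In every case the leading term of $\s_2$ carries exactly the twist $-\deg(\Si,\tSi)=-e_{j_0}$ predicted by the definition of $\deg$, and the middle term $\Om^{\s_1(\Si,\tSi)}$ is the one with the factor merely split off but not lowered, so it matches the middle of the atomic sequence after tensoring. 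I would finally remark that the ``extra'' slots of zeros inserted in the definition of $\s_1,\s_2$ (the $0,\dots,0$ before position $j_0$) contribute only factors $S^0(\cdot)=\cO$, so they do not affect the bundle $\Om^{\s_i(\Si,\tSi)}$ up to canonical isomorphism; this is the one place where a reader might worry, so it is worth one explicit sentence. The only genuine subtlety — and the step I expect to need the most care — is checking that the smoothness hypotheses on all the $X_i$ guarantee that the conormal and tilde conormal sequences are honest short exact sequences of \emph{vector bundles} (not just sheaves) in a neighborhood of $X_p$, so that tensoring preserves exactness; but this is exactly what the standing assumptions in Section \ref{SectionExactSequence} and the remark following them provide, so no new input is required.
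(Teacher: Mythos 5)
Your proof matches the paper's almost word for word: the same degenerate case (all $\la^j=\tla^j=0$, handled by the restriction sequence), the same split according to whether $\tla^{j_0}\neq 0$ or not, and the same atomic sequences (tilde conormal, resp.\ conormal) whose symmetric powers are tensored by the remaining locally free factors. One small slip to fix: the symmetric-power lemma you invoke, $0\to S^{m-1}B\otimes A\to S^m B\to S^m C\to 0$, holds when the \emph{sub}-bundle $A$ is a line bundle, not the quotient $C$; you apply it correctly (the sub is $\cO(-e_{j_0})$), so only the phrasing needs correction.
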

\begin{proof}
Take $\Si=(X_p,\la^0,\dots,\la^p)$ and $\tSi=(X_p,\tla^0,\dots,\tla^p).$ We have to consider two cases. \\

\emph{ Case 1: $\la^1=\cdots=\la^p=\tla^1=\cdots=\tla^p=0.$} Set $\Si':=\s_1(\Si)=\s_2(\Si)$ and $\tSi':=\s_1(\tSi)=\s_2(\tSi)$ so that $\tOm^{(\Si',\tSi')}_{|_{X_p}}=\tOm^{(\Si,\tSi)}.$ We have the restriction exact sequence
$$0\to \cO_{X_{p-1}}(-e_p)\to \cO_{X_{p-1}}\to \cO_{X_p}\to 0.$$
Since $e_p=\deg(\Si,\tSi)$, it suffices to tensor this exact sequence by $\tOm^{(\Si',\tSi')}$ to obtain the desired exact sequence.\\

\emph{Case 2:  there exists $j\geqslant 1$ such that $\la^j\neq 0$ or $\tla^j\neq 0$.} Set $j_0:=\min \{j\geqslant 1 \ /\ \la^j\neq 0  \ {\rm or}\ \tla^j\neq 0\}$. Suppose in a first time that $\tla^{j_0}\neq 0$. Recall that $e_{j_0}=\deg (\Si,\tSi)$. Set also $i_0:=\min \{i \ /\ 1\leqslant i \leqslant m_{j_0}\ {\rm and}\ \la^{j_0}_i\neq 0\}$.
Set $\tSi':=(X_p,\tla^0,0,\dots,0,(\tla^{j_0}_{i_0+1},\dots,\tla^{j_0}_{m_{j_0}}),\tla^{j_0+1},\dots, \tla^p),$ so that $\Om^{(\Si,\tSi)}=\Om^{\Si}\otimes S^{\tla^{j_0}_{i_0}}\tOm_{X_{j_0}}|_{X_p}\otimes \tOm^{\tSi'}$,  $\Om^{\s_1(\Si,\tSi)}=\Om^{\Si}\otimes S^{\tla^{j_0}_{i_0}}\tOm_{X_{j_0-1}}|_{X_p}\otimes \tOm^{\tSi'}$ and $\Om^{\s_2(\Si,\tSi)}=\Om^{\Si}\otimes S^{\tla^{j_0}_{i_0}-1}\tOm_{X_{j_0-1}}|_{X_p}\otimes \tOm^{\tSi'}$.
By taking the $\tla^{j_0}_{i_0}$-th symmetric power of the tilde conormal exact sequence when $X_{j_0}$ is seen as a hypersurface of $X_{j_0-1}$ and restricting everything to $X_p$, we obtain
$$0\to S^{\tla^{j_0}_{i_0}-1}\tOm_{X_{j_0-1}}(-e_{j_0})|_{X_p}\to S^{\tla^{j_0}_{i_0}}\tOm_{X_{j_0-1}}|_{X_p}\to S^{\tla^{j_0}_{i_0}}\tOm_{X_{j_0}}|_{X_p}\to 0.$$
It suffices now to tensor this exact sequence by $\tOm^{\tSi'}$ and $\Om^{\Si}$ to obtain the desired result. If $\la^{j_0}=0$ with use the same decomposition on $\Si$ (and the $\la^j$'s) and we use the usual conormal exact sequence instead of the tilde conormal exact sequence.
\end{proof}
\subsection{A vanishing lemma}
In this section, we prove a vanishing lemma which we will often use later. To be able to give the statement, we need some more notation.
Given any $m$-uple of integers $\la=(\la_1,\dots,\la_m)$, we define $\nz(\la)=\sharp\{i\ {\rm such \ that} \ \lambda_i\neq 0\}$ to be the number of non-zero terms in $\la.$ 
\begin{definition} Take a $\la$-setting $\Si=(X_p,\la^0,\dots,\la^p)$, where for all $j\in \{0,\dots, p\}$, $\la^j=(\la^j_1,\dots, \la^j_{m_j}).$ Then we set 
\begin{itemize}
\item $q(\Si):=\dim(\Si)-\n(\Si)$, where $\n(\Sigma):=\sum_{j=1}^p\sum_{i=1}^{m_j}\min\{j,\lambda_i^j\}$.
\item $i(\Si):=\codim(\Si)+\w(\Si)$, where  $\w(\Si):=\sum_{j=1}^pj\nz(\la^j)$.
\item  $t(\Si):=|\Si|-\nz(\Si)$, where $|\Sigma|:=\sum_{j=0}^p\sum_{i=1}^{m_j}\lambda^j_i$ and $\nz(\Sigma):=\sum_{j=0}^p\nz(\la^j)$.
\end{itemize}
\end{definition} 
We will also need the generalization to $\lambda$-pairs. 
\begin{definition}
Take a $\la$-pair $(\Si,\tSi)$ where $\Si=(X_p,\la^0,\dots,\la^p)$, $\tSi=(X_p,\tla^0,\dots,\tla^p).$ Then we set 
\begin{itemize}
\item $q(\Si,\tSi):=\dim(\Si,\tSi)-\n(\Si,\tSi)$, where $\n(\Sigma,\tSi):=\n(\Si)+\n(\tSi)$.
\item $i(\Si,\tSi):=\codim(\Si,\tSi)+\w(\Si,\tSi)$, where  $\w(\Si,\tSi):=\w(\Si)+\w(\tSi)$.
\item $t(\Si,\tSi):=|(\Si,\tSi)|-\nz(\Si),$ where $|(\Sigma,\tSi)|=|\Si|+|\tSi|$.
\end{itemize}
\end{definition}
\begin{remark} Let us just mention what the purpose of these functions are. The integer $q(\Si,\tSi)$ will be the degree of the cohomology group on which we will get some description. The integer $i(\Si,\tSi)$ will be used as a counter in several induction arguments, and $t(\Si,\tSi)$ (as well as $|\Si|$) will be a bound on the twist by $\cO_X(a)$ we can allow in the statements of our results.
\end{remark}
It is straightforward but crucial to observe how those notions behave with respect to the successors introduces in Section \ref{SectionExactSequence}.
\begin{proposition}\label{Prop-elementaire}
For any $\la$-setting $\Si_0$ and any $\la$-pair $(\Si,\tSi)$  we have:
\begin{enumerate}
\item  $q(\s_1(\Si_0))\geqslant q(\Si_0)$ and $q(\s_1(\Si,\tSi))\geqslant q(\Si,\tSi).$ 
\item $q(\s_2(\Si_0))=q(\Si_0)+1$ and $q(\s_2(\Si,\tSi))= q(\Si,\tSi)+1.$ 
\item $i(\s_1(\Si_0))<i(\Si_0)$ and $i(\s_1(\Si,\tSi))<i(\Si,\tSi).$ 
 \item  $i(\s_2(\Si_0))<i(\Si_0)$ and $i(\s_2(\Si,\tSi))<i(\Si,\tSi).$ 
 \item $|\s_1(\Si_0)| =|\Si_0|$ and $t(\s_1(\Si,\tSi))=t(\Si,\tSi).$
  \item $|\s_2(\Si_0)|=|\Si_0|-1$ and $t(\s_2(\Si,\tSi))\gs t(\Si,\tSi)-1.$
\end{enumerate}
\end{proposition}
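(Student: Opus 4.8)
The plan is to verify each of the six assertions by directly unwinding the definition of $\s_1$ and $\s_2$ and tracking the effect on each of the combinatorial quantities $q$, $i$, $t$, $|\cdot|$. Since the $\lambda$-pair case follows from the $\lambda$-setting case by additivity of $\n$, $\w$, $|\cdot|$ (and since $t(\Si,\tSi)$ only subtracts $\nz(\Si)$, not $\nz(\tSi)$, a point one must be slightly careful about), I would first prove everything for a single $\lambda$-setting $\Si_0 = (X_p,\la^0,\dots,\la^p)$ and then explain the (immediate) reduction. First I would dispose of the degenerate branch, where $\la^1=\cdots=\la^p=0$ and $\s_1(\Si_0)=\s_2(\Si_0)=(X_{p-1},\la^0,\dots,\la^{p-1})$: here $\codim$ drops by $1$, $\dim$ rises by $1$, and none of the $\w$, $\n$, $\nz$, $|\cdot|$ contributions involving indices $j\geqslant 1$ are present (they were all zero), so $q$ rises by exactly $1$, $i$ drops by exactly $1$, and $|\cdot|$ is unchanged. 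Note that in this branch assertions (5) and (6) for $\s_2$ need $|\s_2(\Si_0)|=|\Si_0|-1$ — but wait, here $|\cdot|$ is actually unchanged, so one must check that the degenerate branch is consistent with the stated inequalities; indeed $|\Si_0|=|\Si_0|-1$ would be false, so I should double-check: in the degenerate case $\deg\Si_0=e_p$ and the exact sequence \eqref{ES1} is the restriction sequence, for which $\s_2$ and $\s_1$ coincide, and the claim $|\s_2|=|\Si_0|-1$ in item (6) must be read together with item (5); re-examining, the cleanest formulation is that in the degenerate branch one treats it as item (5) applies to both. I would state this carefully at the outset to avoid confusion.

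The substantive branch is when $j_0 := \min\{j\geqslant 1 : \la^j\neq 0\}$ exists, with $i_0 := \min\{i : \la^{j_0}_i\neq 0\}$, and $\s_1$, $\s_2$ both replace the block $\la^{j_0}$ by two blocks $(\la^{j_0}_{i_0})$ and $(\la^{j_0}_{i_0+1},\dots,\la^{j_0}_{m_{j_0}})$ placed in positions $j_0-1$ and $j_0$ respectively, with $\s_2$ additionally decrementing $\la^{j_0}_{i_0}$ by one. Here $\codim$ and $\dim$ are unchanged. For $\w$: the term $\la^{j_0}_{i_0}$ contributes with weight $j_0$ in $\Si_0$ but with weight $j_0-1$ in $\s_1(\Si_0)$, and $\nz(\la^j)$ for all other $j$ is unchanged, so $\w$ strictly decreases (by $1$ if $\la^{j_0}_{i_0}>1$ in $\s_2$, or whenever the moved entry is nonzero), giving $i(\s_1(\Si_0))<i(\Si_0)$; for $\s_2$ one argues similarly, the only subtlety being when $\la^{j_0}_{i_0}=1$, in which case the moved-and-decremented entry becomes $0$ and drops out of $\nz$ entirely, which only helps the strict inequality. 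For $\n$: $\min\{j_0,\la^{j_0}_{i_0}\}$ in $\Si_0$ becomes $\min\{j_0-1,\la^{j_0}_{i_0}\}$ in $\s_1(\Si_0)$, which is $\leqslant$ the original, so $\n$ weakly decreases and hence $q(\s_1(\Si_0))\geqslant q(\Si_0)$; for $\s_2$, one shows $\n$ decreases by exactly $1$ — this is the one computation requiring a small case split on whether $\la^{j_0}_{i_0}\leqslant j_0-1$ or $\la^{j_0}_{i_0}\geqslant j_0$, and in each subcase the drop $\min\{j_0,\la^{j_0}_{i_0}\} - \min\{j_0-1,\la^{j_0}_{i_0}-1\}$ works out to $1$, giving $q(\s_2(\Si_0))=q(\Si_0)+1$. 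For $|\cdot|$: moving entries around preserves the sum, so $|\s_1(\Si_0)|=|\Si_0|$, while $\s_2$ subtracts one, so $|\s_2(\Si_0)|=|\Si_0|-1$.

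Finally I would transfer to $\lambda$-pairs. By definition $\s_1(\Si,\tSi)$ alters exactly one of the two settings (by one application of the single-setting $\s_1$, or $\s_2$, according to the case rules), leaving the other fixed, and likewise for $\s_2(\Si,\tSi)$; since $q(\Si,\tSi)$, $i(\Si,\tSi)$, $|(\Si,\tSi)|$ are sums of the corresponding quantities for $\Si$ and $\tSi$ (plus, for $i$, the single shared $\codim$ term, which is unchanged), assertions (1)–(4) and the $|\cdot|$ parts follow at once from the single-setting case. The only point needing a remark is $t(\Si,\tSi)=|(\Si,\tSi)|-\nz(\Si)$, which subtracts $\nz$ of the first setting only: if the modified factor is $\tSi$, then $\nz(\Si)$ is untouched while $|(\Si,\tSi)|$ changes by $0$ (for $\s_1$) or $-1$ (for $\s_2$), giving (5) and the bound in (6) immediately; if the modified factor is $\Si$, then $\nz(\Si)$ can only stay the same or drop by $1$ (it drops only when an entry $\la^{j_0}_{i_0}=1$ is decremented to $0$ by $\s_2$), so $t(\s_1(\Si,\tSi))=t(\Si,\tSi)$ and $t(\s_2(\Si,\tSi))=|(\Si,\tSi)|-1-\nz(\s_2(\Si))\geqslant |(\Si,\tSi)|-1-\nz(\Si) = t(\Si,\tSi)-1$. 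The main obstacle is purely bookkeeping: making sure the boundary case $\la^{j_0}_{i_0}=1$ (where an entry vanishes under $\s_2$ and thus leaves both the support and the weight count) is handled consistently across all of $\n$, $\w$, $\nz$; none of the inequalities is threatened by it, but the equalities in (2) must be checked to still hold, which they do because the entry being decremented from $1$ to $0$ contributes $\min\{j_0,1\}=1$ to $\n(\Si_0)$ and $0$ to $\n(\s_2(\Si_0))$, exactly the required drop.
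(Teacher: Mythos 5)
The paper gives no proof of this proposition (it is labeled \emph{straightforward but crucial}), so the only thing to check is whether your direct verification is sound. It is, and you have caught a genuine small inaccuracy in the statement: in the degenerate branch $\la^1=\cdots=\la^p=0$ one has $\s_1(\Si_0)=\s_2(\Si_0)$, so item (5) forces $|\s_2(\Si_0)|=|\Si_0|$, contradicting the equality $|\s_2(\Si_0)|=|\Si_0|-1$ asserted in item (6). The correct statement is the inequality $|\s_2(\Si_0)|\geqslant|\Si_0|-1$, which is all that is ever used (in Lemma \ref{VanishingLemma} and Proposition \ref{ChainInclusion} one passes from $a<|\Si|$ to $a-\deg\Si<|\s_2(\Si)|$ using $\deg\Si\geqslant 1$, and the $\geqslant$ version suffices). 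You flagged this clearly, which is good.

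Two small points to tighten. First, your case split for item (2) is unnecessary: $\min\{j_0-1,\la^{j_0}_{i_0}-1\}=\min\{j_0,\la^{j_0}_{i_0}\}-1$ holds identically, so $\n$ drops by exactly one with no branching. Second, your transfer to $\lambda$-pairs asserts that $\s_1(\Si,\tSi)$ and $\s_2(\Si,\tSi)$ alter exactly one factor and leave $\codim$ unchanged; both claims fail in the degenerate pair case $\la^j=\tla^j=0$ for all $j\geqslant 1$, where $\s_2(\Si,\tSi)=(\s_2(\Si),\s_2(\tSi))$ modifies \emph{both} factors (so they stay defined over the same $X_{p-1}$) and $\codim$ drops by one. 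The conclusions survive because the $\la$-data in each factor only loses a zero block while $\dim$ and $\codim$ enter the pair quantities once rather than twice, but this branch needs to be noted explicitly rather than swept under the sentence \emph{alters exactly one of the two settings}. Your careful treatment of $t(\Si,\tSi)=|(\Si,\tSi)|-\nz(\Si)$ — tracking that only $\nz$ of the first factor is subtracted, and that $\nz(\Si)$ can drop by one under $\s_2$ only when $\la^{j_0}_{i_0}=1$ — is exactly the point most readers would overlook, and it is correct.
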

We are now in position to state and prove our vanishing result.
\begin{lemma}\label{VanishingLemma}
Take a $\lambda$-pair $(\Si,\tSi)$. Take $a\in\mathbb{Z}$ and $j\in \mathbb{N}$. If $j<q(\Si,\tSi)$ and $a<t(\Si,\tSi)$, then
$$H^j\left(\Om^{(\Si,\tSi)}(a)\right)=0.$$
\end{lemma}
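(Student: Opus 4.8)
The strategy is a double induction using the short exact sequence of Proposition~\ref{Suite-Exacte} together with the bookkeeping of Proposition~\ref{Prop-elementaire}. The primary induction parameter is $i(\Si,\tSi)=\codim(\Si,\tSi)+\w(\Si,\tSi)$, and within a fixed value of $i$ one may also induct on $|(\Si,\tSi)|$ or on the dimension. The base case is when $\codim(\Si,\tSi)=0$ and all the $\la^j$, $\tla^j$ vanish for $j\ge 1$ (equivalently $i(\Si,\tSi)=0$): then $X_p=\bP^N$ and $\Om^{(\Si,\tSi)}(a)\cong S^{\la^0}\Om_{\bP^N}\otimes S^{\tla^0}\tOm_{\bP^N}(a)$. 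Using $\tOm_{\bP^N}\cong \bC^{N+1}\otimes\cO_{\bP^N}(-1)$ and the Euler sequence to filter $S^{\la^0}\Om_{\bP^N}$ by twists $\cO_{\bP^N}(-r)$ with $0\le r\le|\la^0|$, Bott vanishing (or just the computation of the cohomology of line bundles on $\bP^N$) gives $H^j=0$ for $j<N=q(\Si,\tSi)$ as soon as the twist is negative enough, and $a<t(\Si,\tSi)=|\Si|+|\tSi|-\nz(\Si)$ is exactly the bound that makes all the relevant line bundles acyclic in degrees $<N$. I would check this degenerate case carefully first, since it is where the precise shape of $t$ enters.

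\textbf{Inductive step.} Assume $i(\Si,\tSi)\ge 1$ and the lemma holds for all $\la$-pairs with strictly smaller value of $i$. Apply the long exact sequence in cohomology to
$$0\to \Om^{\s_2(\Si,\tSi)}(a-\deg(\Si,\tSi))\to \Om^{\s_1(\Si,\tSi)}(a)\to \Om^{(\Si,\tSi)}(a)\to 0,$$
which gives
$$H^j\!\left(\Om^{\s_1(\Si,\tSi)}(a)\right)\to H^j\!\left(\Om^{(\Si,\tSi)}(a)\right)\to H^{j+1}\!\left(\Om^{\s_2(\Si,\tSi)}(a-\deg(\Si,\tSi))\right).$$
By Proposition~\ref{Prop-elementaire}(3)--(4) both successors have smaller $i$, so the inductive hypothesis applies to them. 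For the left term: $i(\s_1(\Si,\tSi))<i(\Si,\tSi)$, $q(\s_1(\Si,\tSi))\ge q(\Si,\tSi)>j$, and $t(\s_1(\Si,\tSi))=t(\Si,\tSi)>a$, so $H^j\!\left(\Om^{\s_1(\Si,\tSi)}(a)\right)=0$. For the right term: $q(\s_2(\Si,\tSi))=q(\Si,\tSi)+1>j+1$, and $t(\s_2(\Si,\tSi))\ge t(\Si,\tSi)-1$; since $\deg(\Si,\tSi)\ge 1$ we get $a-\deg(\Si,\tSi)\le a-1<t(\Si,\tSi)-1\le t(\s_2(\Si,\tSi))$, so $H^{j+1}\!\left(\Om^{\s_2(\Si,\tSi)}(a-\deg(\Si,\tSi))\right)=0$. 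Squeezed between two zeros, $H^j\!\left(\Om^{(\Si,\tSi)}(a)\right)=0$, completing the induction.

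\textbf{Main obstacle.} The only delicate point is that $i$ genuinely decreases under both $\s_1$ and $\s_2$ and that the successors are honest $\la$-pairs so that the inductive hypothesis is applicable — this is exactly Proposition~\ref{Prop-elementaire}, so the real content has been isolated into that lemma; one must just verify that the numerical inequalities chain together as above, paying attention to the off-by-one coming from the twist $-\deg(\Si,\tSi)$ in the left-hand exact sequence term and to the fact that $t$ only satisfies an inequality (not an equality) under $\s_2$. A secondary subtlety is the base case bookkeeping: one should make sure the definition $t(\Si,\tSi)=|\Si|+|\tSi|-\nz(\Si)$ (note: $\nz(\Si)$, not $\nz(\Si,\tSi)$) produces exactly the right acyclicity range on $\bP^N$ after applying the Euler filtration, since the asymmetry between the roles of $\Om$ and $\tOm$ is what forces $\nz$ of only the first setting to appear.
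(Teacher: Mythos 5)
Your proof is correct and follows essentially the same route as the paper: induction on $i(\Si,\tSi)$, with the base case $i=0$ reduced to line-bundle cohomology on $\bP^N$ via the Euler sequence (the paper phrases this as an induction on $\nz(\la^0)$ using $0\to S^m\Om_{\bP^N}\to S^m\tOm_{\bP^N}\to S^{m-1}\tOm_{\bP^N}\to 0$ rather than a literal filtration by line bundles, but the content is the same), and the inductive step using the short exact sequence of Proposition~\ref{Suite-Exacte} together with the inequalities of Proposition~\ref{Prop-elementaire}, chained exactly as you do, including the observation that $a-\deg(\Si,\tSi)<t(\s_2(\Si,\tSi))$. No changes needed.
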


\begin{remark}
If we specialize this lemma to $\tSi=(X,0,\dots,0)$, $\Si=(X,0,\dots,0,(\ell_1,\dots,\ell_k))$ and $a=0$, we obtain that $H^j(X,S^{\ell_1}\Om_X\otimes \cdots \otimes S^{\ell_k}\Om_X)=0$ if $j<\dim X-\sum_{i=1}^k\min\{c,\ell_i\}.$ This is the same conclusion as in Br\"uckmann and Rackwitz theorem. Therefore Lemma \ref{VanishingLemma} can be seen as a generalization of their result. 
\end{remark}



\begin{proof}[Proof of Lemma \ref{VanishingLemma}]
We make an induction on $i(\Si,\tSi).$ \\

\emph{If $i(\Si,\tSi)=0$}: Then $\Si=(\bP^N,\la^0)$ and $\tSi=(\bP^N,\tla^0)$, it is a straightforward induction on $\nz(\la^0)$ using the symmetric powers of the Euler exact sequence.\\

\emph{If $i(\Si,\tSi)>0$}:  By Proposition \ref{Prop-elementaire} we can apply our induction hypothesis to $\s_1(\Si,\tSi)$ and $\s_2(\Si,\tSi)$. Observe also that if $a<t(\Si,\tSi)$ then by Proposition \ref{Prop-elementaire} we get $a<t(\s_1(\Si,\tSi))$ and also $a-\deg (\Si,\tSi)<t(\s_2(\Si,\tSi))$. We apply our induction hypothesis and Proposition \ref{Prop-elementaire} to obtain the vanishings
\begin{eqnarray}
H^j\left(\Om^{\s_1(\Si,\tSi)}(a)\right)=0\ &{\rm for }&\ j< q(\Si,\tSi)\leqslant q(\s_1(\Si,\tSi)),\label{V1}\\
H^j\left(\Om^{\s_2(\Si,\tSi)}(a-\deg (\Si,\tSi))\right)=0 \ &{\rm for }& \ j< q(\Si,\tSi)+1=q(\s_2(\Si,\tSi)).\label{V2}
 \end{eqnarray}
 By Proposition \ref{Suite-Exacte} we have the following exact sequence 
 \begin{eqnarray}
 0\to \Om^{\s_2(\Si,\tSi)}(a-\deg(\Si,\tSi))\to \Om^{\s_1(\Si,\tSi)}(a)\to \Om^{(\Si,\tSi)}(a)\to 0.\label{SE1}
 \end{eqnarray}
 It then suffices to use \eqref{V1} and \eqref{V2} in the cohomology long exact sequence associated to \eqref{SE1} to obtain the desired result.
\end{proof}

\subsection{Statements for the tilde cotangent bundle }\label{SectionStatements}
In this section we will prove the first half of Theorem \ref{MainTheorem}, to be precise, we will describe the map $\tva$ and give the announced description of $\im(\tva).$ In fact, to prove this result we will need to prove a more general statement. Before we state our  results we need more notation and some more definitions. If $\ell\in \mathbb{N}$ and if $\la:=(\la_1,\dots,\la_k)$ is a $k$-uple of integers such that $\la_i\geqslant \ell$ for any $i\in\{1,\dots,k\}$ then we set 
$$\la-\ell\bI:=(\la_1-\ell,\dots,\la_k-\ell).$$
If we have two  $\la$-settings of the same dimension $\Si_1=(X_p,\la^0,\dots,\la^p)$ and $\Si_2=(X_p,\mu^0,\dots,\mu^p)$, we set 
$$\Si_1\cup\Si_2:=(X_p,\la^0\cup\mu^0,\dots,\la^p\cup\mu^p).$$

\begin{definition}
Take a $\la$-setting $\Si=(X_p,\la^0,\dots,\la^p)$ where for all $0\leqslant j\leqslant p$ we denote $\la^j=(\la^j_1,\dots,\la^j_{m_j}).$ We say that $\Si$ is \emph{simple} if for any $1\leqslant j\leqslant p$ and for any $1\leqslant i\leqslant m_j$, $\la^j_i\geqslant j$.
More generally, we say that a $\la$-pair $(\Si,\tSi)$ is \emph{simple} if $\Si$ and $\tSi$ are simple.
\end{definition}
It is easy to observe how simplicity behaves with respect to the successors $\s_1$ and $\s_2$.
\begin{proposition}\label{qsimplicity}
If $\Si$ is simple, then $\s_1(\Si)$ and $\s_2(\Si)$ are simple and moreover $$q(\s_1(\Si))=q(\s_2(\Si))=q(\Si)+1.$$
\end{proposition}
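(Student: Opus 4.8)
\textbf{Proof proposal for Proposition \ref{qsimplicity}.}

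The plan is to analyze the two cases in the definition of $\s_1$ and $\s_2$ separately, tracking how the tuples $\la^j$ change and recomputing $\n(\Si)=\sum_{j=1}^p\sum_{i=1}^{m_j}\min\{j,\la^j_i\}$, together with $\dim$, which only changes in the first case (when the codimension drops).

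First I would handle the case where $\la^j=0$ for all $1\leqslant j\leqslant p$. Here both successors equal $(X_{p-1},\la^0,\dots,\la^{p-1})$, which is trivially simple since there is no constraint to check on the first $p-1$ levels beyond what $\Si$ already satisfied, and the levels $1,\dots,p-1$ carry only zero tuples. For the $q$-computation: $\dim$ increases by $1$ when passing from $X_p$ to $X_{p-1}$, while $\n$ is unchanged because all the tuples $\la^1,\dots,\la^{p-1}$ are zero (so they contribute $0$ to $\n$ for either setting, the only difference being the now-absent level $p$, which also contributed $0$). Hence $q$ goes up by exactly $1$, as claimed.

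Second, I would treat the main case: there exists $j$ with $\la^j\neq 0$; let $j_0$ and $i_0$ be the minimal indices as in the definition. By simplicity of $\Si$, we have $\la^{j_0}_{i_0}\geqslant j_0$. Both $\s_1(\Si)$ and $\s_2(\Si)$ keep the same ambient $X_p$, hence the same $\dim$, and they modify only level $j_0$: the entry $\la^{j_0}_{i_0}$ is split off as a singleton tuple at level $j_0$ and moved up to level $j_0-1$ — in $\s_1$ with value $\la^{j_0}_{i_0}$, in $\s_2$ with value $\la^{j_0}_{i_0}-1$ — while the tail $(\la^{j_0}_{i_0+1},\dots,\la^{j_0}_{m_{j_0}})$ stays at level $j_0$. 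For simplicity of the successors: every surviving entry at level $j\geqslant j_0$ is unchanged and was $\geqslant j$; the entries at the newly populated level $j_0-1$ are $\la^{j_0}_{i_0}\geqslant j_0>j_0-1$ (for $\s_1$) and $\la^{j_0}_{i_0}-1\geqslant j_0-1$ (for $\s_2$), so both successors are simple. For the $q$-computation, compare the $\n$ contributions of the moved entry: in $\Si$ it contributed $\min\{j_0,\la^{j_0}_{i_0}\}=j_0$ (using simplicity); in $\s_1(\Si)$ it sits at level $j_0-1$ and contributes $\min\{j_0-1,\la^{j_0}_{i_0}\}=j_0-1$; in $\s_2(\Si)$ it sits at level $j_0-1$ with value $\la^{j_0}_{i_0}-1\geqslant j_0-1$ and contributes $\min\{j_0-1,\la^{j_0}_{i_0}-1\}=j_0-1$. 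In both cases $\n$ drops by exactly $1$ while $\dim$ is unchanged, so $q$ increases by exactly $1$. The case $\la^{j_0}=0$ but $\tla^{j_0}\neq 0$ does not arise here since $\s_1,\s_2$ on a single $\la$-setting only involve the $\la^j$'s; this is the pair version.

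The computations are entirely routine; the only point requiring care — and the one place the hypothesis of simplicity is genuinely used — is the identity $\min\{j_0,\la^{j_0}_{i_0}\}=j_0$, i.e. that the entry being promoted was already "saturated" at its old level, so that demoting it to level $j_0-1$ costs exactly one unit of $\n$ rather than zero. Without simplicity one would only get $q(\s_1(\Si))\geqslant q(\Si)$ and $q(\s_2(\Si))=q(\Si)+1$, which is Proposition \ref{Prop-elementaire}; simplicity is precisely what upgrades the first inequality to equality.
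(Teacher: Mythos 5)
Your verification is correct: the case split follows the definition of $\s_1,\s_2$, and the key point — that by simplicity the moved entry contributed $\min\{j_0,\la^{j_0}_{i_0}\}=j_0$ to $\n(\Si)$ and contributes exactly $j_0-1$ after demotion, so $\n$ drops by one while $\dim$ is fixed (or, in the degenerate case, $\dim$ rises by one with $\n$ fixed) — is precisely the routine check the paper leaves implicit, since Proposition \ref{qsimplicity} is stated there without proof as an easy observation. Your closing remark correctly identifies that simplicity is only needed to upgrade $q(\s_1(\Si))\geqslant q(\Si)$ to equality with $q(\Si)+1$, in line with Proposition \ref{Prop-elementaire}.
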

\begin{definition} If $\Si$ is simple, we define
$$\Si_{\lim}:=(\bP^N,\la^0\cup(\la^1-\bI)\cup (\la^2-2\bI)\cup\cdots\cup (\la^p-p\bI)),$$
and
$$b_{\Si}:=\sum_{i=1}^pe_i\left(1+\sum_{j=i}^pm_j\right)=\sum_{i=1}^pe_i+\sum_{j=1}^pm_j\sum_{i=1}^je_i.$$
\end{definition}

Fix a simple setting $\Si=(X_p,\la^0,\dots,\la^p)$ as above. Note that for any $ i \in \{1,\dots, p\}$ one has a natural morphism 
$$\cO_{\bP^N}\stackrel{\cdot F_i}{\longrightarrow} \cO_{\bP^N}(e_i)$$
which induces a morphism 
$$\tOm^{\Si_{\lim}}(a) \stackrel{\cdot F_i}{\longrightarrow} \tOm^{\Si_{\lim}}(a+e_i).$$
This induces an application in cohomology, 
$$H^N(\tOm^{\Si_{\lim}}(a)) \stackrel{\cdot F_i}{\longrightarrow} H^N(\tOm^{\Si_{\lim}}(a+e_i)),$$
which we will still denote $\cdot F_i$. This should not lead to any confusion.\\

Similarly, taking the symmetric powers of the application \eqref{foisdF}, for any $i \in \{1,\dots, p\}$ and for any $m\in \mathbb{N}$ one has a natural morphism  
$$S^m\tOm_{\bP^N}\stackrel{\cdot dF_i}{\longrightarrow} S^{m+1}\tOm_{\bP^N}(e_i).$$
Now fix $j \in \{1,\dots, p\}$ and $k \in\{1,\dots, m_j\}$. Set 
\begin{eqnarray*}
\eta^1&:=&\la^0\cup(\la^1-\bI)\cup\cdots\cup(\la^{j-1}-(j-1)\bI)\cup(\la^j_1-j,\dots,\la^j_{k-1}-j)\\
\eta^2 &:=& (\la^j_{k+1}-j,\dots,\la^j_{m_j}-j)\cup(\la^{j+1}-(j+1)\bI)\cup\cdots\cup(\la^p-p\bI).
\end{eqnarray*}
So that $\tOm^{\Si_{\lim}}=\tOm_{\bP^N}^{\eta^1}\otimes S^{\la^j_k-j}\tOm_{\bP^N}\otimes \tOm_{\bP^N}^{\eta^2}$. As above, we get a morphism 
$$S^{\la^j_k-j}\tOm_{\bP^N}\stackrel{\cdot dF_i}{\longrightarrow} S^{\la^j_k-j+1}\tOm_{\bP^N}(e_i).$$
If we tensor this morphism by $\tOm_{\bP^N}^{\eta^1}$, $\tOm_{\bP^N}^{\eta^2}$ and $\cO_{\bP^N}(a)$ we get a morphism
$$\tOm^{\Si_{\lim}}(a)\stackrel{\cdot dF_i^{\{j,k\}}}{\longrightarrow} \tOm_{\bP^N}^{\eta^1}\otimes S^{\la^j_k-j+1}\tOm_{\bP^N}\otimes \tOm_{\bP^N}^{\eta^2}(a+e_i),$$
which yields a morphism 
$$H^N\left(\tOm^{\Si_{\lim}}(a)\right)\stackrel{\cdot dF_i^{\{j,k\}}}{\longrightarrow} H^N\left(\bP^N,\tOm_{\bP^N}^{\eta^1}\otimes S^{\la^j_k-j+1}\tOm_{\bP^N}\otimes \tOm_{\bP^N}^{\eta^2}(a+e_i)\right).$$
We are now in position to state our result.
\begin{theorem}\label{LimiteSetting}
Take a simple $\la$-setting $\Si=(X_p,\la^0,\dots,\la^p)$ as above. Take an integer $a<|\Si|$. Then, there exists an injection 
$$H^{q(\Si)}\left(\tOm^{\Si}(a)\right)\stackrel{\tva}{\hookrightarrow} H^N\left(\tOm^{\Si_{\lim}}(a-b_{\Si})\right).$$
Moreover,
$$\tva\left(H^{q(\Si)}\left(\tOm^{\Si}(a)\right)\right)=\bigcap_{i=1}^p \left(\ker(\cdot F_i)\bigcap_{j=i}^p\bigcap_{k=1}^{m_j}\ker\left(\cdot dF_i^{\{j,k\}}\right)\right).$$
\end{theorem}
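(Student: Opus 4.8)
The plan is to prove Theorem \ref{LimiteSetting} by induction on the counter $i(\Si)$, exactly as in the proof of Lemma \ref{VanishingLemma}, carrying along both the existence of the injection $\tva$ and the description of its image.

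\medskip

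\textbf{Base case.} When $i(\Si)=0$ we have $p=0$, so $\Si=(\bP^N,\la^0)$, $\Si_{\lim}=\Si$, $b_\Si=0$, $q(\Si)=N$, and there is nothing to describe since the intersection on the right is empty (an empty intersection of subspaces of $H^N$ is all of $H^N$); the map $\tva$ is the identity. The only point to check is that $H^j(\bP^N,\tOm_{\bP^N}^{\la^0}(a))$ vanishes for $j<N$, which follows from the splitting $\tOm_{\bP^N}\cong\bC^{N+1}\otimes\cO_{\bP^N}(-1)$ and the standard cohomology of line bundles on $\bP^N$; this is the same computation used in the preliminary example.

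\medskip

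\textbf{Inductive step.} For $i(\Si)>0$, apply Proposition \ref{Suite-Exacte} to the $\la$-setting $\Si$ (viewed as a $\la$-pair with empty tilde part, or directly) to obtain
\begin{eqnarray*}
0\to \tOm^{\s_2(\Si)}(a-\deg\Si)\to \tOm^{\s_1(\Si)}(a)\to \tOm^{\Si}(a)\to 0.
\end{eqnarray*}
Take the associated long exact sequence in cohomology in degrees $q(\Si)-1,q(\Si)$. By Proposition \ref{qsimplicity}, $\s_1(\Si)$ and $\s_2(\Si)$ are simple with $q(\s_1(\Si))=q(\s_2(\Si))=q(\Si)+1$; combined with Lemma \ref{VanishingLemma} (checking $a<|\s_1(\Si)|=|\Si|$ and $a-\deg\Si<t(\s_2(\Si))$, which hold by Proposition \ref{Prop-elementaire} since $\Si$ simple forces $\nz=|\Si|$-type bookkeeping to behave), the term $H^{q(\Si)-1}(\tOm^{\s_1(\Si)}(a))$ vanishes and $H^{q(\Si)}(\tOm^{\s_1(\Si)}(a))$ vanishes as well except we must keep the next term. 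Concretely the long exact sequence gives an injection
\begin{eqnarray*}
H^{q(\Si)}(\tOm^{\Si}(a))\hookrightarrow H^{q(\Si)+1}(\tOm^{\s_2(\Si)}(a-\deg\Si))
\end{eqnarray*}
whose image is the kernel of the connecting-induced map $H^{q(\Si)+1}(\tOm^{\s_2(\Si)}(a-\deg\Si))\to H^{q(\Si)+1}(\tOm^{\s_1(\Si)}(a))$, and by exactness this kernel equals the image of the map induced by the inclusion $\tOm^{\s_2(\Si)}(a-\deg\Si)\to \tOm^{\s_1(\Si)}(a)$ — i.e.\ multiplication by either $F_{j_0}$ (Case 1, where $\s_1=\s_2$ and the map is $\cdot F_p$ on a restriction sequence) or $dF_{j_0}$ (Case 2). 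Now apply the induction hypothesis to $\s_1(\Si)$ and to $\s_2(\Si)$, both of which have strictly smaller $i(\cdot)$ by Proposition \ref{Prop-elementaire}: one checks $(\s_1\Si)_{\lim}=(\s_2\Si)_{\lim}=\Si_{\lim}$ and $b_{\s_1(\Si)}=b_{\s_2(\Si)}=b_\Si-\deg\Si$ (this is a direct computation with the definitions of $b_\Si$ and the successors; the $\deg\Si=e_{j_0}$ term is exactly the shift). This produces the commutative square identifying $\tva$ for $\Si$ as the composite $H^{q(\Si)}(\tOm^{\Si}(a))\hookrightarrow H^{q(\Si)+1}(\tOm^{\s_2(\Si)}(a-\deg\Si))\stackrel{\tva_{\s_2(\Si)}}{\hookrightarrow}H^N(\tOm^{\Si_{\lim}}(a-b_\Si))$, which is injective as a composite of injections.

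\medskip

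\textbf{Identifying the image.} This is where the real work lies. One has, on the level of $H^N(\tOm^{\Si_{\lim}})$, the description $\im\tva_{\s_2(\Si)}=\bigcap(\ker\cdot F_i\cap\bigcap\ker\cdot dF_i^{\{j,k\}})$ over the indices relevant to $\s_2(\Si)$, and similarly for $\s_1(\Si)$; and $\im\tva_\Si=\tva_{\s_2(\Si)}(\ker(H^{q(\Si)+1}(\tOm^{\s_2\Si}(a-\deg\Si))\to H^{q(\Si)+1}(\tOm^{\s_1\Si}(a))))$. The key is a compatibility diagram showing that, after transporting via $\tva_{\s_2(\Si)}$ and $\tva_{\s_1(\Si)}$ into $H^N(\tOm^{\Si_{\lim}})$, the map $H^{q(\Si)+1}(\tOm^{\s_2\Si}(a-\deg\Si))\to H^{q(\Si)+1}(\tOm^{\s_1\Si}(a))$ becomes precisely multiplication by the one new operator — $\cdot F_{j_0}$ or $\cdot dF_{j_0}^{\{j_0,i_0\}}$ — that $\s_2(\Si)$'s index set contains but $\s_1(\Si)$'s "already consumed" index set lacks, while all the other operators commute past $\tva$ by naturality of the connecting homomorphisms. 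Then $\im\tva_\Si=\im\tva_{\s_2(\Si)}\cap\ker(\text{new operator})$, which is exactly the full intersection for $\Si$. I expect this bookkeeping — matching the index $(j_0,i_0)$ produced by $\s_2$ against the definition of $\cdot dF_i^{\{j,k\}}$ and checking the commutativities via naturality of long exact sequences in cohomology with respect to the maps $\cdot F_i$ and $\cdot dF_i$ — to be the main obstacle; everything else is the same machine as Lemma \ref{VanishingLemma}. Care is also needed in the boundary Case 1 ($\la^1=\cdots=\la^p=0$), where $\deg\Si=e_p$, $\s_1(\Si)=\s_2(\Si)$, and the relevant new operator is $\cdot F_p$ coming from the restriction sequence $0\to\cO_{X_{p-1}}(-e_p)\to\cO_{X_{p-1}}\to\cO_{X_p}\to 0$ rather than a tilde conormal sequence.
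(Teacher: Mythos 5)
Your overall route is the same as the paper's: induction on $i(\Si)$, the injection built by iterating the connecting maps of the exact sequence of Proposition \ref{Suite-Exacte} together with the vanishing lemma (this is the paper's Proposition \ref{ChainInclusion}), and then an identification of the image by a commuting-square/linear-algebra argument (the paper's Lemma \ref{linalg} applied to the ladder of Proposition \ref{key}). The existence-of-$\tva$ half of your argument is fine, including the degree bookkeeping $b_{\s_2(\Si)}=b_\Si-\deg\Si$ and $(\s_2\Si)_{\lim}=\Si_{\lim}$.

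The gap is in the image identification, which is the actual content of the theorem, and the one concrete claim you make there is false: in Case 2 one does \emph{not} have $(\s_1\Si)_{\lim}=\Si_{\lim}$. The successor $\s_1$ moves the entry $\la^{j_0}_{1}$ from level $j_0$ to level $j_0-1$ without decrementing it, so the corresponding entry of $(\s_1\Si)_{\lim}$ is $\la^{j_0}_{1}-j_0+1$ rather than $\la^{j_0}_{1}-j_0$; the chain of inclusions for $\s_1(\Si)$ therefore terminates in $H^N\bigl(\tOm^{(\s_1\Si)_{\lim}}(a-b_\Si+\deg\Si)\bigr)$, which is precisely the \emph{target} of the new operator $\cdot dF_{j_0}^{\{j_0,1\}}$ (resp.\ of $\cdot F_p$ in Case 1), not $H^N(\tOm^{\Si_{\lim}}(a-b_\Si))$. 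So your proposed square, in which both sides are "transported into $H^N(\tOm^{\Si_{\lim}})$" and the map "becomes" the new operator, is not even well-posed as stated; the correct statement is a square with two different $H^N$'s at the bottom, injective verticals $\tva_{\s_2(\Si)}$ and $\tva_{\s_1(\Si)}$, and bottom map the new operator, after which the linear algebra you invoke does go through (this is exactly Lemma \ref{linalg}). More importantly, the commutativity of that square is not a formal consequence of "naturality of connecting homomorphisms": naturality applies only once you have produced a morphism of short exact sequences at every rung of the two parallel chains, and the nontrivial verification — carried out in the paper's Proposition \ref{key} — is the sheaf-level commutativity when the conormal-sequence map and the multiplication map $\cdot dF_{j_0}$ act on the \emph{same} symmetric factor, which requires the explicit diagram relating $S^{\ell-1}\tOm_{X_{i-1}}|_{X_i}$, $S^{\ell}\tOm_{X_{i-1}}|_{X_i}$ and $S^{\ell}\tOm_{X_i}$ under $\cdot dF_i$ and $\cdot dF_{j_0}$. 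You flag this as "the main obstacle" but do not supply it, and the final index bookkeeping (that $\im\tva_{\s_2(\Si)}\cap\ker(\text{new operator})$ equals the stated intersection for $\Si$, with the relabelled slot contributing exactly the operators $\cdot dF_i^{\{j_0,1\}}$ for $i\leqslant j_0-1$ on the $\s_2$-side) is also left implicit; as it stands the proposal is a correct skeleton of the paper's proof with its central verification missing and one of its supporting identities misstated.
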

To simplify our presentation, we will decompose the proof of Theorem \ref{LimiteSetting} in a couple of propositions.
The following proposition describes the map $\tva.$
\begin{proposition}\label{ChainInclusion}
With the notation of Theorem \ref{LimiteSetting}. Let $q:=q(\Si).$ There is a chain of inclusions:
\begin{eqnarray*}
H^q(\tOm^{\Si}(a))&\hookrightarrow& H^{q+1}\left(\tOm^{\s_2(\Si)}(a-\deg \Si)\right)\\
&\hookrightarrow& H^{q+2}\left(\tOm^{\s_2^2(\Si)}(a-\deg \Si-\deg\s_2(\Si))\right)\\
&\cdots&\\
&\hookrightarrow& H^{q+k}\left(\tOm^{\s_2^k(\Si)}\left(a-\sum_{i=0}^{k-1}\deg\s_2^i(\Si)\right)\right)\\
&\cdots&\\
&\hookrightarrow& H^{N}\left(\tOm^{\s_2^{N-q}(\Si)}\left(a-\sum_{i=0}^{N-q-1}\deg\s_2^i(\Si)\right)\right)
\end{eqnarray*}
And moreover
\begin{enumerate}
\item $\s_2^{N-q}(\Si)=\Si_{\lim}$.
\item $\sum_{i=0}^{N-q-1}\deg\s_2^i(\Si)=b_{\Si}.$
\end{enumerate}
\end{proposition}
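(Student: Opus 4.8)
The plan is to build the chain of inclusions by iterating the short exact sequence from Proposition \ref{Suite-Exacte} together with the vanishing from Lemma \ref{VanishingLemma}. Concretely, for a simple $\la$-setting $\Si$ with $q=q(\Si)$, Proposition \ref{Suite-Exacte} (applied to the $\la$-pair $(\Si,\tSi)$ with $\tSi$ trivial, or directly to settings) gives
\begin{equation*}
0\to \tOm^{\s_2(\Si)}(a-\deg\Si)\to \tOm^{\s_1(\Si)}(a)\to \tOm^{\Si}(a)\to 0.
\end{equation*}
Taking the long exact sequence in cohomology, the connecting map $H^q(\tOm^{\Si}(a))\to H^{q+1}(\tOm^{\s_2(\Si)}(a-\deg\Si))$ is injective provided $H^q(\tOm^{\s_1(\Si)}(a))=0$. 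The first step is to verify this vanishing: by Proposition \ref{qsimplicity}, $\s_1(\Si)$ is simple and $q(\s_1(\Si))=q(\Si)+1>q$, and since $a<|\Si|=|\s_1(\Si)|$ (Proposition \ref{Prop-elementaire}(5)) we have $a<t(\s_1(\Si))$ because $t\le|\cdot|$ — wait, one must be slightly careful here: $t(\Si)=|\Si|-\nz(\Si)\le|\Si|$, so $a<|\Si|$ does not immediately give $a<t(\s_1(\Si))$. I would instead track the hypothesis $a<|\Si|$ directly: since for simple settings the relevant vanishing should be stated with the bound $|\Si|$ rather than $t(\Si)$, I expect Lemma \ref{VanishingLemma} to be applied in a form (or re-derived) where simplicity lets one replace $t$ by $|\cdot|$, or more likely the chain only ever needs $a-\sum\deg < t$ of the successor, which follows because each $\s_2$ drops $|\cdot|$ by exactly $1$ and drops the twist by $\deg\ge 1$. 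This bookkeeping is the first thing to nail down.

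Second, having one injection, I would iterate: replace $\Si$ by $\s_2(\Si)$, note $q(\s_2(\Si))=q+1$ (Proposition \ref{Prop-elementaire}(2), or Proposition \ref{qsimplicity} which also gives that $\s_2(\Si)$ stays simple), and that the new twist is $a-\deg\Si < |\Si|-1 = |\s_2(\Si)|$ again by Proposition \ref{Prop-elementaire}(6) together with $\deg\Si\ge 1$. So the hypotheses of the argument are reproduced for $\s_2(\Si)$, and we get the next injection $H^{q+1}(\tOm^{\s_2(\Si)}(\dots))\hookrightarrow H^{q+2}(\tOm^{\s_2^2(\Si)}(\dots))$, requiring the vanishing of $H^{q+1}(\tOm^{\s_1(\s_2(\Si))}(\dots))$ which again follows from Lemma \ref{VanishingLemma} since $q(\s_1\s_2(\Si))=q+2$. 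Repeating $N-q$ times terminates because each application of $\s_2$ raises $q$ by one, so after $N-q$ steps the cohomological degree reaches $N=\dim\bP^N$ and $\s_2^{N-q}(\Si)$ is a setting over $\bP^N$ (codimension $0$): indeed each $\s_2$ either peels off one unit from a leading exponent or, when a block is exhausted, drops the codimension by one — and a dimension count shows codimension hits $0$ exactly when the degree hits $N$. Composing all these injections gives $\tva$.

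Third, I would prove the two identities (1) $\s_2^{N-q}(\Si)=\Si_{\lim}$ and (2) $\sum_{i=0}^{N-q-1}\deg\s_2^i(\Si)=b_\Si$. For (1): unwinding the definition of $\s_2$, when processing the block $\la^j=(\la^j_1,\dots,\la^j_{m_j})$ at codimension level $j$, the operator $\s_2$ decrements $\la^j_{i_0}$ one unit at a time while keeping everything at level $j$; once the whole block $\la^j$ has been decremented all the way down — but only $j$ units per entry are removed before the entry "passes through" to level $j-1$... actually the cleanest approach is to observe that the only effect of $\s_2$ that survives to the end is: each entry $\la^j_i$ at level $j$ gets reduced by exactly $j$ (the cost of moving from $X_j$ down to $X_0=\bP^N$ through $j$ tilde-conormal sequences), and all entries end up concatenated into the single level-$0$ block. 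This is precisely $\la^0\cup(\la^1-\bI)\cup\cdots\cup(\la^p-p\bI)$, i.e. $\Si_{\lim}$; simplicity $\la^j_i\ge j$ guarantees these subtractions stay non-negative so the process is well-defined. For (2): the degree $\deg\s_2^i(\Si)$ at step $i$ equals $e_{j}$ where $j$ is the current level being processed (by definition of $\deg$); level $j$ is the active level for exactly $j\cdot m_j + \text{(codim drop)}$... more precisely, entry $\la^j_i$ contributes $j$ steps each of degree $e_j$ (total $j\sum_i\la^j_i$? no) — I would instead just count: the successor $\s_2$ is applied once per unit of $\n(\Si)=\sum_{j,i}\min\{j,\la^j_i\}=\sum_{j,i}j$ (using simplicity) plus once per codimension drop ($p$ of those); at each level $j$ there are $m_j$ entries each costing $j$ decrements of degree $e_j$, plus one restriction step $\cO_{X_{j-1}}\to\cO_{X_j}$ also of degree $e_j$, giving contribution $e_j(j m_j + 1)$ from level $j$ — but reindexing, each $e_i$ appears weighted by $1+\sum_{j\ge i} m_j$, matching the stated formula for $b_\Si$. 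This reindexing is a finite bookkeeping check.

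The main obstacle I anticipate is the second and third paragraphs' index bookkeeping: precisely matching the recursion $\Si\mapsto\s_2(\Si)$ — which processes exponents one unit at a time and only drops codimension when a block empties — against the closed forms for $\Si_{\lim}$ and $b_\Si$, and making sure the running twist $a-\sum_{i=0}^{k-1}\deg\s_2^i(\Si)$ always stays strictly below $|\s_1(\s_2^k(\Si))|$ so that Lemma \ref{VanishingLemma} applies at every step. The cohomological mechanics (one connecting-map injection per short exact sequence) are routine; it is the combinatorial identities (1) and (2), and the invariant that simplicity is preserved with $q$ strictly increasing, that need care. I would organize the proof around a single induction on $N-q$ (equivalently on $i(\Si)$ or on the number of remaining $\s_2$-steps), carrying as inductive hypotheses both the existence of the chain up to the current point and the two numerical identities truncated to the steps taken so far.
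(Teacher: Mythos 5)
Your proposal is essentially the paper's own proof: an induction (the paper phrases it on $i(\Si)$) in which the short exact sequence of Proposition \ref{Suite-Exacte}, combined with the vanishings of Lemma \ref{VanishingLemma} and Proposition \ref{qsimplicity}, makes each connecting map injective, and the two numerical identities are obtained recursively from $\s_2(\Si)_{\lim}=\Si_{\lim}$ and $b_{\Si}=b_{\s_2(\Si)}+\deg\Si$, exactly as in your count. The one point you left open, the bound $t$ versus $|\cdot|$, is settled neither by simplicity nor by the drop of the twist, but by the definition of $t$ for $\la$-pairs: $t(\Si,\tSi)=|(\Si,\tSi)|-\nz(\Si)$ subtracts only the non-tilde factor, so for the purely tilde bundles occurring here one applies Lemma \ref{VanishingLemma} to the pair with trivial first component, where $t=|\cdot|$; then $a<|\Si|$, together with $|\s_1(\Si)|=|\Si|$, $|\s_2(\Si)|=|\Si|-1$ and $\deg\Si\geqslant 1$, is exactly the hypothesis needed at every step of the chain.
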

\begin{proof}
This is an induction on $i(\Si).$ \\

If $i(\Si)=0$, the result is clear since $\Si=(\bP^N,\la^0)$ and therefore $q=N$, $b_{\Si}=0$ and $\Si=\Si_{\lim}.$\\

Now suppose $i(\Si)>0.$ Then we can suppose, by induction, that the result holds for $\s_2(\Si)$ which is simple and satisfies $q(\s_2(\Si))=q(\Si)+1$. Consider the exact sequence
$$0\to \tOm^{\s_2(\Si)}(a-\deg\Si)\to \tOm^{\s_1(\Si)}(a)\to \tOm^{\Si}(a)\to 0.$$
By Proposition \ref{qsimplicity} and Lemma \ref{VanishingLemma} we obtain
$$H^j\left(\tOm^{\s_2(\Si)}(a-\deg\Si)\right)=H^j\left(\tOm^{\s_1(\Si)}(a)\right)=0\ {\rm if} \ j\leqslant q.$$
By looking at the long exact sequence induced in cohomology we obtain an exact sequence
\begin{eqnarray}0\to H^q\left(\tOm^{\Si}(a)\right)\to H^{q+1}\left(\tOm^{\s_2(\Si)}(a-\deg \Si)\right)\to H^{q+1}\left(\tOm^{\s_1(\Si)}(a)\right).\label{Star}\end{eqnarray}
Applying the induction hypothesis to $\s_2(\Si)$ we get the desired chain of inclusions.\\
Now observe that $\s_2(\Si)_{\lim}=\Si_{\lim}$, and therefore, by induction, we obtain $\s_2^{N-q}(\Si)=\Si_{\lim}.$\\
To see the last point, we have to prove that $b_{\Si}=b_{\s_2(\Si)}+\deg\Si.$ If $\la^1=\cdots=\la^p=0$, then $\deg \Si=e_p$, $b_{\Si}=\sum_{i=1}^pe_i$ and $b_{\s_2(\Si)}=\sum_{i=1}^{p-1}e_i,$ and therefore $b_{\Si}=b_{\s_2(\Si)}+\deg\Si.$ Now if there exists $1\leqslant j\leqslant p $ such that $\la^j\neq 0$. Set, as usual, $j_0:=\min \{j \geqslant 1\ /\ \la^j\neq 0 \}$, so that $\deg\Si=e_{j_0}.$ Then $m_j=0$ for all $1\leqslant j< j_0$ and therefore
\begin{eqnarray*}
b_{\s_2(\Si)}=\sum_{i=1}^pe_i+\sum_{j=j_0+1}^pm_j\sum_{i=1}^je_i+\sum_{i=1}^{j_0-1}e_i+(m_{j_0}-1)\sum_{i=1}^{j_0}e_i
= \sum_{i=1}^pe_i+\sum_{j=j_0+1}^pm_j\sum_{i=1}^je_i -e_{j_0}= b_{\Si}-\deg\Si.
\end{eqnarray*}
\end{proof}
The map $$\tva : H^q\left(\tOm^{\Si}(a)\right)\to H^N\left(\tOm^{\Si_{\lim}}(a-b_{\Si})\right)$$
in Theorem \ref{LimiteSetting} is just the composition of all the injections in Proposition \ref{ChainInclusion}.

We will need an easy linear algebra lemma.
\begin{lemma}\label{linalg}
Take a commutative diagram of vector spaces
$$
\xymatrix{
A \ar[r]^g \ar[dr]^{\varphi} &B \ar[r]^{f_1}\ar[d]^{h_1} &C\ar[d]^{h_2}\\
& D\ar[r]^{f_2}& E
}
$$
such that $g,h_1$ and $h_2$ are injective, such that $f_1\circ g=0$ and such that $g(A)=\ker f_1$, then
$$\varphi(A)=h_1(B)\cap \ker f_2.$$ 
\end{lemma}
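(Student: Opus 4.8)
\textbf{Proof plan for Lemma \ref{linalg}.}

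The plan is to prove the two inclusions $\varphi(A)\subseteq h_1(B)\cap\ker f_2$ and $h_1(B)\cap\ker f_2\subseteq\varphi(A)$ separately, using only elementary diagram chasing. First I would establish the easy inclusion: since $\varphi=h_1\circ g$ by commutativity of the left triangle, any element of $\varphi(A)$ lies in $h_1(B)$; and since $f_2\circ h_1=h_2\circ f_1$ by commutativity of the square, we get $f_2(\varphi(a))=f_2(h_1(g(a)))=h_2(f_1(g(a)))=h_2(0)=0$ using the hypothesis $f_1\circ g=0$, so $\varphi(a)\in\ker f_2$. This gives $\varphi(A)\subseteq h_1(B)\cap\ker f_2$.

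For the reverse inclusion, I would take an arbitrary element $d\in h_1(B)\cap\ker f_2$ and write $d=h_1(b)$ for some $b\in B$ (this $b$ is unique since $h_1$ is injective, though uniqueness is not strictly needed). The goal is to show $b\in g(A)$, for then $d=h_1(g(a))=\varphi(a)\in\varphi(A)$. Since $g(A)=\ker f_1$ by hypothesis, it suffices to show $f_1(b)=0$. Now apply $h_2$: we have $h_2(f_1(b))=f_2(h_1(b))=f_2(d)=0$ because $d\in\ker f_2$. Since $h_2$ is injective, $f_1(b)=0$, hence $b\in\ker f_1=g(A)$, say $b=g(a)$, and therefore $d=h_1(b)=h_1(g(a))=\varphi(a)$.

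There is no real obstacle here; the only point requiring a moment of care is keeping track of which injectivity hypothesis is used where — injectivity of $h_2$ is what lets us deduce $f_1(b)=0$ from $h_2(f_1(b))=0$, injectivity of $h_1$ ensures the element $b$ is well-defined from $d$, and injectivity of $g$ is not actually needed for this particular statement (though it is presumably used elsewhere when the lemma is applied). The whole argument is a three-line diagram chase once the two inclusions are set up.
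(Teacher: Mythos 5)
Your proof is correct, and it is the natural diagram chase one would expect; the paper itself states Lemma \ref{linalg} as an ``easy linear algebra lemma'' without giving a proof, so there is nothing in the paper to compare against. Your side remark that injectivity of $g$ and even of $h_1$ is not actually used in proving this particular identity is also accurate (only injectivity of $h_2$ is essential here, to pass from $h_2(f_1(b))=0$ to $f_1(b)=0$); the other injectivity hypotheses matter when the lemma is applied iteratively in Theorems \ref{LimiteSetting} and \ref{LimitePair}, where one needs $\varphi$ itself to be injective.
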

The key observation is the following proposition.
\begin{proposition}\label{key}
With the notation of Theorem \ref{LimiteSetting}. Let $q:=q(\Si).$ There is a commutative diagram 
$$\xymatrix{
H^q\left(\tOm^{\Si}(a)\right)\ar[r]^{\iota\ \ \ \ \ \ \ \ \ }\ar@/_5pc/[dddddr]^{\tva}&H^{q+1}\left(\tOm^{\s_2(\Si)}(a-\deg \Si)\right)\ar[r]^{\cdot A_q}\ar[d]& H^{q+1}\left(\tOm^{\s_1(\Si)}(a)\right)\ar[d]\\
&\vdots\ar[d] & \vdots\ar[d]\\
&H^{q+k}\left(\tOm^{\s_2^k(\Si)}\left(a-\sum_{i=0}^{k-1}\deg \s_2^i( \Si)\right)\right)\ar[r]^{\cdot A_{q+k}}\ar[d]& H^{q+k}\left(\tOm^{\s_2^{k-1}\s_1(\Si)}\left(a-\sum_{i=1}^{k-1}\deg  \s_2^i(\Si)\right)\right)\ar[d]\\
&H^{q+k+1}\left(\tOm^{\s_2^{k+1}(\Si)}\left(a-\sum_{i=0}^{k} \deg \s_2^i(\Si)\right)\right)\ar[r]^{\cdot A_{q+k+1}}\ar[d]& H^{q+k+1}\left(\tOm^{\s_2^{k}\s_1(\Si)}\left(a-\sum_{i=1}^{k} \deg \s_2^i(\Si)\right)\right)\ar[d]\\
&\vdots\ar[d] & \vdots\ar[d]\\
&H^{N}\left(\tOm^{\s_2^{N-q}(\Si)}\left(a-\sum_{i=0}^{N-q-1} \deg \s_2^i(\Si)\right)\right)\ar[r]^{\cdot A_N}& H^{N}\left(\tOm^{\s_2^{N-q-1}\s_1(\Si)}\left(a-\sum_{i=1}^{N-q-1} \deg \s_2^i(\Si)\right)\right)}$$
where all the vertical arrows and the map $\iota$ are injective. The maps $\cdot A_{q+k}$ are described as follows. Let $\Si=(X_p,\la^0,\dots,\la^p)$. If $\la^1=\cdots=\la^p=0$ the map $\cdot A_{q+k}$ is just the map $\cdot F_p$ induced by $\cO_{\bP^N}\to \cO_{\bP^N}(e_p).$ If there exists $1\leqslant j\leqslant p$ such that $\la^j\neq 0$, set $j_0:= \min \{j\geqslant 1/ \la^j\neq0 \}$, then the maps $\cdot A_{q+k}$ are maps induced by $\cdot dF_{j_0}$ so that $\cdot A_q$ is the one induced by the map appearing in the exact sequence \eqref{Star} and the map $\cdot A_N$  is just $\cdot dF_{j_0}^{\{j_0,1\}}$.
\end{proposition}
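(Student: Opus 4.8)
The plan is to build the big commutative diagram of Proposition \ref{key} by an induction on $i(\Si)$, mirroring exactly the induction already carried out in Proposition \ref{ChainInclusion}. The base case $i(\Si)=0$ is vacuous: then $\Si=(\bP^N,\la^0)$, so $q=N$ and the diagram consists of a single row (indeed a single object $H^N(\tOm^{\Si}(a))$ with $\tva=\mathrm{id}$), and there is nothing to prove. For the inductive step $i(\Si)>0$, I would start from the short exact sequence
$$0\to \tOm^{\s_2(\Si)}(a-\deg\Si)\to \tOm^{\s_1(\Si)}(a)\to \tOm^{(\Si)}(a)\to 0$$
of Proposition \ref{Suite-Exacte}. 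Lemma \ref{VanishingLemma} together with Proposition \ref{qsimplicity} (which gives $q(\s_1(\Si))=q(\s_2(\Si))=q+1$) kills $H^j$ of the two leftmost terms for $j\leqslant q$, so the long exact sequence in cohomology produces, in degrees $q$ through $N$, a ladder whose $(q{+}k)$-th rung is the connecting-plus-restriction piece
$$H^{q+k}(\tOm^{\s_2^{?}\cdots(\Si)}(\cdots))\xrightarrow{\ \cdot A\ } H^{q+k}(\tOm^{\s_2^{?-1}\s_1(\Si)}(\cdots)).$$
The bottom-left column of this ladder is precisely the chain of injections of Proposition \ref{ChainInclusion} applied to $\s_2(\Si)$ (valid by induction, since $i(\s_2(\Si))<i(\Si)$ and $\s_2(\Si)$ is simple by Proposition \ref{qsimplicity}), so I splice the inductively-obtained diagram for $\s_2(\Si)$ on top, and add one new top row coming from the long exact sequence above, with $\iota$ the injection from \eqref{Star}. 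The map $\cdot A_q$ is the restriction map $H^{q+1}(\tOm^{\s_2(\Si)}(a-\deg\Si))\to H^{q+1}(\tOm^{\s_1(\Si)}(a))$ appearing in \eqref{Star}; commutativity of the new square is functoriality of the long exact sequence (the snake maps are natural), and injectivity of the new vertical arrow $H^{q+1}(\tOm^{\s_2(\Si)}(\cdots))\hookrightarrow H^{q+k}(\cdots)$ is again exactly what the induction gives.

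The identification of the maps $\cdot A_{q+k}$ as being induced by $\cdot F_p$ (when all $\la^j=0$) or by $\cdot dF_{j_0}$ (otherwise) is a matter of tracking which short exact sequence is being used at each stage of the $\s_2$-descent: in Case 1 of Proposition \ref{Suite-Exacte} the sequence is the restriction sequence twisted by $\cO(-e_p)\to\cO$, whose connecting map composed with restriction is multiplication by $F_p$; in Case 2 it is the $\tla^{j_0}_{i_0}$-th symmetric power of the tilde conormal sequence $0\to\cO(-e_{j_0})\xrightarrow{\cdot dF_{j_0}}\tOm|\to\tOm\to0$, tensored by the spectator factors, whose associated map on $H^N$ is the $\cdot dF_{j_0}^{\{\cdot,\cdot\}}$ of Section \ref{SectionStatements}. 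At the very bottom, after $N-q$ applications of $\s_2$, we have reached $\Si_{\lim}$ (Proposition \ref{ChainInclusion}(1)) and the last descent strips the final symmetric factor, so $\cdot A_N$ is literally $\cdot dF_{j_0}^{\{j_0,1\}}$; this requires only unwinding the definitions of $\eta^1,\eta^2$ and of $\s_2$ and checking that the indices $(j,k)$ match. I would present this index-bookkeeping compactly rather than case-by-case, since it is the same verification iterated.

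The main obstacle, as usual with such diagrams, is not any single commutativity square — those are all instances of naturality of the connecting homomorphism — but the notational consistency of the $N-q$-fold iterate: one must check that $\s_2^{k}\s_1(\Si)$ is the correct target of the $(q{+}k)$-th horizontal map, that the twists $a-\sum_{i=1}^{k-1}\deg\s_2^i(\Si)$ versus $a-\sum_{i=0}^{k-1}\deg\s_2^i(\Si)$ are distributed correctly between the two columns (the left column carries the extra $\deg\Si=\deg\s_2^0(\Si)$ term because it sits one symmetric-power-degree deeper), and that the operation "$\s_1$ then $\s_2$ repeatedly" commutes appropriately so that $\s_2^{k}\s_1 = \s_1\s_2^{k}$ on the relevant nose — which follows from the explicit combinatorial definitions of $\s_1,\s_2$ since $\s_1$ only affects the first non-zero block while subsequent $\s_2$'s act on that same block in a compatible way. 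Once the induction hypothesis is correctly stated to include \emph{both} the chain of Proposition \ref{ChainInclusion} \emph{and} the commutative ladder attached to it, the step goes through mechanically. I would therefore phrase the induction hypothesis as the full statement of Proposition \ref{key} (diagram, injectivity of verticals and $\iota$, and the description of the $\cdot A$'s) and then the only real content is the single new row produced by \eqref{Star}.
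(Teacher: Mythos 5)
Your plan to build the diagram inductively, by splicing the inductively-obtained ladder for $\s_2(\Si)$ beneath a new top row coming from \eqref{Star}, does not produce the required diagram. In Case 2 the horizontal maps $\cdot A_{q+k}$ are \emph{all} induced by $\cdot dF_{j_0}$, where $j_0$ is the smallest index with $\la^{j_0}\neq 0$ for the original $\Si$. But $\s_2$ moves the first non-zero entry of $\la^{j_0}$ one block to the left, so (for $j_0\geqslant 2$) the first non-zero block of $\s_2(\Si)$ sits at position $j_0-1$, and the horizontal maps in the ladder that the induction hypothesis would give you for $\s_2(\Si)$ are induced by $\cdot dF_{j_0-1}$ --- a genuinely different polynomial. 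The mismatch is already visible in the twists: applying the induction hypothesis to $\s_2(\Si)$ with $a'=a-\deg\Si$, its right column at total degree $q+k$ carries the twist $a-\deg\Si-\sum_{i=2}^{k-1}\deg\s_2^i(\Si)$, whereas the statement of Proposition \ref{key} requires $a-\sum_{i=1}^{k-1}\deg\s_2^i(\Si)$; these differ by $\deg\Si-\deg\s_2(\Si)=e_{j_0}-e_{j_0-1}$, which is non-zero for generic multidegrees. The underlying objects of the right column do coincide (one can check $\s_1\s_2=\s_2\s_1$ on simple settings), but the maps and the twists do not, so the two ladders cannot be stacked.

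The paper avoids this by not inducting at all. For each $k$ it constructs directly, at the sheaf level, a commutative diagram whose two columns are the short exact sequences of Proposition \ref{Suite-Exacte} applied to $\s_2^k(\Si)$ and to $\s_2^{k-1}\s_1(\Si)$, and whose rows are multiplication by $dF_{j_0}$ (by $F_p$ in Case 1); taking cohomology of that fixed diagram produces all the $2\times 2$ squares at once with a uniform horizontal map. The real content of the proof is then the commutativity of this sheaf-level diagram, and it is \emph{not} automatic: the vertical map $\cdot B$ arising from \eqref{Star} applied to $\s_2^k(\Si)$ can itself be a $\cdot dF_j$, and when it acts on the same tensor factor as the horizontal $\cdot dF_{j_0}$ the paper must exhibit a specific commuting square of symmetric powers, which it does at the end of Case 2. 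Your appeal to ``naturality of the connecting homomorphism'' presupposes exactly the map of short exact sequences whose existence and commutativity is the thing to be proved; naturality then only gives you the passage from that sheaf-level diagram to the cohomological one, which is the easy step.
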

The proof of Theorem \ref{LimiteSetting} now easily follows by induction from Proposition \ref{key} and Lemma \ref{linalg}.


\begin{proof}[Proof of Theorem \ref{LimiteSetting}]
We proceed by induction on $i(\Si)$.  If $i(\Si)=0,$ there is nothing to prove. Now suppose $i(\Si)>0.$ With the same notation as above. Thanks to Proposition \ref{key} we obtain the following commutative diagram, whose vertical arrows are injective.

$$
\xymatrix{
H^q\left(\tOm^{\Si}(a)\right)\ar[r]^{\!\!\!\!\!\!\!\!\!\!\!\!\!\!\!\!\iota }\ar[dr]^{\tva}&H^{q+1}\left(\tOm^{\s_2(\Si)}(a-\deg \Si)\right)\ar[r]^{\cdot A_q}\ar[d]^{\tva_2}& H^{q+1}\left(\tOm^{\s_1(\Si)}(a)\right)\ar[d]\\
&H^{N}\left(\tOm^{\s_2^{N-q}(\Si)}\left(a-b_\Si\right)\right)\ar[r]^{\!\!\!\!\!\!\!\!\!\!\!\!\!\!\!\! \cdot A_N}& H^{N}\left(\tOm^{\s_2^{N-q-1}\s_1(\Si)}\left(a-b_\Si+\deg(\Si)\right)\right)
}
$$

By Lemma \ref{linalg} we obtain $\im \tva =\im \tva_2\cap \ker(\cdot A_N)$. Now it suffices to apply the induction hypothesis to $\s_2(\Si)$ to obtain the announced description of $\im \tva_2$, which  induces the announced description for $\im \tva.$
\end{proof}


We now give the proof of the proposition.
\begin{proof}[Proof of Proposition \ref{key}]
Let $\Si=(X_p,\la^0,\dots,\la^p)$. We need to treat two cases. \\

\emph{Case 1:  $\la^1=\cdots=\la^p=0$, then $\s_1(\Si)=\s_2(\Si).$} Therefore one has for any $k\geqslant 0$ a commutative diagram 

$$
\xymatrix{
0\ar[d] & 0 \ar[d]\\
\tOm^{\s_2^{k+1}(\Si)}\left(a-\sum_{i=0}^k\deg \s_2^{i}(\Si)\right)\ar[d]\ar[r]^{\cdot F_p} & \tOm^{\s_2^{k}\s_1(\Si)}\left(a-\sum_{i=1}^k\deg \s_2^{i}(\Si)\right)\ar[d]\\
\tOm^{\s_1\s_2^{k}(\Si)}\left(a-\sum_{i=0}^{k-1}\deg \s_2^{i}(\Si)\right)\ar[d]\ar[r]^{\cdot F_p} & \tOm^{\s_1\s_2^{k-1}\s_1(\Si)}\left(a-\sum_{i=1}^{k-1}\deg \s_2^{i}(\Si)\right)\ar[d]\\
\tOm^{\s_2^{k}(\Si)}\left(a-\sum_{i=0}^{k-1}\deg \s_2^{i}(\Si)\right)\ar[d]\ar[r]^{\cdot F_p} & \tOm^{\s_2^{k-1}\s_1(\Si)}\left(a-\sum_{i=1}^{k-1}\deg \s_2^{i}(\Si)\right)\ar[d]\\
0&0
}
$$
where the horizontal maps are just multiplication by $F_p$. Here the vertical exact sequences come from the exact sequence \eqref{Star} applied to $s_2^k(\Si)$. Since $\Si$ is simple, we see that $\s^k_2(\Si)$, $\s_2^{k-1}\s_1(\Si)$, $\s_2^{k+1}(\Si)$, $\s_2^{k}\s_1(\Si)$, $\s_1\s_2^{k}(\Si)$ and $\s_1\s_2^{k-1}\s_1(\Si)$ are simple, 
and that $$q(\s^k_2(\Si))=q(\s_2^{k-1}\s_1(\Si))=q(\Si)+k\ \ {\rm and }\ \ q(\s_2^{k+1}(\Si))=q(\s_2^{k}\s_1(\Si))=q(\s_1\s_2^{k}(\Si))=q(\s_1\s_2^{k-1}\s_1(\Si))=q(\Si)+k+1.$$
By considering the diagram in cohomology associated to the above diagram and by applying Lemma \ref{VanishingLemma}, we obtain the following commutative diagram 
$$
\xymatrix{
0\ar[d] & 0 \ar[d]\\
H^{q+k}\left(\tOm^{\s_2^{k}(\Si)}\left(a-\sum_{i=0}^{k-1}\deg \s_2^{i}(\Si)\right)\right)\ar[d]\ar[r]^{\cdot F_p} &H^{q+k}\left( \tOm^{\s_2^{k-1}\s_1(\Si)}\left(a-\sum_{i=1}^{k-1}\deg \s_2^{i}(\Si)\right)\right)\ar[d]\\
H^{q+k+1}\left(\tOm^{\s_2^{k+1}(\Si)}\left(a-\sum_{i=0}^k\deg \s_2^{i}(\Si)\right)\ar[r]^{\cdot F_p}\right) &H^{q+k+1}\left( \tOm^{\s_2^{k}\s_1(\Si)}\left(a-\sum_{i=1}^k\deg \s_2^{i}(\Si)\right)\right)\\
}
$$
now we put all those squares together to obtain our claim.\\

\emph{Case 2: there exists $1\leqslant j\leqslant p$ such that $\la^j\neq0.$ Set $j_0:=\min\{j\geqslant / \la^j\neq 0\}.$} First let us explain what the maps $\cdot A_{q+k}$ are. Observe that for any $1\leqslant k\leqslant N-q,$ the bundles $\tOm^{\s_2^k(\Si)}$ and $\tOm^{\s_2^{k-1}\s_1(\Si)}$ are of the form
\begin{eqnarray*}
\tOm^{\s_2^k(\Si)}&=& \tOm^{\Si_1}\otimes S^{\ell}\tOm_{X_i}\otimes \tOm^{\Si_2}\\
\tOm^{\s_2^{k-1}\s_1(\Si)}&=& \tOm^{\Si_1}\otimes S^{\ell+1}\tOm_{X_i}\otimes \tOm^{\Si_2}
\end{eqnarray*}
where $\Si_1$ and $\Si_2$ are simple. The map $\cdot dF_{j_0}$ is then the map induced by $\Id_{\Si_1}\otimes \cdot dF_{j_0}\otimes \Id_{\Si_2}$. As before we have a commutative diagram
$$
\xymatrix{
0\ar[d] & 0 \ar[d]\\
\tOm^{\s_2^{k+1}(\Si)}\left(a-\sum_{i=0}^k\deg \s_2^{i}(\Si)\right)\ar[d]^{\cdot B}\ar[r]^{\cdot dF_{j_0}} & \tOm^{\s_2^{k}\s_1(\Si)}\left(a-\sum_{i=1}^k\deg \s_2^{i}(\Si)\right)\ar[d]^{\cdot B}\\
\tOm^{\s_1\s_2^{k}(\Si)}\left(a-\sum_{i=0}^{k-1}\deg \s_2^{i}(\Si)\right)\ar[d]\ar[r]^{\cdot dF_{j_0}} & \tOm^{\s_1\s_2^{k-1}\s_1(\Si)}\left(a-\sum_{i=1}^{k-1}\deg \s_2^{i}(\Si)\right)\ar[d]\\
\tOm^{\s_2^{k}(\Si)}\left(a-\sum_{i=0}^{k-1}\deg \s_2^{i}(\Si)\right)\ar[d]\ar[r]^{\cdot dF_{j_0}} & \tOm^{\s_2^{k-1}\s_1(\Si)}\left(a-\sum_{i=1}^{k-1}\deg \s_2^{i}(\Si)\right)\ar[d]\\
0&0
}
$$
where the map $\cdot B$ comes from the short exact sequence \eqref{Star} applied to $s_2^k(\Si)$. More precisely, the map $\cdot B$ will be either induced by $\cdot F_j$ for some $j$, either induced by $\cdot dF_j$ for some $j$. If $\cdot B=\cdot F_j$ then the commutativity is clear. If $\cdot B$ is induced by some map $\cdot dF_j$ then there are two cases to consider. Recall that by construction each of those maps will be the identity on all except   one term in the tensor product. So either the maps $\cdot B$ and $\cdot A$ act on the same factor of the tensor product, either they act on different factors. If they act on different factors, the commutativity is clear. If they act on the same factor it suffices to use the commutativity of the following diagram:
$$
\xymatrix{
0\ar[r] & S^{\ell-1}\tOm_{{X_{i-1}|}_{X_i}}(-2e_i)\ar[d]^{\cdot dF_{j_0}}\ar[r]^{\cdot dF_{i}} &  S^{\ell}\tOm_{{X_{i-1}|}_{X_i}}(-e_i)\ar[d]^{\cdot dF_{j_0}}\ar[r]& S^{\ell}\tOm_{X_i}(-e_i)\ar[d]^{\cdot dF_{j_0}}\ar[r]& 0\\
0\ar[r] & S^{\ell}\tOm_{{X_{i-1}|}_{X_i}}(-e_i)\ar[r]^{\cdot dF_{i}} &  S^{\ell+1}\tOm_{{X_{i-1}|}_{X_i}}\ar[r]& S^{\ell+1}\tOm_{X_i}\ar[r]& 0.
}
$$
Now the rest of the proof follows as in the first case by looking at what happens in cohomology.
\end{proof}
\subsection{Twisting the Euler exact sequence}
From the previous section, we have a good understanding of the groups $H^{q(\Si)}\left(\tOm^{\Si}(a)\right)$ when $\Si$ is simple. Now we want to use this to deduce a similar description for the groups $H^{q(\Si)}\left(\Om^{\Si}(a)\right)$. To do this, we will use cohomological technics similar to the ones we used in the previous sections, but instead of building everything on the restriction exact sequence and the conormal exact sequence, we will use the Euler exact sequence. Again, everything will be based on a suitable exact sequence, and to define it, we need another way of taking successors for simple pairs. 
\begin{definition} Take a simple $\la$-pair $(\Si,\tSi)$ such that $\Si:=(X_p,\la^0,\dots, \la^p)$, $\tSi:=(X_p,\tla^0,\dots, \tla^p)$ and such that $\nz(\Si)\neq 0$. Set $j_0:=\min\{j\geqslant 0\ /\ \la^j\neq 0\}$, $i_0:=\min\{i\gs 1\ /\ \la^{j_0}_i\neq 0\}$ and let 
\begin{eqnarray*}
\Si'&:=&(X_p,0,\dots, 0,(\la^{j_0}_{i_0+1},\dots, \la^{j_0}_{m_{j_0}}),\la^{j_0+1},\dots, \la^p)\\
\tSi'&:=&(X_p,\tla^0,\dots,\tla^{j_0-1},\tla^{j_0}\cup(\la^{j_0}_{i_0}),\tla^{j_0+1},\dots,\tla^{p})\\
\tSi''&=&(X_p,\tla^0,\dots, \tla^{j_0-1},\tla^{j_0}\cup (\la^{j_0}_{i_0}-1),\tla^{j_0+1},\dots, \tla^p),
\end{eqnarray*}
and set 
$$h_1(\Si,\tSi):=(\Si',\tSi') \ \ \ {\rm and} \ \ \ h_2(\Si,\tSi):=(\Si',\tSi'').$$
\end{definition}

We make an elementary observation.
\begin{proposition} If $(\Si,\tSi)$ is a simple pair such that $\nz(\Si)\neq 0$, then $h_1(\Si,\tSi)$ is simple, $q(h_1(\Si,\tSi))=q(\Si,\tSi)$, and $q(h_2(\Si,\tSi))\geqslant q(\Si,\tSi)$. 
\end{proposition}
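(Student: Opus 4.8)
The plan is to verify the three assertions directly from the definitions of $h_1, h_2$ and the functions $q$, $\n$, $\dim$, $\codim$, $\w$.

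\textbf{Simplicity of $h_1(\Si,\tSi)$.} Write $(\Si',\tSi')=h_1(\Si,\tSi)$. For $\Si'=(X_p,0,\dots,0,(\la^{j_0}_{i_0+1},\dots,\la^{j_0}_{m_{j_0}}),\la^{j_0+1},\dots,\la^p)$ I would note that all entries coming from $\la^{j_0}$ sit in position $j_0$ and by simplicity of $\Si$ satisfy $\la^{j_0}_i\geqslant j_0$; the entries in positions $j_0+1,\dots,p$ are unchanged and already satisfy the required inequalities. The new positions $1,\dots,j_0-1$ are filled with $0$, which is harmless. For $\tSi'$, the only modification is that the single value $\la^{j_0}_{i_0}$ is appended to $\tla^{j_0}$; since $\la^{j_0}_{i_0}\geqslant j_0$ by simplicity of $\Si$, and $\tSi$ is simple, $\tSi'$ is simple. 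Hence $h_1(\Si,\tSi)$ is simple.

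\textbf{The equality $q(h_1(\Si,\tSi))=q(\Si,\tSi)$.} Here I would compute $\n$ and $\dim$ separately. Both $(\Si,\tSi)$ and $h_1(\Si,\tSi)$ live over $X_p$, so $\dim$ is unchanged. For $\n$: the entry $\la^{j_0}_{i_0}$, which contributes $\min\{j_0,\la^{j_0}_{i_0}\}=j_0$ to $\n(\Si)$ (using simplicity), is removed from $\Si$ and re-inserted into $\tSi$ at the \emph{same} position $j_0$, where it again contributes $\min\{j_0,\la^{j_0}_{i_0}\}=j_0$. All other entries of both settings keep their positions and values, so $\n(\Si',\tSi')=\n(\Si,\tSi)$, whence $q(h_1(\Si,\tSi))=q(\Si,\tSi)$.

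\textbf{The inequality $q(h_2(\Si,\tSi))\geqslant q(\Si,\tSi)$.} Now $h_2(\Si,\tSi)=(\Si',\tSi'')$ differs from $h_1(\Si,\tSi)=(\Si',\tSi')$ only in that the appended entry is $\la^{j_0}_{i_0}-1$ instead of $\la^{j_0}_{i_0}$. Since $\min\{j_0,\la^{j_0}_{i_0}-1\}\leqslant\min\{j_0,\la^{j_0}_{i_0}\}$, we get $\n(\Si',\tSi'')\leqslant\n(\Si',\tSi')=\n(\Si,\tSi)$, and as $\dim$ is again unchanged this yields $q(h_2(\Si,\tSi))\geqslant q(h_1(\Si,\tSi))=q(\Si,\tSi)$. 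The only mild subtlety — and the one point worth being careful about — is the edge case $\la^{j_0}_{i_0}=j_0$, where $\la^{j_0}_{i_0}-1=j_0-1$ and the contribution genuinely drops by $1$; but this only makes the inequality strict, so the stated (non-strict) inequality holds in all cases. None of this requires any exact sequence; it is a bookkeeping check on the combinatorial invariants, so there is no real obstacle.
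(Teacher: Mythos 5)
Your proof is correct, and it is the natural (and essentially unique) verification; the paper itself labels this result ``an elementary observation'' and supplies no proof, so there is no competing argument to compare against. The bookkeeping is right at every step: the entry $\la^{j_0}_{i_0}$ is moved without changing its position $j_0$, so both $\dim$ and $\n$ are preserved under $h_1$, and the drop to $\la^{j_0}_{i_0}-1$ in $h_2$ can only decrease $\n(\tSi'')$ (strictly, exactly when $\la^{j_0}_{i_0}=j_0$, as you note). One small point worth making explicit: the definition of $h_1,h_2$ takes $j_0:=\min\{j\geqslant 0\ /\ \la^j\neq 0\}$, so $j_0=0$ is allowed. In that case the appeal to simplicity (which only constrains entries in positions $j\geqslant 1$) is vacuous, and the removed/inserted entry sits in position $0$ and simply does not enter the sum $\n(\Sigma)=\sum_{j=1}^{p}\sum_{i}\min\{j,\la^j_i\}$; the conclusion is unchanged because $\min\{0,x\}=0$ anyway, but your phrase ``contributes $\min\{j_0,\la^{j_0}_{i_0}\}=j_0$ to $\n(\Si)$ (using simplicity)'' reads as if the $j=0$ term were present in the sum and as if simplicity were needed there, neither of which is quite the case. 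This does not affect the validity of any of your three assertions.
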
  

The  following proposition is crucial to us.
\begin{proposition}\label{PropositionEuler}
With the above notation.
\begin{enumerate}
\item The Euler exact sequence \eqref{EulerExactSequence} yields an exact sequence
$$0\to \Om^{(\Si,\tSi)}\stackrel{\Eul}{\to} \Om^{h_1(\Si,\tSi)}\to  \Om^{h_2(\Si,\tSi)}\to 0.$$
\item  Suppose $\codim(\Si,\tSi)>0$. For any $1\leqslant i,j\leqslant 2,$ there is an isomorphism 
$$\Om^{h_i \s_j(\Si,\tSi)}\cong \Om^{\s_j h_i(\Si,\tSi)}.$$ 
\item Suppose $\codim(\Si,\tSi)>0$.  Set $b:=\deg (\Si,\tSi)$. There is a commutative diagram 
\begin{eqnarray}\label{GrosDiagram}
\begin{CD}
  @.            0                @.                 0          @.                        0         @.          \\
@.            @VVV                                @VVV                          @VVV                          @.\\
0  @>>>         \Om^{\s_2(\Si,\tSi)}(-b)             @>>>              \Om^{\s_2 h_1(\Si,\tSi)}(-b)    @>>>   \Om^{\s_2h_2 (\Si,\tSi)}(-b)          @>>> 0          \\   
@.            @VVV                                @VVV                          @VVV                         @.\\ 
0 @>>> \Om^{\s_1(\Si,\tSi)} @>>>     \Om^{\s_1 h_1(\Si,\tSi)}     @>>>      \Om^{\s_1 h_2 (\Si,\tSi)}  @>>>    0\\  
@.            @VVV                                @VVV                          @VVV                          @.\\ 
0 @>>>\Om^{(\Si,\tSi)}  @>>>    \Om^{ h_1(\Si,\tSi)}     @>>>       \Om^{h_2 (\Si,\tSi)}   @>>>     0 \\
@.            @VVV                                @VVV                          @VVV                          @.\\
  @.           0                @.                 0            @.                0                 @.         \\ \\
\end{CD}
\end{eqnarray}
\end{enumerate}
\end{proposition}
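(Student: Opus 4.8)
The plan is to prove Proposition \ref{PropositionEuler} in three steps, mirroring the role the tilde conormal and restriction exact sequences played earlier: part (1) is the "Euler analogue" of Proposition \ref{Suite-Exacte}, part (2) is a bookkeeping compatibility between the two kinds of successors, and part (3) assembles (1) and (2) into the $3\times 3$ diagram \eqref{GrosDiagram} whose exactness is then automatic from the snake/nine lemma.

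\textbf{Proof of part (1).} I would argue exactly as in the proof of Proposition \ref{Suite-Exacte}, Case 2. With $j_0:=\min\{j\geqslant 0\ /\ \la^j\neq 0\}$ and $i_0:=\min\{i\geqslant 1\ /\ \la^{j_0}_i\neq 0\}$, decompose $\Om^{(\Si,\tSi)}=E_1\otimes S^{\la^{j_0}_{i_0}}\Om_{X_{j_0}}|_{X_p}\otimes E_2$ where $E_1,E_2$ are the tensor products of all the other symmetric factors of $\Om^{\Si}$ and $\tOm^{\tSi}$ appearing before, respectively after, the distinguished factor (here one uses $\Om_{X_0}=\Om_{\bP^N}$ when $j_0=0$). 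Take the $\la^{j_0}_{i_0}$-th symmetric power of the Euler exact sequence \eqref{EulerExactSequence} for $X_{j_0}$,
\begin{eqnarray*}
0\to S^{\la^{j_0}_{i_0}}\Om_{X_{j_0}}\stackrel{\Eul}{\to} S^{\la^{j_0}_{i_0}}\tOm_{X_{j_0}}\to S^{\la^{j_0}_{i_0}-1}\tOm_{X_{j_0}}\to 0,
\end{eqnarray*}
restrict it to $X_p$ (flatness of the inclusion $X_p\hookrightarrow X_{j_0}$, which holds since everything is locally free, preserves exactness), and tensor by $E_1$, $E_2$. The middle term becomes $E_1\otimes S^{\la^{j_0}_{i_0}}\tOm_{X_{j_0}}|_{X_p}\otimes E_2=\Om^{h_1(\Si,\tSi)}$ because moving the $i_0$-th entry of $\la^{j_0}$ into the $j_0$-th slot of $\tla$ is precisely what $h_1$ does, and the right term becomes $\Om^{h_2(\Si,\tSi)}$ likewise; this gives the asserted short exact sequence. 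Here the fact that $S^m$ is an exact functor on locally split short exact sequences of vector bundles of the relevant rank is used, just as in the earlier propositions.

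\textbf{Proof of parts (2) and (3).} Part (2) is a purely combinatorial check: since $\codim(\Si,\tSi)>0$ the successor $\s_j$ acts on the "first nonzero" factor in the graded order defined before Proposition \ref{Suite-Exacte}, while $h_i$ acts on the first nonzero factor \emph{coming from $\Si$}; as long as these are governed by the index $j_0$ appearing in each definition, applying $\s_j$ then $h_i$ operates on the same two factors as applying $h_i$ then $\s_j$, only the order of the manipulations differs, so the resulting tensor products $\Om^{h_i\s_j(\Si,\tSi)}$ and $\Om^{\s_jh_i(\Si,\tSi)}$ are literally the same bundle up to the obvious reordering isomorphism; I would dispose of this by a short case analysis on whether the $\s$-distinguished factor lies in $\Si$ or in $\tSi$ and whether it coincides with the $h$-distinguished factor. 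For part (3): the three rows of \eqref{GrosDiagram} are the short exact sequences from part (1) applied to $(\Si,\tSi)$, to $\s_1(\Si,\tSi)$ and to $\s_2(\Si,\tSi)$ (using part (2) to identify $\Om^{\s_jh_i(\Si,\tSi)}$ with $\Om^{h_i\s_j(\Si,\tSi)}$ so the entries match the labels in the diagram), twisted by $\cO_{X_p}(-b)$ on the top row with $b=\deg(\Si,\tSi)$; the three columns are the short exact sequence \eqref{ES1} of Proposition \ref{Suite-Exacte} applied to $(\Si,\tSi)$, to $h_1(\Si,\tSi)$ and to $h_2(\Si,\tSi)$ (noting $\deg h_i(\Si,\tSi)=\deg(\Si,\tSi)=b$, since $h_i$ does not change $j_0$). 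Commutativity of each square reduces to the compatibility of the Euler map $\Eul$ with the conormal/tilde-conormal maps $\cdot F_i$ and $\cdot dF_i$ on the affected factor — when the two maps act on different tensor factors commutativity is formal, and when they act on the same factor it follows from a single elementary commutative square relating $\Eul$ to $\cdot dF$ (the Euler analogue of the last displayed diagram in the proof of Proposition \ref{key}).

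\textbf{Expected main obstacle.} The genuine content is part (2) together with the verification that all corner entries of \eqref{GrosDiagram} really do coincide on the nose: one must be careful that the "first nonzero factor" selected by $\s_1,\s_2$ is not disturbed by $h_1,h_2$ (and vice versa), which requires checking that $h_i$ leaves the grading quantity $j_0$ of Proposition \ref{Suite-Exacte} unchanged — it does, because $h_i$ only moves an entry from $\la^{j_0}$ into $\tla^{j_0}$, keeping it in degree $j_0$ — and that, after this move, the minimum index defining $\s_j$ still points at a compatible factor. Once this bookkeeping is pinned down, exactness of the rows and columns of \eqref{GrosDiagram} is immediate and the commutativity is a routine factor-by-factor check, so I would spend the bulk of the write-up making the index conventions explicit rather than on any analytic difficulty.
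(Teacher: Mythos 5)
Your proposal follows the paper's own outline almost step for step: part (1) via the symmetric power $0\to S^m\Om_{X_{j_0}}\to S^m\tOm_{X_{j_0}}\to S^{m-1}\tOm_{X_{j_0}}\to 0$ of the Euler sequence restricted to $X_p$ and tensored by the remaining factors; part (2) as a combinatorial reordering; and part (3) as a $3\times 3$ assembly. The paper presents part (3) slightly differently, as ``the symmetric powers of Diagram \eqref{Diagram} for suitable $X$ and $Y$, twisted,'' which is the same underlying idea but keeps the twist $-e$ (with $e = e_{j_0}$ fixed by the conormal inclusion $X_{j_0}\subset X_{j_0-1}$ in play) uniform across all three columns automatically.

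One place where your write-up is looser than it should be is the sentence ``noting $\deg h_i(\Si,\tSi)=\deg(\Si,\tSi)=b$, since $h_i$ does not change $j_0$.'' The two $j_0$'s involved are \emph{not} the same: the one used to define $\deg$ is $\min\{j\geqslant 1: \la^j\neq 0\ \text{or}\ \tla^j\neq 0\}$, whereas $h_i$ acts at $\min\{j\geqslant 0: \la^j\neq 0\}$, and they need not coincide. In the generic cases they do match and the twist is indeed $-b$ throughout, but there are degenerate configurations (for instance $\la^0=0$, $\la^1=(1)$, $\tla^1=0$, $i_0=m_1$, so that $h_2$ empties out slot $1$ entirely) where $\deg h_2(\Si,\tSi)$ jumps to $e_p$ rather than $e_1$; in the same configurations $\s_2(\Si,\tSi)$ has $\nz(\s_2\Si)=0$, so $h_i\s_2(\Si,\tSi)$ in part (2) is not even defined and the corner entry of \eqref{GrosDiagram} should really be $0$ rather than $\Om^{\s_2 h_2(\Si,\tSi)}(-b)$. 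The paper's labeling of the diagram has the same blemish, and its ``symmetric powers of Diagram \eqref{Diagram}'' formulation quietly avoids it by never changing the twist; if you want your version to be airtight you should either exclude the degenerate case explicitly (it never arises in the application to Theorem \ref{LimitePair}) or phrase the columns as the symmetric-power conormal sequences for the fixed inclusion $X_{j_0}\subset X_{j_0-1}$ rather than as ``\eqref{ES1} applied to $h_i(\Si,\tSi)$.''
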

\begin{proof}
The proof is very similar to what was done in the previous sections, and we give only a rough outline of it. \begin{enumerate}
\item With the notation of the definition of $h_1$ and $h_2$, it suffices to take the $\la_{i_0}^{j_0}$'s power of the Euler exact sequence to get 
$$0\to S^{\la_{i_0}^{j_0}}\Om_{X_{j_0}}\to S^{\la_{i_0}^{j_0}}\tOm_{X_{j_0}}\to S^{\la_{i_0}^{j_0}-1}\tOm_{X_{j_0}}\to 0.$$
Then one just has to tensor this by a suitable tensor product of symmetric powers of $\Om$'s and $\tOm$'s.
\item One just has to check the different cases. Most of the time the announced isomorphism is just the identity, but in some cases, the isomorphism is a reordering of some factors in the tensor product under consideration.
\item One takes the symmetric powers of Diagram \eqref{Diagram} for suitable $X$ and $Y$, and twist it by a suitable tensor product of symmetric powers of $\Om'$s and $\tOm'$s. 
\end{enumerate}
\end{proof}
\subsection{Statements for the cotangent bundle}
We are now in position to state and prove the second half of Theorem \ref{MainTheorem}.
Observe that for any simple $\lambda$-setting $(\Si,\tSi)$, the Euler exact sequence \eqref{EulerExactSequence} yields a morphism 
$\Om^{(\Si,\tSi)}\stackrel{\Eul}{\to} \tOm^{\Si\cup\tSi}.$
Observe also that $\tOm^{\Si_{\lim}\cup\tSi_{\lim}}\cong \tOm^{(\Si\cup\tSi)_{\lim}}$ (just as in Proposition \ref{PropositionEuler}, this isomorphism is just a reordering of the different factors of the tensor product). Therefore, this induces a map $\Eul:\Om^{\Si_{\lim}}\otimes \tOm^{\tSi_{\lim}}\to \Om^{(\Si\cup\tSi)_{\lim}}$.  As usual we will also denote by $\Eul$ the map induced between the different  cohomology groups. We have the following.
\begin{theorem}\label{LimitePair}
If $(\Si,\tSi)$ is a simple pair and $a<t(\Si,\tSi)$ then, there is a commutative diagram 
$$
\xymatrix{
H^{q(\Si,\tSi)}\left(\Om^{(\Si,\tSi)}(a)\right)\ar[r] \ar[dr]^{\varphi} \ar[d]_{\Eul} & H^N\left(\Om^{\Si_{\lim}}\otimes \tOm^{\tSi_{\lim}}(a-b_{\Si}-b_{\tSi})\right)\ar[d]^{\Eul}\\
H^{q(\Si,\tSi)}\left(\tOm^{\Si\cup\tSi}(a)\right)\ar[r]^{\tva\ \ \ \ \ \ \ \ \ } &\ \ \ \  H^N\left(\tOm^{(\Si\cup\tSi)_{\lim}}(a-b_{\Si}-b_{\tSi})\right)
}
$$
where all the arrows are injective and where $\tva$ is the map from Theorem \ref{LimiteSetting}. Moreover, $\im\varphi=\im\tva\cap \im\Eul.$
\end{theorem}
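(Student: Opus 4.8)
The strategy is to run the same kind of induction on $i(\Si,\tSi)$ that was used in the proofs of Lemma \ref{VanishingLemma} and Theorem \ref{LimiteSetting}, but now carrying the whole Euler square along. The base case $i(\Si,\tSi)=0$ is immediate: then $\codim(\Si,\tSi)=0$ and all the $\w$-contributions vanish, so $\Si=(\bP^N,\la^0)$, $\tSi=(\bP^N,\tla^0)$, both $b_\Si$ and $b_{\tSi}$ are zero, $q(\Si,\tSi)=N$, and every horizontal map in the diagram is the identity; commutativity, injectivity and $\im\varphi=\im\tva\cap\im\Eul$ hold trivially. So assume $i(\Si,\tSi)>0$.

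First I would assemble the two ``horizontal'' long exact sequences: applying Proposition \ref{ChainInclusion}/Proposition \ref{key} to $\Si\cup\tSi$ gives the lower row of the square as a composition of injections whose image is computed by Theorem \ref{LimiteSetting}, and the analogous chain of inclusions built on the exact sequence \eqref{ES1} for the $\la$-pair $(\Si,\tSi)$ gives the top row (this is exactly the generalization of Proposition \ref{ChainInclusion} to pairs — one iterates $\s_2$ until reaching $(\Si\cup\tSi)_{\lim}$, using Lemma \ref{VanishingLemma} to kill the intermediate $\s_1$-terms). The left vertical $\Eul$ on $H^{q(\Si,\tSi)}$ is injective because of the Euler exact sequence \eqref{EulerExactSequence} tensored appropriately: the kernel would live in $H^{q(\Si,\tSi)-1}$ of the ``$\Om$ with one more $\cO$-factor'' bundle, which vanishes by Lemma \ref{VanishingLemma} since dropping a symmetric factor only raises $q$. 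The right vertical $\Eul$ on $H^N$ is injective because its kernel is the image of an $H^{N-1}$-group on $\bP^N$, which is zero.

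The heart of the argument is the inductive step, and the key tool is the big commutative diagram \eqref{GrosDiagram} of Proposition \ref{PropositionEuler}. Choosing $h_1,h_2$ as in that proposition (splitting off one symmetric factor from the first nonzero $\la$-block of $\Si$ and moving it into $\tSi$), the bottom row of \eqref{GrosDiagram} is the Euler sequence $0\to\Om^{(\Si,\tSi)}\to\Om^{h_1(\Si,\tSi)}\to\Om^{h_2(\Si,\tSi)}\to0$, with $h_1(\Si,\tSi)$ simple and $q(h_1(\Si,\tSi))=q(\Si,\tSi)$, $q(h_2(\Si,\tSi))\gs q(\Si,\tSi)$. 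Passing to cohomology in degree $q:=q(\Si,\tSi)$ and using the vanishing lemma on the $h_2$-term in degree $q-1$ and on the $\s_2$-terms (part (3) of Proposition \ref{PropositionEuler}, the left column of \eqref{GrosDiagram}) one extracts: the map $\Eul\colon H^q(\Om^{(\Si,\tSi)}(a))\hookrightarrow H^q(\Om^{h_1(\Si,\tSi)}(a))$, a compatible map down to the $\s_2$-level, and — crucially — that $\im\Eul$ on this level equals the kernel of the connecting map to $H^q(\Om^{h_2(\Si,\tSi)}(a))$. Now $h_1(\Si,\tSi)$ has strictly fewer $\Om$-factors than $(\Si,\tSi)$ but the same $\Si\cup\tSi$ and the same $b_\Si+b_{\tSi}$ (since $b$ depends only on the union, up to reordering $\tOm^{h_1(\Si,\tSi)_{\lim}}\cong\tOm^{(\Si\cup\tSi)_{\lim}}$), so I can feed it to... hmm — but $i(h_1(\Si,\tSi))$ need not be smaller. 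The right fix is to induct not on the $h$-operation but, as in Theorem \ref{LimiteSetting}, to run the $\s_2$-chain for $(\Si,\tSi)$ and apply the induction hypothesis on $i$ to $\s_2(\Si,\tSi)$: by part (3) of Proposition \ref{PropositionEuler} the operations $h_i$ and $\s_j$ commute at the level of the bundles $\Om^{\bullet}$, so the Euler square for $\s_2(\Si,\tSi)$ maps compatibly into the Euler square for $(\Si,\tSi)$, and stacking the chain of inclusions from Proposition \ref{ChainInclusion} (for pairs) on both the $\Om$-row and the $\tOm$-row yields the desired square for $(\Si,\tSi)$ from that for $\s_2(\Si,\tSi)$, with all verticals still injective (composition of injections).

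Finally, to get $\im\varphi=\im\tva\cap\im\Eul$, I would apply the linear-algebra Lemma \ref{linalg}, or rather a three-dimensional version of it, to the composite diagram: the top face gives $\im(\text{top row})=\bigcap\ker(\cdot dF_i^{\{\cdot\}})$-type conditions from Theorem \ref{LimiteSetting} applied on the $\Om$-side; the left face gives that the left $\Eul$ has image cut out by the $\ker(\text{connecting to }h_2)$ condition, which at the $H^N$-level becomes exactly $\im\Eul$; and commutativity of the square forces $\im\varphi$ to be the intersection. Concretely: $\varphi = \Eul_{H^N}\circ(\text{top row})$, so $\im\varphi\subseteq\im\tva\cap\im\Eul$ is formal; for the reverse inclusion, take $\xi\in\im\tva\cap\im\Eul$, lift it through the right $\Eul$ (injective, so the lift is unique) to $\eta\in H^N(\Om^{\Si_{\lim}}\otimes\tOm^{\tSi_{\lim}})$, check $\eta$ lies in the image of the top row using that the top-row image is characterized by the same $F_i$/$dF_i$ vanishing conditions pulled back along $\Eul$ (this is where the commutativity of all squares in \eqref{GrosDiagram} and the explicit description of the maps $\cdot A_{q+k}$ in Proposition \ref{key} are used), and conclude. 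The main obstacle I anticipate is precisely the bookkeeping in this last step — matching the kernel description coming from the $h$-direction (which is what ``$\im\Eul$'' means cohomologically) with the kernel description coming from the $\s_2$-direction (which is what ``$\im\tva$'' means), and verifying they are transverse enough that Lemma \ref{linalg} applies verbatim; everything else is a routine, if lengthy, diagram chase built from the exact sequences already in hand.
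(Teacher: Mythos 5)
Your overall scheme is genuinely different from the paper's, and the point where you hesitated is exactly where the two diverge. The paper does induct along the $h$-direction: your objection that $i(h_1(\Si,\tSi))$ need not drop is correct, but the paper's fix is to change the induction variable to $\nz(\Si)$, which does drop strictly under $h_1$ (each $h_1$ converts one $\Om$-factor into a $\tOm$-factor, and the base case $\nz(\Si)=0$ is Theorem \ref{LimiteSetting} itself). It then stacks the squares of \eqref{GrosDiagram} against the $\s_2$-chains into the grid \eqref{HugeDiagram}, applies Lemma \ref{linalg} once to the two exact rows coming from \eqref{GrosDiagram} to get $\im\varphi_1=\im\varphi_2\cap\im\Eul_N$, and concludes from the induction hypothesis applied to $h_1(\Si,\tSi)$ by the chain $\im\varphi=\Eul'(\im\varphi_1)=\Eul'(\im\Eul_N)\cap\Eul'(\im\varphi_2)=\im\Eul\cap\im\varphi'=\im\Eul\cap\im\tva$. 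Your alternative — induct on $i(\Si,\tSi)$ along $\s_2$, carrying the whole Euler square — is workable, but note a small mismatch you gloss over: the bottom chain you build by peeling factors in the pair-order need not literally be the map $\tva$ of Theorem \ref{LimiteSetting}, which peels in the order of the setting $\Si\cup\tSi$; this is the same kind of reordering-isomorphism bookkeeping the paper itself declares it neglects, so I count it as minor.

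The genuine gap is in your final step, the inclusion $\im\tva\cap\im\Eul\subseteq\im\varphi$. Your proposed justification — lift $\xi$ through the right-hand $\Eul$ and check the lift lies in the image of the top row because that image is ``characterized by the same $F_i$/$dF_i$ vanishing conditions pulled back along $\Eul$'' — is not available: Theorem \ref{LimiteSetting} describes images only for the pure $\tOm$ chains, and the asserted characterization of the mixed top row is essentially equivalent to the statement you are proving, so as written the argument is circular. Within your $\s_2$-induction the step can be completed without that claim, but it needs an explicit transfer argument: write $\varphi=\varphi_{\s_2}\circ\iota$ with $\im\iota=\ker A$, where $A$ is the connecting map to $H^{q+1}\left(\Om^{\s_1(\Si,\tSi)}(a)\right)$ and $\tilde A$ its analogue on the $\tOm$-side; given $\xi\in\im\tva\cap\im\Eul$, the induction hypothesis $\im\varphi_{\s_2}=\im\tva_{\s_2}\cap\im\Eul$ gives $\xi=\varphi_{\s_2}(y)$, injectivity of $\tva_{\s_2}$ gives $\Eul(y)\in\ker\tilde A$, and then the commutation of $\Eul$ with the connecting maps (the $\cdot B$ checks of Proposition \ref{key}, resp.\ Proposition \ref{PropositionEuler}) together with the injectivity of $\Eul$ on $H^{q+1}\left(\Om^{\s_1(\Si,\tSi)}(a)\right)$ — which follows from Lemma \ref{VanishingLemma} because $q(\s_1(\Si,\tSi))=q(\Si,\tSi)+1$ for a simple pair and $a<t(\s_1(\Si,\tSi))$ — forces $Ay=0$, hence $y\in\im\iota$ and $\xi\in\im\varphi$. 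Either this transfer argument or the paper's $\nz(\Si)$-induction is the missing idea; you correctly anticipated that this matching of the $h$-side and $\s_2$-side kernel descriptions is the crux, but the proposal does not actually supply it.
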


We would like to point out a special case of particular interest (which was denoted by Theorem \ref{MainTheorem} in the introduction).
\begin{corollary}\label{LeCoroQuiTue}
Take integers $\ell_1,\dots,\ell_k\geqslant c$ take an integer $a<\ell_1+\cdots +\ell_k-k$, let $q:=n-kc$ and $b:=(k+1)\sum_{i=1}^ce_i$.  Then one has a commutative diagram 
$$
\xymatrix{
H^q\left(X,\Om_X^{(\ell_1,\dots,\ell_k)}(a)\right)\ar[r] \ar[d]_{\Eul} \ar[dr]^{\varphi} & H^N\left(\bP^N,\Om_{\bP^N}^{(\ell_1-c,\dots,\ell_k-c)}(a-b)\right)\ar[d]^{\Eul}\\
H^q\left(X,\tOm_X^{(\ell_1,\dots,\ell_k)}(a)\right)\ar[r]^{\tva\ \ \ \ \ } & H^N\left(\bP^N,\tOm_{\bP^N}^{(\ell_1-c,\dots,\ell_k-c)}(a-b)\right)\\
}
$$
Such that:
\begin{enumerate}
\item $\im \tva =\bigcap_{i=1}^c \left(\ker(\cdot F_i)\bigcap_{j=1}^k\ker\left(\cdot dF_i^{\{j\}}\right)\right)$. 
\item$ \im \varphi=\im \tva \cap \im\Eul.$
\end{enumerate}
\end{corollary}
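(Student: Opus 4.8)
The plan is to obtain this corollary as the specialization of Theorem \ref{LimitePair} (together with the description of $\im\tva$ in Theorem \ref{LimiteSetting}) to one particular $\la$-pair, the whole argument then amounting to checking that the combinatorial invariants of that pair reduce to the quantities $q$, $b$ and the twist bound in the statement. Assume $c\gs 1$, the case $c=0$ being trivial ($X=\bP^N$, $q=N$, $b=0$, all maps identities). First I would take the $\la$-pair $(\Si,\tSi)$ given by
$$\Si:=(X_c,\underbrace{0,\dots,0}_{c},(\ell_1,\dots,\ell_k)),\qquad \tSi:=(X_c,\underbrace{0,\dots,0}_{c+1}),$$
so that $\codim(\Si,\tSi)=c$, $\Om^{(\Si,\tSi)}=\Om_X^{(\ell_1,\dots,\ell_k)}$ and $\tOm^{\Si\cup\tSi}=\tOm_X^{(\ell_1,\dots,\ell_k)}$. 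Since $\ell_i\gs c$ for all $i$, the unique non-zero block $\la^c=(\ell_1,\dots,\ell_k)$ satisfies $\la^c_i\gs c$, hence $(\Si,\tSi)$ is simple (and likewise $\tSi$, vacuously).

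Next I would record the four elementary computations. Because $\min\{c,\ell_i\}=c$ for all $i$, one has $q(\Si,\tSi)=\dim(\Si,\tSi)-\n(\Si,\tSi)=(N-c)-kc=n-kc=q$. Since $\nz(\Si)=k$ and $|(\Si,\tSi)|=\ell_1+\cdots+\ell_k$, we get $t(\Si,\tSi)=\ell_1+\cdots+\ell_k-k$, so that the hypothesis $a<\ell_1+\cdots+\ell_k-k$ reads exactly $a<t(\Si,\tSi)$. As $m_0=\cdots=m_{c-1}=0$ and $m_c=k$, the formula for $b_{\Si}$ gives $b_{\Si}+b_{\tSi}=b_{\Si}=\sum_{i=1}^c e_i\bigl(1+\sum_{j=i}^c m_j\bigr)=(k+1)\sum_{i=1}^c e_i=b$. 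Finally $\Si_{\lim}=(\bP^N,(\ell_1-c,\dots,\ell_k-c))$, $\tSi_{\lim}$ is trivial, and $(\Si\cup\tSi)_{\lim}=\Si_{\lim}$, so $\Om^{\Si_{\lim}}\otimes\tOm^{\tSi_{\lim}}=\Om_{\bP^N}^{(\ell_1-c,\dots,\ell_k-c)}$ and $\tOm^{(\Si\cup\tSi)_{\lim}}=\tOm_{\bP^N}^{(\ell_1-c,\dots,\ell_k-c)}$. Substituting these identifications into the diagram of Theorem \ref{LimitePair} produces, verbatim, the diagram of the corollary, with all arrows injective and $\im\varphi=\im\tva\cap\im\Eul$; this is point (2).

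For point (1), I would feed the simple setting $\Si\cup\tSi=(X_c,0,\dots,0,(\ell_1,\dots,\ell_k))$ into the formula for $\im\tva$ from Theorem \ref{LimiteSetting}, obtaining $\im\tva=\bigcap_{i=1}^c\bigl(\ker(\cdot F_i)\bigcap_{j=i}^c\bigcap_{m=1}^{m_j}\ker(\cdot dF_i^{\{j,m\}})\bigr)$; since $m_j=0$ for $j<c$ and $m_c=k$, the inner double intersection collapses to $\bigcap_{m=1}^k\ker(\cdot dF_i^{\{c,m\}})$, which — writing $dF_i^{\{m\}}$ for $dF_i^{\{c,m\}}$, there being a single block so the first superscript is redundant — is exactly the asserted formula. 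Thus the statement follows entirely from Section \ref{SectionStatements}, and I do not expect any genuine obstacle: the only point requiring care is the faithful translation of the general index conventions ($dF_i^{\{j,m\}}$, $\Si_{\lim}$, $t(\Si,\tSi)$, $b_{\Si}$) into the streamlined notation used in the corollary.
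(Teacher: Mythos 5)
Your overall strategy — specializing Theorems \ref{LimiteSetting} and \ref{LimitePair} to the simple pair
$\Si=(X_c,0,\dots,0,(\ell_1,\dots,\ell_k))$ and $\tSi=(X_c,0,\dots,0)$ — is exactly how the paper intends this corollary to be read, and the identifications $\Om^{(\Si,\tSi)}=\Om_X^{(\ell_1,\dots,\ell_k)}$, $\tOm^{\Si\cup\tSi}=\tOm_X^{(\ell_1,\dots,\ell_k)}$, $q(\Si,\tSi)=n-kc$, $t(\Si,\tSi)=\ell_1+\cdots+\ell_k-k$, and the $\la$-limit settings are all right, as is the reduction of the double intersection in the description of $\im\tva$ to the single-block form.

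There is, however, an arithmetic slip in your computation of the twist. You assert $b_{\Si}+b_{\tSi}=b_{\Si}$, i.e.\ $b_{\tSi}=0$. By the paper's formula $b_{\tSi}=\sum_{i=1}^{p}e_i\bigl(1+\sum_{j\gs i}\tilde m_j\bigr)$, the trivial setting $\tSi=(X_c,0,\dots,0)$ still has $p=c$ (it must, since a $\la$-pair requires matching codimensions), so $b_{\tSi}=\sum_{i=1}^{c}e_i\neq 0$. Taken at face value, Theorem \ref{LimitePair} would therefore place the top-right cohomology group in the twist $a-b_{\Si}-b_{\tSi}=a-(k+2)\sum e_i$, not the $a-(k+1)\sum e_i$ asserted in the corollary. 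What actually saves the day is that the quantity appearing in the proof of Theorem \ref{LimitePair} is $\sum_{\ell=0}^{N-q-1}\deg\s_2^\ell(\Si,\tSi)$, and for a pair this is \emph{not} $b_{\Si}+b_{\tSi}$ but rather $b_{\Si\cup\tSi}=b_{\Si}+b_{\tSi}-\sum_{i=1}^{p}e_i$: when both $\Si$ and $\tSi$ have reached the all-trivial stage the hypersurfaces are removed once for the pair, not once for each component. With that correction the twist is $b_{\Si\cup\tSi}=b_{\Si}=(k+1)\sum e_i=b$, which is what the corollary claims. So the statement you want is true, but it follows from the corrected form of Theorem \ref{LimitePair} (twist $b_{\Si\cup\tSi}$), not from the literal $b_{\Si}+b_{\tSi}$ together with $b_{\tSi}=0$; your computation of $b_{\tSi}$ is wrong and happens to cancel against the $\sum e_i$ overcount in the theorem's statement. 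You should either note the correction of the theorem explicitly, or compute $\sum_{\ell}\deg\s_2^\ell(\Si,\tSi)$ directly for this pair (which equals $(k+1)\sum e_i$: $kc$ steps dragging each $\ell_j$ down from level $c$ to level $0$, contributing $\sum_{i=1}^c e_i$ each, then $c$ steps removing hypersurfaces, contributing another $\sum_{i=1}^c e_i$).
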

In the corollary, the map $\cdot dF_i^{\{j\}}$ is just the map 
$$H^N\left(\bP^N,\tOm_{\bP^N}^{(\ell_1-c,\dots,\ell_k-c)}(a-b)\right)\stackrel{\cdot dF_i^{\{j\}}}{\to}H^N\left(\bP^N,\tOm_{\bP^N}^{(\ell_1-c,\dots\ell_j-c+1,\ell_k-c)}(a-b+e_i)\right)$$
induced by the map 
$$S^{\ell_j-c}\tOm_{\bP^N}\stackrel{\cdot dF_i}{\to}S^{\ell-c+1}\tOm_{\bP^N}(e_i).$$
\begin{remark} Let us consider more precisely the case $k=1$ in Corollary \ref{LeCoroQuiTue}, suppose for simplicity that $N=2N_0$ is even. At first sight, it might seem that if $c>n$ this result doesn't tell us anything, but in fact, we can still get some information by using a simple trick. Indeed, suppose we want to construct symmetric differential forms on $X$, then it suffices to write $Y=H_1\cap \cdots \cap H_{N_0}$ and $X=Y\cap H_{N_0+1}\cap \cdots \cap H_c$. Corollary \ref{LeCoroQuiTue} then gives us information on $H^0(Y,S^m\Om_{Y})$. And if we are able to construct an element  $\om\in H^0(Y,S^m\Om_{Y})$ one can consider the induced restriction $\om_X\in H^0(X,S^m\Om_{X}).$ Similar arguments can by done when $N$ is odd. We would like to mention that in the proof of our main application in Section \ref{SectionAmplitude}, we will in fact use a similar trick with Theorem \ref{LimiteSetting} and not only Corollary \ref{LeCoroQuiTue}.
\end{remark}
\begin{proof}[Proof of Theorem \ref{LimitePair}]
Take a simple $\la$-pair $(\Si,\tSi)$ where $\Si=(X_p,\la^0,\dots,\la^p)$ and $\tSi=(X_p,\tla^0,\dots, \tla^p).$ We make an induction on $\nz(\Si).$ Let $q:=q(\Si,\tSi).$  If $\nz(\Si)=0$ there's nothing to prove. We now suppose that $\nz(\Si)\neq 0$. Diagram \eqref{GrosDiagram} yields a commutative square
$$
\begin{CD}
  H^q\left( \Om^{(\Si,\tSi)}\right)  @>>>   H^q\left( \Om^{ h_1(\Si,\tSi)} \right) \\
          @VVV                                @VVV       \\ 
         H^{q+1}\left(\Om^{\s_2(\Si,\tSi)}(-\deg (\Si,\tSi))\right)           @>>>    H^{q+1}\left( \Om^{\s_2 h_1(\Si,\tSi)}(-\deg (\Si,\tSi))\right)  
\end{CD}
$$
where, by applying Lemma \ref{VanishingLemma}, all the arrows are injective. Now set 
$$H_{j,k}^{q+i}:= H^{q+i}\left(\Om^{s_2^jh_1^k(\Si,\tSi)}\left(a-\sum_{\ell=0}^{j-1}\deg s_2^{\ell}(\Si,\tSi) \right)\right).$$
Putting all the above cartesian squares together, we obtain the following commutative diagram whose arrows are all injective.
\begin{eqnarray}\label{HugeDiagram}
\begin{CD}
H^{q}_{0,0}@>>> H^{q}_{0,1}@>>> \cdots @>>> H^{q}_{0,k}@>>> H^{q}_{0,k+1} @>>> \cdots @>>> H^{q}_{0,\nz(\Si)} \\
@VVV @VVV @. @VVV @VVV @.  @VVV\\
H^{q+1}_{1,0}@>>>H^{q+1}_{1,1} @>>>\cdots @.\vdots @. \vdots @. @.\vdots \\
@VVV @VVV @. @VVV @VVV @.  @.\\
\vdots@. \vdots@. @.\vdots @.\vdots @. @. \\
@VVV @. @. @VVV @VVV @.  @.\\
H^{q+j}_{j,0}@>>>\cdots @. \cdots @>>> H^{q+j}_{j,k} @>>> H^{q+j}_{j,k+1} @>>> \cdots @. \\
@VVV @. @. @VVV @VVV @. @.\\
H^{q+j+1}_{j+1,0}@>>> \cdots @.\cdots @>>> H^{q+j+1}_{j+1,k} @>>> H^{q+j+1}_{j+1,k+1}@>>> \cdots @. \\
@VVV @. @. @VVV @VVV @. @.\\
\vdots @. @. @. \vdots @. \vdots @. @. \vdots \\
@VVV @. @. @. @. @. @VVV \\
H^{N}_{N-q,0}@>>> \cdots @. @.\cdots @. @.\cdots @>>> H^{N}_{N-q,\nz(\Si)} 
\end{CD}
\end{eqnarray}

Observe that 
\begin{eqnarray*}
H^{q}_{0,\nz(\Si)}&\cong& H^q(\tOm^{\Si\cup\tSi}(a))\\
H^N_{N-q,0}&\cong& H^N(\Om^{\Si_{\lim}}\otimes \tOm^{\Si_{\lim}}(a-b_{\Si}-b_{\tSi}))\\
H^N_{N-q,\nz(\Si)}&\cong& H^N(\tOm^{\Si_{\lim}\cup\tSi_{\lim}}(a-b_{\Si}-b_{\tSi})).
\end{eqnarray*}
We would like to point out that to make the proof completely precise, one should take into account the different isomorphisms coming from Proposition \ref{PropositionEuler}.2. But for simplicity we neglect those details here.
Set $\varphi$ to be the map $H^q_{0,0}\to H^N_{N-q,\nz(\Si)}$.
Observe also that the composed map $H^q_{0,\nz(\Si)}\to H^N_{N-q,\nz(\Si)}$ is just the map $\tva$ from Theorem \ref{LimiteSetting}. This gives us the commutative diagram of Theorem \ref{LimitePair}. The proof of the  rest of the statement is similar to the proof of Theorem \ref{LimiteSetting}. When one looks at the long exact sequence associated to Diagram \ref{GrosDiagram}, we get a commutative diagram
$$
\begin{CD}
0 @>>>H^q\left(\Om^{(\Si,\tSi)}(a)\right)  @>>>   H^q\left( \Om^{ h_1(\Si,\tSi)}(a)\right)     @>>>      H^q\left( \Om^{h_2 (\Si,\tSi)}(a)\right)\\
@.            @VVV                                @VVV                          @VVV                         \\
0  @>>>     H^{q+1} \left(   \Om^{\s_2(\Si,\tSi)}(a-b)\right)          @>>>        H^{q+1}\left(\Om^{\s_2\circ h_1(\Si,\tSi)}(a-b)    \right)@>>> H^{q+1}\left(\Om^{\s_2\circ h_2 (\Si,\tSi)}(a-b)\right)           \\   
\end{CD}
$$
where $b:=\deg (\Si,\tSi)$. A quick induction yields a commutative diagram
$$
\xymatrix{
0\ar[r] & H^q\left(\Om^{(\Si,\tSi)}(a)\right) \ar[d] \ar[dr]^{\varphi_1} \ar[r]^{\Eul_q} & H^q\left(\Om^{h_1(\Si,\tSi)}(a)\right)\ar[d]^{\varphi_2}\ar[r]& H^q\left(\Om^{h_2(\Si,\tSi)}(a)\right)\ar[d] \\ 
0\ar[r] & H^N\left(\Om^{\s_2^{N-q}(\Si,\tSi)}(a-b_{\Si,\tSi})\right) \ar[r]^{\Eul_N} & H^N\left(\Om^{\s_2^{N-q}h_1(\Si,\tSi)}(a-b_{\Si,\tSi})\right) \ar[r]^{\tau}&H^N\left(\Om^{\s_2^{N-q}h_2(\Si,\tSi)}(a-b_{\Si,\tSi})\right)
}
$$
Where $b_{\Si,\tSi}=b_{\Si}+b_{\tSi}.$ By Lemma \ref{linalg} we obtain $$\im\varphi_1=\im\varphi_2\cap\ker(\tau)=\im\varphi_2\cap \im \Eul_N.$$
From Diagram \ref{HugeDiagram} one can extract the commutative diagram 
$$
\xymatrix{
H^q_{0,0}\ar[d]\ar[r]\ar[dr]^{\varphi_1} \ar@/^3pc/[drr]^{\varphi}& H^q_{0,1}\ar[d]^{\varphi_2}\ar[r]\ar[dr]^{\varphi'} & H^q_{0,\nz(\Si)}\ar[d]^{\tva}\\
H^N_{N-q,0} \ar[r]^{\Eul_N}\ar@/_2pc/[rr]^{\Eul}& H^{N}_{N-q,1} \ar[r]^{\Eul'}& H^N_{N-q,\nz(\Si)}
}
$$
Our induction hypothesis is that $\im \varphi'=\im \Eul'\cap \im\tva.$ Using the fact that $\im\varphi_1=\im\varphi_2\cap \im \Eul_N,$ we get
\begin{eqnarray*}
\im\varphi = \Eul'(\im\varphi_1)=\Eul'(\im \Eul_N)\cap\Eul'(\im\varphi_2)=\im\Eul\cap\im\varphi'=\im\Eul\cap\im\tva\cap\im\Eul'=\im\Eul\cap\im\tva.
\end{eqnarray*}

\end{proof}


\section{Applications}\label{SectionApplications}

We give different applications of theorems \ref{LimiteSetting} and \ref{LimitePair}. These two statements basically give us a way of computing different cohomology groups on complete intersection varieties by reducing the problem to the computation of the kernel of a linear map depending (in some explicit way) on the defining equations of our complete intersection. In general computing this kernel is a  difficult question because of the dimension of the spaces that are involved. However, in some special cases, one can make those computations, and this gives us some noteworthy conclusions. 

\subsection{Explicit computation in \v{C}ech cohomology}\label{Cech}
In this section, we explain how to make theorems \ref{LimiteSetting} and \ref{LimitePair} explicit via the use of \v{C}ech cohomology. We will use the standard homogenous coordinates $[Z_0:\dots:Z_N]$ on $\bP^N$ and the standard affine subsets $U_i:=(Z_i\neq 0)\subset \bP^N$. Let $\fU:=(U_i)_{0\ls i\ls N}.$
\begin{remark}
In some situations it is more natural to use other open coverings of $\bP^N$ more suitable to the geometric problem under consideration. See for instance Section \ref{Curves} for an illustration of this.
\end{remark}
Recall, see for example \cite{Har77}, that if $a\geqslant N+1$ then 
\begin{eqnarray}
H^N(\bP^N,\cO_{\bP^N}(-a))\cong \check{H}^N(\fU,\cO_{\bP^N}(-a))\cong\bigoplus_{\substack{i_0+\cdots+i_N=a\\ i_0,\dots,i_N\geqslant 1}}\frac{1}{Z_0^{i_0}\cdots Z_N^{i_N}}\cdot \bC
\end{eqnarray}
Recall also that
$$\tOm_{\bP^N}=\bigoplus_{i=0}^N\cO_{\bP^N}(-1)dZ_i= V\otimes \cO_{\bP^N}(-1),$$
where $V=\bigoplus_{i=0}^N\bC\cdot dZ_i$.
Now take integers $\ell_1,\dots, \ell_k\geqslant 0$ and $a<\ell_1+\cdots +\ell_k-N-1$. We have
 \begin{eqnarray*}
 H^N\left(\bP^N,\tOm_{\bP^N}^{(\ell_1,\dots,\ell_k)}(a)\right)&\cong&V^{(\ell_1,\dots,\ell_k)}\otimes H^N\left(\bP^N,\cO_{\bP^N}(a-\ell_1-\cdots-\ell_k)\right)\\
 &\cong& V^{(\ell_1,\dots,\ell_k)}\otimes\!\!\!\! \bigoplus_{\substack{i_0,\dots,i_N\geqslant 1\\ i_0+\cdots+i_N=\ell_1+\cdots+\ell_k-a}}\!\!\!\!\frac{1}{Z_0^{i_0}\cdots Z_N^{i_N}}\cdot \bC.
 \end{eqnarray*}
Therefore an element of  $ H^N\left(\bP^N,\tOm_{\bP^N}^{(\ell_1,\dots,\ell_k)}(a)\right)$ can be thought of as an element of the form 
$$\omega=\sum_{\substack{J_1,\dots,J_k,I\in\mathbb{N}^{N+1}\\ |J_1|=\ell_1,\dots,|J_k|=\ell_k\\ |I|=\ell_1+\cdots+\ell_k-a\\ I\geqslant \bI}}\omega_{J_1,\dots,J_k}^I\frac{dZ^{J_1}\otimes\cdots \otimes dZ^{J_k}}{Z^I}.$$
 Where $\omega_{J_1,\dots,J_k}^I\in\bC$ and where $\bI:=(1,\dots,1)\in\mathbb{N}^{N+1}.$ If $K=(k_0,\dots,k_{N})$ and $J=(j_0,\dots,j_N)$ are both in $\mathbb{N}^{N+1}$ we write $K\geqslant J$ if $k_i\geqslant j_i$ for any $0\leqslant i \leqslant N.$ We also use the standard multi-index notation: if $I=(i_0,\dots,i_N)\in \mathbb{N}^{N+1}$ then $Z^I:=Z_0^{i_0}\cdots Z^{i_N}_N$  and  $dZ^I:=dZ_0^{i_0}\cdots dZ^{i_N}_N$.\\
 
 We now describe explicitly the maps $\cdot F$ and $\cdot dF$. Start with a monomial $Z^M\in H^N(\bP^N,\cO_{\bP^N}(e))$ of degree~$e$, where $M\in\mathbb{N}^{N+1}$. Take $\la=(\ell_1,\dots,\ell_k)$ as above. The multiplication by $Z^M$ induces the map
 $$
 \begin{array}{ccc}
 H^{N}\left(\bP^{N},\tOm_{\bP^N}^{\la}(a)\right) & \to & H^N\left(\bP^{N},\tOm_{\bP^N}^{\la}(a+e)\right)\\
 \omega=\frac{dZ^{J_1}\otimes \cdots\otimes dZ^{J_k}}{Z^I} & \mapsto & \omega\cdot Z^M=\left\{\begin{array}{ccc}\frac{dZ^{J_1}\otimes \cdots\otimes dZ^{J_k}}{Z^{I-M}} &{\rm if} & I+\bI\geqslant M\\ 0& {\rm if}& I+\bI \ngeqslant M.\end{array}\right.
 \end{array}
 $$
 If we take any $F\in H^0(\bP^N,\cO_{\bP^N}(e))$, it suffices to decompose $F$ as a sum of monomials and extend the above description by linearity.\\
 
 Now consider an element $\xi=Z^MdZ_i\in H^0\left(\bP^N,\tOm_{\bP^N}(e)\right).$ Fix $1\leqslant j\leqslant k.$ This induces a map
 
  $$
 \begin{array}{ccc}
 H^{N}\left(\bP^{N},\tOm_{\bP^N}^{(\ell_1,\dots,\ell_k)}(a)\right) & \to & H^N\left(\bP^{N},\tOm_{\bP^N}^{(\ell_1,\dots,\ell_j+1,\dots,\ell_k)}(a+e)\right)\\
 \omega=\frac{dZ^{J_1}\otimes \cdots\otimes dZ^{J_k}}{Z^I} & \mapsto & \omega\cdot \xi^{\{j\}}=\left\{\begin{array}{ccc}\frac{dZ^{J_1}\otimes \cdots\otimes dZ_idZ^{J_j}\otimes\cdots\otimes dZ^{J_k}}{Z^{I-M}} &{\rm if} & I+\bI\geqslant M\\ 0& {\rm if}& I+\bI \ngeqslant M.\end{array}\right.
 \end{array}
 $$
 again, it suffices to extend by linearity to describe the maps $\cdot dF^{\{j\}}$ for any $j$ and  any  $F\in H^N(\bP^N,\cO_{\bP^N}(e))$.
 The maps $\Eul$ are understood very similarly and we don't provide all the details here. However, because in our computations we will have to use the coboundary map in the long exact sequence in cohomology associated to a short exact sequence, we would like to recall how such a coboundary map can be understood. Suppose that $X\subseteq \bP^N$ is a projective variety, that $\fU$ is an open covering of $X$ and that one has the following exact sequence of sheaves on $X$
 $$0\to \cF\stackrel{\varphi}{\to} \cE\stackrel{\rho}{\to} \cG\to 0$$
 This gives us the following maps between the \v{C}ech complexes:
  $$
 \xymatrix{
 \vdots \ar^{\check{d}}[d] &  \vdots \ar^{\check{d}}[d]&  \vdots \ar^{\check{d}}[d] \\
 C^{k}(\mathfrak{U},\cF) \ar^{\check{d}}[d] \ar^{\varphi_k}[r]&C^{k}(\mathfrak{U},\cE) \ar^{\check{d}}[d] \ar^{\rho_k}[r]&C^{k}(\mathfrak{U},\cG) \ar^{\check{d}}[d]\\
 C^{k+1}(\mathfrak{U},\cF) \ar^{\check{d}}[d] \ar^{\varphi_{k+1}}[r]&C^{k+1}(\mathfrak{U},\cE) \ar^{\check{d}}[d] \ar^{\rho_{k+1}}[r]&C^{k+1}(\mathfrak{U},\cG) \ar^{\check{d}}[d]\\
 \vdots&\vdots&\vdots
 }
 $$
Here $\check{d}$ denotes the \v{C}ech differential. The coboundary map $\delta_k:\check{H}^{k}(\fU,\cG)\to \check{H}^{k+1}(\fU,\cF)$ is obtained by applying the snake lemma in the above diagram (we suppose that $\fU$ is sufficiently refined so that one can make this work). The problem we will be facing is the following: suppose that $\delta_k$ is injective and that we have $(\si_{i_0,\dots,i_k})\in Z^{k+1}(\fU,\cF)$ representing a class $\si\in \check{H}^{k+1}(\fU,\cF)$ such that $\si \in \im(\delta_k)$, how to compute a \v{C}ech representative for $ \delta^{-1}_k(\si)$ in $C^{k}(\fU,\cG)$? This is just a diagram chase, and goes as follows: first compute $\vphi_{k+1}(\si_{i_0,\dots,i_k})$, by hypothesis, there exists $(\tau_{i_0,\dots, i_k})\in C^k(\fU,\cE)$ such that $\vphi_{k+1}(\si_{i_0,\dots,i_k})=\check{d}(\tau_{i_0,\dots, i_k}).$ And then $\delta_k^{-1}(\si)$ is just represented by the cocycle $\rho_k(\tau_{i_0,\dots, i_k}).$ For us the maps $\vphi_k$ will be either multiplication by $F$ or multiplication by $dF$ for some polynomial $F$ and $\rho_k$ will just by a restriction map.
\subsection{The case of plane curves}\label{Curves}
Let us just explain how we can use this strategy to construct the \og classical\fg\ differential form on smooth curves in $\bP^2$ of genus greater than $1.$ Let $F\in \bC[Z_0,Z_1,Z_2]$ be a homogenous polynomial of degree $e\gs 3.$ Such that the curve $C:=(F=0)\subseteq \bP^2$ is smooth. For any $i\in \{0,1,2\},$ set $F_i:=\frac{\partial F}{\partial Z_i}$ and $U_i^F:=(F_i\neq 0)$ and $\fU^F:=(U_i^F)_{0\ls i\ls 2}.$ Because $C$ is smooth, $\fU^F$ is an open covering of $\bP^2.$ From Proposition \ref{ChainInclusion} we get a diagram whose arrows are all injective
$$\xymatrix{
H^0(C,\tOm_C) \ar[r]^{\tphi_0}\ar[dr]_{\tphi} & H^1(C,\cO_C(-e)) \ar[d]^{\tphi_1}\\
&H^2(\bP^2,\cO_{\bP^2}(-2e))
}
$$
By Corollary \ref{LeCoroQuiTue} we get that $\im(\tphi)=\ker(\cdot F)\cap \ker(\cdot dF).$
We will work in \v{C}ech cohomology  with respect to $\fU^F$. For any degree $e-3$ polynomial $P\in \bC[Z_0,Z_1,Z_2]$ consider the element $\tom^{P,2}\in H^2(\bP^2,\cO_{\bP^2}(-2e))$ given by the following cocycle:
$$\tom^P_{012}:=\frac{P}{F_0F_1F_2}\in \check{H}^2(\fU^F,\cO_{\bP^2}(-2e)).$$
By Euler's formula we have  $eF=F_0Z_0+F_1Z_1+F_2Z_2.$ Hence, we obtain 
\begin{eqnarray*}
\tom^{P}_{012}\cdot F=\frac{P}{e}\frac{1}{F_0F_1F_2}\cdot (F_0Z_0+F_1Z_1+F_2Z_2)=\frac{P}{e}\left(\frac{Z_0}{F_1F_2}+\frac{Z_1}{F_0F_2}+\frac{Z_2}{F_0F_1}\right)=0\in \check{H}^2(\fU^F,\cO_{\bP^2}(-e))
\end{eqnarray*}
And because $dF=F_0dZ_0+F_1dZ_1+F_2dZ_2$ we obtain similarly 
\begin{eqnarray*}
\tom_{012}^P\cdot dF=P\left(\frac{dZ_0}{F_1F_2}+\frac{dZ_1}{F_0F_2}+\frac{dZ_2}{F_0F_1}\right)=0\in \check{H}^2(\fU^F,\tOm_{\bP^2}(-e))
\end{eqnarray*}
hence, we see that $\tom^{P,2}\in \im(\tphi).$ This already proves the existence of a tilde differential form on $C$ and in fact, Corollary \ref{LeCoroQuiTue} even proves that this tilde differential form will yield a true differential form simply because, with the  notation of Corollary \ref{LeCoroQuiTue}, $\im(\Eul)=H^2(\bP^2,\cO_{\bP^2}(-e)).$ It remains to compute $\tphi^{-1}(\tom^{P,2}),$ to do so, we first compute $\tphi_1^{-1}(\tom^{P,2}).$ This is done as follows:
\begin{eqnarray*}
\tom^P_{012}\cdot F= \frac{P}{e}\left(\frac{Z_0}{F_1F_2}+\frac{Z_1}{F_0F_2}+\frac{Z_2}{F_0F_1}\right)=\tom^P_{12}-\tom^P_{02}+\tom^2_{01}=\check{d}\left((\tom_{ij}^P)_{0\ls i<j\ls 2}\right)
\end{eqnarray*}
Where for each $i<j,$ $\tom^P_{ij}=(-1)^k\frac{P}{e}\frac{Z_k}{F_iF_j}$ for $k\in\{0,1,2\}\setminus \{i,j\}$. Therefore, $\tom^{P,1}:=\tphi^{-1}(\tom^{P,2})$ is represented by the cocycle $(\tom_{ij}^P)_{0\ls i<j\ls 2}.$ Now we compute $\tphi_0^{-1}(\tom^{P,1}).$ Let $i,j\in \{0,1,2\}$ such that $i<j$ and let $k\in\{0,1,2\}\setminus \{i,j\}.$ 
\begin{eqnarray*}
\tom^P_{ij}\cdot dF= (-1)^k\frac{P}{e}\frac{Z_k}{F_iF_j}(F_idZ_i+F_jdZ_j+F_kdZ_k)=(-1)^k\frac{P}{e}\left(\frac{Z_kdZ_i}{F_j}+\frac{Z_kdZ_i}{F_i}+\frac{Z_kF_kdZ_k}{F_iF_j}\right).
\end{eqnarray*}
But here we can use the relation $F=0,$ so that $Z_kF_k=-Z_iF_i-Z_jF_j,$ and therefore
\begin{eqnarray*}
\tom^P_{ij}\cdot dF= (-1)^k\frac{P}{e}\left(\frac{Z_kdZ_i}{F_j}+\frac{Z_kdZ_j}{F_i}-\frac{(Z_iF_i+Z_jF_j)dZ_k}{F_iF_j}\right)=
 (-1)^k\frac{P}{e}\left(\frac{1}{F_j}\left|\begin{array}{cc} Z_k& Z_i\\ dZ_k& dZ_i\end{array}\right|+\frac{1}{F_i}\left|\begin{array}{cc} Z_k& Z_j\\ dZ_k& dZ_j\end{array}\right|\right)
\end{eqnarray*}
For any $i\in \{0,1,2\}$, take $j,k\in \{0,1\}\setminus \{i\}$ such that $j<k$, and set $$\tom_i^P:=(-1)^i\frac{P}{eF_i}\left|\begin{array}{cc} Z_j& Z_k\\ dZ_j& dZ_k\end{array}\right|.$$ With this formula, it is straighforward to check that $(\tom^{P}_{ij}\cdot F)_{0\ls i<j\ls 2}=\check{d}\left((\tom^P_i)_{0\ls i\ls 2}\right),$ hence $\tom^{P,0}:=\tphi^{-1}(\tom^{P,2})=\tphi_0^{-1}(\tom^{P,1})$ is represented by the cocycle $(\tom^{P}_i)_{0\ls i \ls 2}.$ To complete our study, we still have to compute the corresponding element $\om^{P,0}\in H^0(C,\Om_C).$ To do this, we only have to dehomogenize $(\tom^P_i)_{0\ls i\ls 2}.$  We are only going to consider the chart $U_0=(Z_0\neq 0)=\bC^2$ with coordinates $(z_1,z_2)$ where $z_1=\frac{Z_1}{Z_0}$ and $z_2=\frac{Z_2}{Z_0}$. Let $f\in \bC[z_1,z_2]$ (resp. $Q\in \bC[z_1,z_2]$) be the dehomogeneization of $F$ (resp. $P$) with respect to $Z_0$. For $i\in \{1,2\}$ let $f_i:=\frac{\partial f}{\partial z_i}$, observe that $f_i$ is the dehomogeneization of $F_i$ with respect to $Z_0$. We let $U_i'=U_0\cap U_i^{F}=(f_i\neq 0).$ Moreover, for $i\in \{1,2\},$ we have 
$$dz_i=\frac{Z_0dZ_i-Z_idZ_0}{Z_0^2}=\frac{1}{Z_0^2}\left|\begin{array}{cc} Z_0 & Z_i \\ dZ_0 & dZ_i\end{array}\right|.$$
Hence if $i\in \{1,2\}$ and $j\in \{1,2\}\setminus \{i\}$ we obtain
\begin{eqnarray*} \tom^P_i=(-1)^i\frac{P}{eF_i}\left|\begin{array}{cc}Z_0&Z_j\\ dZ_0 & dZ_j\end{array}\right|=(-1)^i\frac{Z_0^{e-3}Q}{eZ_0^{e-1}f_i}Z_0^2dz_i=(-1)^i\frac{1}{e}\frac{Q}{f_i}dz_j.
\end{eqnarray*}
From this we obtain that $\om^{P,0}|_{U_0}\in H^0(U_0\cap C,\Om_C)$ is represented in \v{C}ech cohomology with respect to $(U_1'\cap C,U_2'\cap C)$ by the cocycle $(-Q\frac{dz_2}{f_1},Q\frac{dz_1}{f_2}),$ as expected.

\begin{remark} It is true that this computation is longer than the usual one, however the strategy is a bit different. Indeed, the classical approach consists in finding a differential form locally (on $U_0\cap C$) which is done by a \og guessing\fg\ process, after what one checks that the constructed differential form extends on $C.$ Here we somehow approach the problem the other way around, using Corollary \ref{LeCoroQuiTue}, once we have found the element $\om^P_{012},$ we already know that we have a global differential form on $C$, and the entire computation just comes down to compute a local representative of it, this process is long but of a mechanical nature.  In our opinion this second strategy reduces the importance of the \og guessing \fg\ part and is therefore more suitable for higher dimensional generalizations.
\end{remark}

\subsection{Optimality in Br\"uckmann and Rackwitz theorem}
We are now in position to prove optimality in Theorem \ref{B-RVanishing}. By applying theorems \ref{LimiteSetting} and \ref{LimitePair} to a very particular complete intersection, we will prove the following (so called Theorem \ref{theoB} in the introduction).
\begin{proposition}\label{example}
Let $N\geqslant 2$, let $0\leqslant c<N$. Take integers $\ell_1,\dots,\ell_k\gs1$ and $a<\ell_1+\cdots+\ell_k-k$. Suppose  $0\leqslant q:= N-c-\sum_{i=1}^k\min\{c,\ell_i\}$. Then, there exists a smooth complete intersection variety in $\bP^N$ of codimension $c$, such that 
$$H^q(X,S^{\ell_1}\Om_X\otimes\cdots\otimes S^{\ell_k}\Om_X(a))\neq 0.$$
\end{proposition}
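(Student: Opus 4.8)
\emph{The plan} is to feed one very special complete intersection into Theorems \ref{LimiteSetting} and \ref{LimitePair}, which turns the desired non-vanishing into an explicit, solvable question about a map of cohomology groups on $\bP^N$. I would fix a large integer $e$ and take for $X\subseteq\bP^N$ a complete intersection of codimension $c$ and multidegree $(e,\dots,e)$ cut out by Fermat type polynomials $F_i=\sum_l Z_l^e$ (after a general linear change of coordinates, so that $X$ and all the $X_i$ are smooth). I encode the exponents into a $\la$-setting $\Si=(X,\la^0,\dots,\la^c)$, paired with the trivial setting, by taking $\la^0=0$, putting every $\ell_i\gs c$ into $\la^c$ and every $\ell_i<c$ into $\la^{\ell_i}$. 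Then $\Si$ is simple, $\n(\Si)=\sum_i\min\{c,\ell_i\}$ so that $q(\Si)=n-\sum_i\min\{c,\ell_i\}=q$, and $t(\Si,\tSi)=\ell_1+\dots+\ell_k-k$, which is exactly the range allowed for $a$ by our hypothesis. By Theorem \ref{LimitePair} it then suffices to exhibit a nonzero class
$$\omega\in H^N\bigl(\bP^N,\tOm^{\Si_{\lim}}(a-b_\Si)\bigr)$$
lying simultaneously in $\im\Eul$ and in $\bigcap_{i=1}^c\bigl(\ker(\cdot F_i)\cap\bigcap_{j,k}\ker(\cdot dF_i^{\{j,k\}})\bigr)$; pushing such an $\omega$ back through the diagram produces a nonzero element of $H^q\bigl(X,\Om^{(\Si,\tSi)}(a)\bigr)$.

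\emph{Constructing $\omega$.} I would work in \v{C}ech cohomology for the standard cover as in Section \ref{Cech}, so that classes in $H^N(\bP^N,\tOm^{\Si_{\lim}}(a-b_\Si))$ are combinations of monomials $\frac{dZ^{J_1}\otimes\dots\otimes dZ^{J_r}}{Z^I}$ with $I\gs\bI$ and $|I|=b_\Si+|\Si_{\lim}|-a$. Since $F_i=\sum_l Z_l^e$ and $dF_i=e\sum_l Z_l^{e-1}dZ_l$, every such monomial all of whose exponents satisfy $i_l\ls e-3$ is annihilated by all the maps $\cdot F_i$ and $\cdot dF_i^{\{j,k\}}$; a numerical check — and this is where the hypothesis $q\gs 0$ is used, through $b_\Si=e\bigl(c+\w(\Si)\bigr)\ls eN$ — shows that for $e$ large one can pick such an $I$ with the required value of $|I|$. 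Landing in $\im\Eul$ is the delicate point: by Proposition \ref{PropositionEuler} (and the argument proving Theorem \ref{LimitePair}), $\im\Eul$ is the simultaneous kernel of the Euler contractions $\tOm_{\bP^N}\to\cO_{\bP^N}$ applied in each tensor factor, and a single monomial rarely lies there. Instead I would take a suitable alternating sum of such monomials, exactly in the spirit of the plane curve computation of Section \ref{Curves}, where the Euler identity $e F_i=\sum_l Z_l\,\partial F_i/\partial Z_l$ is what creates the cancellations, arranging at the same time that every monomial occurring keeps all its exponents $\ls e-3$. (When all $\ell_i\ls c$ the bundle $\tOm^{\Si_{\lim}}$ is trivial, $\im\Eul$ is everything, and one may simply take $\omega=1/Z^I$.) This provides the sought class, hence $H^q\bigl(X,\Om^{(\Si,\tSi)}(a)\bigr)\neq 0$.

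\emph{Descending to $S^{\ell_1}\Om_X\otimes\dots\otimes S^{\ell_k}\Om_X$.} If every $\ell_i\gs c$ then $\Om^{(\Si,\tSi)}=S^{\ell_1}\Om_X\otimes\dots\otimes S^{\ell_k}\Om_X$ and we are done; otherwise the factors coming from the indices with $\ell_i<c$ are $S^{\ell_i}\Om_{X_{\ell_i}}|_X$, which surject onto $S^{\ell_i}\Om_X$ via the conormal sequences of $X$ in $X_{\ell_i}$. I would write $u\colon\Om^{(\Si,\tSi)}\twoheadrightarrow S^{\ell_1}\Om_X\otimes\dots\otimes S^{\ell_k}\Om_X$ and $\mathcal{K}=\ker u$; the natural filtration on $\mathcal{K}$ has associated graded a direct sum of bundles of the shape $\Om^{\Si'}\otimes\cO_X(-D)$, where $D$ is an effective divisor of degree a positive multiple of $e$ and $q(\Si')\gs q+1$. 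Since $a-\deg D<0\ls t(\Si')$ for $e$ large, Lemma \ref{VanishingLemma} kills $H^q$ of each graded piece, so $H^q(X,\mathcal{K}(a))=0$; hence $u_*\colon H^q(X,\Om^{(\Si,\tSi)}(a))\to H^q(X,S^{\ell_1}\Om_X\otimes\dots\otimes S^{\ell_k}\Om_X(a))$ is injective, and the nonzero class found above maps to a nonzero class, which is what we want. (For $c=0$, where $X=\bP^N$, the same kind of cohomology computation using only the Euler sequence gives the statement directly.)

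\emph{The main obstacle} will be the construction in the second step: the two requirements that $\omega$ be killed by every $\cdot F_i$ and $\cdot dF_i^{\{j,k\}}$ and that $\omega$ lie in $\im\Eul$ pull in opposite directions, and reconciling them — by choosing both the complete intersection (Fermat type, large degree) and the explicit \v{C}ech representative (an alternating sum of Laurent monomials built on the Euler identity) carefully — is the heart of the argument. The numerical inequalities that make this possible degenerate into equalities exactly on the Br\"uckmann--Rackwitz boundary $q=0$, which is precisely why the bound is sharp.
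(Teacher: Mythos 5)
Your overall framework is right and tracks the paper's proof closely in spirit: special Fermat-type complete intersections, an encoding as a simple $\la$-setting with the correct values of $q$ and $t$, a class $\omega\in H^N\bigl(\bP^N,\tOm^{\Si_{\lim}}(a-b_\Si)\bigr)$ that must lie in both $\im\Eul$ and the intersection of the kernels of the $\cdot F_i$ and $\cdot dF_i^{\{j,k\}}$, and a final step to land in the actual target $H^q\bigl(X,S^{\ell_1}\Om_X\otimes\cdots\otimes S^{\ell_k}\Om_X(a)\bigr)$. You and the paper differ in one genuine and legitimate respect on that final step: you set up the simple setting with $\ell_i<c$ placed in slot $\la^{\ell_i}$, getting a bundle $\Om^{\Si}$ that \emph{surjects} onto $S^{\ell_1}\Om_X\otimes\cdots\otimes S^{\ell_k}\Om_X$, and then kill $H^q$ of the kernel using the vanishing lemma; the paper instead introduces the successors $c_1,c_2$ and builds an \emph{injection} $H^q(\Om^{\Si'}(a))\hookrightarrow H^q(\Om^{\Si}(a))$ from a simple setting into the possibly non-simple target (Proposition \ref{NonSimple}). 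Both routes seem viable; yours needs a slightly more careful accounting of the twists $\cO_X(-re)$ and of $t(\Si')$ on the graded pieces, but the $r\gs 1$ drop in the twist combined with $q(\Si')\gs q+1$ does the job, as you indicate.

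The real gap is exactly where you flag it: you do not actually construct $\omega$, and the ``alternating sums of Laurent monomials built on the Euler identity'' you gesture at is not the right mechanism. In Section \ref{Curves} the Euler identity $eF=\sum Z_l\partial F/\partial Z_l$ is what puts a class into $\ker(\cdot F)$; it is not what puts a class into $\im\Eul$. Membership in $\im\Eul$ is controlled by the Euler exact sequence $0\to\Om_{\bP^N}\to\tOm_{\bP^N}\to\cO_{\bP^N}\to 0$, and the clean way to hit $\im\Eul$ is to notice that $Z_0dZ_1-Z_1dZ_0=Z_0^2\,\Eul(dz_1)$ is manifestly a section of (the image of) $\Om_{\bP^N}(2)$ inside $\tOm_{\bP^N}(2)$. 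The paper's explicit class is
$$\omega=\frac{P}{Z_0^{e-1}\cdots Z_N^{e-1}}\bigotimes_{j,k}(Z_0dZ_1-Z_1dZ_0)^{\la^j_k-j},$$
with $\deg P=(q(\Si)+1)e+a-N-1-2|\Si_{\lim}|$, which for $e$ large is nonnegative. Each tensor factor lies in $\im\Eul$ by the above remark, and the denominator $Z_0^{e-1}\cdots Z_N^{e-1}$ guarantees that multiplying by $Z_l^e$ (resp. $Z_l^{e-1}dZ_l$) produces a nonnegative power of $Z_l$ and hence vanishes in $H^N$ — this is what kills $\cdot F_i$ and $\cdot dF_i^{\{j,k\}}$. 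Without producing such an explicit element, your argument remains a plausible plan rather than a proof; the two conditions (kernel of the differentials of the defining equations, image of the Euler map) are not in tension once the correct representative is chosen, but the choice has to be exhibited.

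One smaller point: the paper does not use the plain Fermat $F_i=\sum_l Z_l^e$ after a generic linear change of variable; it uses $F_p=\sum_j a_{pj}Z_j^e$ with a matrix $A$ all of whose minors are nonzero, which guarantees smoothness of every $X_i$ and of every partial intersection. Your variant is essentially equivalent, but ``a general linear change of coordinates'' on a single Fermat does not obviously give $c$ independent equations, so you would need to say this more carefully.
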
 
We will construct an example as follows. Take a $c\times (N+1)$ matrix 
$$A=\left(
\begin{array}{ccc}
a_{10} & \cdots & a _{1N}\\
\vdots& \ddots&\vdots \\
a_{c0}&\cdots& a_{cN}
\end{array}\right),
$$
where the $a_{ij}\in\bC$ are such that for any $ p\in \{1,\dots,c\}$, the $p\times p $ minors of the matrix $A$ are non zero. Fix an integer $e\geqslant 1$. For each $p\in \{1,\dots,c\}$, set
$$F_p:=\sum_{j=0}^Na_{pj}X_j^e \ \ \ {\rm and} \ \ \ X_p:=(F_1=0)\cap \cdots\cap (F_p=0).$$
One easily check that $X_i$ is smooth for any $i\in \{1,\dots, c\}.$ We will show that if we take $e\gg 1$ this example is sufficient to prove Proposition \ref{example}
\subsubsection{The simple case}
We first treat the \emph{simple} case. Take a simple setting $\Si=(X_c,\la^0,\dots,\la^c),$ if for any $0\leqslant j \leqslant c$ one denotes $\la^j=(\la^j_1,\dots,\la^j_{m_j})$ where $\la^j_k\geqslant j$ for any $1\leqslant k\leqslant m_j$. In our situation, one obtains: $\n(\Si)=\sum_{j=1}^cjm_j$, $\nz(\Si)=\sum_{j=1}^cm_j$, $|\Si_{\lim}|=\sum_{j=0}^c\sum_{k=1}^{m_j}(\la^j_k-j)$ and $b_{\Si}=e(c+\n(\Si))$. The statement is the following.
\begin{proposition}\label{SimpleCase}Take $X=X_c$ and $\Si$ as above. Take $a<t(\Si).$ If 
$$e\geqslant \frac{N+1+2|\Si_{\lim}|-a}{q(\Si)+1}\ \ \ \ {\rm then}\ \ \ \ H^{q(\Si)}(\Om^{\Si}(a))\neq 0.$$
\end{proposition}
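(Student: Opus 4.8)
The plan is to reduce the statement, via Theorem~\ref{LimitePair}, to the production of a single nonzero class in a top cohomology group of $\bP^N$, and then to write such a class down explicitly in \v{C}ech cohomology, exploiting the very special shape of the $F_p$. I would apply Theorem~\ref{LimitePair} to the $\la$-pair $(\Si,\tSi)$ with $\tSi:=(X_c,0,\dots,0)$ the trivial setting, so that $\Om^{(\Si,\tSi)}=\Om^{\Si}$, $\tOm^{\Si\cup\tSi}=\tOm^{\Si}$, $b_{\tSi}=0$ and $q(\Si,\tSi)=q(\Si)$, $t(\Si,\tSi)=t(\Si)$. Since $a<t(\Si)$, the theorem gives the commutative square with all arrows injective and $\im\varphi=\im\tva\cap\im\Eul$ inside $H^N\!\left(\bP^N,\tOm^{\Si_{\lim}}(a-b_{\Si})\right)$; hence it suffices to exhibit a nonzero element of $\im\tva\cap\im\Eul$. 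By Theorem~\ref{LimiteSetting}, $\im\tva$ is the intersection of the kernels of all operators $\cdot F_i$ and $\cdot dF_i^{\{j,k\}}$, and by Section~\ref{Cech} the ambient group is $V^{\mu}\otimes\check{H}^N(\fU,\cO_{\bP^N}(a-b_{\Si}-|\Si_{\lim}|))$, with \v{C}ech basis the Laurent monomials $\frac{dZ^{J_1}\otimes\cdots\otimes dZ^{J_r}}{Z^I}$, $I\gs\bI$, total $dZ$-degree $|\Si_{\lim}|$ and $|I|=b_{\Si}+|\Si_{\lim}|-a$, where $(\mu_1,\dots,\mu_r)$ is the tuple defining $\Si_{\lim}$.

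The core of the argument is then an explicit \v{C}ech computation for $\fU=(Z_i\neq 0)_{0\ls i\ls N}$, where the Fermat shape pays off: $\frac{\partial F_p}{\partial Z_j}=e\,a_{pj}Z_j^{e-1}$, so $F_p=\sum_j a_{pj}Z_j^e$ and $dF_p=e\sum_j a_{pj}Z_j^{e-1}dZ_j$ are diagonal in the coordinates. Consequently $\cdot F_p$ and $\cdot dF_p^{\{j,k\}}$ act on a \v{C}ech monomial $\frac{dZ^{J_1}\otimes\cdots}{Z^I}$ by lowering a single exponent $i_l$ by $e$, resp. by $e-1$ (and in the latter case inserting one $dZ_l$ in the $(j,k)$-th factor), and the Euler-contraction maps whose common kernel on $H^N$ equals $\im\Eul$ (coming from the symmetric powers of the Euler exact sequence~\eqref{EulerExactSequence}) lower one exponent by $1$. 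Since a Laurent monomial in $\check{H}^N(\fU,\cO_{\bP^N}(\cdot))$ represents the zero class as soon as one of its exponents is $\gs 0$, \emph{any} monomial all of whose denominator exponents satisfy $i_l\ls e-1$ lies automatically in $\bigcap_i\big(\ker(\cdot F_i)\cap\bigcap_{j,k}\ker(\cdot dF_i^{\{j,k\}})\big)=\im\tva$; and a monomial whose $dZ$'s are supported at coordinates with $i_l=1$ lies in $\im\Eul$. One may instead build the cocycle out of the Euler-closed blocks $Z_jdZ_k-Z_kdZ_j$, exactly as in the plane-curve computation of Section~\ref{Curves}: those are annihilated by the Euler contraction already at the level of cochains, which makes $\im\Eul$ transparent. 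It then remains to produce such a cocycle which is moreover a nonzero class, i.e. has all denominator exponents $\gs 1$.

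This last point is a purely combinatorial count on $I$: I must choose $J_1,\dots,J_r$ of prescribed symmetric degrees $\mu_1,\dots,\mu_r$ with $\sum\mu_s=|\Si_{\lim}|$, together with $I$ satisfying $\bI\ls I$, $i_l\ls e-1$, $|I|=b_{\Si}+|\Si_{\lim}|-a=e\big(c+\n(\Si)\big)+|\Si_{\lim}|-a$, and leaving the slack needed to carry the $dZ$-data and to guarantee Euler-closedness while keeping all post-cancellation exponents positive. Writing $q(\Si)=N-c-\n(\Si)$ (so $q(\Si)+1=N+1-c-\n(\Si)$) and using $a<t(\Si)$, one checks that such a choice exists precisely when $e\big(q(\Si)+1\big)\gs N+1+2|\Si_{\lim}|-a$, which is the stated hypothesis; when $|\Si_{\lim}|=0$ this already reproduces (for $N=2$, $c=1$) the bound $e\gs 3-a$ underlying the plane-curve case. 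The main obstacle is the construction of the previous paragraph in full generality — producing, for an arbitrary simple $\Si$ and arbitrary $a<t(\Si)$, an explicit cocycle that is simultaneously Euler-closed and annihilated by every $\cdot F_i$ and every $\cdot dF_i^{\{j,k\}}$ while representing a nonzero class, a somewhat intricate generalisation of the $2\times2$-determinant forms $Z_jdZ_k-Z_kdZ_j$ — together with the careful bookkeeping of $\Si_{\lim}$, $b_{\Si}$, $q(\Si)$ and $t(\Si)$ that converts the existence of such a cocycle into the displayed inequality for $e$.
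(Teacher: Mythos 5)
Your reduction is exactly the one the paper uses: apply Theorem \ref{LimitePair} with $\tSi$ trivial so that it suffices to exhibit a nonzero class in $\im\tva\cap\im\Eul\subseteq H^N\bigl(\bP^N,\tOm_{\bP^N}^{\Si_{\lim}}(a-b_{\Si})\bigr)$, then exploit in \v{C}ech cohomology the diagonal shape of the Fermat-type $F_p$ (any class all of whose denominator exponents are $\leqslant e-1$ is killed by every $\cdot F_i$ and every $\cdot dF_i^{\{j,k\}}$) together with Euler-closed blocks $Z_jdZ_k-Z_kdZ_j$, and convert the degree bookkeeping into the stated bound on $e$. The only point where you stop --- the construction, for an arbitrary simple $\Si$ and $a<t(\Si)$, of an explicit cocycle that is simultaneously Euler-closed, annihilated by all the operators, and nonzero, which you flag as the ``main obstacle'' --- is precisely what the paper's proof consists of, and it is a one-line choice rather than an intricate combinatorial construction: no distribution of the $dZ$'s over several coordinate pairs is needed. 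The paper takes
$$\omega:=\frac{P}{Z_0^{e-1}\cdots Z_N^{e-1}}\bigotimes_{j=0}^c\bigotimes_{k=1}^{m_j}(Z_0dZ_1-Z_1dZ_0)^{\la^{j}_k-j},$$
where $P\in\bC[Z_0,\dots,Z_N]$ is homogeneous of degree $(q(\Si)+1)e+a-N-1-2|\Si_{\lim}|$; the hypothesis $e\geqslant\frac{N+1+2|\Si_{\lim}|-a}{q(\Si)+1}$ is exactly the condition $\deg P\geqslant 0$, i.e.\ that such an element exists. All denominator exponents are at most $e-1$, so by your own observation $\omega\in\im\tva$; each block equals $Z_0^2\,\Eul(dz_1)$, so $\omega\in\im\Eul$; and a suitable choice of (a monomial term of) $P$ keeps the exponents of some surviving \v{C}ech monomial between $1$ and $e-1$, making the class nonzero. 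One small inaccuracy in your sketch: the claim that a monomial whose $dZ$'s sit only at coordinates with $i_l=1$ lies in $\im\Eul$ is neither justified nor needed --- the block construction you also propose (and which the paper uses) is the correct mechanism for Euler-closedness. So the approach is the same as the paper's; what you defer as the hard step is resolved there by the single explicit element above.
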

\begin{proof}
It suffices to apply theorems \ref{LimiteSetting} and \ref{LimitePair} to a non-zero element of the form
$$\omega:=\frac{P}{Z_0^{e-1}\cdots Z_N^{e-1}}\bigotimes_{j=0}^c\bigotimes_{k=1}^{m_j}(Z_0dZ_1-Z_1dZ_0)^{\la^{j}_k-j}\in H^N\left(\bP^N,\tOm_{\bP^N}^{\Si_{\lim}}(a-b_{\Si})\right).$$
Where $P\in\bC[Z_0,\dots,Z_N]$ is a homogenous polynomial of degree $(q(\Si)+1)e+a-N-1-2|\Si_{\lim}|.$
The condition on the degree just insures that such an element exists. Our statement follows at once using theorems \ref{LimiteSetting} and \ref{LimitePair} and the description of Section \ref{Cech} with $F_p=\sum_{j=0}^Na_{pj}Z_j^e$ and $dF_p=e\sum_{j=0}^Na_{pj}Z_j^{e-1}dZ_j$.
\end{proof}

\subsubsection{The general case}
To complete the proof of Proposition \ref{example} we also have to treat the non-simple case. If $\Si$ is not simple, we can not apply directly theorems \ref{LimiteSetting} and \ref{LimitePair}.
We need another type of successors. Take any setting $\Si=(X_p,\la^0,\dots,\la^p)$ where $\la^j=(\la^j_1,\dots,\la^j_{m_j}).$ Suppose, without loss of generality, that for all $1\leqslant j\leqslant p$ and for all $1\leqslant i\leqslant m_j$ we have $\la^j_i\geqslant 1$. And suppose also that $\Si $ is \emph{not} simple. We define the successors $c_1$ and $c_2$ as follows: consider $j_0:=\max\left\{1\leqslant  j\leqslant p \ \ {\rm such \ that}\ \ \exists\  1\leqslant i\leqslant m_j\ \  {\rm with} \ \ \la^j_i<j\right\}$ and
$i_0:=\min\left\{ 1\leqslant i\leqslant m_{j_0} \ \ {\rm such\ that} \ \ \la^{j_0}_i<j_0\right\}.$
Set
\begin{eqnarray*}
c_1(\Si)&:=&\left(X_p,\la^0,\dots,\la^{j_0-2},\la^{j_0-1}\cup(\la^{j_0}_{i_0}),\mu,\la^{j_0+1},\dots,\la^p\right)\\
c_2(\Si)&:=&\left(X_p,\la^0,\dots,\la^{j_0-2},\la^{j_0-1}\cup(\la^{j_0}_{i_0}-1),\mu,\la^{j_0+1},\dots,\la^p\right)
\end{eqnarray*}
where $\mu := \left(\la^{j_0}_1,\dots,\la_{i_0-1}^{j_0},\la_{i_0+1}^{j_0},\dots,\la^{j_0}_{m_{j_0}}\right).$
With those notation set also $\deg'\Si:=e_{j_0}.$ As in Proposition \ref{Suite-Exacte} we obtain an exact sequence
$$0\to \Om^{c_2(\Si)}(-\deg'\Si)\to \Om^{c_1(\Si)}\to \Om^{\Si}\to 0.$$
We can now prove the following.
\begin{proposition}\label{NonSimple}
Let  $\Si$ be any setting, and denote $q:=q(\Si).$ For  any integer $a<t(\Si)$,  there exists a simple setting $\Si'$ such that $q(\Si')=q$, and an injection
$$H^q\left(\Om^{\Si'}(a)\right)\hookrightarrow H^q\left(\Om^{\Si}(a)\right).$$
\end{proposition}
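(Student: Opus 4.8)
The plan is to run the now-familiar long–exact–sequence argument on the exact sequence
$$0\to \Om^{c_2(\Si)}(-\deg'\Si)\to \Om^{c_1(\Si)}\to \Om^{\Si}\to 0$$
attached to the non-simple successors $c_1,c_2$, and to induct on a measure of how far $\Si$ is from being simple. The natural measure is the \emph{total deficiency} $d(\Si):=\sum_{j=1}^p\sum_{i=1}^{m_j}\max\{0,\,j-\la^j_i\}$, which vanishes exactly when $\Si$ is simple. As already noted in the definition of $c_1,c_2$, one may assume $\la^j_i\gs 1$ for every $1\ls j\ls p$; this forces the index $j_0$ of that definition to satisfy $j_0\gs 2$, so that $c_1(\Si)$ and $c_2(\Si)$ are genuine $\la$-settings, and since the moved entry $\la^{j_0}_{i_0}$ is $\gs 1$, the setting $c_1(\Si)$ again satisfies $\la^j_i\gs1$ for $j\gs 1$, so the induction stays inside this class.

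First I would record the elementary bookkeeping, in the spirit of Proposition \ref{Prop-elementaire}. Passing from $\Si$ to $c_1(\Si)$ (resp. $c_2(\Si)$) only alters the single entry $\la^{j_0}_{i_0}$, which is moved from level $j_0$ to level $j_0-1$ (resp. moved and decreased by $1$). Since $\la^{j_0}_{i_0}\ls j_0-1$ one has $\min\{j_0-1,\la^{j_0}_{i_0}\}=\la^{j_0}_{i_0}$ and $\min\{j_0-1,\la^{j_0}_{i_0}-1\}=\la^{j_0}_{i_0}-1$, whence $\n(c_1(\Si))=\n(\Si)$ and $\n(c_2(\Si))=\n(\Si)-1$, i.e. $q(c_1(\Si))=q(\Si)$ and $q(c_2(\Si))=q(\Si)+1$. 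A comparison of $|\cdot|$ and $\nz(\cdot)$ gives $t(c_1(\Si))=t(\Si)$ and $t(c_2(\Si))\gs t(\Si)-1$ (the two cases being $\la^{j_0}_{i_0}\gs 2$, which leaves $\nz$ unchanged, and $\la^{j_0}_{i_0}=1$, which drops both $|\cdot|$ and $\nz$ by $1$). Finally the deficiency of the moved entry decreases by exactly $1$ and nothing else changes, so $d(c_1(\Si))=d(\Si)-1$.

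Now fix $a<t(\Si)$. Then $a<t(c_1(\Si))$, and since $\deg'\Si=e_{j_0}\gs 1$ we get $a-\deg'\Si\ls t(\Si)-2<t(c_2(\Si))$; together with $q(\Si)<q(c_2(\Si))$, Lemma \ref{VanishingLemma}, applied to $c_2(\Si)$ regarded as a $\la$-pair with trivial tilde part, gives $H^{q(\Si)}\big(\Om^{c_2(\Si)}(a-\deg'\Si)\big)=0$. Twisting the displayed exact sequence by $\cO_{X_p}(a)$ and reading off the long exact sequence in cohomology, this vanishing makes $H^{q(\Si)}\big(\Om^{c_1(\Si)}(a)\big)\to H^{q(\Si)}\big(\Om^{\Si}(a)\big)$ injective. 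If $\Si$ is already simple we take $\Si'=\Si$; otherwise $c_1(\Si)$ satisfies $q(c_1(\Si))=q(\Si)$, $t(c_1(\Si))=t(\Si)>a$ and $d(c_1(\Si))<d(\Si)$, so the induction hypothesis furnishes a simple $\Si'$ with $q(\Si')=q(\Si)$ and an injection $H^{q(\Si)}(\Om^{\Si'}(a))\hookrightarrow H^{q(\Si)}(\Om^{c_1(\Si)}(a))$, and composing the two injections concludes. I do not expect a genuine obstacle here: the only thing needing care is checking the $q$- and $t$-identities in the degenerate cases (for instance $\la^{j_0}_{i_0}=1$, so that the moved entry becomes $0$ in $c_2(\Si)$, or the removal of $\la^{j_0}_{i_0}$ emptying level $j_0$), and these are all settled by the same one-line $\min$ and counting computations used throughout Section \ref{MainSection}.
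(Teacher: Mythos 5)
Your proof is correct and follows essentially the same route as the paper: twist the $(c_1,c_2)$-exact sequence, kill $H^{q}$ of the $c_2(\Si)$ term via Lemma~\ref{VanishingLemma} (and your verification that $a-\deg'\Si<t(c_2(\Si))$ makes that step explicit), then iterate on $c_1$. The only cosmetic difference is that you introduce the deficiency $d(\Si)$ as an explicit induction measure, whereas the paper simply asserts that finitely many iterations of $c_1$ produce a simple setting; your bookkeeping on $\n$, $t$, $\nz$ and $d$ across $c_1$ and $c_2$ is accurate, including the degenerate cases $\la^{j_0}_{i_0}=1$ and $m_{j_0}=1$.
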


\begin{proof}
Of course, we can suppose that $\Si$ is not simple. 
Observe that $q(c_2(\Si))=q(\Si)+1=q+1.$ Therefore, applying Lemma \ref{VanishingLemma}, we obtain $$H^q\left(\Om^{c_2(\Si)}(a-\deg'\Si)\right)=0.$$
This yields an injection 
$$H^q\left(\Om^{c_1(\Si)}(a)\right)\hookrightarrow H^q\left(\Om^{\Si}(a)\right).$$
It suffices now to observe that after a sufficient number $r$ of iterations, we obtain a simple setting $\Si'=c_1^r(\Si)$ such that $q(\Si')=q.$ By induction we therefore get a chain of injections
$$H^q\left(\Om^{\Si'}(a)\right)=H^q\left(\Om^{c_1^r(\Si)}(a)\right)\hookrightarrow \cdots \hookrightarrow H^q\left(\Om^{c_1(\Si)}(a)\right)\hookrightarrow H^q\left(\Om^{\Si}(a)\right).$$ 
\end{proof}
The proof of Proposition \ref{example} now follows directly from propositions \ref{SimpleCase} and \ref{NonSimple}.


\subsection{Examples for the non-invariance under deformation}
Applying theorems \ref{LimiteSetting} and \ref{LimitePair} to a very particular family of complete intersection surfaces we will prove that the numbers $h^0\left(X,S^m\Om_X\right)$ are not deformation invariant as soon as $m\geqslant 2$.
Our example is the following. 
Take $\alpha:=(\alpha_1,\alpha_2)\in \bC^2$ and $\beta:=(\beta_1,\beta_2)\in \bC^2$. Take $a_0,\dots,a_4\in \bC$ such that $a_i\neq a_j$ if $i\neq j$. Take an integer $e\geqslant 5$ and set $e_1=\lfloor \frac{e}{2}\rfloor$ and $e_2=\lceil \frac{e}{2}\rceil.$ Set 
\begin{eqnarray*}
F_{\alpha}&:=& Z_0^e+Z_1^e+Z_2^e+Z_3^e+Z_4^e+\alpha_1Z_0^{e_1}Z_1^{e_2}+\alpha_2Z_2^{e_1}Z_3^{e_2}\\
G_{\be}&:=& a_0Z_0^e+a_1Z_1^e+a_2Z_2^e+a_3Z_3^e+a_4Z_4^e+\beta_1Z_0^{e_1}Z_1^{e_2}+\beta_2Z_2^{e_1}Z_3^{e_2}.
\end{eqnarray*} 
And set $$X_{\alpha,\beta}=(F_{\alpha}=0)\cap(G_{\beta}=0).$$
\begin{proposition}\label{PropFamille}
With the above notation, we have:
\begin{enumerate}
\item $h^0(X_{0,0},S^2\Om_{X_{0,0}})\neq 0$.
\item For generic $\alpha$ and $\beta$, $h^0(X_{\alpha,\beta},S^2\Om_{X_{\alpha,\beta}})= 0$.
\end{enumerate}
\end{proposition}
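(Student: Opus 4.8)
The plan is to derive both statements from Corollary \ref{LeCoroQuiTue} (i.e.\ Theorem \ref{MainTheorem}) applied with $c=2$, $k=1$, $\ell_1=2$, $a=0$, so that $q=n-kc=0$, $b=(k+1)\sum_{i=1}^c e_i=4e$ and $\ell_1-c=0$. For any smooth complete intersection surface $X=X_{\alpha,\beta}\subset\bP^4$ of multidegree $(e,e)$ with defining equations $F_1=F_\alpha$, $F_2=G_\beta$, this produces an injection $\varphi:H^0(X,S^2\Om_X)\hookrightarrow H^N(\bP^N,\Om_{\bP^N}^{(0)}(-4e))=H^4(\bP^4,\cO_{\bP^4}(-4e))$ together with the identification $\im\varphi=\im\tva\cap\im\Eul$, where $\im\tva=\bigcap_{i=1}^{2}\bigl(\ker(\cdot F_i)\cap\ker(\cdot dF_i^{\{1\}})\bigr)$ sits inside $H^4(\bP^4,\cO_{\bP^4}(-4e))$ and $\im\Eul$ depends only on $N,a,b,\ell_1,c$, not on the equations. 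Thus $h^0(X_{\alpha,\beta},S^2\Om_{X_{\alpha,\beta}})=\dim\bigl(\im\tva(\alpha,\beta)\cap\im\Eul\bigr)$, so everything is reduced to an explicit computation in the space $H^4(\bP^4,\cO_{\bP^4}(-4e))$ described in Section \ref{Cech}. One first records that $X_{0,0}$ is smooth (Jacobian criterion: the $2\times2$ minors of the Jacobian of $(F_0,G_0)$ are the $(a_j-a_i)Z_i^{e-1}Z_j^{e-1}$, which cannot all vanish on $X_{0,0}$ since the $a_i$ are distinct), and that $X_{\alpha,\beta}$, and the intermediate hypersurface $(F_\alpha=0)$, remain smooth for generic $(\alpha,\beta)$ because smoothness is open and holds at $(0,0)$.

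\textbf{Part (1).} Here $\Si:=(X_{0,0},\emptyset,\emptyset,(2))$ is a simple $\lambda$-setting with $q(\Si)=0$, $\Si_{\lim}=(\bP^4,(0))$, $|\Si_{\lim}|=0$, and $a=0<t(\Si)=1$; since $e\geqslant 5$ the numerical hypothesis $e\geqslant\frac{N+1+2|\Si_{\lim}|-a}{q(\Si)+1}=5$ of Proposition \ref{SimpleCase} holds, and it gives $H^0(X_{0,0},S^2\Om_{X_{0,0}})\neq 0$. Concretely, one takes the nonzero class $\omega:=\dfrac{1}{Z_0^{4}Z_1^{e-1}Z_2^{e-1}Z_3^{e-1}Z_4^{e-1}}\in H^4(\bP^4,\cO_{\bP^4}(-4e))$ (a valid \v{C}ech representative precisely because $e\geqslant 5$) and checks, using the multiplication rules of Section \ref{Cech} and the fact that $F_0=\sum_kZ_k^e$ and $G_0=\sum_k a_kZ_k^e$ involve only pure powers: for every $k$ the product $\omega\cdot Z_k^{e}$ (resp.\ $\omega\cdot Z_k^{e-1}dZ_k$) has a denominator in which $Z_k$ no longer appears, hence vanishes in $H^4$. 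Therefore $\omega\cdot F_i=0$ and $\omega\cdot dF_i^{\{1\}}=0$ for $i=1,2$, so $\omega\in\im\tva$, and the particular shape of $\omega$ places it in $\im\Eul$ as well (this is exactly the verification inside the proof of Proposition \ref{SimpleCase}). Hence $\varphi^{-1}(\omega)\neq 0$.

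\textbf{Part (2).} Now the claim is that $\im\tva(\alpha,\beta)\cap\im\Eul=0$ for generic $(\alpha,\beta)$. First one makes $\im\Eul$ explicit: it is the image of $H^N(\bP^N,\Om_{\bP^N}^{\Si_{\lim}}\otimes\tOm_{\bP^N}^{\tSi_{\lim}}(-4e))$ under the map induced by the Euler exact sequence \eqref{EulerExactSequence}, and is described in \v{C}ech cohomology by the Euler contraction; this is a fixed subspace of $H^4(\bP^4,\cO_{\bP^4}(-4e))$. Dually (Serre duality), $\im\tva(\alpha,\beta)$ is the annihilator of the subspace $W_{\alpha,\beta}\subset H^0(\bP^4,\cO_{\bP^4}(4e-5))$ spanned by $F_\alpha\cdot H^0(\cO(3e-5))$, $G_\beta\cdot H^0(\cO(3e-5))$ and the $\partial_jF_\alpha\cdot H^0(\cO(3e-4))$, $\partial_jG_\beta\cdot H^0(\cO(3e-4))$ for $0\leqslant j\leqslant 4$. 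The argument then tracks, across the lattice of multi-indices $I$, the linear relations forced on the \v{C}ech coefficients $\omega^I$ of a putative $0\neq\omega\in\im\tva(\alpha,\beta)\cap\im\Eul$ by the conditions $\omega\cdot\partial_jF_\alpha=0$, $\omega\cdot\partial_jG_\beta=0$ (each equivalent to $\omega\cdot\partial_j\bullet=0$ for all five $j$, by freeness of the $dZ_j$), together with $\omega\cdot F_\alpha=0$, $\omega\cdot G_\beta=0$ and the $\im\Eul$ condition. The key point is that the deformation monomials $Z_0^{e_1}Z_1^{e_2}$ and $Z_2^{e_1}Z_3^{e_2}$ are chosen so that their partial derivatives couple the blocks of variables $\{Z_0,Z_1\}$ and $\{Z_2,Z_3\}$ that the pure Fermat equations leave independent, while the distinct coefficients $a_i$ in $G_\beta$ break the residual symmetry; for generic $\alpha,\beta$ this rigidifies the system enough to force every $\omega^I=0$, so $\varphi^{-1}=0$. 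Equivalently and more softly, since $(\alpha,\beta)\mapsto h^0(X_{\alpha,\beta},S^2\Om_{X_{\alpha,\beta}})$ is upper semicontinuous on the Zariski-open, nonempty locus where $X_{\alpha,\beta}$ is a smooth complete intersection, the set $\{h^0=0\}$ is Zariski open, so it suffices to exhibit one pair $(\alpha_0,\beta_0)$ with $h^0=0$; the computation above does exactly this for the generic member.

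\textbf{Main obstacle.} The delicate part is the final step of (2): organising the numerous linear conditions on the coefficients $\omega^I$ — in particular disentangling how the two deformation monomials interact with the Fermat terms and with the $\im\Eul$ constraint — and showing that the resulting system admits only the trivial solution for generic $\alpha,\beta$ (equivalently, producing and checking an explicit good pair $(\alpha_0,\beta_0)$). Everything else is either a direct application of Corollary \ref{LeCoroQuiTue} / Proposition \ref{SimpleCase} or an explicit, if somewhat lengthy, \v{C}ech computation of the type already carried out in Section \ref{Cech} and Section \ref{Curves}.
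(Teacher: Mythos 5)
Your overall strategy matches the paper's in spirit, and Part (1) is fine: you correctly specialize Proposition \ref{SimpleCase} with $\Si=(X_2,\varnothing,\varnothing,(2))$, the numerical threshold $e\geqslant 5$, and the explicit class $1/(Z_0^4Z_1^{e-1}\cdots Z_4^{e-1})$.

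Part (2), however, has a genuine gap: you set up the framework ($\im\tva\cap\im\Eul$, the reduction to linear conditions on the \v{C}ech coefficients $\xi^I$, the observation that the deformation monomials couple the variable blocks) but then explicitly defer the crux, writing that ``organising the numerous linear conditions \dots and showing that the resulting system admits only the trivial solution'' is the main obstacle. That computation is the proof. The paper carries it out in full: using $eF=\sum Z_i\,\partial F/\partial Z_i$ it first reduces $\im\tva$ to $\bigcap_{i=0}^4\bigl(\ker(\cdot\partial_i F_\alpha)\cap\ker(\cdot\partial_i G_\beta)\bigr)$; then for each $i$ a $2\times 2$ Gauss elimination (valid whenever $\beta_1\neq a_0\alpha_1$ etc.\ --- these are the explicit genericity conditions) converts the two constraints into \emph{box}-type vanishings of the form $\xi^I=0$ when $i_0>e-1$, respectively when $i_0>e_1-1$ and $i_1>e_2$, and similarly for the other coordinates; and finally a one-line estimate $4e=i_0+\cdots+i_4\leqslant 2(e_2+e-2)+e-1\leqslant 4e-4$ shows every admissible $I$ is caught by one of those boxes, hence $\im\tva=0$. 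Nothing in your proposal supplies these steps, and the appeal to upper semicontinuity is circular as stated, since you still need one explicit $(\alpha_0,\beta_0)$ with $h^0=0$, which is exactly the missing computation.

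A secondary remark on efficiency: in this instance the $\Eul$-condition is vacuous. Since $\ell_1-c=0$, the bottom row of the diagram in Corollary \ref{LeCoroQuiTue} is just $\cO_{\bP^4}(-4e)$ and the vertical Euler map at the $H^4$ level is an isomorphism, so $\im\Eul$ is the whole space; equivalently, $H^0(X,S^2\Om_X)\cong H^0(X,S^2\tOm_X)$ and one can invoke Theorem \ref{LimiteSetting} directly, bypassing Theorem \ref{LimitePair}. This is what the paper does, and it spares you both the $\im\Eul$ bookkeeping and the Serre-duality reformulation of $\im\tva$ that you introduce.
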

\begin{proof} The fact that $h^0(X_{0,0},S^2\Om_{X_{0,0}})\neq 0$ is a very particular case of Proposition \ref{SimpleCase}. The rest of the  proof is a straightforward computation, but we give it for the sake of completeness. Observe first that $$H^0\left(X_{\alpha,\beta},S^2\Om_{X_{\alpha,\beta}}\right)\cong H^0\left(X_{\alpha,\beta},S^2\tOm_{X_{\alpha,\beta}}\right).$$
By Theorem \ref{LimiteSetting} and we obtain an injection 
$H^0\left(X_{\alpha,\beta},S^2\tOm_{X_{\alpha,\beta}}\right)\stackrel{\tva}{\hookrightarrow} H^4\left(\bP^N,\cO_{\bP^N}(-4e)\right)$ such that 
$$\tva\left(H^0\left(X_{\alpha,\beta},S^2\tOm_{X_{\alpha,\beta}}\right)\right)=\ker(\cdot F_{\alpha})\cap\ker(\cdot G_{\beta})\cap\ker(\cdot dF_{\alpha})\cap\ker(\cdot dG_{\beta}).$$
Let $\xi\in H^4\left(\bP^N,\cO_{\bP^N}(-4e)\right)$, observe that $\xi\in\ker\left(\cdot dF_{\alpha}\right)\ \ {\rm if \ and \ only \ if \ \ } \xi\in\bigcap_{i=0}^4\ker\left(\cdot \frac{\partial F_{\alpha}}{\partial Z_i}\right).$
Moreover, since $eF=\sum_{i=0}^4Z_i\frac{\partial F_{\alpha}}{\partial Z_i}$, we see that if $\xi\in\bigcap_{i=0}^4\ker\left(\cdot \frac{\partial F_{\alpha}}{\partial Z_i}\right)$ then $\xi\in \ker (\cdot F_{\alpha}).$ A similar argument for $G_{\beta}$ shows that 
$$\im \tva= \bigcap_{i=0}^4\left(\ker\left(\cdot\frac{\partial F_{\alpha}}{\partial Z_i}\right)\cap\ker\left(\cdot\frac{\partial G_{\beta}}{\partial Z_i}\right)\right).$$
Now we proceed to a standart Gauss algorithm  
$$
\begin{array}{ccl}
\xi \in \ker\left(\cdot\frac{\partial F_{\alpha}}{\partial Z_0}\right)\cap\ker\left(\cdot\frac{\partial G_{\beta}}{\partial Z_0}\right)& \Leftrightarrow  &
\left\{
\begin{array}{lcc}
 \sum_{I}\xi^I\frac{1}{Z^{I}}\cdot \frac{\partial F_{\alpha}}{\partial Z_0}&=&0\\
 \sum_{I}\xi^I\frac{1}{Z^{I}}\cdot \frac{\partial G_{\beta}}{\partial Z_0}&=&0\\
\end{array}
\right.\\
&&\\
&\Leftrightarrow  &
\left\{
\begin{array}{lcc}
 \sum_{I}\xi^I\frac{1}{Z^{I}}\cdot(eZ_0^{e-1}+e_1\alpha_1Z_0^{e_1-1}Z_1^{e_2})&=&0\\
 \sum_{I}\xi^I\frac{1}{Z^{I}}\cdot (ea_0Z_0^{e-1}+e_1\beta_1Z_0^{e_1-1}Z_1^{e_2})&=&0\\
\end{array}
\right.\\
&&\\
&\Leftrightarrow  &
\left\{
\begin{array}{lcc}
 \sum_{I}\xi^I\frac{1}{Z^{I}}\cdot Z_0^{e-1}&=&0\\
 \sum_{I}\xi^I\frac{1}{Z^{I}}\cdot Z_0^{e_1-1}Z_1^{e_2}&=&0\\
\end{array}
\right.\\
&&\\
&\Leftrightarrow  &
\left\{
\begin{array}{lcl}
 \xi^I=0 & & \forall I=(i_0,\dots, i_4) \ / \  i_0> e-1 \\
 \xi^I=0 & & \forall I=(i_0,\dots, i_4) \ / \  i_0> e_1-1 \ \rm{and}  \ i_1 > e_2. \\
\end{array}
\right.
\end{array}
$$
Note that to do this, we suppose $\beta_1\neq a_0\alpha_1.$ Similarly, we find

$$
\begin{array}{ccc}
\xi\in \ker\left(\cdot\frac{\partial F_{\alpha}}{\partial Z_1}\right)\cap\ker\left(\cdot\frac{\partial G_{\beta}}{\partial Z_1}\right)& \Leftrightarrow  &
\left\{
\begin{array}{lcl}
 \xi^I=0 & & \forall I=(i_0,\dots, i_4) \ / \  i_1> e-1 \\
 \xi^I=0 & & \forall I=(i_0,\dots, i_4) \ / \  i_0> e_1 \ \rm{and}  \ i_1> e_2-1 \\
\end{array}
\right.
\end{array}
$$
$$
\begin{array}{ccc}
\xi\in \ker\left(\cdot\frac{\partial F_{\alpha}}{\partial Z_2}\right)\cap\ker\left(\cdot\frac{\partial G_{\beta}}{\partial Z_2}\right)& \Leftrightarrow  &
\left\{
\begin{array}{lcl}
 \xi^I=0 & & \forall I=(i_0,\dots, i_4) \ / \  i_2> e-1 \\
 \xi^I=0 & & \forall I=(i_0,\dots, i_4) \ / \  i_2> e_1-1 \ \rm{and}  \ i_3> e_2  \\
\end{array}
\right.
\end{array}
$$
$$
\begin{array}{ccc}
\xi\in \ker\left(\cdot\frac{\partial F_{\alpha}}{\partial Z_3}\right)\cap\ker\left(\cdot\frac{\partial G_{\beta}}{\partial Z_3}\right)& \Leftrightarrow  &
\left\{
\begin{array}{lcl}
 \xi^I=0 & & \forall I=(i_0,\dots, i_4) \ / \  i_3> e-1 \\
 \xi^I=0 & & \forall I=(i_0,\dots, i_4) \ / \  i_2> e_1 \ \rm{and}  \ i_3> e_2-1 \\
\end{array}
\right.
\end{array}
$$
$$
{\rm and}\ \ \ \begin{array}{ccc}
\xi \in \ker\left(\cdot\frac{\partial F_{\alpha}}{\partial Z_4}\right)\cap\ker\left(\cdot\frac{\partial G_{\beta}}{\partial Z_4}\right)& \Leftrightarrow  &
\left\{
\begin{array}{ccc}
 \xi^I=0 & & \forall I=(i_0,\cdots, i_4) \ / \  i_4>e-1.\\
\end{array}
\right.
\end{array}
$$
Now we just have to observe that any multi-index $I=(i_0,\dots,i_4)$ with $|I|=4a$ satisfies one of those conditions, because otherwise we would get 
$$4e=i_0+\dots+i_4\leqslant 2(e_2+e-2)+e-1 \leqslant 4e-4,$$
which is a contradiction. And therefore, if $\xi\in \im \tva,$ then $\xi^I=0$ for all $I$. Hence, $\im\tva=\{0\}.$
\end{proof}

Observe that from this simple example, one can easily generate other families for which the dimension of the space of holomorphic differential forms jumps.
\begin{corollary}\label{CoroFamily}
For any $n\geqslant 2$, for any $m\geqslant 2$, there is a family of varieties $\mathscr{Y}\to B$ over a curve, of relative dimension $n$, and a point $0\in B$ such that for generic $t\in B,$ $$h^0(Y_0,S^m\Om_{Y_0})> h^0(Y_t,S^m\Om_{Y_t}).$$
\end{corollary}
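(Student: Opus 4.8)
The plan is to reduce everything to Proposition \ref{PropFamille} by two devices: a product construction, which handles every case with relative dimension $n\gs 3$, and, for surface families (the case $n=2$, $m\gs3$), a verbatim repetition of the argument of Proposition \ref{PropFamille} with $S^2$ replaced by $S^m$.

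First the product construction. Let $\pi\colon\mathscr{X}\to B$ be the family of Proposition \ref{PropFamille}, restricted over a smooth curve $B$ through the point $0$ with $\al=\be=0$ and general enough that $h^0(X_t,S^2\Om_{X_t})=0$ for $t$ in a dense open of $B$. Fix a smooth projective $Z$ of dimension $n-2$: put $Z=\bP^{n-2}$ if $m=2$, and $Z=C\times\bP^{n-3}$ with $C$ a curve of genus $\gs2$ if $m\gs3$ (with $\bP^0$ a point). Set $\mathscr{Y}:=\mathscr{X}\times Z\to B$, which has relative dimension $n$. Since $\Om_{X\times Z}\cong\pr_1^*\Om_X\oplus\pr_2^*\Om_Z$, the Künneth formula gives, for every $t$,
$$H^0(Y_t,S^m\Om_{Y_t})=\bigoplus_{i+j=m}H^0(X_t,S^i\Om_{X_t})\otimes H^0(Z,S^j\Om_Z),$$
and $H^0(\bP^k,S^j\Om_{\bP^k})=0$ for $j\gs1$ forces $H^0(Z,S^j\Om_Z)=H^0(C,\om_C^{\otimes j})$ when $m\gs3$ (and $H^0(Z,S^0\Om_Z)=\bC$ when $m=2$). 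By semicontinuity $h^0(X_0,S^i\Om_{X_0})\gs h^0(X_t,S^i\Om_{X_t})$ for all $i$ and general $t$, and by Proposition \ref{PropFamille} this is strict for $i=2$. As $H^0(C,\om_C^{\otimes(m-2)})\ne0$ (because $m-2\gs1$ and $g(C)\gs2$), respectively $H^0(Z,\cO_Z)=\bC\ne0$ when $m=2$, the summand $i=2$, $j=m-2$ already forces $h^0(Y_0,S^m\Om_{Y_0})>h^0(Y_t,S^m\Om_{Y_t})$ for general $t$. This settles all cases with $n\gs3$, and also $n=2$, $m=2$ (which is Proposition \ref{PropFamille} itself, with $\mathscr{Y}=\mathscr{X}$).

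There remains $n=2$, $m\gs3$, where $Z$ is forced to be a point. Here I would redo Proposition \ref{PropFamille} with $2$ replaced by $m$: take $X_{\al,\be}=(F_\al=0)\cap(G_\be=0)\subset\bP^4$, a complete intersection of two hypersurfaces of degree $e:=2m+1$, with $F_\al$, $G_\be$ of Fermat type plus the same binomial perturbations $\al_iZ_{2i-2}^{e_1}Z_{2i-1}^{e_2}$, $\be_iZ_{2i-2}^{e_1}Z_{2i-1}^{e_2}$ ($e_1=\lfloor e/2\rfloor=m$, $e_2=\lceil e/2\rceil=m+1$), and $X_0:=X_{0,0}$. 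Applying Proposition \ref{SimpleCase} to the simple setting $\Si=(X_0,0,0,(m))$ — for which $q(\Si)=0$, $\Si_{\lim}=(\bP^4,(m-2))$, $b_\Si=4e$, and the hypothesis of that proposition reads exactly $e\gs2m+1$ — yields $H^0(X_0,S^m\Om_{X_0})\ne0$; the section produced is $\tva^{-1}$ of $\tfrac{1}{Z_0^{e-1}\cdots Z_4^{e-1}}(Z_0dZ_1-Z_1dZ_0)^{m-2}\in H^4\big(\bP^4,S^{m-2}\tOm_{\bP^4}(-4e)\big)$ (the relevant polynomial $P$ of Proposition \ref{SimpleCase} has degree $e-2m-1=0$). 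For general $(\al,\be)$ one must then check that
$$\bigcap_{i=1}^{2}\Big(\ker(\cdot F_i)\cap\ker(\cdot G_i)\cap\ker(\cdot dF_i)\cap\ker(\cdot dG_i)\Big)=0$$
inside $H^4\big(\bP^4,S^{m-2}\tOm_{\bP^4}(-4e)\big)\cong S^{m-2}V\otimes H^4(\bP^4,\cO_{\bP^4}(-4e-m+2))$; granting this, Theorems \ref{LimiteSetting} and \ref{LimitePair} give $h^0(X_{\al,\be},S^m\Om_{X_{\al,\be}})=0$ for general $(\al,\be)$, and restricting to a general curve $B\ni0$ produces the jumping family of surfaces.

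The main obstacle is precisely this last vanishing for $m\gs3$. In the case $m=2$ of Proposition \ref{PropFamille} the target is spanned by the Laurent monomials $Z^{-I}$ with $|I|=4e$, the perturbations translate the kernel conditions into the numerical constraints recorded there, and the contradiction is the plain count $4e=\sum_j i_j\ls 4e-4$. For $m\gs3$ the extra factor $S^{m-2}V$ enriches the system: the maps $\cdot dF_i$, $\cdot dG_i$ no longer act monomial by monomial but mix the different $dZ^K$–components with $|K|=m-2$, so one must first run the Gauss elimination with $K$ fixed and then propagate the relations across the $K$'s. I expect that the same binomial perturbations still force, for each $K$, a contradiction of the form $|I|=4e+(m-2)$ against a strictly smaller upper bound, but organizing this cleanly is exactly what the successor formalism of Section \ref{MainSection} — in particular Proposition \ref{Suite-Exacte} and Lemma \ref{VanishingLemma} — is built to make routine; it is the one step that is genuinely heavier than in Proposition \ref{PropFamille}.
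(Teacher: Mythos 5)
Your product argument for $n\gs 3$ follows the paper's route: the paper crosses the surface family of Proposition \ref{PropFamille} with an $(n-2)$-dimensional abelian variety $A$ and invokes K\"unneth. Using $A$ rather than your $\bP^{n-2}$ / $C\times\bP^{n-3}$ case split has the small advantage that $\Om_A$ is trivial, so $H^0(A,S^j\Om_A)\neq 0$ for every $j$ simultaneously and a single auxiliary factor handles all $m\gs 2$ without distinguishing $m=2$ from $m\gs 3$; but your version is just as sound, and the key step in both --- the $i=2$, $j=m-2$ K\"unneth summand jumps, the $i=1$ summand vanishes because $h^0(X_t,\Om_{X_t})=0$ by Br\"uckmann--Rackwitz, and the remaining summands cannot decrease by semicontinuity --- is correct.

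For $n=2$, $m\gs 3$ you have put your finger on a genuine gap, and it is a gap in the paper's one-line proof as well as in your write-up: a $0$-dimensional abelian variety is a point, so $\ccX\times A=\ccX$, and what is then needed is $h^0(X_{\al,\be},S^m\Om_{X_{\al,\be}})<h^0(X_{0,0},S^m\Om_{X_{0,0}})$ for generic $(\al,\be)$ and every $m\gs 3$, which Proposition \ref{PropFamille} establishes only for $m=2$. Your sketch sets the problem up correctly (apply Theorem \ref{LimiteSetting} with $\Si=(X,0,0,(m))$, land in $H^4(\bP^4,S^{m-2}\tOm_{\bP^4}(-4e))$, take $e\gs 2m+1$ so that $P$ may be a nonzero constant of degree $e-2m-1$), but the rank computation you would need to run is not carried out, and as you say it is genuinely heavier: once the $S^{m-2}V$-factor is present, the maps $\cdot dF_i$ and $\cdot dG_i$ no longer act diagonally on Laurent monomials but couple the $dZ^K$-components, so the straightforward Gauss elimination of Proposition \ref{PropFamille} does not transplant verbatim. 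Until that elimination is finished (or replaced by a different surface example for $n=2$), your proof is complete for $n\gs 3$ with any $m\gs 2$ and for $n=m=2$, and explicitly incomplete for $n=2$, $m\gs 3$.
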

\begin{proof} Let  $\ccX$ be the family of surfaces constructed in Proposition \ref{PropFamille}. Then it suffices to consider the family $\ccX\times A$ for any $(n-2)-$dimensional abelian variety $A$.
\end{proof}


\section{Varieties with ample  cotangent bundle}\label{SectionAmplitude}
\subsection{Statements}
In this section we prove our main application of the results of Section  \ref{MainSection}. Our statement  is the following partial result towards Debarre's conjecture (denoted by Theorem \ref{theoD} in the introduction).

\begin{theorem}\label{Amplitude}
Let $N,c,e\in\bN$ such that $N\geqslant 2$, $c\geqslant \frac{3N-2}{4}$ and $e\geqslant 2N+3$. If $X\subseteq \bP^N$ be a general complete intersection of codimension $c$ and multidegree $(e,\dots, e)$, then $\Om_X$ is ample.
\end{theorem}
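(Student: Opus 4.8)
\emph{Proof strategy.} The plan is to build one explicit smooth complete intersection $X_0\subseteq\bP^N$ of codimension $c$ and multidegree $(e,\dots,e)$ with $\Om_{X_0}$ ample, and then to conclude by openness of ampleness: over the (nonempty) locus $T^{\circ}$ of the parameter space of such complete intersections where the member is smooth, the relative cotangent bundle $\Om_{\ccX/T^{\circ}}$ restricts on fibres to the $\Om_{\ccX_t}$, and the set of $t$ with $\Om_{\ccX_t}$ ample is Zariski open; a single ample example therefore forces the general member to have ample cotangent bundle. So everything reduces to producing and checking $X_0$.

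For the example I would take $X_0=H_1\cap\cdots\cap H_c$ with $H_i=(F_i=0)$, where each $F_i$ is a \emph{generic small deformation of a Fermat polynomial} $\sum_{j=0}^N a_{i,j}Z_j^e$, the coefficients and deformation parameters chosen generically enough that every partial intersection $X_p=H_1\cap\cdots\cap H_p$ is smooth and that the linear algebra below can be solved — this is a many-variable, many-hypersurface analogue of the equations used in Proposition \ref{PropFamille}. To prove $\Om_{X_0}$ ample I would use the criterion that a vector bundle $E$ on a projective variety is ample as soon as $S^mE\otimes A^{-1}$ is globally generated for some ample line bundle $A$ and some $m\gs 1$: global generation forces $\cO_{\bP(E)}(m)\otimes\pi^*A^{-1}$ to be nef, whence $\cO_{\bP(E)}(1)$ is nef, and a short Segre-class computation (using that $\cO_{\bP(E)}(m)\otimes\pi^*A^{-1}$ is relatively ample over the base) upgrades nef to ample. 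Thus it suffices to exhibit an $m$ for which the global sections of $S^m\Om_{X_0}\otimes\cO_{X_0}(-1)$ have empty common zero locus in $\bP(\Om_{X_0})$.

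To produce those sections I would apply Theorem \ref{LimiteSetting}, together with Theorem \ref{LimitePair} to pass from $\tOm$ to $\Om$ via $\Eul$. Since $c>n$ in the range considered, $X_0$ cannot be fed directly into a ``diagonal'' $\la$-setting, so I would apply Theorem \ref{LimiteSetting} to a carefully chosen (possibly mixed, non-diagonal) $\la$-setting $\Si$ on an intermediate complete intersection $X_{N_0}$ with $N_0\approx N/2$, arranged so that $q(\Si)=0$, and then restrict the resulting symmetric forms from $X_{N_0}$ down to $X_0$ along the conormal surjections — exactly the trick described in the remark after Corollary \ref{LeCoroQuiTue}. Theorem \ref{LimiteSetting} identifies this $H^0$ with the explicit subspace $\bigcap_i\bigl(\ker(\cdot F_i)\cap\bigcap_{j,k}\ker(\cdot dF_i^{\{j,k\}})\bigr)$ of a top cohomology group of $\bP^N$, which by Section \ref{Cech} is spanned by \v{C}ech classes $P\cdot\frac{dZ^{J_1}\otimes\cdots}{Z^I}$; for the deformed Fermat equations the operators $\cdot F_i$ and $\cdot dF_i^{\{j,k\}}$ act on these monomials transparently, and a Gauss-elimination argument, the higher-dimensional version of the one in the proof of Proposition \ref{PropFamille}, yields an explicit spanning family of forms on $X_0$, each of the shape $P\cdot(\text{product of }2\times 2\text{ minors in }Z,dZ)/Z^I$ with $P$ ranging over an explicit monomial space. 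The degree bound $e\gs 2N+3$ enters here: by the estimate of Proposition \ref{SimpleCase} one needs the symmetric weights $\ell_i$ to be of order $N$ to have a large enough family while keeping the twist negative, and $2N+3$ is what makes this consistent.

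The heart of the argument — and the step I expect to be the main obstacle — is to show this explicit family has empty base locus in $\bP(\Om_{X_0})$, i.e. that for every $x\in X_0$ and every nonzero tangent direction at $x$ some form is nonzero there. Evaluating and using the Fermat-type structure, the generic ``rotations'' provided by the coefficients $a_{i,j}$ (as in Proposition \ref{PropFamille}) reduce this to a finite combinatorial check on monomial exponents; the codimension hypothesis is exactly what makes the check succeed, since $c\gs\frac{3N-2}{4}$ is equivalent to $c\gs 3n-2$, which guarantees that after the $\approx N/2$ conormal constraints and the restriction to $X_0$ there remain enough free indices in $\Si$ for the family to separate the roughly $2n-1$ dimensions of $\bP(\Om_{X_0})$. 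Once the base locus is empty, $S^m\Om_{X_0}\otimes\cO_{X_0}(-1)$ is globally generated and the criterion of the second paragraph gives ampleness of $\Om_{X_0}$; the delicate points are then choosing the Fermat deformation so that the Gauss elimination leaves a usable kernel, and the bookkeeping matching the two thresholds $e\gs 2N+3$ and $c\gs\frac{3N-2}{4}$ with, respectively, existence of the forms and emptiness of their base locus.
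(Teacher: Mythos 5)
Your overall frame (openness of ampleness plus one explicit example built from deformed Fermat equations, with the sections produced by Theorem \ref{LimiteSetting}/\ref{LimitePair} and the intermediate-intersection trick) matches the paper, but the step you yourself flag as the heart of the argument --- showing that the explicit family of forms has \emph{empty} base locus in $\bP(\Om_{X_0})$, hence that $S^m\Om_{X_0}(-1)$ is globally generated --- is not achievable with these forms, and this is a genuine gap. Every form the construction yields (Proposition \ref{DiffForms}) is a determinant whose $q$-th column is built from $b_q(u)=z_qu$ and $\be_q(u)=z_qdu+eu\,dz_q$; consequently it vanishes identically on each locus $(z_q=0)\cap(\xi_q=0)$, i.e.\ on every $\bP(\Om_{W_q})$ where $W_q=X\cap(Z_q=0)$. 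These loci have dimension $2\dim X-3\gs 1$ as soon as $\dim X\gs 2$, so the common zero locus of the whole family is never empty in the range of the theorem, global generation fails, and the combinatorial/Gauss-elimination check you propose cannot succeed. The codimension hypothesis $c\gs\frac{3N-2}{4}$ is not what makes the base locus empty; in the paper it is exactly what makes the dimension count $-4c+3N-2\ls 0$ work, showing the base locus is no larger than $\bigcup_i\bP(\Om_{W_i})$ plus finitely many points (Lemma \ref{EliminationLemma}).

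What is missing from your proposal is the mechanism that handles the unavoidable part of the base locus: the paper proves the weaker statement that $\bL_X^n(-a)$ is \emph{nef} (Theorem \ref{AmplitudeExemple}), by induction on $\dim X$. A curve in the base locus must lie in some $\bP(\Om_{W_i})$; since $W_i$ is again a deformed-Fermat complete intersection of the same codimension in $\bP^{N-1}$ (and $c\gs\frac{3N-2}{4}$ implies $c\gs\frac{3(N-1)-2}{4}$), the inductive hypothesis gives nefness of $\bL_{W_i}^{n-1}(-a)$, and a one-line intersection estimate transfers this to $\bL_X^n(-a)\cdot C\gs 0$; ampleness of $\Om_X$ then follows from nefness of $\bL_X^n(-1)$, not from global generation. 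Without this induction over coordinate hyperplane sections (and with the twist $a$ and the parameter $\ep$ kept available so that the hyperplane sections stay in the same family), your argument stalls precisely at the step you identified as the main obstacle. The remaining discrepancies are minor: the intermediate intersection used has codimension $n=\dim X$ rather than $\approx N/2$, and the bound $e\gs 2N+3$ enters as $e-a-N\ep-N-1\gs 0$ (existence of the polynomial factor $P$), not through Proposition \ref{SimpleCase}.
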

\begin{remark}Observe that the condition $c\geqslant \frac{3N-2}{4}$ can be rephrased by $\codim_{\bP^N}(X)\geqslant 3\dim(X)-2$. \end{remark}
Since ampleness is an open condition (see for instance Theorem 1.2.17 in \cite{Laz04I} and Proposition 6.1.9 in \cite{Laz04II}), it suffices to construct one example of a smooth complete intersection variety  with ample cotangent bundle satisfying the hypothesis of the theorem to prove that the result holds for a general complete intersection variety. We will construct such an example by considering deformations of Fermat type complete intersection varieties. To do so, let us introduce some notation. 

Fix $N\geqslant 2$, $\ep \in \bN$ and $e\in \bN^*$. Set $\bA_\ep:=\bC[Z_0,\dots,Z_N]_\ep.$ For any $s=(s_0,\dots,s_N)\in \bA_\ep^{\oplus N+1}$ we set:
\begin{eqnarray}
F_s(Z)=F(s,Z):=\sum_{i=0}^Ns_iZ_i^e.
\end{eqnarray}
This is a homogeneous equation of degree $e_0:=\ep+e$ and for a general choice of $s,$ it defines a smooth hypersurface in $\bP^N$ which we will denote by $X_s$.
For any $m,a\in \bN$, and any smooth variety $X\subseteq \bP^N$, we will write $\bL_X:=\cO_{\bP(\Om_X)}(1)$ and $\bL_X^m(-a):=\bL_X^{\otimes m}\otimes \pi_X^*\cO_X(-a).$ With those notation we have the following.
\begin{theorem}\label{AmplitudeExemple} Let $N,c,e,\ep,a\in\bN$ such that $N\geqslant 2$, $c\geqslant \frac{3N-2}{4}$ and set $n:=N-c$. Suppose that  $\ep\geqslant 1$  and that $e\geqslant N+1+a+N\ep$. Then for any $0\leqslant j\leqslant N$ there exists $s^j\in \bA_\ep^{\oplus N+1}$ such that $X:=X_{s^1}\cap \cdots \cap X_{s^c}$ is a smooth complete intersection variety such that $\bL_X^n(-a)$ is nef.
\end{theorem}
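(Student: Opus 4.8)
\emph{Proof plan.} The plan is to choose $s^1,\dots,s^c\in\bA_\ep^{\oplus N+1}$ inside the (nonempty, Zariski open) locus where $X=X_{s^1}\cap\cdots\cap X_{s^c}$ and all the intermediate intersections $X_{s^1}\cap\cdots\cap X_{s^p}$ are smooth, in such a way that $\bL_X^n(-a)=\cO_{\bP(\Om_X)}(n)\otimes\pi_X^*\cO_X(-a)$ becomes nef, and to check nefness by producing enough symmetric differential forms on $X$ via Section \ref{MainSection}. First one reduces nefness to a base-locus statement: for every $m\gs 1$ one has $H^0\big(\bP(\Om_X),\bL_X^{nm}(-am)\big)=H^0\big(X,S^{nm}\Om_X(-am)\big)$, and if an irreducible curve $C\subset\bP(\Om_X)$ is not contained in $\bB\big(\bL_X^{nm}(-am)\big)$ then $\bL_X^n(-a)\cdot C\gs 0$. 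Hence it suffices that the stable base locus $\bigcap_{m\gs1}\bB\big(\bL_X^{nm}(-am)\big)$ contain no curve; it is even enough that for each curve $C$ some $m$ and some $\sigma\in H^0(X,S^{nm}\Om_X(-am))$ satisfy $\sigma|_C\not\equiv 0$. Since the smoothness conditions define a Zariski open subset of $\bA_\ep^{\oplus(N+1)c}$, it is enough to exhibit one value of $(s^1,\dots,s^c)$ in that locus realizing the base-locus condition.

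Next one constructs the forms using Theorem \ref{LimiteSetting}, Theorem \ref{LimitePair} and the \v{C}ech description of Section \ref{Cech}, applied to the flag $\bP^N=X_0\supset X_1\supset\cdots\supset X_c=X$ cut out by the deformed Fermat equations $F_{s^i}=\sum_{j=0}^N s^i_jZ_j^e$, all of degree $e_0=\ep+e$. One picks a simple $\la$-setting $\Si$ with $q(\Si)=0$ — for instance (scaled by $m$) the one with $\la^1=(m,\dots,m)$ having $n$ entries and all other $\la^j$ trivial, for which $q(\Si)=\dim\Si-\n(\Si)=n-n=0$ — so that Theorem \ref{LimiteSetting} identifies $H^0\big(X,\Om^{\Si}(-am)\big)$ with the subspace
$$\bigcap_{i=1}^c\Big(\ker(\cdot F_{s^i})\cap\bigcap_{j,k}\ker\big(\cdot dF_{s^i}^{\{j,k\}}\big)\Big)$$
of an explicit \v{C}ech group $H^N\big(\bP^N,\tOm_{\bP^N}^{\Si_{\lim}}(-am-b_\Si)\big)$, whose elements are finite combinations $\sum\omega^I_{J}\,\tfrac{dZ^{J_1}\otimes\cdots}{Z^I}$; pushing these through the Euler maps $\Eul$ and the conormal restrictions $\Om_{X_1}|_X\twoheadrightarrow\Om_X$ produces genuine elements of $H^0(X,S^{nm}\Om_X(-am))$. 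The hypothesis $e\gs N+1+a+N\ep$ is exactly what guarantees that the target $H^N$ is nonzero and that, after imposing the explicit monomial kernel conditions coming from $F_{s^i}=\sum s^i_jZ_j^e$ and $dF_{s^i}=\sum_\ell\partial_{Z_\ell}F_{s^i}\,dZ_\ell$, sufficiently many monomials $Z^I$ survive; solving these kernel equations is a Gauss-elimination problem in the $\omega^I_J$ with coefficients polynomial in the $s^i$.

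The heart of the argument is then to kill the base locus, which is where the $s^i$ get pinned down. Dehomogenizing the surviving \v{C}ech cocycles on the standard affine charts yields explicit symmetric forms on $X$ whose zero loci in $\bP(\Om_X)$ are cut out by explicit polynomials in the $s^i$ and the coordinates; one must choose $s^1,\dots,s^c$ (a sufficiently generic value, or a carefully engineered degeneration, still lying in the smooth locus) so that these zero loci have empty common intersection — equivalently, so that through every point $[\xi]\in\bP(\Om_X)$ passes one of the constructed forms that is nonzero at $[\xi]$. Two features are essential here: that the coefficients $s^i_j$ are polynomials of degree $\ep\gs1$ rather than constants, which moves the vanishing loci off the lines and rational curves of the Fermat locus, and that there are $c\gs\frac{3N-2}{4}$ equations — i.e. $\codim_{\bP^N}X\gs3\dim X-2$ — which is precisely enough hypersurfaces to cut the base locus down to the empty set. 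Not lying in the zero locus of a given form unwinds to the non-vanishing of an explicit polynomial in $(s^1,\dots,s^c)$ and $[\xi]$; covering $\bP(\Om_X)$ by finitely many such open conditions and choosing $(s^1,\dots,s^c)$ generically in the smooth locus finishes the proof.

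I expect the main obstacle to be exactly this last step: tracking the zero locus of the explicitly constructed forms through the descent from $\bP^N$ to $X$ (through the Euler and conormal exact sequences), controlling which monomials survive the kernel conditions, and verifying that under the precise constraints $\ep\gs1$, $c\gs\frac{3N-2}{4}$ and $e\gs N+1+a+N\ep$ one can choose $(s^1,\dots,s^c)$ in the smooth locus so that the forms separate every point and every tangent direction of $\bP(\Om_X)$. Everything else — the vanishings needed to identify the relevant $H^0$, and the passage to \v{C}ech representatives — is bookkeeping with the long exact sequences of Section \ref{MainSection} and the computations of Section \ref{Cech}.
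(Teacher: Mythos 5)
There is a genuine gap, and it sits exactly at the step you flag as the ``heart of the argument.'' Your plan is to choose $s^1,\dots,s^c$ so that the explicitly constructed forms have no common zero (or at least no curve in their common zero locus) on $\bP(\Om_X)$. This cannot succeed: every twisted symmetric form produced by this construction (the $\om^{I,P}$ of Proposition \ref{DiffForms}, and any products of them in higher powers) vanishes identically on $\bigcup_{i=0}^N\bP(\Om_{W_i})$, where $W_i=X\cap(Z_i=0)$ — in the determinantal formula the $i$-th column is built from $b_i=z_it_i^j$ and $\be_i=z_i\,dt_i^j+e\,t_i^j\,dz_i$, both of which die on $(z_i=0)\cap(\xi_i=0)$. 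Since $\dim\bP(\Om_{W_i})=2n-3\gs 1$ for $n\gs 2$, the common zero locus always contains curves in the interesting range, no matter how cleverly the $s^i$ are chosen, so nefness cannot be obtained by making the constructed sections separate every point of $\bP(\Om_X)$.

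The paper's proof supplies precisely the missing mechanism. Lemma \ref{EliminationLemma} only asserts that for general $s$ the base locus of $\bL_X^n(-a)$ is contained in $\bigcup_i\bP(\Om_{W_i})\cup E$ with $\dim E=0$ (and even this requires the incidence-variety dimension counts of Lemmas \ref{determinental}, \ref{FormesIndep} and \ref{stratification}, not just the assertion that a generic choice works). Curves lying in some $\bP(\Om_{W_i})$ are then handled by an induction on $n$: $W_i$ is again a complete intersection of the same deformed-Fermat type and of the same codimension $c$ inside $\bP^{N-1}$, and $c\gs\frac{3N-2}{4}$ implies $c\gs\frac{3(N-1)-2}{4}$, so by induction $\bL_{W_i}^{\,n-1}(-a)$ is nef; from $(n-1)\bL_{W_i}\cdot C\gs a\,\pi^*H\cdot C$ one gets $n\bL_X\cdot C\gs\frac{n}{n-1}\,a\,\pi^*H\cdot C\gs a\,\pi^*H\cdot C$, i.e.\ $\bL_X^n(-a)\cdot C\gs0$. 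This induction on the coordinate hyperplane sections is the reason the exponent in the statement is $n$ rather than $n-1$, and it is entirely absent from your proposal; without it (or some genuinely new source of sections not vanishing on $\bigcup_i\bP(\Om_{W_i})$) the argument does not close. The base case $n=1$ is the adjunction computation $K_X(-a)$ nef, which you would also need to make explicit.
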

Let us first prove that this result implies Theorem \ref{Amplitude}.
\begin{proof}[Proof of Theorem \ref{AmplitudeExemple} $\Rightarrow$ Theorem \ref{Amplitude}] Take $a=\ep=1$, $e\geqslant N(1+\ep)+a+1=2N+2$ (this is equivalent to $e_0\gs 2N+3$) and $c\geqslant \frac{3N-2}{4}$. Then Theorem \ref{AmplitudeExemple} implies that there exists a smooth complete intersection variety $X\subset\bP^N$ of codimension $c$ and multidegree $(e,\dots,e)$ such that $\bL^n_X(-1)$ is nef. But this implies that $\bL^n_X$ is ample and therefore that $\bL_X$ is ample, and thus, $\Om_X$ is ample. But since ampleness is an open condition in families, we deduce that a general complete intersection variety in $\bP^N$ of codimension $c$ and multidegree $(e,\dots,e)$ has ample cotangent bundle.
\end{proof}
The proof of Theorem \ref{AmplitudeExemple} is an induction based on the following technical lemma.
\begin{lemma}\label{EliminationLemma}  Let $N,c,e,\ep,a\in\bN$ such that $N\geqslant 2$, $c\geqslant \frac{3N-2}{4}$, $\ep\geqslant 1$ and $e\geqslant N+1+a+N\ep$. Set $n:=N-c$. For any $0\leqslant j\leqslant N$ take $s^j\in \bA_\ep^{\oplus N+1}$ such that $X:=X_{s^1}\cap \cdots \cap X_{s^c}$ is a smooth complete intersection variety. 
 For any $i\in \{0,\dots,N\}$, set $H_i:=(Z_i=0)$ and $W_i:=X\cap H_i$. We look at $\bP(\Om_{W_i})$ as a subvariety of $\bP(\Om_X)$. Then, for a general choice of  $s^1,\dots,s^c$, there exists $E\subseteq \bP(\Om_X)$, such that $\dim E=0$ and such that   $$\bB(\bL_X^n(-a))\subseteq\bigcup_{i=0}^N \bP(\Om_{W_i})\cup E$$
\end{lemma}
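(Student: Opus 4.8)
The plan is to construct, for a general choice of $s^1,\dots,s^c$, an explicit family of sections of $\bL_X^n(-a)=\cO_{\bP(\Om_X)}(n)\otimes\pi_X^*\cO_X(-a)$, i.e. of $H^0(X,S^n\Om_X(-a))$, and to read off from their shape that their common zero locus meets $\pi_X^{-1}(X\setminus\bigcup_iH_i)$ in a finite set. Since the hypersurfaces $X_{s^j}$ have degree $e_0:=e+\ep>c$, Corollary \ref{LeCoroQuiTue} is void on $X$ itself, so — following the remark after it — I would apply Theorems \ref{LimiteSetting} and \ref{LimitePair} to a \emph{simple} $\lambda$-setting $\Si=(X_c,0,\dots,0,(n),0,\dots,0)$ (the entry $(n)$ sitting in the $n$-th slot, built from a choice of $n$ among $X_{s^1},\dots,X_{s^c}$; that this is simple uses only $n\ge n$, not the codimension). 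For such a $\Si$ one has $q(\Si)=0$, $|\Si|=n$, $b_\Si=Ne_0$, and $\Si_{\lim}$ is the zero setting, so that $\Om^{\Si_{\lim}}\cong\cO_{\bP^N}$, the Euler comparison map of Theorem \ref{LimitePair} is an isomorphism, and one obtains an injection
\[
\varphi:H^0\bigl(X,\Om^{\Si}(-a)\bigr)\hookrightarrow H^N\bigl(\bP^N,\cO_{\bP^N}(-a-Ne_0)\bigr),\qquad \im\varphi=\im\tva=\bigcap_i\ker(\cdot F_i)\cap\bigcap_{i,j,k}\ker\bigl(\cdot dF_i^{\{j,k\}}\bigr),
\]
together with a natural morphism $\Om^{\Si}(-a)\to S^n\Om_X(-a)$ carrying these classes to sections of $\bL_X^n(-a)$.

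I would then produce explicit elements of that intersection of kernels via the \v{C}ech description of Section \ref{Cech}, just as in the proof of Proposition \ref{SimpleCase}: for $P$ homogeneous of degree $e-N-1-a-N\ep$ set $\omega_P:=P/(Z_0\cdots Z_N)^{e-1}\in H^N(\bP^N,\cO_{\bP^N}(-a-Ne_0))$. The relations $F_i=\sum_\ell s^i_\ell Z_\ell^e$ and $dF_i=\sum_\ell(e\,s^i_\ell Z_\ell^{e-1}dZ_\ell+Z_\ell^e\,ds^i_\ell)$ force $\omega_P\cdot F_i=0$ and $\omega_P\cdot dF_i^{\{j,k\}}=0$ automatically, because every monomial that occurs carries a factor $Z_\ell^{e}$ or $Z_\ell^{e-1}$ which drives the $\ell$-th exponent of the resulting denominator below $1$. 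Hence the hypothesis $e\geq N+1+a+N\ep$ is exactly what makes $\deg P\geq 0$, and $\bL_X^n(-a)$ acquires a linear system of explicit sections, one for each such $P$ and each choice of the $n$ equations entering $\Si$.

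Unwinding each such section along the chain of inclusions of Proposition \ref{ChainInclusion}, exactly as in the plane-curve computation of Section \ref{Curves}, one finds that in the standard affine charts it takes the form $Q\cdot W_{\Si}/D_{\Si}$, where $Q$ is the dehomogenization of $P$ (free, of controlled degree), $W_{\Si}$ is a Wronskian-type symmetric $n$-form built out of the differentials $dF_i$ of the $n$ equations selected by $\Si$, and $D_{\Si}$ is a product of the relevant Jacobian minors. Consequently, over the locus where all $Z_i\neq 0$ and the minors $D_{\Si}$ are invertible, the base locus of $\bL_X^n(-a)$ is contained in $\bigcap_{\Si}\{W_{\Si}=0\}$; the coordinate hyperplanes $Z_i=0$ have to be discarded because both the $W_{\Si}$ and the minors $D_{\Si}$ degenerate along them, and this is precisely the origin of the term $\bigcup_i\bP(\Om_{W_i})$ in the statement.

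The heart of the argument — and the step I expect to be the main obstacle — is to show that for a general choice of $s^1,\dots,s^c$ the intersection $\bigcap_{\Si}\{W_{\Si}=0\}$ meets $\pi_X^{-1}(X\setminus\bigcup_iH_i)$ in finitely many points. Since $\bP(\Om_X)$ has dimension $2n-1$, this amounts to producing enough of the Wronskians $W_{\Si}$ — and showing they are sufficiently independent — to cut $\bP(\Om_X)$ down to dimension zero; the supply of combinatorial data $\Si$ is controlled by the choices of which $n$ of the $c$ equations enter, and the hypothesis $c\geq\frac{3N-2}{4}$ (equivalently $\codim_{\bP^N}X\geq 3\dim X-2$) is exactly the inequality that makes enough of these Wronskians available and independent. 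The genericity would be run on the parameter space $\mathbf{S}=(\bA_\ep^{\oplus N+1})^{\oplus c}$ through the incidence variety
\[
\mathcal{W}:=\bigl\{\,(s,(x,[\xi])):(x,[\xi])\in\bP(\Om_{X_s}),\ x\notin\textstyle\bigcup_iH_i,\ W_{\Si}(x,\xi)=0\ \text{for every }\Si\,\bigr\},
\]
for which, using the explicit shape of the $W_{\Si}$, one checks that the projection to $\mathbf{S}$ has $0$-dimensional general fibre. A general $s$ then makes $X$ and each $W_i$ smooth and makes this estimate sharp, which gives the lemma.
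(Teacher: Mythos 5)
Your first half — the construction of the sections — is essentially the paper's: the simple setting with $\la^n=(n)$, $q(\Si)=0$, $b_\Si=Ne_0$, the \v{C}ech class $P/(Z_0\cdots Z_N)^{e-1}$ killed by $\cdot F_i$ and $\cdot dF_i^{\{j\}}$ because every monomial of $F_i$ and $dF_i$ carries $Z_\ell^{e-1}$, and the unwinding into Wronskian-type forms is exactly Lemma \ref{TildeDiffForms} and Proposition \ref{DiffForms}. The problem is the second half. The statement ``using the explicit shape of the $W_\Si$, one checks that the projection to the parameter space has $0$-dimensional general fibre'' is not a proof: it is precisely the content of the lemma, and it is where the hypothesis $c\gs\frac{3N-2}{4}$ must actually be used. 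The paper's proof of this step occupies all of Section \ref{EliminationSection}: one rewrites the common zero locus of the $\om^{I,P}$ as a rank condition on the pair of matrices $B(t,z)$ (coefficients $b_i(t_i^j,z)$) and $B'(t,z,\xi)$ (coefficients $\be_i(t_i^j,z,\xi)$), and then bounds the dimension of the incidence varieties by the two linear-algebra lemmas (Corollary \ref{determinental} and, crucially, Lemma \ref{FormesIndep}, which needs the pointwise non-degeneracy statement of Claim \ref{claim0}), combined with the stratification Lemma \ref{stratification}. The inequality $c\gs\frac{3N-2}{4}$ only appears at the very end as $-4c+3N-2\ls 0$ in the estimate $\dim\Ga_c^0\ls\dim\bA_\ep^{(N+1)c}-4c+3N-2$. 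None of this quantitative work is present or sketched in your proposal, so the heart of the lemma is missing.

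Two more concrete defects in the finiteness step as you set it up. First, you only control the base locus ``over the locus where all $Z_i\neq 0$ and the minors $D_\Si$ are invertible''. On the locus where $\rk B(t,z)<c$ every section $\om^{I,P}$ vanishes identically in $\xi$, so your sections give no information there; the paper must (and does) prove separately that for general $t$ this locus is empty off the excluded set, via Claim \ref{claim2} ($\dim\De_c<\dim\bA_\ep^{(N+1)c}$). Second, your incidence variety $\mathcal{W}$ throws away all points with $x\in\bigcup_i H_i$, but the lemma requires the base locus to be finite outside the smaller set $\bigcup_i\bP(\Om_{W_i})$: points $(x,[\xi])$ with $z_i=0$ but $\xi_i\neq 0$ lie over the hyperplanes yet are not in $\bP(\Om_{W_i})$, and your argument says nothing about them. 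The paper handles these by excluding only $W=\bigcup_i\bigl((z_i=0)\cap(\xi_i=0)\bigr)$ and stratifying by the number $k$ of vanishing coordinates (the sets $Y_k$ in Claims \ref{claim2} and \ref{claim3}), with Claim \ref{claim0} guaranteeing the forms $b_q,\be_q$ stay independent off $W$. As written, your argument would prove a weaker statement (finiteness of the base locus over $X\setminus\bigcup_i H_i$) and even that only conditionally on the unproved fibre-dimension estimate.
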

The proof of Lemma \ref{EliminationLemma} is the content of Section \ref{ExpliciteDescription} and Section \ref{EliminationSection}. But for now let us explain how to obtain  Theorem \ref{AmplitudeExemple} from Lemma \ref{EliminationLemma}.
\begin{proof}[Proof of Theorem \ref{AmplitudeExemple}] Let $H$ be a hyperplan section of $X$. Fixing $c\geqslant \frac{3N-2}{4}$, the proof is an induction on $n=\dim X$ (or equivalently on $N$). If $n=1$, then $X$ is a curve. By the adjunction formula, $K_X=\cO_{X}(c(e+\ep)-N-1)$. Therefore $\Om_X(-a)=K_X(-a)$ is nef. \\
Now suppose that $n\geqslant 2$. We have to prove that for any irreducible curve $C\subseteq \bP(\Om_X)$, $C \cdot \bL_X^n(-a)\geqslant 0$. Certainly, if $C\not\subset \bB(\bL_X^n(-a))$, $C\cdot \bL_X^n(-a)\geqslant 0$. Now suppose that $C\subseteq \bB(\bL_X^n(-a))$, from Lemma \ref{EliminationLemma} we know that $\bB(\bL_X^n(-a))\subseteq \bigcup_{i=0}^N \bP(\Om_{W_i})\cup E$ for some zero-dimensional set $E$. Therefore,  there exists $i\in \{0,\dots, N\}$ such that $C\subseteq \bP(\Om_{W_i})$. But on can view $W_i$ as a codimension $c$ complete intersection variety in $H_i\cong \bP^{N-1}$ defined by equations of the same type than $X$ (with one less variable). Observe moreover that $\bL_X(-a)|_{\bP(W_i)}= \bL_{W_i}(-a)$ and that if $c\geqslant \frac{3N-2}{4}$, one has $c\geqslant\frac{3(N-1)-2}{4}$. From our  induction hypothesis we therefore  know that $\bL_{W_i}^{n-1}(-a)$ is nef.
Thus, $\bL_{W_i}^{n-1}(-a)\cdot C\gs 0$ and in particular $(n-1)\bL_{W_i}\cdot C\gs a\pi_X^*H\cdot C.$ 
Hence,
$$n\bL_X\cdot C=\frac{n}{n-1}(n-1)\bL_{W_i}\cdot C\gs \frac{n}{n-1}a\pi_X^*H\cdot C\gs a\pi^*H\cdot C.$$
And finally $\bL_X^n(-a)\cdot C\gs 0.$ 
\end{proof}

\subsection{Constructing symmetric differential forms}\label{ExpliciteDescription} To prove Lemma \ref{EliminationLemma} we first need to construct sufficiently many symmetric differential forms   on complete intersection varieties of the above type, this is the purpose of this section. The setting is the following. We fix $N,c,\ep,e,a,r,n\in \bN$ such that $N\geqslant 2$, $N\geqslant c\geqslant\frac{N}{2}$, $a\geqslant 0$, $\ep\geqslant 0$, $e\geqslant a+N(\ep+1)+1$, $r=e-1$ and $n=N-c.$ For any $1\leqslant j\leqslant c$, we take $s^{j}\in \bA_\ep^{N+1}$ such that for any $1\leqslant p\leqslant c$ and for any $I:=(i_1,\dots,i_p)\in \{1,\dots,c\}_{\neq}^p$, the set $X^I:=X_{s^{i_1}}\cap\cdots\cap X_{s^{i_p}}$ is a smooth complete intersection variety. Of course, this last condition holds if the $s^{i_j}$'s are general. We also denote $X:=X^{(1,\dots,c)}$. For any $0\leqslant i\leqslant N$, any  $1\leqslant j\leqslant c$ and any $v\in \bA_\ep$, we set $a_i(v):=Z_iv$ and $\al_i(v):=Z_idv+evdZ_i.$ So that :
\begin{eqnarray}F({s^{j}},Z)=\sum_{i=0}^N a_i(s_i^{j})Z_i^r\ \ \ \text{and}\ \ \ dF(s^j,Z):=dF_{s^{j}}(Z)=\sum_{i=0}^N \al_i(s_i^{j})Z_i^r\label{EquationFermat}\end{eqnarray}
For any $i\in \{0,\dots,n\}$ set $U_i:=(Z_i\neq 0)$, $\mathfrak{U}=(U_i)_{0\leqslant i\leqslant N}$ and $\mathfrak{U}_X:=(U_i\cap X)_{0\leqslant i\leqslant N}$, $\mathfrak{U}_X$ is an open covering of $X$. Our first result is the following.
\begin{lemma}\label{TildeDiffForms}
With the above notations, for any $I=(i_1,\dots,i_n)\in \{1,\dots,c\}^n_{\neq}$, and for any degree $e-a-N\ep-N-1$ homogeneous polynomial $P\in\bC[Z_0,\dots,Z_N],$ there is a non-zero element $\tom^{I,P}\in H^0(X,S^n\tOm_X(-a))$ such that, when computed in \v{C}ech cohomology one has  $\tom^{I,P}=(\tom_0^{I,P},\dots, \tom_N^{I,P})\in \check{H}^0(\fU_X,S^n\tOm_X(-a))$ with 
$$\tom_j^{I,P}=\frac{(-1)^j P}{Z_i^r}\left|\begin{array}{cccccc}a_0(s_0^1)&\cdots&a_{j-1}(s_{j-1}^1)&a_{j+1}(s_{j+1}^1)&\cdots&a_N(s_N^1)\\ \vdots&&\vdots&\vdots&&\vdots\\ a_0(s_0^c)&\cdots&a_{j-1}(s_{j-1}^c)&a_{j+1}(s_{j+1}^{c})&\cdots&a_N(s_N^{c})\\ \al_0(s_0^{i_1})&\cdots&\al_{j-1}(s_{j-1}^{i_1})&\al_{j+1}(s_{j+1}^{i_1})&\cdots&\al_N(s_N^{i_1})\\ \vdots&&\vdots&\vdots&&\vdots\\ \al_0(s_0^{i_n})&\cdots&\al_{j-1}(s_{j-1}^{i_n})&\al_{j+1}(s_{j+1}^{i_n})&\cdots&\al_N(s_N^{i_n})\end{array}\right|.$$
\end{lemma}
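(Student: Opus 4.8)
The plan is to verify the displayed formula directly, using the template of the plane-curve computation of Section~\ref{Curves}. The cochain $(\tom_j^{I,P})_{0\le j\le N}$ is the \v{C}ech expression produced by running the diagram chase of Section~\ref{Cech} through the $c$ restriction exact sequences attached to $F_{s^1},\dots,F_{s^c}$ and the $n$ tilde conormal exact sequences attached to $dF_{s^{i_1}},\dots,dF_{s^{i_n}}$, which relate $H^0(X,S^n\tOm_X(-a))$ to a monomial class in $H^N(\bP^N,S^n\tOm_{\bP^N}(\bullet))$: the $N\times N$ determinant arises from $c+n=N$ successive applications of Cramer's rule, the sign $(-1)^j$ from deleting the $j$-th column, and the twist $Z_j^{-r}$ from Euler's identity $e_0F_{s^m}=\sum_iZ_i\,\partial_{Z_i}F_{s^m}$ together with \eqref{EquationFermat}. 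Granting the formula, three things must be checked: (i) each $\tom_j^{I,P}$ is a well-defined section of $S^n\tOm_X(-a)$ over $U_j\cap X$; (ii) these local sections agree on overlaps, so that the cochain is a global section; (iii) it is non-zero.

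For (i) one expands the determinant into monomial $n$-forms in $dZ_0,\dots,dZ_N$ (all differentials being $dZ$'s once $ds_i^{i_\ell}=\sum_k\partial_{Z_k}s_i^{i_\ell}\,dZ_k$ is substituted) and does a degree count from $\deg s_i^j=\ep$, $\deg P=e-a-N\ep-N-1$ and $r=e-1$: the hypothesis $e\ge a+N(\ep+1)+1$ is exactly what forces $\deg P\ge 0$, and it also gives $e\ge a+n+1$, which is what makes the numerator a polynomial after dividing by $Z_j^r$; the outcome is a section of $S^n\tOm_{\bP^N}(-a)$ over $U_j$, and since $X$ is a smooth complete intersection $\tOm_X$ is a quotient of $\tOm_{\bP^N}|_X$, so it descends to a section of $S^n\tOm_X(-a)$ on $U_j\cap X$. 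For (iii) one specialises $P$ (say to a power of a coordinate) and checks that the associated class on $\bP^N$ is non-zero; one can moreover identify the image of the comparison map, using Theorem~\ref{LimiteSetting} applied to $X^I$ together with the restriction trick of the remark following Corollary~\ref{LeCoroQuiTue}, as $\bigcap_{m}\ker(\cdot F_{s^m})\cap\bigcap_{\ell}\ker(\cdot dF_{s^{i_\ell}})$, which the monomial class belongs to by the Gauss-elimination bookkeeping of Proposition~\ref{PropFamille}.

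The heart of the argument is (ii). Writing $M$ for the $(c+n)\times(N+1)$ matrix whose rows are $\bigl(a_i(s_i^m)\bigr)_{0\le i\le N}$ for $1\le m\le c$ and $\bigl(\al_i(s_i^{i_\ell})\bigr)_{0\le i\le N}$ for $1\le\ell\le n$, and $M_{\widehat{j}}$ for $M$ with its $j$-th column removed, the gluing amounts to the vanishing, in $S^n\tOm_X(-a)$, of the global polynomial $n$-form $Z_k^r\det M_{\widehat{j}}-(-1)^{j-k}Z_j^r\det M_{\widehat{k}}$ on $U_j\cap U_k\cap X$; equivalently this form must lie in $(F_{s^1},\dots,F_{s^c})\,S^n\tOm_{\bP^N}+(dF_{s^1},\dots,dF_{s^c})\,S^{n-1}\tOm_{\bP^N}$, which maps to $0$ in $S^n\tOm_X$. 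The tool is Laplace expansion along the extra row $w:=(Z_0^r,\dots,Z_N^r)$: adjoining $w$ to (subsets of) the rows of $M$ and expanding along $w$ produces linear relations among the maximal minors $\det M_{\widehat{j}}$ whose contractions against $w$ are, by \eqref{EquationFermat}, the polynomials $\sum_ia_i(s_i^m)Z_i^r=F_{s^m}$ and the $1$-forms $\sum_i\al_i(s_i^m)Z_i^r=dF_{s^m}$; feeding these through the $S^n$ of the quotient presentation $\tOm_X=\tOm_{\bP^N}|_X/\langle dF_{s^1},\dots,dF_{s^c}\rangle$ yields the required congruence. The main obstacle is precisely this bookkeeping — keeping track of signs, twists, and the passage from $\tOm_{\bP^N}|_X$ to $\tOm_X$ through all $N$ successive sequences — while everything else reduces to degree counts and unwinding definitions.
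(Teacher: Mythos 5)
Your parts (i) and (ii) are correct and take a genuinely different route from the paper. Brotbek works \emph{top-down}: starting from the monomial class $P/(Z_0^r\cdots Z_N^r)\in H^N(\bP^N,\cO(-a-Ne_0))$, he checks via~\eqref{EquationFermat} that it lies in $\im\tva$, then computes $\tva^{-1}$ by an $N$-step diagram chase down the chain of inclusions of Proposition~\ref{ChainInclusion}, each step adding one row to the determinant via a Cramer-type elimination; the gluing of the resulting $0$-cochain is automatic from the snake lemma and is never checked separately. You work \emph{bottom-up}: write down the cochain and verify the gluing directly. Your determinantal identity is sound — with $w=(Z_0^r,\dots,Z_N^r)$ and $M$ the $N\times(N+1)$ matrix of the statement, the gluing obstruction $Z_k^r\det M_{\widehat{j}}\pm Z_j^r\det M_{\widehat{k}}$ equals, up to sign, $\det\bigl(M_{\widehat{j,k}}\mid M\,{}^tw\bigr)$, which upon Laplace expansion along the last column is a combination $\sum_m(\pm)F_{s^m}\cdot(\text{minor in }S^{n}\tOm_{\bP^N})+\sum_\ell(\pm)dF_{s^{i_\ell}}\cdot(\text{minor in }S^{n-1}\tOm_{\bP^N})$ and hence vanishes in $S^n\tOm_X$ since $F_{s^m}|_X=0$ and $dF_{s^{i_\ell}}\cdot S^{n-1}\tOm_{\bP^N}|_X$ dies in the quotient $S^n\tOm_X$. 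This one-shot identity is a clean substitute for the paper's step-by-step eliminations.

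There is, however, a genuine gap in (iii). The paper gets non-vanishing for free: $\tom^{I,P}$ is \emph{defined} as $\tva^{-1}$ of a visibly nonzero class, and $\tva$ is injective. In your construction $\tom^{I,P}$ is defined by the explicit cochain, and you never establish that its image under the composed connecting maps is $P/(Z_0^r\cdots Z_N^r)$; citing Theorem~\ref{LimiteSetting} shows only that \emph{some} nonzero section maps to that monomial class, not that yours does. To close the gap you would either have to run the forward diagram chase — at which point you are re-proving the paper's induction — or argue directly that some $\det M_{\widehat{j}}$ is not identically zero on $X$, a nontrivial fact that must hold for every admissible choice of $s^1,\dots,s^c$, not only generic ones, and for which you offer nothing. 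A minor slip: ``the Gauss-elimination bookkeeping of Proposition~\ref{PropFamille}'' is a misattribution; what you actually need is that $Z_i^r\cdot P/(Z_0^r\cdots Z_N^r)$ is a \v{C}ech coboundary, which follows immediately from~\eqref{EquationFermat} and the description of the maps $\cdot F$, $\cdot dF$ in Section~\ref{Cech}.
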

\begin{proof} For simplicity, for any $i\in \{1,\dots,N\}$ and any $j\in \{1,\dots,c\}$, we write $a_i^j:=a_i(s_i^j)$, $\al_i^j:=\al_i(s_i^j)$ and $F_j=F_{s^j}.$ As in the statement, take $I=(i_1,\dots,i_n)\in \{1,\dots,c\}^n_{\neq}$,  and take $(i_{n+1},\dots,i_c)\in \{1,\dots,c\}^{c-n}_{\neq}$ such that $\{i_1,\dots,i_c\}=\{1,\dots,c\}.$  For any $j\in \{0,\dots, c\}$ write $I_j=(i_1,\dots, i_j).$
We can apply Theorem  \ref{LimiteSetting} and Theorem \ref{LimitePair} to the simple $\lambda$-setting $\Si=(X,\lambda^0,\dots,\lambda^c)$ where $\lambda^i=\varnothing$ if $i\neq n$ and $\lambda^n=n$ (note that $q(\Si)=0$). Here the order in which we intersect the different hypersurfaces to obtain $X$ is crucial. We obtain an injection 
$$H^0\left(X,S^n\tOm_{X^{I_n}}|_X\otimes \cO_{X}(-a)\right)\stackrel{\tilde{\varphi}}{\rightarrow}H^N\left(\bP^N,\cO_{\bP^N}(-a-Ne_0)\right)$$
such that \begin{eqnarray}\im(\tphi)=\left(\bigcap_{j=n+1}^c\ker(\cdot F_{{i_j}})\right)\bigcap\left(\bigcap_{i=1}^n\ker(\cdot F_{i_j})\cap\ker(\cdot dF_{i_j})\right).\label{description}\end{eqnarray}
Observe that our hypothesis on the degree ensures that $e-a-N\ep-N-1\geqslant 0$. Fix any degree $e-a-N\ep-N-1$ polynomial $P$. Then we get a well defined element $$\tom^{I,P}_{0,\dots, N}:=\frac{P}{Z_0^r\cdots Z_N^r}\in \check{H}^N\left(\fU,\cO_{\bP^N}(-a-Ne_0)\right).$$ 
From \eqref{EquationFermat} and \eqref{description} we see that $\tom^{I,P}_{0,\dots, N}\in \im(\tphi)$. To obtain the explicit description of $\tphi^{-1}(\tom^{I,P}_{0\cdots N})\in  \check{H}^0\left(\fU_X,S^n\tOm_{X^{I_n}}|_X(-a)\right)$ we have to describe explicitly the inclusion $\tphi$, to do so, we have to unravel the proof of Theorem \ref{LimiteSetting} in our situation. This inclusion is described in Proposition \ref{ChainInclusion}, applying this proposition in our situation, we see that $\tphi$ is obtained as the composition of the following chain of inclusions:
\begin{eqnarray*}
H^0(X,S^n\tOm_{X^{I_n}}|_X(-a))&\hookrightarrow& H^1\left(X,S^{n-1}\tOm_{X^{I_{n-1}}}|_{X}(-a-e_0)\right)\\
&\hookrightarrow & H^2\left(X,S^{n-2}\tOm_{X^{I_{n-2}}}|_{X}(-a-2e_0)\right)\\
&\vdots&\ \ \ \ \ \ \ \ \ \ \vdots\\
&\hookrightarrow & H^{n-1}(X,\tOm_{X^{I_1}}|_{X}(-a-(n-1)e_0))\\
&\hookrightarrow & H^{n}(X,\cO_X(-a-ne_0))\\
&\hookrightarrow & H^{n+1}(X^{I_{c-1}},\cO_{X^{I_{c-1}}}(-a-(n+1)e_0))\\
&\vdots&\ \ \ \ \ \ \ \ \ \ \vdots\\
&\hookrightarrow & H^{N}(\bP^N,\cO_{\bP^N}(-a-Ne_0))\\
\end{eqnarray*}
For any $0\leqslant \ell\leqslant N$, we denote by $\tom^{\ell}$ element in the $H^\ell$ group in the above chain of inclusion whose image in $H^N(\bP^N,\cO_{\bP^N}(-a-N(e+\ep)))$ under the above inclusions is $\tom^{I,P}.$ Moreover, for any $0\leqslant\ell\leqslant N$ and for any $K=(k_0,\dots,k_\ell)\in \{0,\dots,N\}_{<}^{\ell+1}$ if we let $ K^\circ=(k_{\ell+1},\dots,k_N)\in \{0,\dots,N\}^{N-\ell}_<$ such that $\{k_0,\dots,k_N\}=\{0,\dots,N\}$, we set $\ep(K)$ to be the signature of the following permutation 
$$\si_K:=\left(\begin{array}{cccc}0&1&\cdots & N\\ k_0&k_1&\cdots&k_N\end{array}\right),$$ 
and for any $m\in \{1,\dots,c\}$, we  set $L^{m}_K:=(a^{m}_{k_{\ell+1}},\dots,a^{m}_{k_{N}})$ and $\La^{m}_K:=(\al^{m}_{k_{\ell+1}},\dots,\al^{m}_{k_{N}}).$ We also set $\bX_K:=\frac{1}{Z_{k_0}^r\cdots Z_{k_\ell}^r}$.\\ 

Let $\ell\in \{0,\dots,N\},$ we are going to prove by induction that $\tom^{\ell}$ can be represented in \v{C}ech cohomology by the following cocycle $(\tom^\ell_K)_{K\in \{0,\dots, N\}^{\ell+1}_{<}}:$
\begin{enumerate}
\item If $n\leqslant \ell\leqslant N$, write $m:=N-\ell$, then $\tom^\ell_K=(-1)^{N(N-\ell+1)}\ep(K)P\left|\begin{array}{c}L_K^{i_1}\\ \vdots\\ L_K^{i_m}\end{array}\right|\bX_K.$
\item If $0\leqslant \ell\leqslant n$, write $m:=n-\ell$, then $\tom^\ell_K=(-1)^{N(N-\ell+1)}\ep(K)P\left|\begin{array}{c}L_K^{i_1}\\ \vdots\\ L_K^{i_c}\\ \La_K^{i_1}\\ \vdots \\ \La_K^{i_{m}}\end{array}\right|\bX_K.$
\end{enumerate}
Of course, if we look at $\ell=0$ in the last case, up to a sign, we get the description announced in the statement of the lemma, and it suffices to look at $\tom^{I,P}$ as an element in $H^0(X,S^n\tOm_X(-a))$ by considering its image under the natural restriction map $H^0(X,S^n\tOm_{X^{I_n}}|_X(-a))\to H^0(X,S^n\tOm_X(-a)).$ \\

We start by computing a \v{C}ech cocycle for $\tom^{N-1}$. Recall that one has the twisted restriction short exact sequence
$$0\to \cO_{\bP^N}(-a-Ne_0)\stackrel{\cdot F_{i_1}}{\rightarrow} \cO_{\bP^N}(-a-(N-1)e_0)\to \cO_{X_{i_1}}(-a-(N-1)e_0)\to 0, $$
which yields the following in cohomology:
$$0\to H^{N-1}(X_{i_1},\cO_{X_{i_1}}(-a-(N-1)e_0))\stackrel{\delta_{\cdot F_{i_1}}}{\rightarrow} H^N(\bP^N, \cO_{\bP^N}(-a-Ne_0))\stackrel{\cdot F_{i_1}}{\rightarrow} H^N(\bP^N,\cO_{\bP^N}(-a-(N-1)e_0)). $$
By the very definition of $\tom^{N-1}$, we have $\delta_{\cdot F_{i_1}}(\tom^{N-1})=\tom^N=\tom^{I,P}$.
As explained in Section \ref{Cech}, to compute a cocycle for $\tom^{N-1}$, we compute $\tom^{N}_{0,\dots, N}\cdot F_{i_1}$ and write it as the \v{C}ech differential of the desired cocycle. We have 
\begin{eqnarray*}
\tom^N_{0,\dots, N}\cdot F_{i_1}&=&P\cdot \bX_{0,\dots,N}\cdot \sum_{j=0}^Na_j^{i_j}Z_j^r\\
&=& P\cdot \sum_{j=0}^Na_j^{i_j}\bX_{0,\dots,\hat{j},\dots,N}= P\cdot \sum_{j=0}^N(-1)^j\left((-1)^j(-1)^{N(N-1+1)}a_j^{i_j}\bX_{0,\dots,\hat{j},\dots,N}\right)
\end{eqnarray*}
If we write $K=(0,\dots,\hat{j},\dots,N)$, we have $\ep(K)=(-1)^j$ and $L^{i_1}_{K}=(a^{i_1}_j)$ and $\bX_K=\bX_{0,\dots,\hat{j},\dots,N}$. Therefore, we see that $\tom^N_{0,\dots, N}\cdot F_{i_1}=\check{d}\left((\tom_K^{N-1})_{K\in\{0,\dots,N\}^{N}_<}\right)$ where as announced $$\tom_K^{N-1}=(-1)^{N(N-1+1)}\ep(K)P|L_K^{i_1}|\bX_K.$$
We will only treat one case of the induction, the other case is treated the exact same way. Let $n< \ell\ls N$, let $m=N-\ell$ and suppose that we know that $\tom^\ell$ can be represented by a cocycle $(\tom_K^\ell)_{K\in \{0,\dots,N\}^{\ell+1}_<}$ as in the case $1$. We have to prove that $\tom^{\ell-1}$ can be represented by a cocycle of the form described in case $1.$ We have $$\tom^\ell\in H^\ell\left(X^{I_{m}},\cO_{X^{I_m}}(-a-\ell e_0)\right),$$ 
and we have the following short exact sequence 
\begin{eqnarray*}
0\to \cO_{X^{I_m}}(-a-\ell e_0)& \stackrel{\cdot F_{i_{m+1}}}{\to}& \cO_{X^{I_m}}(-a-(\ell-1)e_0) \to \cO_{X^{I_{m+1}}}(-a-(\ell-1)e_0)\to 0.
\end{eqnarray*}
In cohomology, it yields an injection (which is precisely the one appearing in the above chain of inclusions)
$$H^{\ell-1}\left(X^{I_{m+1}},\cO_{X^{I_{m+1}}}(-a-(\ell-1)e_0)\right)\stackrel{\delta_{\cdot F_{i_{m+1}}}}{\hookrightarrow} H^{\ell}\left(X^{I_{m}},\cO_{X^{I_{m}}}(-a-\ell e_0)\right)$$
To compute a \v{C}ech cohomology cocycle of $\om^{\ell-1}$, we proceed as before. For any $K\in \{0,\dots, N\}_<^{\ell+1}$ we are going to compute $\om_K^\ell\cdot F_{i_{m+1}}$ modulo $F_{i_1},\dots,F_{i_m}.$
Fix $K=(k_0,\dots,k_\ell)\in\{0,\dots,N\}_<^{\ell+1}$ and as usual, take $K^\circ=(k_{\ell+1},\dots,k_N)\in\{0,\dots,N\}^{N-\ell}_<$ such that $K\cup K^\circ=\{0,\dots,N\}.$ Because we are working modulo $F_{i_1},\dots,F_{i_m},$ we have
$$\left\{\begin{array}{ccc}F_{i_1}&=&0\\ \vdots &&\vdots \\ F_{i_m}&=&0 \\ F_{i_{m+1}}-F_{i_{m+1}}&=&0\end{array}\right.\Leftrightarrow \left\{\begin{array}{ccc}\sum_{j=\ell+1}^Na_{k_j}^{i_1}Z_{k_j}^r&=&-\sum_{j=0}^\ell a_{k_j}^{i_1}Z_{k_j}^r\\ \vdots &&\vdots \\ \sum_{j=\ell+1}^Na_{k_j}^{i_m}Z_{k_j}^r&=&-\sum_{j=0}^\ell a_{k_j}^{i_m}Z_{k_j}^r \\ \sum_{j=\ell+1}^N a_{k_j}^{i_{m+1}}Z_{k_j}^r-F_{i_{m+1}}&=&-\sum_{j=0}^\ell a_{k_j}^{i_{m+1}}Z_{k_j}^r\end{array}\right.$$
We may write this as follows:
$$\left(\begin{array}{cccc}a^{i_1}_{k_{\ell+1}}&\cdots&a^{i_1}_{k_N}&0\\  \vdots & & \vdots & \vdots \\ a^{i_m}_{k_{\ell+1}}&\cdots&a^{i_m}_{k_N}&0\\ a^{i_{m+1}}_{k_{\ell+1}}&\cdots&a^{i_{m+1}}_{k_N}&-1\end{array}\right)\left(\begin{array}{c}Z^r_{k_{\ell+1}}\\ \vdots \\ Z^r_{k_N}\\ F_{i_{m+1}}\end{array}\right)=\left(\begin{array}{c}-\sum_{j=0}^\ell a_{k_j}^{i_1}Z_{k_j}^r\\ \vdots \\ -\sum_{j=0}^\ell a_{k_j}^{i_m}Z_{k_j}^r \\ -\sum_{j=0}^\ell a_{k_j}^{i_{m+1}}Z_{k_j}^r\end{array}\right).$$
By a simple application of Cramer's rule we obtain
$$F_{i_m}\left|\begin{array}{cccc}a^{i_1}_{k_{\ell+1}}&\cdots&a^{i_1}_{k_N}&0\\ \vdots & & \vdots & \vdots \\ a^{i_m}_{k_{\ell+1}}&\cdots&a^{i_m}_{k_N}&0\\ a^{i_{m+1}}_{k_{\ell+1}}&\cdots&a^{i_m}_{k_N}&-1\end{array}\right|=\left|\begin{array}{cccc}a^{i_1}_{k_{\ell+1}}&\cdots&a^{i_1}_{k_N}&-\sum_{j=0}^\ell a_{k_j}^{i_1}Z_{k_j}^r\\ \vdots & & \vdots & \vdots \\ a^{i_m}_{k_{\ell+1}}&\cdots&a^{i_m}_{k_N}&-\sum_{j=0}^\ell a_{k_j}^{i_m}Z_{k_j}^r\\ a^{i_{m+1}}_{k_{\ell+1}}&\cdots&a^{i_{m+1}}_{k_N}&-\sum_{j=0}^\ell a_{k_j}^{i_{m+1}}Z_{k_j}^r\end{array}\right|$$
which yields 
$$\left|\begin{array}{c}L_K^{i_1}\\  \vdots \\ L^{i_m}_K\end{array}\right|F_{i_{m+1}}=\sum_{j=0}^\ell \left|\begin{array}{cccc}a^{i_1}_{k_{\ell+1}}&\cdots&a^{i_1}_{k_N}&a_{k_j}^{i_1}\\  \vdots & & \vdots & \vdots \\ a^{i_{m+1}}_{k_{\ell+1}}&\cdots&a^{i_{m+1}}_{k_N}&a_{k_j}^{i_{m+1}}\end{array}\right|Z_{k_j}^r.$$
We just have to be a bit careful with the signs. For any $j\in \{0,\dots, \ell\}$ we set $K_j=K\setminus \{k_j\}$. We also take $\nu_j\in \{\ell+1,\dots, N-1\}$ such that $k_{\nu_j}<k_j<k_{\nu_j+1}$ (if $k_j< k_{\ell+1} $, we just take $\nu_j=\ell$). With this notation, we have $K^\circ_j=(k_{\ell+1},\dots, k_{\nu_j},k_j,k_{\nu_j+1},\dots, k_N)$ (or $K^\circ_j=(k_j,k_{\nu_j+1},\dots, k_N) $ if $k_j<k_{\ell+1}$). A straightforward computation shows that $\si_{K_j}=(k_j,k_{j+1},\dots, k_{\nu_j})^{-1}\circ \si_K$, hence $\ep(K_j)=(-1)^{\nu_j-j}\ep(K).$ On the other hand, observe that 
$$\left|\begin{array}{cccc}a^{i_1}_{k_{\ell+1}}&\cdots&a^{i_1}_{k_N}&a_{k_j}^{i_1}\\  \vdots & & \vdots & \vdots \\ a^{i_{m+1}}_{k_{\ell+1}}&\cdots&a^{i_{m+1}}_{k_N}&a_{k_j}^{i_{m+1}}\end{array}\right|
=(-1)^{N-\nu_j}\left|\begin{array}{c}L_{K_j}^{i_1}\\  \vdots \\ L_{K_j}^{i_{m+1}} \end{array}\right|.$$
Now we are ready for our computation:
\begin{eqnarray*}
\om^\ell_{K}\cdot F_{i_{m+1}}&=&(-1)^{N(N-\ell)}\ep(K)P\bX_K\left|\begin{array}{c}L_K^{i_1}\\  \vdots \\ L^{i_{m}}_K\end{array}\right|F_{i_{m+1}}= (-1)^{N(N-\ell)}\ep(K)P\bX_K\sum_{j=0}^\ell (-1)^{N-\nu_j}\left|\begin{array}{c}L_{K_j}^{i_1}\\ \vdots \\ L^{i_{m+1}}_{K_j}\end{array}\right|Z_{k_j}^r\\
&=&(-1)^{N(N-\ell)}P\sum_{j=0}^\ell (-1)^{N}\ep(K)(-1)^j(-1)^{j-\nu_j}\left|\begin{array}{c}L_{K_j}^{i_1}\\ \vdots \\ L^{i_{m+1}}_{K_j}\end{array}\right| \bX_{K_j}\\
&=&\sum_{j=0}^\ell(-1)^{N(N-\ell+1)}\ep(K_j)(-1)^jP\left|\begin{array}{c}L_{K_j}^{i_1}\\  \vdots \\ L^{i_{m+1}}_{K_j}\end{array}\right| \bX_{K_j}.
\end{eqnarray*}
Hence $\left(\tom^\ell\cdot F_{i_{m+1}}\right)_K=\check{d}\left(\left(\tom^{\ell-1}_{K'}\right)_{K'\in\{0,\dots,N\}^\ell_<}\right),$ where for any $K'\in\{0,\dots,N\}_<^{\ell}$, $$\om^{\ell-1}_{K'}=(-1)^{N(N-\ell+1)}\ep(K')P\left|\begin{array}{c}L_{K'}^{i_1}\\  \vdots \\ L^{i_{m+1}}_{K'}\end{array}\right| \bX_{K'}$$ as announced.
\end{proof}
The last step is to use the tilde-symmetric differential forms constructed in Lemma \ref{TildeDiffForms} to get usual symmetric differential forms. Recall that in our situation, from the exact sequence 
$$0\to S^n\Om_X(-a)\to S^n\tOm_X(-a)\to S^{n-1}\tOm_X(-a)\to 0$$
and the vanishing $H^0\left(X,S^{n-1}\tOm_X(-a)\right)$ (deduced from Lemma \ref{VanishingLemma}), we get an isomorphism 
$$H^0\left(X,S^n\Om_X(-a)\right)\cong  H^0\left(X,S^n\tOm_X(-a)\right).$$
Therefore we just have to compute the image $\om^{I,P}$ of $\tom^{I,P}$ under this isomorphism. To do this we just have to dehomogenize $\tom^{I,P}.$ For simplicity, we only work in the $U_0=(Z_0\neq 0)$ chart. We use the following notation: for any $i\in \{1,\dots,N\}$, $z_i=\frac{Z_1}{Z_0}$, so that $dZ_i=Z_0dz_i+z_idZ_0.$ Let us make an elementary observation. Take $\ell\gs 1$, and let $G\in \bC[Z_0,\dots,Z_N]$ be a homogeneous degree $\ell$ polynomial. Let $g$ be the dehomogenized of $G$ with respect to $Z_0$. Then we have $$dG=\ell Z_0^{\ell-1}gdZ_0+Z_0^\ell dg.$$ Indeed,
\begin{eqnarray*}
Z_0dG&=&\sum_{i=0}^NZ_0\frac{\partial G}{\partial Z_i}dZ_i = Z_0\frac{\partial G}{\partial Z_0}dZ_0+\sum_{i=1}^N\frac{\partial G}{\partial Z_i}(Z_0^2dz_i+Z_idZ_0)\\
&=& \sum_{i=0}^N\frac{\partial G}{\partial Z_i}Z_idZ_0+Z_0^2\sum_{i=1}^N\frac{\partial G}{\partial Z_i}dz_i=\ell GdZ_0+Z_0^2\sum_{i=1}^NZ_0^{\ell-1}\frac{\partial g}{\partial z_i}dz_i= \ell Z_0^{\ell}gdZ_0+Z_0^{\ell+1} dg.
\end{eqnarray*}
We introduce some more notation. For any $u\in \bA_\ep\cong\bC[z_1,\dots,z_N]_{\ls\ep}$ and any $q\in \{1,\dots,N\}$  we set 
 \begin{eqnarray*}
 b_q(u):=z_qu \ \ \ \text{and} \ \ \ \be_q(u):=z_qdu+eudz_q.
 \end{eqnarray*}
For any $1\ls j\ls c$ and for any $0\ls i\ls N$ we let $t_i^{j}$ (resp. $f_{j}$) to be the dehomogenized of $s_{i}^{j}$ (resp. $F_j$) with respect to $Z_0$.
Therefore we have $a_{i}(s_i^j)=Z_is_i^{j}=Z_0^{\ep+1}z_it_i^{j}=Z_0^{\ep+1}b_i(t_i^j)$ and 
\begin{eqnarray*}
\al_i(s_i^j)&=& Z_ids_i^{j}+es_i^{j}dZ_i=Z_0z_i(\ep Z_0^{\ep-1}t_i^{j}dZ_0+Z_0^\ep dt_i^{j})+eZ_0^\ep t_i^{j}(z_idZ_0+Z_0dz_i)\\
&=& e_0Z_0^\ep z_it_i^{j}dZ_0+Z_0^{\ep+1}(z_idt_i^{j}+et_i^{j}dz_i)=e_0Z_0^\ep b_i(t_i^j)dZ_0+Z_0^{\ep+1}\be_i(t_i^j).
\end{eqnarray*}
Therefore, by the elementary properties of the determinant we get 
$$\tom^{I,P}_0=\frac{P}{Z_0^r}\left|\begin{array}{ccc}a_1(s_{1}^{1})&\cdots&a_N(s_N^{1})\\ \vdots&&\vdots\\ a_1(s_{1}^{c})&\cdots&a_N(s_N^{c})\\ \al_1(s_{1}^{i_1})&\cdots&\al_N(s_N^{i_1})\\ \vdots&&\vdots\\ \al_1(s_{1}^{i_n})&\cdots&\al_N(s_N^{i_n})\end{array}\right|=PZ_0^{N(\ep+1)-r}\left|\begin{array}{ccc}b_1(t_{1}^{1})&\cdots&b_N(t_N^{1})\\ \vdots&&\vdots\\ b_1(t_{1}^{c})&\cdots&b_N(t_N^{c})\\ \be_1(t_{1}^{i_1})&\cdots&\be_N(t_N^{i_1})\\ \vdots&&\vdots\\ \be_1(t_{1}^{i_n})&\cdots&\be_N(t_N^{i_n})\end{array}\right|.$$
This completes the proof of the following.
\begin{proposition}\label{DiffForms} With the above notation. Take a homogenous polynomial $P\in \bC[Z_0,\dots,Z_N]$ of degree $e-a-N\ep-N-1$ and let $Q\in \bC[z_1,\dots,z_N]$ be the dehomogenization of $P$ with respect to $Z_0.$ Then, for any $I=(i_1,\dots,i_n)\in\{1,\dots,c\}_{\neq}^{n}$, the symmetric differential form
$$\om_0^{I,Q}(t):=Q\left|\begin{array}{ccc}b_1(t_{1}^{1})&\cdots&b_N(t_N^{1})\\ \vdots&&\vdots\\ b_1(t_{1}^{c})&\cdots&b_N(t_N^{c})\\ \be_1(t_{1}^{i_1})&\cdots&\be_N(t_N^{i_1})\\ \vdots&&\vdots\\ \be_1(t_{1}^{i_n})&\cdots&\be_N(t_N^{i_n})\end{array}\right|\in H^0(U_0\cap X,S^n\Om_{X})$$
extends to a twisted symmetric differential form $\om^{I,P}\in H^0(X,S^n\Om_X(-a))$. 
\end{proposition}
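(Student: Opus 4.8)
The plan is to deduce Proposition \ref{DiffForms} essentially by combining Lemma \ref{TildeDiffForms} with the dehomogenization computation already carried out just above the statement. First I would recall the starting data: Lemma \ref{TildeDiffForms} produces, for each $I=(i_1,\dots,i_n)\in\{1,\dots,c\}^n_{\neq}$ and each homogeneous $P$ of degree $e-a-N\ep-N-1$, a non-zero element $\tom^{I,P}\in H^0(X,S^n\tOm_X(-a))$, together with an explicit \v{C}ech representative $(\tom^{I,P}_0,\dots,\tom^{I,P}_N)$ with respect to $\fU_X$. So the first step is simply to invoke the existence of $\tom^{I,P}$.

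The second step is to pass from $\tOm$ to $\Om$. Here I would use the $n$-th symmetric power of the Euler exact sequence \eqref{EulerExactSequence},
$$0\to S^n\Om_X(-a)\to S^n\tOm_X(-a)\to S^{n-1}\tOm_X(-a)\to 0,$$
and the vanishing $H^0(X,S^{n-1}\tOm_X(-a))=0$, which follows from Lemma \ref{VanishingLemma} applied to the simple $\la$-setting $(X,0,\dots,0,(n-1))$: indeed $q$ of this setting equals $n-(n-1)\cdot\min\{c,n-1\}\cdots$, more precisely $\dim-\n = n - \min\{c,n-1\}$, but one checks $\min\{c,n-1\}=n-1$ since $c\gs n/2$ and $n\gs 2$ force $c\gs n-1$; hence $q=1>0$, and the twist condition $a<t=n-1$ is exactly the hypothesis $e\gs a+N(\ep+1)+1$ combined with... actually the relevant inequality here is simply $a<n-1$, which I should double-check is implied by the standing hypotheses (if not, one argues directly via the long exact sequence that the connecting map still gives the isomorphism under the weaker bound used in Lemma \ref{TildeDiffForms}). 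In any case the long exact sequence in cohomology yields the isomorphism $H^0(X,S^n\Om_X(-a))\xrightarrow{\ \sim\ } H^0(X,S^n\tOm_X(-a))$, so $\tom^{I,P}$ corresponds to a unique $\om^{I,P}\in H^0(X,S^n\Om_X(-a))$.

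The third step is to identify $\om^{I,P}$ explicitly on the chart $U_0$. This isomorphism is induced by the inclusion $\Om_X\hookrightarrow\tOm_X$, which in the chart $U_0$ is literally the map sending $dz_i$ to $\Eul(dz_i)=\dfrac{Z_0dZ_i-Z_idZ_0}{Z_0^2}$; concretely one just has to rewrite the \v{C}ech component $\tom^{I,P}_0$ in terms of the affine coordinates $z_1,\dots,z_N$. This is exactly the computation performed in the paragraphs preceding the statement: using the elementary identity $dG=\ell Z_0^{\ell-1}g\,dZ_0+Z_0^\ell dg$ for a homogeneous degree-$\ell$ polynomial $G$ with dehomogenization $g$, one obtains $a_i(s_i^j)=Z_0^{\ep+1}b_i(t_i^j)$ and $\al_i(s_i^j)=e_0Z_0^\ep b_i(t_i^j)dZ_0+Z_0^{\ep+1}\be_i(t_i^j)$. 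Expanding the determinant defining $\tom^{I,P}_0$ along these substitutions, the $dZ_0$-terms in each $\al$-row can be cleared by column/row operations (they are multiples of the corresponding $a$-rows, hence of $b$-rows), leaving precisely the determinant $\om_0^{I,Q}(t)$ up to the scalar factor $P Z_0^{N(\ep+1)-r}$, where $Q$ is the dehomogenization of $P$. Since $r=e-1$ this power is a unit on $U_0$, so on $U_0\cap X$ the section $\om^{I,P}$ agrees with $\om_0^{I,Q}(t)$ up to a nowhere-vanishing function; equivalently $\om_0^{I,Q}(t)$, a priori only a section over $U_0\cap X$, is the restriction of the global twisted form $\om^{I,P}\in H^0(X,S^n\Om_X(-a))$, which is the assertion.

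The main obstacle is purely bookkeeping: keeping track of the twist by $\cO_X(-a)$, the powers of $Z_0$ produced by dehomogenization, and the sign and column-ordering conventions so that the cleared determinant is exactly $\om_0^{I,Q}$ and not some signed permutation of its rows. There is no conceptual difficulty beyond what is already contained in Lemma \ref{TildeDiffForms} and the Euler sequence; the vanishing needed for the $\Om/\tOm$ comparison is the only place where one must be slightly careful that the numerical hypotheses are strong enough, and as noted this follows from Lemma \ref{VanishingLemma} (using $c\gs N/2$, so $\min\{c,n-1\}=n-1$ and $q\gs 1$) together with the degree bound on $e$.
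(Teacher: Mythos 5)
Your proposal follows the paper's own argument essentially verbatim: invoke Lemma \ref{TildeDiffForms}, use the symmetric power of the Euler sequence together with the vanishing of $H^0(X,S^{n-1}\tOm_X(-a))$ from Lemma \ref{VanishingLemma} to get the isomorphism $H^0(X,S^n\Om_X(-a))\cong H^0(X,S^n\tOm_X(-a))$, and then dehomogenize $\tom^{I,P}_0$ on $U_0$, clearing the $dZ_0$-terms by row operations to land on $Q$ times the $b,\be$-determinant (times $Z_0^{N(\ep+1)-r}$, i.e.\ the local trivialization of $\cO_X(-a)$). The only point to tidy is your hedge about the numerics: the relevant twist is $-a$, so Lemma \ref{VanishingLemma} requires $-a<t=n-1$ (and $\min\{c,n-1\}=n-1$ because the standing hypothesis $c\gs N/2$ gives $c\gs n$, not your ``$c\gs n/2$''), which holds automatically for $a\gs 0$ and $n\gs 2$, so no fallback argument is needed.
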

\subsection{Estimating the base locus}\label{EliminationSection}
\subsubsection{Dimension count lemmas}
We will need some elementary results. The following is classical, we provide a proof because we will use the idea of it again afterwards. 
\begin{lemma} Let $p,q\in \bN$ such that $n\ls p$. Fix a rank $n$ matrix $A\in \Mat_{n,p}(\bC).$ For any $r\in \bN^*$ with $r<n$, let 
$$\Si^{p,q}_r:=\left\{B\in \Mat_{p,q}(\bC)\ \text{such that}\ \ \rk{AB}\ls r\right\}.$$
Then, $\dim \Si_r^{p,q}\ls pq-(q-r)(n-r).$
\end{lemma}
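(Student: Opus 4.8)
The plan is to reduce the problem to a fiber-dimension estimate over the variety of rank-$\le r$ matrices. First I would use that $A$ has rank $n$ to find an invertible $p\times p$ matrix $P$ such that $AP = (I_n \mid 0)$, where $I_n$ is the $n\times n$ identity and $0$ is the $n\times (p-n)$ zero block. Since right-multiplication by $P^{-1}$ is an isomorphism of $\Mat_{p,q}(\bC)$ that sends $\Si_r^{p,q}$ to $\{C \in \Mat_{p,q}(\bC) : \rk((AP)C) \le r\}$, and $(AP)C$ is simply the top $n$ rows of $C$, I am reduced to bounding the dimension of
$$
\cS := \left\{ C \in \Mat_{p,q}(\bC) \ \text{such that}\ \rk(C_{\mathrm{top}}) \le r \right\},
$$
where $C_{\mathrm{top}} \in \Mat_{n,q}(\bC)$ denotes the submatrix formed by the first $n$ rows of $C$.

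Next I would split $C$ into its top block $C_{\mathrm{top}} \in \Mat_{n,q}(\bC)$ and bottom block $C_{\mathrm{bot}} \in \Mat_{p-n,q}(\bC)$; the bottom block is entirely free, contributing $(p-n)q$ to the dimension. The top block is constrained to lie in the determinantal variety $D_r := \{ M \in \Mat_{n,q}(\bC) : \rk M \le r\}$. The key classical fact I would invoke is that $\dim D_r = r(n+q-r)$ (the locus of rank exactly $r$ is a smooth quasi-projective variety of that dimension, and its closure is $D_r$; this can be seen via the incidence variety parametrizing pairs $(M, V)$ with $V$ an $r$-dimensional subspace of $\bC^n$ containing the column space of $M$, which fibers over $\Gr(r,n)$ with fibers $\Mat_{r,q}(\bC)$, giving $r(n-r) + rq$). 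Adding the two contributions gives
$$
\dim \cS = (p-n)q + r(n+q-r).
$$

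Finally I would check this equals the claimed bound by a short arithmetic simplification:
$$
(p-n)q + r(n+q-r) = pq - nq + rn + rq - r^2 = pq - (q-r)(n-r),
$$
since $(q-r)(n-r) = qn - qr - rn + r^2$. This gives exactly $\dim \Si_r^{p,q} \le pq - (q-r)(n-r)$, as desired (in fact equality holds, but only the upper bound is needed). I do not anticipate a genuine obstacle here; the only point requiring care is citing or reproving the dimension of the generic determinantal variety $D_r$, which I would do via the Grassmannian incidence argument sketched above — this mirrors the style of reasoning the paper uses elsewhere and keeps the proof self-contained.
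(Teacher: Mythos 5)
Your proof is correct, and it takes a route that is related but not identical to the paper's. The paper works directly with the incidence variety $\De=\{(B,\Ga)\in \Mat_{p,q}(\bC)\times\Grass(r,\bC^n):\im(AB)\subseteq\Ga\}$: the fiber over $\Ga$ is $\{B:\im B\subseteq A^{-1}(\Ga)\}\cong\Hom(\bC^q,A^{-1}(\Ga))$, and since $A$ is surjective $\dim A^{-1}(\Ga)=r+p-n$, so $\dim\De=q(r+p-n)+r(n-r)=pq-(q-r)(n-r)$, from which the bound follows because $\Si_r^{p,q}=\pr_1(\De)$. You instead first normalize $A$ to $(I_n\mid 0)$ by right multiplication, split $C$ into a free bottom $(p-n)\times q$ block and a top $n\times q$ block constrained to lie in the generic determinantal variety $D_r\subseteq\Mat_{n,q}(\bC)$, and then cite (or reprove via the column-space incidence over $\Gr(r,n)$) the classical formula $\dim D_r=r(n+q-r)$. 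The two arguments are the same computation in disguise: your normalization cleanly separates the $(p-n)q$ dimensions coming from $\ker A$ from the genuine determinantal constraint, whereas the paper's fiber over $\Ga$ absorbs both into the single space $A^{-1}(\Ga)$ of dimension $r+(p-n)$. The paper's version is a one-shot incidence argument with no normalization; your version is slightly more modular because it isolates the known determinantal-variety fact, at the cost of an extra reduction step. Both are valid, and the arithmetic check at the end is correct.
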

\begin{proof} Set $$\De:=\left\{(B,\Ga)\in \Mat_{p,q}(\bC)\times \Grass(r,\bC^n)\ \text{such that}\ \im(AB)\subseteq \Ga\right\}$$
Let $\pr_1:\De\to \Mat_{p,q}(\bC)$ and $\pr_2:\De\to \Grass(r,\bC^n) $ be the natural projections. For any $\Ga\in \Grass(r,\bC^n)$ we consider $\De_\Ga=\pr_1(\pr_2^{-1}(\Ga)).$ Of course, 
\begin{eqnarray*}
\De_\Ga=\left\{B\in \Mat_{p,q}(\bC)\ \text{such that}\ \im(AB)\subseteq \Ga\right\}=\left\{B\in \Mat_{p,q}(\bC)\ \text{such that}\ \im(B)\subseteq A^{-1}(\Ga)\right\}\end{eqnarray*}
But since $\rk A=n,$ $A$ is surjective and therefore $\dim(A^{-1}(\Ga))=r+p-n$. Therefore, $\De_\Ga\cong \Hom(\bC^q,\bC^{r+p-n})$ and thus  $\dim (\De_\Ga)=q(r+p-n).$ In particular, $$\dim \De= q(r+p-n)+\dim\Grass(r,\bC^n)=q(r+p-n)+r(n-r)=qp-(q-r)(n-r).$$
Since $\Si_r^{p,q}=\pr_1(\De),$ the result follows.
\end{proof}
From the previous lemma one can easily deduce the following.
\begin{corollary}\label{determinental} Let $M,N,c\in \bN^*$ such that $c\ls N$. Let $\ell_1,\dots, \ell_N\in (\bC^M)^\vee$ be non-zero linear forms.  For any $r\in \bN^*$ such that $r<c,$ let 
$$\Si_r=\left\{(x_i^j)^{1\leqslant j\ls c}_{1\ls i \ls N}\in (\bC^M)^{Nc} \ \text{such that } \rk\left(\begin{array}{ccc}\ell_1(x_1^1)&\cdots & \ell_N(x_N^1)\\ \vdots & &\vdots \\ \ell_1(x_1^c)&\cdots & \ell_N(x_N^c)\end{array}\right)\ls r\right\}.$$ 
Then, $\dim(\Si_r)\ls MNc-(N-r)(c-r).$ 
\end{corollary}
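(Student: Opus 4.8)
The plan is to deduce Corollary \ref{determinental} from the preceding lemma by a linear-algebra reduction, exactly in the spirit of how the lemma's proof organizes matters. First I would fix the $N$ non-zero linear forms $\ell_1,\dots,\ell_N$ on $\bC^M$ and think of a tuple $(x_i^j)$ as the following factorization: let $B\in\Mat_{MN,c}(\bC)$ be the matrix whose columns are indexed by $j\in\{1,\dots,c\}$ and whose $j$-th column is the concatenation $(x_1^j,\dots,x_N^j)\in (\bC^M)^N\cong\bC^{MN}$. Let $A\in\Mat_{N,MN}(\bC)$ be the block-diagonal matrix $\mathrm{diag}(\ell_1,\dots,\ell_N)$, viewed as a map $\bC^{MN}\to\bC^N$. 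Then the $c\times N$ matrix whose $(j,i)$ entry is $\ell_i(x_i^j)$ is precisely $(AB)^{\mathsf{T}}$, so its rank equals $\rk(AB)$. Since each $\ell_i$ is non-zero, $A$ is surjective, i.e.\ $\rk A=N$. (Here the roles of the ``$p,q$'' in the lemma are played by $p=MN$, $q=c$, and ``$n$'' by $N$; note $c\ls N$ guarantees $r<c\ls N=n$, so the lemma applies.)

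Having set this up, the set $\Si_r$ is literally the preimage of $\Si_r^{MN,c}$ (the bad set from the lemma, for this particular $A$) under the linear isomorphism $(x_i^j)\mapsto B$ between $(\bC^M)^{Nc}$ and $\Mat_{MN,c}(\bC)$. Therefore $\dim\Si_r=\dim\Si_r^{MN,c}$, and the lemma gives
$$\dim\Si_r\ls (MN)\cdot c-(c-r)(N-r)=MNc-(N-r)(c-r),$$
which is exactly the claimed bound.

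One small point to address carefully: the lemma is stated for a \emph{fixed} rank-$n$ matrix $A$, and one must make sure the specific block-diagonal $A=\mathrm{diag}(\ell_1,\dots,\ell_N)$ indeed has rank $N$; this is immediate because a block-diagonal matrix whose diagonal blocks are each non-zero row vectors has $N$ linearly independent rows. There is no genuine obstacle here — the content is entirely in the lemma, and the corollary is a bookkeeping translation. The only thing to get right is the transpose/indexing so that $\rk$ of the displayed $c\times N$ matrix matches $\rk(AB)$, and the identification $p=MN$, $q=c$, $n=N$ so that the inequality $\dim\ls pq-(q-r)(n-r)$ reads off as $MNc-(c-r)(N-r)$. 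Since $\dim\Grass(r,\bC^N)$ and $\dim\Hom(\bC^c,\bC^{r+MN-N})$ are the quantities actually used in the lemma's proof, everything is consistent, and no new estimates are needed.
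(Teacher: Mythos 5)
Your proof is correct and takes essentially the approach the paper intends: the paper offers no written argument for Corollary \ref{determinental} beyond saying it follows from the preceding lemma, and your reduction (the block-diagonal $A=\mathrm{diag}(\ell_1,\dots,\ell_N)$ of rank $N$, applied with $p=MN$, $q=c$, $n=N$, plus the transpose bookkeeping identifying the displayed $c\times N$ matrix with $(AB)^{\mathsf{T}}$) is exactly that easy deduction. No gaps.
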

The following lemma will be crucial to us.
\begin{lemma}\label{FormesIndep}
Take $c,n,M\in \bN^*$ such that $n\ls c$ and let $N:=n+c$. Take linear forms $\ell_1,\dots, \ell_n,\la_1,\dots ,\la_n\in (\bC^M)^\vee$ such that for any $i\in \{1,\dots,c\}$, $\la_i\neq 0$ and $\ker\ell_i\neq \ker \la_i$. Fix $A\in \Gl_c(\bC)$ and  $B\in \Mat_{c}(\bC)$. For any $x=(x_i^j)^{1\ls j\ls c}_{1\ls i\ls n}\in (\bC^M)^{cn}$ consider 
$$S\left(x\right)=\left(\begin{array}{ccc|ccc}&&&\ell_1(x_1^1)&\cdots&\ell_n(x_n^1)\\&A&&\vdots&&\vdots \\ &&&\ell_1(x_1^c)&\cdots&\ell_n(x_n^c)\\ \hline &&&\la_1(x_1^1)&\cdots&\la_n(x_n^1)\\&B&&\vdots&&\vdots \\ &&&\la_1(x_1^c)&\cdots&\la_n(x_n^c) \end{array}\right).$$
For any $r\in \{c,\dots, N-1\}$, let $$\Si_r:=\left\{x\in (\bC^M)^{cn}\ \text{such that }\ \rk S\left(x\right)\ls r\right\}\subseteq (\bC^M)^{cn}.$$
Then, $\dim(\Si_r)\leqslant Mnc-(N-r)(2c-r).$ 
\end{lemma}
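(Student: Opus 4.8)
The plan is to mimic the proof of the previous determinantal lemma by an incidence variety argument, but now treating the top block and the bottom block separately so as to exploit the hypothesis $\ker\ell_i\neq\ker\lambda_i$. First I would set up the incidence variety
$$
\Delta:=\left\{(x,\Gamma)\in(\bC^M)^{cn}\times\Grass(r,\bC^N)\ \text{such that}\ \im S(x)\subseteq\Gamma\right\},
$$
with projections $\pr_1$ to $(\bC^M)^{cn}$ and $\pr_2$ to $\Grass(r,\bC^N)$, and I would note that $\Sigma_r=\pr_1(\Delta)$, so it suffices to bound $\dim\Delta$. The fibre $\pr_2^{-1}(\Gamma)$ consists of those $x$ with all columns of $S(x)$ in $\Gamma$; I would write each column of $S(x)$ as $(\text{fixed part from }A\text{ or }B)\oplus(\ell_i(x_i^j)_j,\lambda_i(x_i^j)_j)$ and translate the constraint "$i$-th column $\in\Gamma$" into a linear condition on the vector $x_i:=(x_i^1,\dots,x_i^c)\in(\bC^M)^c$ via the two forms $\ell_i,\lambda_i$.

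The key computation is the fibre dimension. Fix $\Gamma$ of dimension $r\in\{c,\dots,N-1\}$. Since $A\in\Gl_c(\bC)$, the first $c$ standard coordinate directions $e_1,\dots,e_c$ together with $\Gamma$ span a space of dimension at least $\min\{N,r+c\}$ — actually I would argue that the composite of $\Gamma\hookrightarrow\bC^N=\bC^c\oplus\bC^c$ with the projection onto the first factor, precomposed with the "$A$ part", is controlled by $A$ being invertible — and deduce that the image of $\Gamma$ under the projection $\bC^N\to\bC^c$ onto the \emph{last} $c$ coordinates has dimension at least $r-c$. For each $i$, the condition that the $i$-th column of $S(x)$ lies in $\Gamma$ forces the vector $(\ell_i(x_i^1),\dots,\ell_i(x_i^c),\lambda_i(x_i^1),\dots,\lambda_i(x_i^c))\in\bC^c\oplus\bC^c$ to lie (after subtracting the fixed contribution coming from the $i$-th columns of $A,B$, which is harmless for a dimension count) in the projection of $\Gamma$ to those $2c$ coordinates; generically this projection has codimension $\geq 2c-(r-c)=3c-r$ inside $\bC^{2c}$ when $r\leq 3c$, and here is where $\ker\ell_i\neq\ker\lambda_i$ enters: because $\ell_i$ and $\lambda_i$ are linearly independent, the map $x_i\mapsto(\ell_i(x_i^1),\dots,\lambda_i(x_i^c))$ is surjective onto $\bC^c\oplus\bC^c$, so each independent linear condition on the target pulls back to exactly one independent linear condition on $x_i\in(\bC^M)^c$. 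Summing over $i=1,\dots,n$ I would obtain that the fibre $\pr_1^{-1}(\cdot)$ over a generic $\Gamma$, i.e. $\dim\pr_2^{-1}(\Gamma)$, is at most $Mcn-n\cdot(\text{number of conditions per }i)$, and then $\dim\Delta\leq\dim\Grass(r,\bC^N)+\dim\pr_2^{-1}(\Gamma)$ with $\dim\Grass(r,\bC^N)=r(N-r)$; the arithmetic $r(N-r)+Mcn-n(2c-r)\cdot\text{(something)}\leq Mnc-(N-r)(2c-r)$ should be arranged to close using $N=n+c$ and $n\leq c$.

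I would organize the numerology carefully: the target bound $Mnc-(N-r)(2c-r)$ suggests that over each $\Gamma$ the expected number of conditions imposed on $x$ should be at least $(N-r)(2c-r)-r(N-r)=(N-r)(2c-r-r)$... — so in fact the cleaner route is to not fix $\Gamma$ generically but to stratify $\Grass(r,\bC^N)$ by the dimension of the projection of $\Gamma$ to the last $c$ coordinates, and balance the two contributions on each stratum, exactly as the paper does in Corollary \ref{determinental}. Concretely, for $\Gamma$ whose image in the last $c$ coordinates has dimension $s$ (so $r-c\leq s\leq\min\{r,c\}$), the fibre over $\Gamma$ imposes roughly $n(2c-?)$ conditions while the locus of such $\Gamma$ has dimension $r(N-r)$ minus a correction; taking the maximum over $s$ and using $A$ invertible to force $s\geq r-c$ should yield the stated inequality. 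The main obstacle, and the step I would spend the most care on, is precisely tracking how many \emph{independent} linear conditions the containment $\im S(x)\subseteq\Gamma$ imposes on the $x_i$'s: one must check that the contributions from the $n$ different blocks $x_1,\dots,x_n$ are genuinely independent (they involve disjoint sets of variables, so this is fine) and that within a single block the two rows coming from $\ell_i$ and $\lambda_i$ contribute two independent conditions rather than one — this is exactly what $\ker\ell_i\neq\ker\lambda_i$ guarantees, and it is the hypothesis that the bound $(N-r)(2c-r)$ (as opposed to the weaker $(N-r)(c-r)$ of Corollary \ref{determinental}) is designed to capture. Once the conditions are counted correctly, the final inequality is a routine optimization in the auxiliary integer $s$.
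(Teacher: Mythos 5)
Your overall plan (bound $\Sigma_r$ by an incidence variety with a Grassmannian and count the linear conditions imposed on each block $x_i$, using $\ker\ell_i\neq\ker\lambda_i$ to get the stronger codimension) is the right philosophy, but as written the argument has a genuine gap and would not close.

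First, a bookkeeping error that propagates: the columns of $S(x)$ live in $\bC^{2c}$, not $\bC^N$ — $S(x)$ is a $2c\times N$ matrix — so the incidence variety should pair $x$ with $\Gamma\in\Grass(r,\bC^{2c})$, which has dimension $r(2c-r)$, not $r(N-r)$. With the wrong Grassmannian the "routine optimization" you defer to cannot produce $Mnc-(N-r)(2c-r)$; you notice this yourself ("should be arranged to close") and pivot to a stratification by the image of the projection of $\Gamma$ to the last $c$ coordinates, but you never set that stratification up. Second, and more importantly, you miss the observation that actually makes a direct incidence-variety argument work without any stratification: the first $c$ columns of $S(x)$ are the columns of $\left(\begin{smallmatrix}A\\ B\end{smallmatrix}\right)$, which do not depend on $x$ and, because $A$ is invertible, span a fixed $c$-dimensional subspace $V_0\subseteq\bC^{2c}$. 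Hence only those $\Gamma\supseteq V_0$ have non-empty fibre, and the locus of such $\Gamma$ is a sub-Grassmannian of dimension $(r-c)(2c-r)$. Moreover $V_0$ is a complement of $\{0\}^c\times\bC^c$, so for any such $\Gamma$ one has $\dim\bigl(\Gamma\cap(\{0\}^c\times\bC^c)\bigr)=r-c$ exactly, which is what controls the fibre even when $\ell_i=0$ — a case the hypothesis permits but which your phrase ``$\ell_i$ and $\lambda_i$ are linearly independent'' silently excludes (if $\ell_i=0$ and $\lambda_i\neq 0$, the map $x_i\mapsto(\ell_i(x_i^1),\dots,\lambda_i(x_i^c))$ is not surjective, yet the fibre dimension still comes out to $Mc+r-2c$ thanks to the exact intersection count above). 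Once these two points are in place, each of the $n$ free columns contributes $2c-r$ independent conditions and the bound follows with no optimization in an auxiliary parameter.

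For comparison, the paper avoids all of this by first exploiting $A\in\Gl_c$ via row and column operations to rewrite $\rk S(x)=c+\rk(X\Lambda-BXL)$, where $X\Lambda-BXL$ is a $c\times n$ matrix whose $j$-th column is $K_jX_j$ for an explicit $c\times Mc$ matrix $K_j$; the hypothesis $\lambda_j\neq 0$, $\ker\ell_j\neq\ker\lambda_j$ (covering both the case $\ell_j=0$ and the case of two independent forms) is exactly what gives $\rk K_j=c$, i.e.\ surjectivity. The incidence is then with $\Grass(r-c,\bC^c)$ and the fibre dimension $n(Mc+r-2c)$ drops out uniformly, giving the bound in one line. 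That reduction is what removes the need for any Schubert-cell or stratification argument; if you want to keep your direct approach you must at least make the ``$\Gamma\supseteq V_0$'' reduction explicit and handle the $\ell_i=0$ case.
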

\begin{proof}
Using elementary operations on the $c$ first  columns, we see that we may suppose that $A=I_c$ is the identity matrix of size $c$. We will write everything in matricial notation. For any $i\in \{1,\dots, M\}$ any $j\in \{1,\dots, c\}$ and any $x=(x_i^j)^{1\ls j\ls c}_{1\ls i\ls n}\in (\bC^M)^{cn}$ we let $$L_i=(\ell_{i,1},\dots, \ell_{i,M})\in (\bC^M)^\vee,\ \ \La_i=(\la_{i,1},\dots, \la_{i,M})\in (\bC^M)^\vee \ \text{and} \ X_{i}^j=\left(\begin{array}{c}x_{i,1}^j\\ \vdots\\ x_{i,M}^j\end{array}\right)\in \bC^M$$
such that $\ell_i(x_i^j)=L_iX_i^j$ and $\la_i(x_i^j)=\La_i X_i^j.$ For any matrix $Q$ we denote the transposed of $Q$ by ${}^tQ.$ Set moreover
$$L:=\left(\begin{array}{ccc}{}^tL_1&\cdots &0\\ \vdots &\ddots&\vdots \\ 0&\cdots &{}^tL_n\end{array}\right),\ \ \La:=\left(\begin{array}{ccc}{}^t\La_1&\cdots &0\\ \vdots &\ddots&\vdots \\ 0&\cdots &{}^t\La_n\end{array}\right)\ \text{and}\ X:=\left(\begin{array}{ccc}{}^tX_1^1&\cdots &{}^tX_n^1\\ \vdots &&\vdots \\ {}^tX_1^c&\cdots &{}^tX_n^c\end{array}\right).$$
 With those notations, $S(x)=\left(\begin{array}{cc}I_c&XL\\ B& X\La\end{array}\right),$ and by elementary operations on the lines we get that 
 $$\rk(S(x))=\rk\left(\begin{array}{cc}I_c&XL\\ 0& X\La-BXL\end{array}\right)=c+\rk(X\La-BXL).$$
 Write $B=(b_{i,j})_{1\ls i,j\ls c}.$ By a straightforward computation we obtain that for any $j\in \{1,\dots, n\}$ the $j$-th column of the matrix $X\La-BXL$ is exactly $K_jX_j$ where $$X_j=\left(\begin{array}{c}X_j^1\\ \vdots\\ X_j^c\end{array}\right)\ \ \ \text{and} \ \ \
 K_j=\left(\begin{array}{ccc}\La_j&\cdots&0\\ \vdots&\ddots&\vdots\\ 0&\cdots & \La_j\end{array}\right)-\left(\begin{array}{ccc}b_{11}L_j&\cdots&b_{1c}L_j\\ \vdots&&\vdots\\ b_{c1}L_j&\cdots & b_{cc}L_j\end{array}\right)$$
In particular, we have $X\La-BXL=\left(\begin{array}{ccc}K_1X_1&\cdots&K_nX_n\end{array}\right).$ An easy computation shows that under our hypothesis on the $\ell_i'$s and the $\la_i'$s, $\rk K_j=c$ for all $j\in \{1,\dots,n\}.$ The end of the proof goes as in Lemma~\ref{determinental}. Let
$$\De:=\left\{(X,\Ga)\in (\bC^M)^{nc}\times \Grass(r-c,\bC^c)\ \text{such that} \ \im(X\La-BXL)\subseteq \Ga\right\}$$
 Let $\pr_1:\De\to (\bC^M)^{cn}$ and $\pr_2:\De\to \Grass(r-c,\bC^c) $ be the natural projections. For any $\Ga\in \Grass(r-c,\bC^c)$ let $\De_\Ga=\pr_1(\pr_2^{-1}(\Ga)).$ Then 
 \begin{eqnarray*}
 \De_{\Ga}&=&\left\{X\in (\bC^M)^{nc}\ \text{such that} \ \im(X\La-BXL)\subseteq \Ga\right\}\\
 &=&\left\{X\in (\bC^M)^{nc}\ \text{such that} \ \im\left(\begin{array}{ccc}K_1X_1&\cdots&K_nX_n\end{array}\right)\subseteq \Ga\right\}\\
 &=&\left\{X\in (\bC^M)^{nc}\ \text{such that} \ \im(X_1)\subseteq K_1^{-1}(\Ga),\dots, \im(X_n)\subseteq K_n^{-1}(\Ga)\right\}
 \end{eqnarray*}
 Since for each $j\in \{1,\dots,n\},$ $K_j$ is surjective, we obtain that $\dim(K_j^{-1}(\Ga))=r-c+Mc-c=Mc+r-2c.$
 In particular
$ \De_\Ga\cong \Hom(\bC,\bC^{Mc+r-2c})^{\oplus n},$
which implies that $\dim(\De_\Ga)=n(Mc+r-2c)$. And therefore,
$$\dim(\De)=n(Mc+r-2c)+\dim\Grass(r-c,\bC^c)=n(Mc+r-2c)+(r-c)(2c-r)=Mnc-(n+c-r)(2c-r).$$
But since $\Si_r=\pr_1(\De)$, this concludes the proof of the lemma.
 \end{proof}
We are also going to need the following elementary algebraic geometry lemma.
\begin{lemma}\label{stratification} Let $X$ and $Y$ be two algebraic varieties (not necessarily irreducible). Let $f:X\to Y$ be a regular map. Suppose that $Y=Y_0\cup \cdots\cup Y_r$ such that $Y_i\cap Y_j=\varnothing$  if $i\neq j$. For each $i\in \{0,\dots,r\}$ take $n_i\in \bN$. Suppose that for each $y\in Y_i$ and for each $i\in \{0,\dots,r\}$ , $\dim X_y\leqslant n_i$. Then 
$$\dim X\leqslant \max_{0\ls i\ls r}(\dim(Y_i)+n_i).$$ 
\end{lemma}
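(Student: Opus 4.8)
The plan is to reduce everything to the classical theorem on fibers of a dominant morphism between irreducible varieties: if $g\colon V\to W$ is such a morphism, then $\dim V=\dim W+d$ where $d$ is the dimension of the generic fibre, and more precisely there is a dense open $U\subseteq W$ such that $\dim g^{-1}(w)=\dim V-\dim W$ for every $w\in U\cap g(V)$. I will assume this standard fact.

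First I would decompose the source. Since $Y=Y_0\sqcup\cdots\sqcup Y_r$, we get $X=\bigsqcup_{i=0}^r f^{-1}(Y_i)$, and each $f^{-1}(Y_i)$ is again a variety (the $Y_i$ are locally closed in $Y$, hence their preimages are locally closed in $X$). Therefore $\dim X=\max_{0\leqslant i\leqslant r}\dim f^{-1}(Y_i)$, and it suffices to show, for each fixed $i$, that $\dim f^{-1}(Y_i)\leqslant \dim Y_i+n_i$. Replacing $X$ by $f^{-1}(Y_i)$ and $Y$ by $Y_i$, I am reduced to the following assertion: if $g\colon X'\to Y'$ is a regular map all of whose fibres have dimension at most $n$, then $\dim X'\leqslant \dim Y'+n$.

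To prove that, I would pass to an irreducible component $Z$ of $X'$ of maximal dimension, so $\dim X'=\dim Z$. Then $\overline{g(Z)}$ is an irreducible variety contained in $\overline{Y'}$, so $\dim\overline{g(Z)}\leqslant\dim\overline{Y'}=\dim Y'$, and $g|_Z\colon Z\to\overline{g(Z)}$ is dominant. By the fibre dimension theorem there is a point $w\in g(Z)$ with $\dim(g|_Z)^{-1}(w)=\dim Z-\dim\overline{g(Z)}$. Since $(g|_Z)^{-1}(w)\subseteq g^{-1}(w)$, this fibre has dimension at most $n$ by hypothesis, whence $\dim Z-\dim\overline{g(Z)}\leqslant n$, i.e.\ $\dim X'=\dim Z\leqslant\dim\overline{g(Z)}+n\leqslant\dim Y'+n$. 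Combining this with the first step gives the statement.

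The only delicate points are bookkeeping ones — that preimages of the $Y_i$ are again varieties, that dimension is unchanged under passing to closures, and that one may restrict the base to the (closure of the) image before invoking the fibre dimension theorem — none of which is a genuine obstacle. I expect the main (mild) subtlety to be simply arranging the reduction to an irreducible component $Z$ and a dominant restriction $g|_Z$ so that the classical theorem applies verbatim.
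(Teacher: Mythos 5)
Your argument is correct, and it is the standard one: the paper states Lemma \ref{stratification} without proof (it is invoked as an elementary fact), so there is nothing to diverge from — decomposing $X$ into the locally closed preimages $f^{-1}(Y_i)$, passing to a top-dimensional irreducible component $Z$, and applying the fibre-dimension theorem to the dominant map $Z\to\overline{g(Z)}\subseteq\overline{Y_i}$ is exactly the intended reasoning. The only reading issue is that the $Y_i$ are merely asserted to partition $Y$; your proof implicitly takes them locally closed (or at least constructible, after refining the partition), which is harmless and is satisfied in the paper's application, where each stratum $Y_k$ is indeed locally closed.
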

\subsubsection{Proof of Lemma \ref{EliminationLemma}}
We are now in position to prove Lemma \ref{EliminationLemma}. It will be a immediate consequence of the following more precise statement.
\begin{lemma}\label{RefinedElimitationLemma}Let $N,c,e,\ep,a\in\bN$ such that $N\geqslant 2$, $c\geqslant \frac{3N-2}{4}$, $\ep\geqslant 1$ and $e\geqslant N+1+a+N\ep$, set $q:=e- (N+1+a+N\ep)$. Set $n:=N-c$. For any $0\leqslant j\leqslant N$ take $s^j\in \bA_\ep^{\oplus N+1}$ such that $X:=X_{s^1}\cap \cdots \cap X_{s^c}$ is a smooth complete intersection variety. 
 For any $i\in \{0,\dots,N\}$, set $H_i:=(Z_i=0)$ and $W_i:=X\cap H_i$. We look at $\bP(\Om_{W_i})$ as a subvariety of $\bP(\Om_X)$. Then, for a general choice of  $s^1,\dots,s^c$, there exists $E\subseteq \bP(\Om_X)$ such that $\dim E \ls 0$ and  $$\bB(\bL_X(-a))\subseteq \bigcap_{\substack{I\in \{1,\dots, c\}^n_{\neq}\\ P\in\bC[Z_0,\dots, Z_N]_q}}\!\!\!\!\!\!\!Ò (\om^{I,P}=0)=\bigcup_{i=0}^N \bP(\Om_{W_i})\cup E.$$
 Where $\om^{I,P}\in H^0(\bP(\Om_X),\bL_X^n(-a))\cong H^0(X,S^n\Om_X(-a))$ is the symmetric differential form constructed in Proposition \ref{DiffForms} viewed as a global section of $\bL_X^n(-a).$
\end{lemma}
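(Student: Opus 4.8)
The strategy is to analyze the common zero locus of the explicit symmetric differential forms $\om^{I,P}$ constructed in Proposition~\ref{DiffForms}, working in the affine chart $U_0=(Z_0\neq 0)$ and using the explicit determinantal formula for $\om_0^{I,Q}(t)$. A point of $\bP(\Om_X)$ over $x\in U_0\cap X$ is a pair $(x,[\xi])$ with $\xi\in\Om_{X,x}\setminus\{0\}$; evaluating $\om_0^{I,Q}$ at such a point means substituting for each $dz_q$ the corresponding component of $\xi$, so $b_q(t_q^j)$ becomes $z_q t_q^j$ (a number depending only on $x$) and $\be_q(t_q^j)$ becomes $z_q\langle dt_q^j,\xi\rangle + e\, t_q^j\langle dz_q,\xi\rangle$ (a number depending on $(x,\xi)$). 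First I would rewrite, for fixed $(x,\xi)$, the condition that $\om_0^{I,P}(x,\xi)=0$ for \emph{all} $I\in\{1,\dots,c\}^n_{\neq}$ and all $P$ of degree $q$: since $P$ ranges over a space of polynomials with no common zero on $\bP^N$ away from a controlled locus, the vanishing of $\om^{I,P}$ for all such $P$ forces the vanishing of the determinant $D_I(x,\xi)$ appearing in the formula (except possibly when $x$ lies on a coordinate hyperplane, which is exactly where the $\bP(\Om_{W_i})$ come from, since $z_i=0$ kills the $i$-th column in the $b$-part; one must be a little careful here and track when $Q$ can vanish identically on the relevant locus).

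**Reduction to a rank condition.** The key observation is that requiring $D_I(x,\xi)=0$ for every $n$-element subset $I=(i_1,\dots,i_n)$ of $\{1,\dots,c\}$ is precisely the statement that the $(c+c)\times N$ matrix
$$
S(x,\xi):=\begin{pmatrix} b_1(t_1^1)&\cdots&b_N(t_N^1)\\ \vdots & & \vdots\\ b_1(t_1^c)&\cdots&b_N(t_N^c)\\ \be_1(t_1^1)&\cdots&\be_N(t_N^1)\\ \vdots & &\vdots\\ \be_1(t_1^c)&\cdots&\be_N(t_N^c)\end{pmatrix}
$$
has rank $\leqslant c+n-1 = N-1$: the top $c$ rows are fixed, and $D_I$ is the $N\times N$ minor obtained by adjoining the $n$ rows indexed by $I$ of the $\be$-block. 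Moreover the relations $F(s^j,x)=0$ and $dF(s^j,x)(\xi)=0$ defining $X$ and $\bP(\Om_X)$ say exactly that the vector $(Z_0^r,\dots,Z_N^r)$ (or its appropriate dehomogenization) lies in the kernels of the $b$-rows and the $\be$-rows respectively, so $S(x,\xi)$ automatically has rank $\leqslant N-1$ at every point — the point is to estimate, as $(s^1,\dots,s^c)$ varies, the locus where \emph{all} the minors that are not forced to vanish actually do vanish, equivalently where the rank drops further or the relevant determinants degenerate. So I would set up the incidence variety over the parameter space $(\bA_\ep^{N+1})^{\oplus c}$ whose fiber over a point of $\bP(\Om_X)$ not lying on any $\bP(\Om_{W_i})$ is the set of coefficient-tuples making all $\om^{I,P}$ vanish there, and bound its dimension.

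**Dimension count.** This is where Lemma~\ref{determinental} and especially Lemma~\ref{FormesIndep} come in: after fixing $(x,\xi)$ off the coordinate hyperplanes, the map sending $(s^j_i)$ to the entries $b_q(t_q^j),\be_q(t_q^j)$ is linear in the coefficients of the $s^j_i$, and the nondegeneracy hypothesis $\ker\ell_i\neq\ker\la_i$ translates into the fact that $Z_q dv + e v\, dZ_q$ and $Z_q v$ impose independent conditions, which is exactly the input of Lemma~\ref{FormesIndep}. Using Lemma~\ref{stratification} to stratify $\bP(\Om_X)$ by the coordinate hyperplanes (the stratum lying in $H_{i_1}\cap\cdots\cap H_{i_s}$ contributing $\bP(\Om_{W_{i_1}\cap\cdots})$), one shows that outside $\bigcup_i\bP(\Om_{W_i})$ the locus of $(x,\xi)$ where all $\om^{I,P}$ vanish has, for general $s$, dimension $\leqslant 0$; the numerical inequality $c\geqslant\tfrac{3N-2}{4}$ (i.e. $N-c\leqslant c-(N-2c)$, roughly $n\leqslant 2c-n$) is precisely what makes the codimension estimate $(N-r)(2c-r)$ from Lemma~\ref{FormesIndep} beat the dimension $2N-1$ of $\bP(\Om_X)$ plus the relevant correction, forcing the general fiber to be finite. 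Then a standard generic-smoothness / openness argument over the parameter space yields a general choice of $s$ for which the residual locus $E$ is actually zero-dimensional in $\bP(\Om_X)$.

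**Main obstacle.** The delicate point is not the dimension count per se but handling the boundary behavior: proving that the \emph{only} contribution to $\bigcap_{I,P}(\om^{I,P}=0)$ coming from the coordinate hyperplanes is exactly $\bigcup_i\bP(\Om_{W_i})$ — i.e. that on $H_i$ the forms $\om^{I,P}$ restrict to the pullback of the analogous forms on $W_i\subseteq\bP^{N-1}$ (so they vanish identically there, giving the inclusion $\bP(\Om_{W_i})\subseteq\bB$), while off the hyperplanes the vanishing of all $\om^{I,P}$ genuinely forces the rank drop with no spurious components. This requires carefully matching up the determinantal formula in $N+1$ variables with the one in $N$ variables after setting $Z_i=0$, and checking that the polynomial factors $P$ (equivalently $Q$) do not conspire to make everything vanish on some larger locus — which is handled by noting that $\bC[Z_0,\dots,Z_N]_q$ with $q = e-(N+1+a+N\ep)\geqslant 0$ has no common zeros except where forced, and that multiplication by these $P$'s on the top cohomology is surjective onto the relevant component by the description in Section~\ref{Cech}. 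Once this is in place, the equality of the three sets in the statement follows, and Lemma~\ref{EliminationLemma} is the immediate weaker consequence.
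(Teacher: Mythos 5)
Your overall plan is the same as the paper's: work in the affine chart $U_0$, package the forms $\om_0^{I,Q}$ as minors of the $2c\times N$ matrix $\binom{B(t,z)}{B'(t,z,\xi)}$, consider the universal incidence variety over $\bA_\ep^{(N+1)c}$, and estimate fiber dimensions via Lemma \ref{determinental}, Lemma \ref{FormesIndep} (with the nondegeneracy input $\ker b_q\neq\ker\be_q$ off $W$, which is Claim \ref{claim0} in the paper), and Lemma \ref{stratification}, with the numerics of Lemma \ref{FormesIndep} producing exactly the threshold $c\geqslant\frac{3N-2}{4}$. That identification is correct, and the factor $Q$ is a non-issue: since $q\geqslant 0$, the constants are allowed, so vanishing for all $Q$ is the same as vanishing of the determinant everywhere.

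However, your ``reduction to a rank condition'' contains an error that would be fatal if taken literally. You assert that the defining relations for $X$ and $\bP(\Om_X)$ force the vector $(Z_0^r,\dots,Z_N^r)$ (dehomogenized) into the kernel of $S(x,\xi)$ and hence that $S$ ``automatically has rank $\leqslant N-1$ at every point.'' This is false: in the chart $U_0$ the relation $f_{t^j}(z)=0$ reads $\sum_{i=1}^{N}z_i^{\,r}\,b_i(t_i^j,z)=-t_0^j(z)$, so $(z_1^r,\dots,z_N^r)$ is \emph{not} generally in the kernel of the $b$-rows --- the term $t_0^j(z)$, coming from the dropped column $0$, is the obstruction. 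If your claim were true, every $N\times N$ minor of $S$ would vanish and the $\om^{I,P}$ would be identically zero, contradicting Proposition \ref{DiffForms}. The correct kernel relation lives only in the homogeneous $(N+1)$-column picture, where it yields the Cramer/cocycle relations between the $\tom_j^{I,P}$'s, not a vanishing. Relatedly, the equivalence ``$D_I=0$ for all $I$ iff $\rk S\leqslant N-1$'' only holds once $\rk B(t,z)=c$; the paper must and does treat the locus $\rk B<c$ separately (its Claim \ref{claim2}, via Lemma \ref{determinental}) before running the main estimate on $E_c^0$ (its Claim \ref{claim3}, via Claim \ref{claim0} and Lemma \ref{FormesIndep}). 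Your proposal conflates these two strata; you should split the locus $\bigcap_{I,Q}(\om^{I,Q}_0=0)$ as $\{\rk B<c\}\cup\{\rk B=c,\ \rk S<N\}$ and bound each incidence set independently. Finally, the ``main obstacle'' you single out --- matching the hyperplane contribution to $\bigcup_i\bP(\Om_{W_i})$ --- is actually the easy direction: on $W=\bigcup_i\{z_i=0,\ \xi_i=0\}$ the $i$-th column of the determinant vanishes outright, so $W\subseteq\bigcap(\om^{I,Q}=0)$ is immediate, and there is no need to compare with forms on $W_i$ viewed inside $\bP^{N-1}$. The real work is the reverse inclusion off $W$, which is precisely the two-part dimension count above.
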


\begin{proof}
For simplicity, we only treat the chart $U_0$, the other charts are dealt with in the exact same way. Let us precise the notation of Proposition \ref{DiffForms}: for any $(u,z,\xi)\in \bA_\ep\times \bC^N\times \bP^{N-1}$ we set 
 \begin{eqnarray*}
 b_q(u,z):=z_qu(z),\ \text{and} \ \  \be_q(u,z,\xi):=z_qdu_z(\xi)+eu(z)\xi_q.
 \end{eqnarray*}   Moreover, for any $t=(t^1,\dots,t^c)=(t^j_i)^{1\ls j\ls c}_{1\ls i\ls N}\in (\bA_\ep^N)^c$ and for any $(z,\xi)\in \bC^N\times \bP^{N-1}$ we set 
$$B(t,z):=\left(\begin{array}{ccc}b_1(t_1^1,z)&\cdots & b_N(t_N^1,z)\\ \vdots & & \vdots \\ b_1(t_1^c,z)& \cdots &b_N(t_N^c,z)\end{array}\right)\ \ \text{and}\ \ 
B'(t,z,\xi):=\left(\begin{array}{ccc}\be_1(t_1^1,z,\xi)&\cdots & \be_N(t_N^1,z,\xi)\\ \vdots & & \vdots \\ \be_1(t_1^c,z,\xi)& \cdots &\be_N(t_N^c,z,\xi)\end{array}\right).$$
From now on we make the identification $\bP({\Om_{\bP^N}}|_{U_0})\cong \bC^{N}\times \bP^{N-1}.$ So that if for any $j\in \{1,\dots, c\}$ we take $s^j\in \bA_\ep^{N+1}$ such that $X_s=(F_{s^1}=0)\cap \cdots \cap (F_{s^c}=0)$ is a smooth complete intersection in $\bP^N$ and if for any $j\in \{1,\dots,c\}$, we let $t^j$ be the dehomogeneization on $s^j$, $f_{t^j}$ be the dehomogeneization of $F_{s^j}$ and $X_t=X_s$, then, one naturally has 
$$\bP(\Om_{X_t\cap U_0})=\left\{(x,\xi)\in \bC^N\times \bP^{N-1}\ \text{such that }\ \ \left\{\begin{array}{c}f_{t^{1}}(z)=\cdots=f_{t^{c}}(z)=0\\ df_{t^{1},z}(\xi)=\cdots=df_{t^{c},z}(\xi)=0\end{array}\right.\right\}\subseteq \bC^{N}\times \bP^{N-1}\cong \bP(\Om_{U_0})$$ 
Moreover, set $W:=\bigcup_{i=1}^N\left((z_i=0)\cap(\xi_i=0)\right)\cong \bigcup_{i=1}^N \bP(\Om_{H_i\cap U_0}),$ where $H_i=(Z_i=0).$ 
Our aim is to prove that, for a general $t\in \bA_{\ep}^{(N+1)c},$ there exists $E\subseteq \bP(\Om_{X_t\cap U_0})$ such that $\dim E\ls 0$ and $$ \bP(\Om_{X_t\cap U_0})\cap\bigcap_{I,Q}\left(\om^{I,Q}_0(t,z,\xi)=0\right)=\left(\bP(\Om_{X_t\cap U_0})\cap W\right)\cup E.$$
The first thing to observe is that if $(x,\xi)\in W$ then for any $I\in \{1,\dots,c\}^n_{\neq}$ and any $Q\in\bC[z_1,\dots,z_N]_{\ls q}$ one has $\om^{I,Q}(t,z,\xi)=0$, and therefore $W\subseteq \bigcap_{I,Q}\left(\om^{I,Q}_0(t,z,\xi)=0\right).$ The more difficult part is to prove that, generically, this is actually an equality up to a finite number of points.

Observe that if $\rk(B(t,z))<c$, then $\om_0^{I,Q}(t,z,\xi)=0$ for all $I$ and all $Q$. On the other hand, if $ \rk(B(t,z))=c$ then 
$$\om_0^{I,Q}(t,z,\xi)=0 \ \forall I, \ \forall Q \ \ \iff \ \ \rk\left(\begin{array}{c}B(t,z)\\ B'(t,z,\xi)\end{array}\right)<N.$$
Therefore, for any $t\in \bA_\ep^{(N+1)c},$ the locus $\bigcap_{I,Q}\left(\om^{I,Q}_0(t,z,\xi)=0\right)$ is precisely 
$$\left\{(z,\xi)\in \bC^N\times \bP^{N-1} \ \ \text{such that}\ \ \rk B(t,z)<c \ \ \text{or}\ \ \rk B(t,z)=c \ \text{and}\ \rk\left(\begin{array}{c}B(t,z)\\ B'(t,z,\xi)\end{array}\right)<N \right\}$$
 To understand this locus for general $t\in \bA_\ep^{(N+1)c}$ we will consider the problem in family, and study the corresponding incidence varieties.
 Let us introduce some more notation. Consider the natural projections 
\begin{eqnarray*}
\rho_1:\bA_\ep^{(N+1)c}\times \bC^N \to \bA_\ep^{(N+1)c}\ \ \ \ \ \rho_2:\bA_\ep^{(N+1)c}\times \bC^N \to \bC^N\ \ \ \ \ p_{1}:\bA_\ep^{(N+1)c}\times \bC^N \times \bP^{N-1}\to \bA_\ep^{(N+1)c}\\
p_{12}:\bA_\ep^{(N+1)c}\times \bC^N \times \bP^{N-1}\to \bA_\ep^{(N+1)c}\times \bC^N\ \ \ \ \ p_{23}:\bA_\ep^{(N+1)c}\times \bC^N\times \bP^{N-1} \to \bC^N\times \bP^{N-1}.
\end{eqnarray*}
For any $k\in \{0,\dots, N\},$ set $Y_k=\left\{(z_1,\dots,z_N)\in\bC^N \ \ /\ \ \exists I\in \{1,\dots,N\}^k_{\neq} \  \text{such that} \ \ z_i=0\iff i\in I \right\}.$
Observe that $\dim Y_k=N-k.$ Moreover, set 
\begin{eqnarray*}
\ccX_c&:=&\left\{\left((t^{1},\dots,t^{c}),z\right)\in (\bA_\ep^{(N+1)})^c\times \bC^N\ \text{such that}\ \ f_{t^{1}}(z)=\cdots=f_{t^{c}}(z)=0\right\}\\
\ccX_c'&:=&\left\{\left((t^{1},\dots,t^{c}),z,\xi\right)\in (\bA_\ep^{(N+1)})^c\times \bC^N\times \bP^{N-1}\ \text{such that}\ \ \left\{\begin{array}{c}f_{t^{1}}(z)=\cdots=f_{t^{c}}(z)=0\\ df_{t^{1},z}(\xi)=\cdots=df_{t^{c},z}(\xi)=0\end{array}\right.\right\}\\
\De_c&:=&\left\{\left(t,z\right)\in \ccX_c \ \text{such that}\ \ \rk B(t,z)<c\right\}\\
E_c^0&:=&\left\{\left(t,z,\xi\right)\in \ccX_c' \ \text{such that}\ \ \rk B(t,z)=c \ \text{and}\ \ \rk\left(\begin{array}{c}B(t,z)\\ B'(t,z,\xi)\end{array}\right)<N\right\}\\
\De_c'&:=&p_{12}^{-1}(\De_c)\cap\ccX_c',\ \ \ E_c:=E^0_c\cup \De_c', \ \ \ \Ga_c:=E_c\setminus p_{23}^{-1}(W) \ \text{and} \ \ \ \Ga_c^{0}:=E_c^0\setminus p_{23}^{-1}(W).
\end{eqnarray*}
Of course, $\ccX_c\stackrel{\rho_1}{\to}\bA_\ep^{(N+1)c}$ is just the universal family of complete intersection of Fermat type we are considering (or to be precise, the dehomogeneization of it), and $\ccX_c'$ is just the projectivization of the relative cotangent bundle of $\ccX_c\stackrel{\rho_1}{\to}\bA_\ep^{(N+1)c}.$ For any $t\in \bA_\ep^{(N+1)c},$ we write $\ccX_{c,t}=\rho_1^{-1}(t)$ (and similarly for $\ccX'_{c,t},$ $\De_{c,t}$, etc.). By the above discussion, our aim is to prove that for a general $t\in \bA_\ep^{(N+1)c},$ $\Ga_{c,t}$ is finite. To do so, we will prove that for a general $t\in \bA_{\ep}^{(N+1)c}$, $\De'_{c,t}$ is empty and $\dim \Ga_{c,t}^0\ls 0.$ This will follow at once from the two following claims.
\begin{claim}\label{claim2}$\dim \De_c<\dim\bA_\ep^{(N+1)c}$\end{claim}
\begin{claim}\label{claim3}$\dim \Ga_c^{0}\ls\dim\bA_\ep^{(N+1)c}$\end{claim}
\begin{proof}[Proof of Claim \ref{claim2}] Take $k\in \{0,\dots, N\}$. Fix $(z_1,\dots, z_N)\in Y_k.$ Without loss of generality, we may assume that $z_1=\cdots =z_k=0$ and that $z_{k+1}\cdots z_N\neq 0.$ In particular, 
$$\rk B(t,z)=\rk\left(\begin{array}{ccc}t_{k+1}^1(z)&\cdots & t_N^1(z)\\ \vdots & &\vdots \\ t_{k+1}^c(z)&\cdots & t_N^c(z)\end{array}\right).$$
Set $\De_{c}^z:=\rho_1\left(\rho_2^{-1}(\{z\})\cap \De_c\right).$ Of course, 
$$\De_{c}^z=\left\{\left((t_i^j)^{1\ls j\ls c}_{1\ls i\ls N}\right)\in \bA_\ep^{(N+1)c}\ \text{such that}\ \ \rk\left(\begin{array}{ccc}t_{k+1}^1(z)&\cdots & t_N^1(z)\\ \vdots & &\vdots \\ t_{k+1}^c(z)&\cdots & t_N^c(z)\end{array}\right)<c,\ \text{and}\ f_{t^1}(z)=\cdots=f_{t^c}(z)=0\right\}.$$
By Lemma \ref{determinental} the set defined by $\rk\left(\begin{array}{ccc}t_{k+1}^1(z)&\cdots & t_N^1(z)\\ \vdots & &\vdots \\ t_{k+1}^c(z)&\cdots & t_N^c(z)\end{array}\right)<c$ is of dimension less than  $\dim \bA_\ep^{(N+1)c}-\max\{0,N-c-k+1\}$. Therefore, $$\dim \De_{c}^z\ls\dim\bA_\ep^{(N+1)c}-\max\{0,N-k-c+1\}-c.$$
Indeed, for any $j\in \{1,\dots, c\},$ the equation $f_{t^j}(z)=t_0^j(z)+t_1^j(z)z_1^e+\cdots +t_N^j(z)z_N^e=0$ is affine (in $t_i^j$) and involves the term $t_0^j(z)$ which appears in none of the other equations defining $\De_{c}^z$. So that each of the equations $f_{t^j}(z)=0$ increases the codimension by one, and we get the announced dimension. Therefore by Lemma \ref{stratification} we obtain that 
\begin{eqnarray*}
\dim \De_c &\ls& \max_{0\leqslant k\ls N}\left(\dim Y_k+\dim\bA_\ep^{(N+1)c}-\max\{0,N-k-c+1\}-c\right)\\
&=& \max_{0\leqslant k\ls N}\left(\dim\bA_\ep^{(N+1)c}-\max\{0,N-k-c+1\}+N-k-c\right)\ls \dim\bA_\ep^{(N+1)c}-1<\dim \bA_\ep^{(N+1)c}.
\end{eqnarray*}
\end{proof}
To prove Claim \ref{claim3} we will need the following observation. 
\begin{claim}\label{claim0}Let $(x,\xi)\in\bC^N\times\bP^{N-1}\setminus W$. For any $q\in \{1,\dots, N\}$, $\be_q(\cdot,z,\xi)\neq 0$ and $\ker b_q(\cdot, z)\neq \ker\be_q(\cdot,z,\xi).$\end{claim}
\begin{proof}[Proof of Claim \ref{claim0}]
For $k\in\{1,\dots,N\}$, we set $\de_k=(0,\dots,0,1,0,\dots, 0)\in \bN^N$ be the multi-index whose only non-zero term is in the $k$-th slot. For any $u\in \bA_\ep$ any $(z,\xi)\in \bC^N\times \bP^{N-1}$ and any $q\in \{1,\dots,N\}$ we have by definition,
\begin{eqnarray*}
b_q(u,z)&=&z_q\left(\sum_{|I|\ls \ep}u_Iz^I\right)=z_qu_0+z_1z_qu_{\de_1}+\cdots +z_Nz_qu_{\de_N}+\sum_{2\ls |I|\ls \ep}z^{I+\de_q}u_I\\ 
\be_q(u,z,\xi)&=& z_q\left(\sum_{|I|\ls \ep}\sum_{k=1}^Ni_ku_Iz^{I-\de_k}\xi_k\right)+e\xi_q\left(\sum_{|I|\ls \ep}u_Iz^I\right)\\
&=&e\xi_qu_0+(z_q\xi_1+ez_1\xi_q)u_{\de_1}+\cdots+(z_q\xi_N+ez_N\xi_q)u_{\de_N}+\sum_{2\ls |I|\ls \ep}\left(z_q\sum_{k=1}^Ni_kz^{I-\de_k}\xi_k+ez^I\xi_q\right)u_I.
\end{eqnarray*} 
Take $(x,\xi)\in \bC^{N-1}\times \bP^{N-1}\setminus W.$ Let $q\in \{1,\dots, N\}$. In view of the above expression, if $\be_q=0$ then in particular $$e\xi_q=z_q\xi_1+ez_1\xi_q=\cdots=z_q\xi_N+ez_N\xi_q=0,$$
and because $(z,\xi)\notin W$, we get $z_q\neq 0$, so that $\xi_q=\xi_1=\cdots=\xi_N=0$, which is not possible because $\xi\in\bP^{N-1}.$ It remains to prove that $\ker b_q(\cdot,z)\neq \ker \be_q(\cdot,z,\xi).$ If $b_q(\cdot,z)=0$, this is obvious. Suppose $b_q(\cdot,z)\neq 0$ and therefore $z_q\neq 0$. If $\ker b_q(\cdot,z)=\ker \be_q(\cdot,z,\xi),$ then  from the above expression we get in particular that
$$\left|\begin{array}{cc}1& z_1\\ e\xi_q& z_q\xi_1+ez_1\xi_q\end{array}\right|=\cdots=\left|\begin{array}{cc}1& z_N\\ e\xi_q& z_q\xi_N+ez_N\xi_q\end{array}\right|=0,$$
From which we deduce once again that $\xi_1=\cdots=\xi_N=0.$
\end{proof}

\begin{proof}[Proof of Claim \ref{claim3}] For any $K=(k_1,\dots, k_c)\in \{1,\dots, N\}^c_{\neq}$ let 
$$U^K=\left\{(t,z,\xi)\in \bA_\ep^{(N+1)c}\times \bC^N\times \bP^{N-1}\ \text{such that }\ \det\left(\begin{array}{ccc}b_{k_1}(t_{k_1}^1,z)&\cdots &b_{k_c}(t_{k_c}^1,z)\\ \vdots&&\vdots\\ b_{k_1}(t_{k_1}^c,z)&\cdots &b_{k_c}(t_{k_c}^c,z)\end{array}\right)\neq 0\right\}.$$
For any $K$, $U^K_\Ga:=U^K\cap\Ga_c^0$ is an open subset of $\Ga_c^{0}$ and $\Ga_c^{0}=\bigcup_KU^K_\Ga,$ therefore for our dimension estimation, we might as well restrict ourselves to $U:=U^{(1,\dots,c)}$ and $U_\Ga:=U_\Ga^{(1,\dots,c)}.$ Fix $(z,\xi)\in \bC^N\times\bP^{N-1}$ and set $U^{z,\xi}_\Ga:=p_1(p_{23}^{-1}(z,\xi)\cap U_\Ga)$. Using Claim \ref{claim0} and Lemma \ref{FormesIndep} we see that the set defined in $\bA_\ep^{(N+1)c}$ by 
$$ \rk\left(\begin{array}{c}B(t,z)\\ B'(t,z,\xi)\end{array}\right)<N\ \ \text{and}\ \ \ (t,z,\xi)\in U$$
is of dimension less than $\dim\bA_\ep^{(N+1)c}-2c+N-1$, and by the same argument as in the proof of Claim \ref{claim2}, we obtain
$$\dim U_\Ga^{z,\xi}\ls\dim\bA_{\ep}^{(N+1)c}-2c+N-1-2c.$$
Hence $\dim U_\Ga\ls\dim\bA_\ep^{(N+1)c}-4c+N-1+\dim(\bC^N\times\bP^{N-1})=\dim\bA_\ep^{(N+1)c}-4c+N-1+2N-1.$ And therefore (because the same argument holds for all the $U^K_\Ga$)
$$\dim \Ga_c^{0}=\dim\bA_\ep^{(N+1)c}-4c+3N-2.$$
In particular, $\dim \Ga_c^{0}\ls\dim\bA_\ep^{(N+1)c}$ as soon as $-4c+3N-2\ls 0$. This last condition is equivalent to $c\gs \frac{3N-2}{4},$ which is exactly our hypothesis.
\end{proof}
\end{proof}


\bibliographystyle{plain}      
\bibliography{biblio.bib}   

\begin{thebibliography}{10}

\bibitem{Bog78}
F.~A. Bogomolov.
\newblock Holomorphic symmetric tensors on projective surfaces.
\newblock {\em Uspekhi Mat. Nauk}, 33(5(203)):171--172, 1978.

\bibitem{BDO08}
Fedor Bogomolov and Bruno De~Oliveira.
\newblock Symmetric tensors and geometry of {$\Bbb P^N$} subvarieties.
\newblock {\em Geom. Funct. Anal.}, 18(3):637--656, 2008.

\bibitem{Bro14}
Damian Brotbek.
\newblock Hyperbolicity related problems for complete intersection varieties.
\newblock {\em Compos. Math.}, 150(3):369--395, 2014.

\bibitem{B-R90}
P.~Br{\"u}ckmann and H.-G. Rackwitz.
\newblock {$T$}-symmetrical tensor forms on complete intersections.
\newblock {\em Math. Ann.}, 288(4):627--635, 1990.

\bibitem{Bru85}
Peter Br{\"u}ckmann.
\newblock Some birational invariants of algebraic varieties.
\newblock In {\em Proceedings of the conference on algebraic geometry
  ({B}erlin, 1985)}, volume~92 of {\em Teubner-Texte Math.}, pages 65--73.
  Teubner, Leipzig, 1986.

\bibitem{BKT13}
Yohan Brunebarbe, Bruno Klingler, and Burt Totaro.
\newblock Symmetric differentials and the fundamental group.
\newblock {\em Duke Math. J.}, 162(14):2797--2813, 2013.

\bibitem{Deb05}
Olivier Debarre.
\newblock Varieties with ample cotangent bundle.
\newblock {\em Compos. Math.}, 141(6):1445--1459, 2005.

\bibitem{Div08}
Simone Diverio.
\newblock Differential equations on complex projective hypersurfaces of low
  dimension.
\newblock {\em Compos. Math.}, 144(4):920--932, 2008.

\bibitem{Div09}
Simone Diverio.
\newblock Existence of global invariant jet differentials on projective
  hypersurfaces of high degree.
\newblock {\em Math. Ann.}, 344(2):293--315, 2009.

\bibitem{DMR10}
Simone Diverio, Jo{\"e}l Merker, and Erwan Rousseau.
\newblock Effective algebraic degeneracy.
\newblock {\em Invent. Math.}, 180(1):161--223, 2010.

\bibitem{Har77}
Robin Hartshorne.
\newblock {\em Algebraic geometry}.
\newblock Springer-Verlag, New York, 1977.
\newblock Graduate Texts in Mathematics, No. 52.

\bibitem{Laz04I}
Robert Lazarsfeld.
\newblock {\em Positivity in algebraic geometry. {I}}, volume~48 of {\em
  Ergebnisse der Mathematik und ihrer Grenzgebiete. 3. Folge. A Series of
  Modern Surveys in Mathematics [Results in Mathematics and Related Areas. 3rd
  Series. A Series of Modern Surveys in Mathematics]}.
\newblock Springer-Verlag, Berlin, 2004.
\newblock Classical setting: line bundles and linear series.

\bibitem{Laz04II}
Robert Lazarsfeld.
\newblock {\em Positivity in algebraic geometry. {II}}, volume~49 of {\em
  Ergebnisse der Mathematik und ihrer Grenzgebiete. 3. Folge. A Series of
  Modern Surveys in Mathematics [Results in Mathematics and Related Areas. 3rd
  Series. A Series of Modern Surveys in Mathematics]}.
\newblock Springer-Verlag, Berlin, 2004.
\newblock Positivity for vector bundles, and multiplier ideals.

\bibitem{Mer13}
J.~{Merker}.
\newblock {Siu-Yeung jet differentials on complete intersection surfaces X\^{}2
  in P\^{}4(C)}.
\newblock {\em ArXiv e-prints}, December 2013.

\bibitem{Mer14}
J.~{Merker}.
\newblock {Extrinsic projective curves X\^{}1 in P\^{}2(C): harmony with
  intrinsic cohomology}.
\newblock {\em ArXiv e-prints}, February 2014.

\bibitem{R-R14}
Xavier Roulleau and Erwan Rousseau.
\newblock Canonical surfaces with big cotangent bundle.
\newblock {\em Duke Math. J.}, 163(7):1337--1351, 2014.

\bibitem{Sch85}
Michael Schneider.
\newblock Complex surfaces with negative tangent bundle.
\newblock In {\em Complex analysis and algebraic geometry ({G}\"ottingen,
  1985)}, volume 1194 of {\em Lecture Notes in Math.}, pages 150--157.
  Springer, Berlin, 1986.

\bibitem{Siu04}
Yum-Tong Siu.
\newblock Hyperbolicity in complex geometry.
\newblock In {\em The legacy of {N}iels {H}enrik {A}bel}, pages 543--566.
  Springer, Berlin, 2004.

\bibitem{S-Y96}
Yum-Tong Siu and Sai-kee Yeung.
\newblock Hyperbolicity of the complement of a generic smooth curve of high
  degree in the complex projective plane.
\newblock {\em Invent. Math.}, 124(1-3):573--618, 1996.

\bibitem{Som84}
Andrew~John Sommese.
\newblock On the density of ratios of {C}hern numbers of algebraic surfaces.
\newblock {\em Math. Ann.}, 268(2):207--221, 1984.

\end{thebibliography}

\end{document}